\newtheorem{theorem}{Theorem}[chapter]
\newtheorem{corollary}[theorem]{Corollary}
\newtheorem{lemma}[theorem]{Lemma}
\newtheorem{proposition}[theorem]{Proposition}
\theoremstyle{definition}
\newtheorem{definition}[theorem]{Definition}
\newtheorem{remark}[theorem]{Remark}
\numberwithin{equation}{chapter}
\renewcommand{\leq}{\leqslant}
\renewcommand{\le}{\leqslant}
\renewcommand{\geq}{\geqslant}
\renewcommand{\ge}{\geqslant}
\def\N{\mathbb{N}}
\def\R{\mathbb{R}}
\title{Civil Wars: \\ A New Lotka-Volterra \\ Competitive System\\
and Analysis of Winning Strategies}
\author{Elisa Affili, Serena Dipierro, \\
Luca Rossi, and Enrico Valdinoci}
\begin{document}

\maketitle

\chapter*{Preface: this monograph at a glance}
In this monograph, we introduce a new model in population dynamics that describes
two species, or communities, sharing the same environmental resources in a situation of open hostility. 

Though the main methodology fits into the broad realm of mathematical biology, relying on methods from dynamical systems, ordinary differential equations, optimization and optimal control,
the model proposed and the results presented here are completely new.

\medskip

The model is deduced from basic principles, accounting for
competition and hostility between two species sharing the same environment. The fact of sharing this environment constitutes
one of the salient features of the model in the description of a situation that, for human populations, is typical of civil wars. The model is also adapted to describe economic situations where 
two companies compete for the same market.

Our key assumption is that one of the two populations deliberately seeks for hostility through ``targeted attacks''. 
Hence the interaction is described not in terms
of random encounters but rather via the
strategic decisions of one population
that can attack the other according to different levels of aggressiveness.

This leads to a non-variational model for the two populations in conflict, taking into
account structural parameters such as the relative fit of the two populations with
respect to the available resources and the effectiveness of the attack strikes of the
aggressive population.
One of the features that distinguishes this model from usual competitive systems is that 
it allows one of the population to go extinct {\em in finite time}.

The analysis that we perform focuses
on the dynamical properties of the system, by detecting
and describing all  possible equilibria and their basins of attraction.
Moreover, we analyze the strategies that may lead to the victory of the aggressive
population, i.e.~the choice of the aggressiveness parameter,
in dependence of the structural constants of the system and possibly varying in time
in order to make the attacks effective, which take to the extinction in finite time
of the defensive population. 

{F}rom the technical point of view, analyzing the case of two populations allows us to exploit techniques typical of two-dimensional dynamical systems, in which trajectories tend to separate different regions and in which asymptotic behaviors are regulated by the Poincar\'e-Bendixson Theorem (in particular,
the system does not exhibit strange attractors).
The analysis of the linear stability of the system, combined with
a careful detection of stable and unstable orbits, also allows us to fully characterize the behavior of the equilibria, in dependence of
the parameters involved in the description of the model (such as the fitness of the populations to the environment, the level of aggressiveness of the hostile population, and the effectiveness and cost of the attacks).

A number of bespoke analytical arguments are utilized to detect
and identify the set of points for which finite time extinction is possible. 
In particular, different constraints on the aggressiveness parameter lead to different sets of initial points for which the victory is possible, highlighting that bang-bang strategies are necessary and sufficient to perform the task. 
We also analyze time minimizing strategies, for which variational tools such as the Pontryagin's Maximum Principle
play a role in the optimality conditions.

\medskip
Besides its mathematical interest, we think that the subject of this monograph is also topical and of great impact. 
Indeed, during the last century, the average duration of civil wars has significantly grown: since the end of World War II, this average duration has risen
from about one-and-a-half-year to over four years (see~\cite{NEV}).
This increased average length of civil wars has also resulted in an increased number of wars ongoing at any one time, thus contributing to a rise of tensions and potentially conflict situations, which may also contribute to the surge of new civil wars
in a spiral of concurrent effects (for instance,
about twenty contemporary civil wars took place
close to the end of the Cold War).

Given the localized structure of these conflicts,
civil wars often entail a  large numbers of casualties, also among civilians (it is estimated that civil wars have caused the deaths of over 25 million people since 1945). 

The intensity of the conflict and the high number of collateral damages typically cause a severe consumption of significant resources, a rise of speculative financial operations, and a radical pauperization of the territory.

Causes for civil wars are varifold, including
ethnic and religious fractionalizations, poverty and social inequalities,
governance and political issues. Classical analysis of civil wars
focused on ``greed versus grievance'' as the two baseline arguments as causes of civil wars, where
``greed'' collects economic motivations making the
best interests for individuals to join a rebellion 
and ``grievance'' is shorthand for all issues of ethnic,
religious, and social tensions that contribute to the
development of a conflict (see~\cite{GRGG}).

Various causes to start and prolong civil wars have also been detected (see~\cite{KEEN}), such as the possibility for elite groups
to control economic resources and power positions, or
to obtain private profit by mobilizing violent riots.

Among the myriad of factors that may stimulate or consolidate civil wars, we also mention the lack of accountability by political leaders
(see~\cite{fe0pkjfeSmb}) and
the size of a country's population (see~\cite{47852tjg-156yhng}).

A strategical configuration of the territory can also
be a reinforcing factor for a civil war: for instance,
high levels of population dispersion 
and the presence of mountainous terrain can favor riots, making the population harder to control (see~\cite{47852tjg-156yhng}).
\medskip

Humans are not the only population that exhibit organized aggressive behavior.  
This tendency is common to several primate groups but also to evolutionarily very different species. 
In his work devoted to the study of ants, Wilson states:
``The foreign policy aim of ants can be summed up as follows: restless aggression, territorial conquest, and genocidal annihilation of neighboring colonies whenever possible'' (see \cite{holldobler1990ants}).
Our model shows that this effect is not negligible in population dynamics models and opens the way for new studies in mathematics for ecology.

Besides the specific application in the study of aggressiveness phenomena in population dynamics, we stress that the model that we present here is also well-suited
to describe confrontation between rival players or agents in various different contexts,
such as strategic games or marketing models.
For instance, starting from the Bass diffusion model \cite{bass1969new}, which describes the number of buyers or adopters 
of a product, and considering two products competing for the same market,
leads to a Lotka–Volterra competitive system, see \cite{KrishnanBass,Bonaldo}. Our model then describes the situation where 
the two products are fabricated by distinct companies, and one of the two companies
resorts to aggressive policies (such as misleading
advertising, or releasing computer viruses) to set the rival product out of the
market.

\newpage$\,$\vfill

{\bf Keywords:} Dynamics of populations, biological mathematics, models for competing species, conflicts, controllability, reachable sets.\medskip\medskip

{\bf 2010 Mathematics Subject Classification:} 92D25, 37N25, 92B05, 34A26, 93B03.

\chapter*{Acknowledgments}

It is a pleasure
to thank Emmanuel Tr\'elat
for very interesting discussions.

Elisa Affili and Luca Rossi have been supported by the Italian INDAM-GNAMPA.

Serena Dipierro has been supported by
the Australian Research Council DECRA DE180100957
{\em PDEs, free boundaries and applications}. 

Luca Rossi has been supported by
the French project 
ANR-23-CE40-0023-01
{\em ``{\sc ReaCh}'' Réaction-diffusion: nouveaux défis}.

Enrico Valdinoci has been supported by
the Australian Laureate Fellowship
FL190100081
{\em Minimal surfaces, free boundaries and partial differential equations}.

\tableofcontents

\chapter{Introduction}\label{ONE:C}

\begin{center}
\begin{minipage}{25em}
\noindent{\bf Abstract of Chapter~\ref{ONE:C}.}
{\sl In this chapter, we describe the content of this book, in which we will introduce a new mathematical model to analyze the situation of two biological populations competing for the same resources, in a mutual conflict caused by
an aggressive population which, depending on the parameters of the system, may attack the other.
We employ ``civil war'' as a stylised expression that we employ to resume the two main features of our model: 
aggressive behaviour from one hand, and competition for the resources from the other.

The main questions that
we deal with in this monograph are
the characterization of the initial conditions for which there exists a winning strategy,
the success of the constant strategies, compared to all  possible strategies,
the construction of a winning strategy for a given initial datum, and the
existence of a single winning strategy independently of the initial datum.

Our analysis will characterize the equilibria of the system and their features 
in terms of the different parameters
of the model (such as relative fitness to the environment, aggressiveness
and effectiveness of strikes). Moreover, we will study the initial configurations which
may lead to the victory of the aggressive population, discussing the possible strategies to achieve the victory.

The analysis of the optimal strategies is complex, also because
there exists initial configurations
for which the aggressive population cannot obtain the victory of the war, no matter what strategy is adopted. 

Different scenarios will arise in dependence of the different parameters of the system.
For example, for populations with the same fit to the environment,
the constant strategies suffice for the aggressive population to possibly
achieve the victory, but
for populations with different fit to the environment
the constant strategies do not exhaust all the possible winning strategies,
but it is still enough to consider strategies with at most one jump
discontinuity.

The utility of the strategy is also subject to different possible parameters, such as the duration of the war. For example, we show that among all the winning strategies,
jump discontinuous strategies may not be optimal.}\end{minipage}\end{center}
\bigskip\bigskip\bigskip\bigskip\bigskip\bigskip

Mathematical biology is a traditional field of investigation that bridges together different branches of pure and applied mathematics in strong connection with several disciplines in biology, such as ethology, behavioral ecology, cytology, evolutionary biology, cancer modeling, neuroscience, etc. (see~\cite{murray1, murray2}).

The birth of mathematical biology dates back to the 13th century, when the famous Fibonacci sequence was introduced to calculate the growth of rabbit populations. The tradition of mathematical biology consolidated in the 18th and 19th centuries, also due to the works of Thomas Malthus, who described the population growth in terms of exponential functions, and Pierre Fran\c{c}ois Verhulst, who introduced the mathematical notion of competition for resources and formulated the logistic equation (see e.g.~\cite{CRAMER2004613} for an overview of the history of logistic models).

Since then, one of the traditional domains of mathematical biology has focused on population dynamics. This field of research typically leverages methods from differential equations and dynamical systems to understand the size and features of biological populations.

The effectiveness of population dynamics has very often reached further out from its original targets and has provided extremely solid links with other branches of science and mathematics. For instance, the methods developed for problems in population dynamics frequently find applications in epidemiology (see e.g.~\cite{Saito2020.06.25.20139865} for recent applications
of logistic models to concrete descriptions of the COVID-19 pandemic). The nowadays very popular family of ``SIR'' models introduced by Kermack and McKendrick in epidemiology is a particular instance of population dynamics systems in which the population is divided into compartments: Susceptibles, Infected, Removed (see~\cite{KMKSIR}).

Moreover, the questions posed in the setting of population dynamics often require the involvement of different mathematical specializations, such as game theory, control theory, optimization, etc.
(see e.g.~\cite{MR2270822, MR1747250, MR2605611, MR3408563}).

The development of population dynamics has also deeply impacted social sciences and anthropology, providing quantitative settings to describe complicated social interactions, such as crime occurrences, migrations, foundations of political parties, voting processes and circulation of ideas
(see~\cite{MR3163243, MR3488848, MR4231145}
and the references therein).

Finally, ideas from population dynamics have also been applied in the context of economics, predicting the phenomena of adoption of a new product or technology (the Bass model, see e.g.~\cite{bass1969new, RePEc:inm:ormnsc:v:31:y:1985:i:12:p:1569-1585}), as well as knowledge diffusion in macroeconomics, and competition between old technologies ad newer ones (Lotka-Volterra, see e.g.~\cite{WATANABE2003437, MR3412958}).

\section{Themes and aims of this work}

Among the several models dealing with the
dynamics of biological systems, the case of populations engaging into a mutual conflict
seems to be unexplored.
This work aims at laying the foundations of a new model describing
two populations competing for the same resources with one aggressive population
which may attack the other:
concretely, one may think of
a situation in which
two populations live together in the same territory and share the same
environmental resources,
till one population wants to prevail and try to overwhelm the other one.

We consider this situation as a ``civil war'', since the two populations share
land and resources; the two populations may be equally fit to the environment
(and, in this sense, they are ``indistinguishable'', up to the aggressive attitude of
one of the populations), or they can have a different compatibility to the resources
(in which case one may think that the conflict could be motivated by the different
accessibility to environmental resources).

Given the lack of reliable data related to civil wars, a foundation of
a solid
mathematical theory for this type of conflicts may only leverage the deduction
of the model from first principles: we follow this approach to obtain
the description of the problem in terms of a system of two
ordinary differential equations, each describing the evolution in time
of the density
of one of the two populations.

The method of analysis that we adopt is a combination
of techniques from different fields, including ordinary differential equations,
dynamical systems and optimal control.

This viewpoint allows us to rigorously investigate the model,
with a special focus on a number of mathematical features of
concrete interest, such as the possible extinction of one of the two populations
and the analysis of the strategies that lead to the victory of the aggressive population.

In particular, we analyze the {\em dynamics of the system},
characterizing the equilibria and their features (including possible basins of attraction)
in terms of the different parameters
of the model (such as relative fitness to the environment, aggressiveness
and effectiveness of strikes). Moreover, we study the initial configurations which
may lead to the victory of the aggressive population, also taking into account
different possible {\em strategies} to achieve the victory: roughly speaking,
we suppose that the aggressive population may adjust the parameter
describing the aggressiveness in order to either dim or 
exacerbate the conflict with the aim of destroying the second population
(of course, the war has a cost in terms of life
for both the populations,
hence the aggressive population must select the appropriate strategy in terms
of the structural parameters of the system). We show that the initial data
allowing the victory of the aggressive population
does not exhaust the all space, namely {\em there exist initial configurations
for which the aggressive population cannot make the other extinct}, regardless of the strategy adopted during the conflict. 

Furthermore, {\em for identical populations with the same fit to the environment,
the constant strategies suffice} for the aggressive population to possibly
achieve the victory: namely, if an initial configuration admits a piecewise continuous in time
strategy that leads to the victory of the aggressive population,
then it also admits a constant in time strategy that reaches the same objective
(and of course, for the aggressive population, the possibility of focusing
only on constant strategies would entail concrete practical advantages).

Conversely, {\em for populations with different fit to the environment
the constant strategies do not exhaust all the winning strategies}:
that is, in this case, there are initial conditions which allow
the victory of the aggressive population only under the exploitation
of a strategy that is not constant in time.

In any case, we also prove that {\em strategies with at most one jump
discontinuity are sufficient} for the aggressive population:
namely, independently from the relative fit to the environment,
if an initial condition allows the aggressive population to reach the victory
through a piecewise continuous in time
strategy, then the same goal can be reached using a ``bang-bang''
strategy with at most one jump.

We also discuss the {\em winning strategies that minimize
the duration of the war}: in this case, we will show that
jump discontinuous strategies may be not sufficient and interpolating
arcs have to be taken into account.

\section{Disclaimer}

In no way do the authors of this book suggest that the model has implications of military or sociological type. This model is designed to expand the family of Lotka-Volterra systems by introducing a novel element, justifiable as an aggression term within the diverse interpretations of Lotka-Volterra systems -- whether viewed through the lens of population dynamics or as models of business competition. Conversely, the incorporation of this new term significantly alters the system's behavior, thereby raising questions distinct from those prevalent in existing literature, in particular from a control theory point of view. We suggest that our approach could be useful in other contexts. Through this book, our intention is to inspire and furnish an accessible resource for researchers possessing a robust mathematical background, particularly those intrigued by the development of competitive systems and models necessitating investigations into controllability.

\section{Organization of this monograph}

In Chapter~\ref{chapt:model} we describe in detail the model starting from prime principles and in
Chapter~\ref{chapt:result} we present the main results of this monograph.

After having clarified the main notation used throughout this monograph in Chapter~\ref{TOOLB},
in Chapter~\ref{IKJM:plrg777}
we will exploit methods from ordinary differential equations and dynamical systems
to describe the equilibria of the system and their possible basins of attraction.
The dependence of the dynamics on the structural parameters,
such as fit to the environment, aggressiveness, and efficacy of attacks,
is discussed in detail in Chapter~\ref{ss:dependence}.

Chapter~\ref{STRATE} is devoted to the analysis of the strategies
that allow the first population to eradicate the second one (this part needs
an original combination of methods from dynamical systems and optimal control theory).

We conclude our work with a brief chapter offering guidance on classical bibliography. This section aims to assist readers seeking background on the theories we employ.

\chapter{Description of the model}\label{chapt:model}

\begin{center}
\begin{minipage}{25em}
\noindent{\bf Abstract of Chapter~\ref{chapt:model}.} {\sl
Here we will introduce our new model of civil war starting from first principles.
The classical logistic description of two competing populations will be complemented by terms modeling a conflict originating from the aggressiveness of a population
and accounting for the death rate caused by intentional strikes.}\end{minipage}\end{center}\bigskip\bigskip\bigskip\bigskip\bigskip\bigskip

We now describe in detail our model of conflict between the
two populations and the attack strategies pursued by the aggressive population.
We assume that two populations 
compete for the same resources; this leads to the  
standard competitive Lotka-Volterra system for their densities~$u$ and~$v$, as introduced\footnote{This model
was originally designed to describe
a predator-prey system. Given its broad flexibility,
it is often regarded as a paradigmatic model for competition.} in~\cite{LOTK, zbMATH02586915}, see
also~\cite{ lotka,volterra}.

We then incorporate the fact that one population --the one with density $u$--
deliberately attacks the other. As a result, both populations 
suffer some losses. 

The key point in our analysis
is that the clashes do not depend on the chance of meeting between
 the two populations, given by the quantity~$uv$, as it happens in many other works in the literature  (starting from the publications of Lotka and Volterra,~\cite{lotka,volterra}), but they are sought by the first population and
only depend  on the size~$u$ of the first population and on its level of aggressiveness~$a$ (or the portion of the population devoted to the attacks).

The resulting model is
\begin{equation}\label{model}
	\left\{
	\begin{array}{llr}
		\dot{u}&= u(1-u-v) - acu, & {\mbox{ for }}t>0,\\
		\dot{v}&= \rho v(1-u-v) -au, & {\mbox{ for }}t>0,
	\end{array}
	\right.
\end{equation}
where~$a$,~$c$ and~$\rho$ are positive real numbers. Here, the coefficient~$\rho$ models the second population's fitness with respect to the first one when resources are abundant for both; it is linked with the exponential growth rate of the two species. The parameter~$c$ stands for the quotient of endured per inflicted  damages for the first population. 
Deeper justifications to the model~\eqref{model} will be given in
Section~\ref{ss:derivation}.
The complete description of the trajectories of the dynamical system~\eqref{model} is presented in Section~\ref{ss:notation}.

Notice that the size of the second population~$v$ may become negative in finite time while the first population is still alive. The situation where~$v=0$ and~$u>0$ represents the extinction of the second population and the victory of the first one. 

To describe our results, for communication convenience (and in spite of our
personal fully pacifist beliefs)
we take the perspective of the first population,
that is, the aggressive one; the objective
of this population is to overwhelm the other one, and, to achieve that, it can influence the system by tuning the parameter~$a$. 

{F}rom now on, we may refer to the parameter~$a$ as the \textit{strategy}, which may also depend on time, and we will say that it is \textit{winning} if it leads to the victory of the first population. 

The main questions that
we deal with in this monograph are:
\begin{enumerate}
	\item The characterization of the {\em initial conditions for which there exists a winning strategy}.
	\item The {\em success of the constant strategies}, compared to all  possible strategies.
	\item The {\em construction of a winning strategy} for a given initial datum.
	\item The {\em existence of a single winning strategy independently of the initial datum}.
\end{enumerate}

We discuss all these topics in Section~\ref{ss:strategy}, presenting
concrete answers to each of these problems.
 
Also, since to our knowledge, this is the first time that system~\eqref{model} is considered,
in Sections~\ref{ss:notation} and~\ref{ss:dynamics} we discuss the dynamics and some exciting results about the dependence of the basins of attraction on the other parameters. 

It would also be extremely interesting to add the space component to our model, by considering a system of reaction-diffusion equations. This will be the subject of further work.

\section{Motivations} \label{ss:derivation}

The classic Lotka-Volterra equations for modeling predator-prey systems were first introduced independently in \cite{1eddd3e8-a442-3aa5-91bf-ea3ab4ee18ac} and \cite{volterra1926variatzioni}. 
Later, the models were extended to other types of interaction between two populations, including competition (see~\cite{volterra}), and to model other phenomena involving competition, for example in technology substitution~\cite{substitution}.  
The competitive Lotka-Volterra system concerns the sizes~$u_1(t)$ and~$u_2(t)$ of two species competing for the same resources. The system that
the couple~$(u_1(t), u_2(t))$ solves is
 \begin{equation}\label{lv}
 \begin{cases}
 \dot{u}_1=r_1  u_1\left(\sigma-\displaystyle
  \frac{u_1+\alpha_{12} u_2}{k_1}  \right), & t>0,\\
 \dot{u}_2
=r_2 u_2\left(\sigma- \displaystyle\frac{u_2+\alpha_{21} u_1}{k_2}  \right), & t>0,
 \end{cases}
 \end{equation}
where~$r_1$,~$r_2$,~$\sigma$,~$\alpha_{12}$,~$\alpha_{21}$,~$k_1$ and~$k_2$ are nonnegative real numbers.

Here, the coefficients~$\alpha_{12}$ and~$\alpha_{21}$ represent the competition between individuals of different species, and indeed they
appear multiplied by the term~$u_1 u_2$, which represents a probability of meeting. 

The coefficient~$r_i$ is the exponential growth rate of the~$i-$th population, that is, the reproduction rate that is observed when the resources are abundant. 
The parameters~$k_i$ are called carrying capacity and represent the number of individuals of the~$i-$th population that can be fed with the resources of the territory, that are quantified by~$\sigma$. 
It is however usual to renormalize the system in order to reduce the number of parameters. 
In general,~$u_1$ and~$u_2$ are normalized so that they vary in the interval~$[0,1]$,
thus describing densities of populations.
 
The behavior of the system depends substantially on the values of~$\alpha_{12}\frac{k_2}{k_1}$ and~$\alpha_{21}\frac{k_1}{k_2}$ with respect to the threshold value~$1$, or simply on $\alpha_{12}$ and $\alpha_{21}$ if $k_1=k_2$  (see e.g.~\cite{nonlinear}). In this latter case, if~$\alpha_{12}<1<\alpha_{21}$, then the first species~$u_1$
has an advantage over the second one~$u_2$
and will eventually prevail;
if~$\alpha_{12}$ and~$\alpha_{21}$ are both strictly above~$1$, 
	then the first population that penetrates the environment (that is, the one that has a greater size at the initial time) will persist while the other will extinguish;
	 if instead~$\alpha_{12}$ and~$\alpha_{21}$ are both equal or below~$1$, then an attractive coexistence equilibrium appears.

Some modification of the Lotka-Volterra model were made in stochastic analysis by adding a noise term of the form~$-f(t)u_i$ in the~$i-$th equation, finding some emerging phenomena of phase transition,
see e.g.~\cite{noise}. 

The ODE system~\eqref{lv} has been extended to study the case of two competitive populations that diffuse in space. Many different types of diffusion have been compared and one can find a huge literature on the topic, see ~\cite{mimura, crooks, massaccesi} for some examples and~\cite{murray2} for a more general overview. 
We point out that other dynamical systems exhibiting finite time extinction of one or more species living in some heterogeneous environments
have been considered in the literature, see for example  the model in~\cite{gaucel} for the predator-prey behavior of cats and birds, that has been thereafter widely studied. 
\medskip

In this monograph,
we focus not only on {\em
basic competition for resources}, but also on {\em
situations of open hostility}.
In social sciences, war models are in general little studied; indeed, the collection of data up to modern times is hard for the lack of reliable sources. 
Also, there is still much discussion about what factors are involved and how to quantify them: in general, the outcome of a war does not only depend on the availability of resources, but also on more subtle factors as the commitment of the population and the knowledge of the battlefield,
see e.g.~\cite{toft2005state}. 
Instead, the causes of war were investigated by the statistician L.F. Richardson, who proposed some models for predicting the beginning of a conflict,
see~\cite{richardson1960arms}. 

In addition to the human populations, behavior of hostility between groups of the same species has been observed in chimpanzee. Other species with complex social behaviors are able to coordinate attacks against groups of different species: ants versus termites, agouti versus snakes, small birds versus hawk and owls, see e.g.~\cite{animalwar}.  

The model that
we present here
is clearly a simplification of reality. Nevertheless,
we try to capture some important features of conflicts
between rational and strategic populations, 
introducing in the
mathematical modeling the idea that a conflict may be sought
and the parameters that influence its development may be
conveniently adjusted. 

Specifically, in our model, the
interactions between populations are not merely driven
by chance but rather the strategic decisions of the population
play a crucial role in the final outcome of the conflict, and
we consider this perspective as an interesting novelty in the mathematical
description of competitive environments.

At a technical level, our aim is to introduce a model for conflict between
two populations~$u$ and~$v$,
starting from the model when the two populations compete for food and modifying it to add the information about the clashes. 
We imagine that
each individual of the first population~$u$ decides to attack an individual of the second population with some probability~$a$ in a given period of time. 
As an outcome, the  individual of the first population has a probability~$\zeta_u$ of being killed and a probability~$\zeta_v$ of killing one opponent.
One may think that hostilities take the form of ``duels'', that is, one-to-one fights, whose single outcome does not 
depend on the total number of individuals of the populations. 
Notice that in some duel the fighters might  both be killed. 
Thus, after one time-period, the casualties for the first and second populations
are~$a\zeta_u u$ and~$a\zeta_v u$
respectively.
The same conclusions are found if we imagine that the first population forms an army to attack the second, which tries to resist by recruting an army of proportional size. At the end of each battle, a ratio of the total soldiers is dead, and this is again of the form~$a\zeta_u u$ for the first population and~$a\zeta_v u$ for the second one.

Another effect that
we want to take into account is the drop in  the fertility of the population  during wars. 
This seems due to the fact that families suffer some income loss during war time, because of a lowering of the average productivity and lacking salaries only partially compensated by the state; another reason possibly discouraging couples to have children is the increased chance of death of the parents during war. 
As pointed out in~\cite{fertility}, in some cases the number of lost births during wars are comparable to the number of casualties. 
However, it is not reasonable to think that this information should be included in the exponential growth rates~$r_u$  and~$r_v$, because the fertility drop really depends on the intensity of the war. For this reason, 
we introduce a population loss rate for $u$ and $v$ given by $c_u a u$ and $c_v a u$ respectively,
where $c_u\geq 0$ and~$c_v\geq 0$ ar given parameters.

Finally, for simplicity,
we also suppose that the clashes take place apart from inhabited zone, without having influence on the harvesting of resources. 
\medskip

\section{The notion of aggressiveness}

Concering the notion of ``aggressiveness’’, we remark that obviously in our simplified model this term has merely a mathematical meaning, and not a social, psychological, or legal connotation.

{F}rom the historical point of view, the notion of ``aggression'' in relation to military actions was probably formalized for the first time in 1919, on the occasion of the Treaty of Versailles (Article 231, often referred to as the ``War Guilt Clause'', stated that ``The Allied and Associated Governments affirm and Germany accepts the responsibility of Germany and her allies for causing all the loss and damage to which the Allied and Associated Governments and their nationals have been subjected as a consequence of the war imposed upon them by the {\em aggression} of Germany and her allies'').

Similar clauses were also used in the Treaty of Saint-Germain-en-Laye (1919), in the Treaty of Neuilly (1919), in the Treaty of Trianon (1920) and in the Treaty of S\'evres (1920). Articles of this type have been used as legal bases to extract money reparations for the war's devastations and costs. Notwithstanding the existence of an International Criminal Court, the definition for a war of aggression is not univocal and it is often controversial.

The notion of ``aggressiveness’’ has also been commonly employed in connection with wars in several historical contexts. For instance, in relation to ancient empires, the word ``aggressive'' has also been very often adopted by scholars (e.g. ``Roman aggression'' \cite[page~229]{RAAF},
as well as ``...the most aggressive ancient and modern civilized states.'', see~\cite[page~33]{LAWRENCE}; ``the populous and aggressive Parthian (Persian) Empire'', see~\cite[page~76]{LAWRENCE}; also about ancient Egypt ``In the Old Kingdom, warfare was supposed to be aggressive'' \cite[page~77]{RAAF}; in relation to the Hellenistic World ``aggressive kings such as Philip and Alexander'' \cite[page~171]{RAAF};
and, with regard to Eastern imperial dynasties, ``the Japanese court faced the aggressive Tang Empire'' \cite[page~52]{RAAF}; as regards the prehistoric and pre-Columbian Mesoamerican societies,  ``the major aggressor was the Culhua-Mexica (or Aztec) empire, which rather easily overran the area in 1486 and in 1506’’, see~\cite[page~340]{PREME}; ``Aztec aggression’’, see~\cite[page~357]{PREME}; ``It is difficult to say exactly What the statuittle doubt that it was a politically weak, militarily aggressive, and probably tributary group’’, see~\cite[page~398]{PREME}; ``The Warrau appear to have been pushed into very marginal swamp areas by their notoriously expansive and aggressive neighbors, the Caribs and Arawaks'' \cite[page~203]{CONST}).

In this context, there are also classical examples of ``peaceful’’ populations (such as ``the certain peaceful Inuit groups, the Semai, and the La Paz Zapotec, as well as on several other peaceful cultures'', see~\cite[page~721]{LAPA}).

However, aggressiveness and peacefulness may vary within different communities of the same population (e.g., ``it would be incorrect to generalize that Zapotec culture overall has a low level of aggression based solely on data from the peaceful La Paz community, or conversely, to generalize on the basis of fieldwork in a different Zapotec community than La Paz that all Zapotec communities are violent. Jean Briggs voices a similar caution that not all Inuit bands are as peaceful as the groups she describes'', see~\cite[page~727]{LAPA}).

Also in cases of Indian massacres in North America, the role of the ``aggressive'' or ``colonialistic'' population has also often emerged quite clearly (in fact, scholars speak about ``colonial aggression'', see e.g.~\cite[pages~133 and~247]{ALFRED}), though we have also occurrences of aggressive behaviors of indigenous populations (see~\cite[pages~128--129]{LAWRENCE} in the context of ``Apache-Navajo aggressiveness'' and ``the aggressive Mohave'', see also~\cite[page~58]{CONST} for ``The Comanche of the Southern Plains and the Yanomama have been described as particularly aggressive'').

The distinction between aggressive and peaceful populations is also a notion adopted by scholars (e.g., ``the aggressive groups acquired territory at the expense of more passive ones'', see~\cite[page~129]{LAWRENCE}; ``regions and periods of frequent bitter warfare are often centered on especially aggressive societies that spoil their neighborhood'', see~\cite[page~177]{LAWRENCE}; ``the Semai [...] tradition of flight from violence is a consequence of countless defeats and slave raiding at the hands of the more numerous and aggressive Malays. In other words, the Sexnai can be characterized as defeated refugees'', see~\cite[page~206]{LAWRENCE}).

Obviosuly, aggressive treats may also change in time and according to circumstances (e.g. ``The hyperaggressive Norsemen have become the pacific Scandinavians'', see~\cite[page~130]{LAWRENCE}).

The strict link between the notions of ``war'' and ``aggression'' are at the basis of~\cite{ARMS}.

Of course, the ``aggressiveness’’ of a population is sometimes highly influenced by its political leader (see e.g.~\cite[page~175]{LAWRENCE} for ``Napoleon's aggressive use of'' the French Revolution).  Moreover, in cases of national, ethnical, racial or religious genocides, the role of an ``aggressive'' population is usually very apparent.

Aggressiveness is not only found in humans but is a well-studied behavior in the animal realm. 
Konrad Lorenz, one of the founders of ethology, also wrote an influential book fully dedicated on the topic of ``aggression’’, see~\cite{KONR}. In his opinion, 
``the aggression of so many animals towards members of their
own species is in no way detrimental to the species but, on the
contrary, is essential for its preservation'' \cite{KONR}. The same concepts were also confirmed and presented by Dawkins in his influential book ``The selfish gene'' \cite{dawkins2016selfish}. Aggression is a complex behavior that can manifest in a variety of ways, including physical attacks, threats, and dominance displays.

Aggression is in fact observed in a wide range of animal species, from insects to fish, birds, and mammals \cite{davies2012introduction}.
Interspecific aggressive behavior seems to be crucial in order to defend territory, protect the offspring, and establish dominance hierarchies,  that ensure breeding rights to the triumphing male.
In ecosystems around the world, top predators kill, harass, and steal food from smaller predators. These direct, aggressive interactions, generally referred to as interference competition, are widespread and substantial, and can have profound consequences for the distributions and population dynamics of smaller predators. These patterns of suppression and coexistence vary across systems and species, see \cite{swanson2016absence}.

Aggressive business policies, including predatory pricing, excessive discounting, and exclusive dealing, are tactics employed by companies to gain or maintain a competitive advantage in the market. These practices can harm consumers and rival businesses, and they are often subject to legal scrutiny \cite{bain1956barriers}.
Many retailers use excessive discounting as a promotional tactic to attract customers during periods of low demand. However, this practice can harm smaller competitors and lead to higher prices in the long run. Some authors refer to this strategy as ``price wars'', see \cite{HEIL200183}.

Moreover, with a slight abuse of notation, it is fascinating to include in the study of ``civil wars'' possibly the most ancient human conflict on a large scale, namely the long war of attrition between the individuals of {\em Homo sapiens} and those of {\em Homo neanderthalensis}, caused by the expansion of the sapiens out of Africa about 60 or 70 thousand years ago, which led to the extinction of the Neanderthals around 40 thousand years ago, with a concrete overlap of the two species for between about 2 and 5 thousand years, see~\cite{EXMNEA}.

On the one hand, this conflict cannot be classified as a civil war in the modern sense of the term, also because neanderthalensis and sapiens are recognized as two separate species. On the other hand, the two species present a strikingly similar anatomy and share 99.7\% of DNA, and there is even
strong indication of interbreeding, see~\cite{20448178}.

Both species were certainly acquainted with war actions: for instance, signs of warfare are typically considered skull traumas and parry fractures, which seem to be especially common in young males, see~\cite{NAKAHASHI201783}.

The reasons for the supremacy of the sapiens species are still under intense debate, they may include a refined symbolic intelligence, a more articulated language, the adoption of superior ranged weapons, more advanced social systems, a more specialized division of labor, as well as possibly a more aggressive and better organized expansion of the sapiens which broke the preexisting demographic balance, see e.g.~\cite{507197, ojqld904uythgiwhgvnbvnbJKXSN}.
Let us also mention the mathematical modeling of Sapiens-Neanderthal interaction proposed 
in~\cite{flores} where the author adopt the point of view of considering pure quadratic competition between the two species and 
explicitly neglect the war perspective.
\medskip

\section{Derivation of the model}

Now we derive the system of equations from an heuristic analysis.
As in the Lotka-Volterra model, it is assumed that the change of the size of the population in an interval of time~$\Delta t$ is proportional to the size of the population~$u(t)$, that~is
\begin{equation*}
	u(t+\Delta t)-u(t) \approx u(t) f(u,v)
\end{equation*}
for some appropriate function~$f(u,v)$. In particular,~$f(u,v)$ should depend on resources that are available and reachable for the population. 
The maximum number of individuals that can be fed with all the resources of the environment is~$k$; taking into account all the individuals of the two populations, the available resources are 
\begin{equation*}	
	k-u-v.	
\end{equation*}
Notice that we suppose here that each individual consumes the same amount of resources, independently of its belonging. In our
model, this assumption
is reasonable since
all the individuals belong to the same species. 
Also, the competition for the resources only depends  on the number of individuals, independently on their identity.

Furthermore, our model is sufficiently
general to take into account the fact that the growth rate of the populations can be possibly different. In practice,
this possible difference
could be the outcome of
a cultural distinction, or it may be also due to some slight genetic differentiation, as it happened 
in the case of Homo Sapiens and Neanderthal mentioned in the previous section.

Let us call~$r_u$ and~$r_v$ the fertility of the first and second populations respectively. The contribution
to the population growth rate  is  given by
$$ f(u,v) := r_u \left(1-\frac{u+v}{k}   \right),~$$ and these effects
can be comprised in a typical Lotka-Volterra system.

Instead, in our model, we also take into
account the possible death rate due to casualties.
In this way,
we obtain a term such as~$-a\zeta_u$ to be added to~$f(u,v)$. 
The fertility losses give another term~$-ac_u$ for the first population.
We also perform the same analysis for the second population, with the appropriate coefficients.

With these considerations, the system of
the equations that we obtain is
\begin{equation}\label{model1}
\left\{
\begin{array}{llr}
\dot{u}&= r_u u\left(1- \dfrac{u+v}{k} \right) - a(c_u + \zeta_u)u, & t>0,\\
\dot{v}&=r_v v\left(1- \dfrac{v+u}{k} \right) - a(c_v + \zeta_v)u, & t>0.
\end{array}
\right.
\end{equation}
As usual in these kinds of models, we can rescale the variables and the coefficients  in order to
find an equivalent model with fewer parameters.

Hence, we perform the changes of variables
\begin{equation}\label{changeofvar}\begin{split}
&\tilde{u}(\tilde t)= \dfrac{u(t)}{k}, \quad \tilde{v}(\tilde t)=\dfrac{v(t)}{k}, 
\quad {\mbox{ where }}\quad\tilde{t}= r_u t,\\
&
\tilde{a}= \dfrac{a(c_v+\zeta_v)}{r_u}, \quad \tilde{c}= \dfrac{c_u+\zeta_u}{c_v+\zeta_v} \quad {\mbox{ and }}\quad \rho= \frac{r_v }{r_u },
\end{split}
\end{equation}
and, dropping the tildas for the sake of readability, we finally get the system
in~\eqref{model}.
We will also refer to it as the civil war model.

{F}rom the change of variables in~\eqref{changeofvar}, we notice in particular that~$a$ may now take values in~$[0,+\infty)$.

\medskip

\section{Interpretation of the model in an economic key}
The competitive Lotka-Volterra system is already used to study some market phenomena as technology substitution, see e.g.~\cite{substitution, bhargava1989generalized, watanabe2004substitution}, and our model aims at adding new features to such models.

Concretely, in the technological competition model, one can think that~$u$ and~$v$
represent the capitals of two companies, producing for instance computers, or cell phones, etc. 
In this setting, to start with, one can suppose that the first company produces a very successful product, 
say computers with a certain operating system, in an infinite market, reinvesting a proportion~$r_u$ of the profits into the production of additional items, which are purchased by the market, and so on: in this way, one obtains a linear equation of the type~$\dot u=r_u u$, with exponentially growing solutions. 
The case in which the market is not infinite, but admits a saturation income threshold~$k$, would correspond to the equation
$$\dot u=r_u u\left(1-\frac{u}{k}\right).$$ 
Then, when a second computer company comes into the business, selling computers with a different operating system to the same market, one obtains the competitive system of equations
\begin{equation}\label{Bass}
 \begin{cases}
\dot u=r_u u\displaystyle\left(1-\frac{u+v}{k}\right),\\
\dot v=r_v v\displaystyle\left(1-\frac{v+u}{k}\right).
\end{cases}
\end{equation}
At this stage, the first company may decide to use an ``aggressive'' strategy in order to harm the rival company
and set it out of the market
(for instance through the  
spreading of a virus attacking the other company's operating system, or by some marketing campaigns).
Once the competition of the second company is removed, the first company can then exploit the market in a monopolistic regime.
To model this strategy, one can suppose that the first company 
invests a proportion of its capital in the project and
diffusion of the virus, according to a quantifying parameter~$a_u\ge0$, thus producing the equation
\begin{equation}\label{MAR1} \dot u=r_u u\left(1-\frac{u+v}{k}\right)-a_u u.\end{equation}
This directly impacts the capital
of the second company proportionally to the virus
spread,
since the second company has to spend money to project and release antiviruses,
as well as to repay unsatisfied customers,
hence resulting in a second equation of the form
\begin{equation}\label{MAR2} \dot v=r_v v\left(1-\frac{v+u}{k}\right)-a_v u.\end{equation}
The case~$a_u=a_v$ would correspond to an ``even'' effect in which the costs of producing the virus is in balance with
the damages that it causes.
It is also realistic to take into account the case~$a_u<a_v$
(e.g., the first company manages to
produce and diffuse the virus at low cost,
with high impact on the functionality of the operating
system of the second company) as well as the case~$a_u>a_v$ (e.g., the cost of producing and diffusing
the virus is high with respect to the damages caused).

We remark that equations~\eqref{MAR1} and~\eqref{MAR2} can be set into the form~\eqref{model1}, thus showing the interesting versatility of our model also
in financial mathematics.

Even the original Bass model \cite{bass1969new}, introduced to describe the number of adopters 
of a durable good, can be extended to the case of two products competing for the same market,
leading to a competitive system of the Lotka–Volterra type, see e.g.~\cite{KrishnanBass,Bonaldo}.
In such a framework, if we neglect ``innovators'', new people adopt a product by imitation, 
namely, calling $u$ and $v$ the portions of adopters of the two goods,
their rate of change is proportional to the portion of people that
do not adopt neither product, i.e.~$(1-u-v)$, times the portion that already adopted the product,
i.e.~$u$ or $v$. The system then takes the form~\eqref{Bass} with $k=1$.
One can then envision the fact that one of the two companies producing the goods resorts to
some type of aggressive marketing policy in order to harm the rival.
Without describing the specific mechanisms of such a policy, we just make the general assumption that, on one hand,  
it requires some consumption of resources by the company, and, on the other hand, it
produces some damages to the rival one; the outcome is the reduction of the rates of change of adopters
of the two products, and we assume that these are proportional. Under these assumptions, we end up with 
the system~\eqref{MAR1}-\eqref{MAR2}.

Finally, it is natural to envision other contexts 
where our model  could be pertinent, such as dynamic games. 
Let us mention for instance real-time strategy computer games
that combine
resource management and war confrontation, see~\cite{starcraft}.

\bigskip

As a final comment, let us stress that certainly our model does not aim to capture all the complexity of the phenomena intertwined with civil wars, and other mathematical approaches to the problem can certainly be very beneficial for a deeper understanding of the problem.
Other possible tools of investigations naturally include (but are not limited to) kinetic models and
Boltzmann-type equations, mean-field games,
agent-based models, and active particles methods, see e.g.~\cite{MR2295621,
MR2346927, MR3204371, MR3268061,
MR3847177, MR3969953, 
AUFISH}
and the references therein.

Thus, our objective here is just to
propose a simple, stylized model to describe such a complex scenario as the interaction between rival species and communities. The outcomes of our approach are that, on one hand, it allows us for a rigorous analytic investigation of the mathematical model proposed. On the other hand, we believe that our results may capture some qualitative features of the real phenomenon under study, as we are now going to showcase in some detail.

\chapter{Statement of the main results}\label{chapt:result}

\begin{center}
\begin{minipage}{25em}
\noindent{\bf Abstract of Chapter~\ref{chapt:result}.} {\sl 
Here we describe our main results in mathematical details.
First of all, we analyze the equilibria of the system, their linear stability,
and the presence of a stable or center manifold, in dependence of the parameters. We also characterize the strategies that lead to the victory of the aggressive population.

In this analysis, the case of populations exactly with the same fitness to the environment plays a special role, since in this situation the final outcome of the war is determined solely by the initial conditions and the specific strategy followed by the aggressive population does not play a major role.

Instead, when the fitness levels of the two populations are different, complex scenarios arise and constant strategies are not sufficient to ensure victory starting from favorable conditions. The set of winning strategies can however be greatly simplified, by reducing it to the case of piecewise constant functions with at most one discontinuity.

Finally, among all the possible winning strategies, we aim at detecting the one that minimizes the length of the war. For this, constant strategies are not enough, nor piecewise constant
strategies with a jump discontinuity, and the quickest victory could be achieved through a strategy assuming some values along a singular arc.}\end{minipage}\end{center}
\bigskip\bigskip\bigskip\bigskip\bigskip\bigskip

We now describe in some technical detail the results that we obtain on the civil war model.

\section{Basic results on the dynamics}\label{ss:notation}

We denote by~$(u(t), v(t))$ a solution of~\eqref{model} starting from a point $$(u(0),v(0))\in [0,1] \times [0,1].$$

We will also refer to the \textit{orbit} of~$(u(0), v(0))$  as the following subset of $\R^2$:
$$\{(u(t), v(t))\ :\ t\in \R\},$$
thus both positive and negative times, while the \textit{trajectory} 
(referred in some texts as simply \emph{solution} or \emph{phase curve through the point} $(u(0),v(0))$ 
at time $t=0$, see \cite{smale1974differential,MR1056699}) is the set
$$\{(u(t), v(t))\ :\ t\geq0\}.$$
\medskip

As already mentioned in the discussion below formula~\eqref{model},
$v$ can reach the value~$0$ and even negative values in finite time. However, we suppose that the dynamics stops when the value~$v=0$ is reached for the first time.
At this point, the conflict ends with the victory of the first population~$u$, that can continue its evolution with a classical Lotka-Volterra equation of the form
\begin{equation*}
\dot{u}= u (1- u)
\end{equation*}
and that would certainly fall into the attractive equilibrium~$u=1$.
The only other possibility is that the solution remains in the set~$[0,1]\times(0,1]$ for all times.
Indeed, on the rest of the boundary of this square, there holds
\begin{eqnarray*} &&u(t)=0\implies \dot{u}(t)=0,\\&& u(t)=1\implies \dot{u}(t)\leq0 \\{\mbox{and }} &&
v(t)=1\implies \dot{v}(t)\leq0.\end{eqnarray*}

\begin{remark}\label{rmk:dichotomy}
	In a nutshell, for any solution with initial datum~$(u(0),v(0))\in [0,1] \times [0,1]$, one of the following 
	situations occurs:
	\begin{enumerate}
		\item[(1)] $(u(t), v(t))\in [0,1]\times [0,1]$ for all~$t\geq 0$.
		\item[(2)] There exists a unique~$T\geq0$ such that $v(T)=0$, $u(T)>0$ and~$(u(t), v(t))\in [0,1]\times (0,1]$ for all~$t\in[0,T)$.
	\end{enumerate} 
\end{remark}

Owing to this dichotomy, we define the \textit{stopping time} of the solution~$(u(t), v(t))$~as
\begin{equation}\label{def:T_s}
	T_s (u(0), v(0)) := 
	\left\{ 
	\begin{array}{ll}
		+\infty & \text{if situation (1) occurs}, \\
		T & \text{if situation (2) occurs}.
	\end{array}
	\right.
\end{equation} 
{F}rom now on, we will implicitly  consider solutions~$(u(t),v(t))$ only for~$t\in [0, T_s(u(0), v(0)) )$.

\medskip

We now specify the class of admissible strategies~$a(t)$. In view of the applications,
one is led to allow $a(t)$ to be non-constant and discontinuous, so we consider the following class of admissible
strategies:
\begin{equation}\begin{split}\label{DEFA}
 \mathcal{A}&\; :=
\big\{a: [0, +\infty) \to [0, +\infty) {\mbox{ s.t.~$a$ is continuous}}\\
&\qquad \qquad {\mbox{except at most at a finite number of points}}\big\}.\end{split}\end{equation}
A \emph{solution related to a strategy~$a(t)\in \mathcal{A}$} is a pair
$$(u(t), v(t)) \in C^0( (0,+\infty))\times 
C^0( (0,+\infty)),$$ which is~$C^1$ outside the  points of discontinuity of~$a(t)$,
and solves the system~\eqref{model} outside these points.
Moreover, once the initial datum is imposed, the solution is assumed to be
continuous up to~$t=0$. 
Existence and uniqueness of solutions in this setting is standard:
one just considers a juxtaposition of classical Cauchy problems starting at the times 
coinciding with the discontinuity points of $a$. 

\medskip

We then analyze the dynamics of~\eqref{model} with a particular emphasis on possible strategies. To do this, we consider the \textit{basin of attraction} of the equilibrium~$(0,1)$,~i.e.
\begin{equation}\label{DEFB}\begin{split}
	\mathcal{B}&\;:= \Big\{ (u(0),v(0))\in [0,1]\times[0,1] \;{\mbox{ s.t. }}\;\\
	&\qquad\qquad T_s (u(0), v(0)) = +\infty, \ (u(t),v(t)) \overset{t\to\infty}{\longrightarrow} (0,1)  \Big\},
\end{split}\end{equation}
which corresponds to the set of the initial points for which the first population gets extinct (in infinite time) and the second one survives.

Furthermore, we set
\begin{equation}\label{DEFE}\begin{split}
\mathcal{E}:=\;& \big\{ (u(0),v(0))\in [0,1]\times[0,1] \\&\qquad {\mbox{ s.t. }}\;
 T_s(u(0),v(0))< + \infty \big\},\end{split}
\end{equation}
namely
the set of initial points for which we eventually have the victory of the first population and the extinction of the second one. 

Of course, the sets~$\mathcal{B}$ and~$\mathcal{E}$ depend on the parameters~$a$,~$c$, and~$\rho$; we will express this dependence by writing~$\mathcal{B}(a,c,\rho)$
and~$\mathcal{E}(a,c,\rho)$ when it is needed, and omit it otherwise for the sake of readability. The dependence on parameters will be carefully studied
in Chapter~\ref{ss:dependence}.

\section{Constant strategies}\label{ss:dynamics}

The first step towards the understanding of the dynamics of~\eqref{model} 
consists in the analysis
of the behavior of the system for constant coefficients. 

To this end, we introduce some notation.
Following the terminology of~\cite[Section~1.1.]{MR1056699},
we say that an equilibrium point (or fixed point) of the dynamics
is a (hyperbolic) {\em sink}
if all the eigenvalues of the linearized map have strictly
negative real parts, a (hyperbolic) {\em source}
if all the eigenvalues of the linearized map have strictly
positive real parts, and a (hyperbolic) {\em saddle}
if some of the eigenvalues of the linearized map have strictly
positive real parts
and some have strictly negative real parts
(since in this monograph we work in dimension~$2$,
this means that one
eigenvalue has positive real part
and the other has negative real part).

We also recall that
sinks are asymptotically stable (and sources are
asymptotically stable for the reversed-time dynamics), see e.g.~\cite[Theorem~1.1.1]{MR1056699}.

With this terminology, we state the following theorem:

\begin{theorem}[Dynamics of system~\eqref{model}] \label{thm:dyn}
For given positive constants $a$, ${\rho}$, and $c$, the system~\eqref{model} has the following features:
\begin{itemize}
	\item[(i)] When~$0<ac<1$, there are $3$ equilibria:~$(0,0)$ is a source,~$(0,1)$ is 
	a sink, and 
	\begin{equation}\label{usvs}
		(u_s, v_s):= \left( \frac{1-ac}{1+{\rho}c} {\rho}c, \frac{1-ac}{1+{\rho}c} \right) \in (0,1)\times (0,1)
	\end{equation}
	is a saddle.

	 \item[(ii)] When~$ac>1$, there are 2 equilibria:~$(0,1)$ is a sink and~$(0,0)$ is a saddle.  
		\item[(iii)] When~$ac=1$, there are 2 equilibria:~$(0,1)$ is a sink and~$(0,0)$
		corresponds to a positive eigenvalue and a null one.
	\item[(iv)] We have 
	\begin{equation} \label{fml:division}
	[0,1]\times [0,1] = \mathcal{B} \cup \mathcal{E} \cup \mathcal{M}
	\end{equation}
	where~$\mathcal{B}~$ and~$\mathcal{E}$ are defined in~\eqref{DEFB}
and~\eqref{DEFE}, respectively, and~$\mathcal{M}$ is a smooth curve.

	\item[(v)] There holds that
	
	\vspace{-8pt}
	
	\begin{itemize}[\quad$\bullet$]
		\item  $\mathcal{M}$ is the {\em stable manifold} of $(u_s,v_s)$ if\,~$0<ac<1$;	
		\item  $\mathcal{M}$ is the {\em center manifold} of $(0,0)$ if\,~$ac=1$;
		\item  $\mathcal{M}$ is the {\em stable manifold} of $(0,0)$ if\,~$ac>1$.
	\end{itemize}

\vspace{-8pt}
		
	Moreover, trajectories starting in~$\mathcal{M}$ remain in~${\mathcal{M}}$ and
	converge to~$(u_s,v_s)$ if~$0<ac<1$
	and to~$(0,0)$ if~$ac\ge1$ as~$t$ goes to~$+\infty$.
\end{itemize}
\end{theorem}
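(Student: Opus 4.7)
The proof will combine algebraic and linear analysis for parts (i)-(iii) with global planar-dynamics arguments for (iv) and (v).

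For (i)-(iii), I will first locate the equilibria. Factoring $\dot u = u(1-u-v-ac)$ gives either $u=0$, which yields the points $(0,0)$ and $(0,1)$ via $\dot v = \rho v(1-v) = 0$, or $1-u-v=ac$, which substituted into $\rho v(1-u-v)-au=0$ forces $u=\rho c v$ and then produces the explicit formula~\eqref{usvs}, lying in $(0,1)^2$ precisely when $ac<1$. To classify each equilibrium I will compute the Jacobian
$$J(u,v) = \begin{pmatrix} 1-2u-v-ac & -u \\ -\rho v - a & \rho(1-u-2v) \end{pmatrix}$$
and read off the eigenvalues: at $(0,0)$ they are $1-ac$ and $\rho$; at $(0,1)$ they are $-ac$ and $-\rho$; and at $(u_s,v_s)$ the diagonal entries simplify using $1-u_s-v_s=ac$, after which the determinant reduces to $-a u_s (1+\rho c)<0$, forcing a saddle. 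This settles (i), (ii), and (iii).

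For (iv) and (v), I will first verify that $[0,1]^2$ is positively invariant except through the bottom edge: the line $u=0$ is invariant since $\dot u=0$ there, on $u=1$ one has $\dot u \le -ac<0$, on $v=1$ one has $\dot v = -(\rho+a)u \le 0$, and on $\{v=0,\,u>0\}$ one has $\dot v = -au<0$, corresponding precisely to exit into $\mathcal{E}$. I will then rule out periodic orbits in the interior by an index argument: any closed orbit of a planar flow must enclose equilibria of total index $+1$, whereas for $0<ac<1$ the only interior equilibrium is the saddle $(u_s,v_s)$ of index $-1$, and for $ac \ge 1$ there is no interior equilibrium at all. By Poincar\'e--Bendixson, every trajectory in $[0,1]^2$ either converges to an equilibrium or exits through $v=0$. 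The curve $\mathcal{M}$ will then be realized as the stable manifold of the relevant saddle: for $0<ac<1$ the stable manifold theorem (cf.~\cite{MR1056699}) produces a smooth one-dimensional $W^s$ through $(u_s,v_s)$, which I extend by backward flow and intersect with $[0,1]^2$; by planar topology this curve separates the remainder of the square into two open regions, the one containing $(0,1)$ being $\mathcal{B}$ (by asymptotic stability and the above dichotomy) and the other being $\mathcal{E}$. For $ac>1$ the same construction applies with the saddle at $(0,0)$, whose stable eigenvector $(\rho+ac-1,\,a)$ points into the first quadrant.

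The main obstacles I anticipate are twofold. First, the global extension of the local invariant manifold into a single smooth curve cutting across all of $[0,1]^2$: since $\mathcal{M}$ consists of full trajectories it cannot terminate in the interior, and the absence of periodic orbits prevents accumulation on itself, so it must reach the boundary; making the separation property rigorous requires careful tracking of the two backward branches of $W^s$ to the edges of the square. Second, the degenerate case $ac=1$, where $(0,0)$ has eigenvalues $0$ and $\rho>0$ with center direction $(\rho,a)$; here I will invoke the center manifold theorem and perform a second-order reduction along this direction, which produces an evolution of the form $\dot s = -Cs^2 + O(s^3)$ with $C>0$, showing that the center manifold is one-sidedly attracting into the first quadrant and therefore plays the role of $\mathcal{M}$, so that (v) continues to hold at the borderline.
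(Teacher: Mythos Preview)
Your treatment of (i)--(iii) is essentially the paper's: locate equilibria by factoring, compute the Jacobian, and read off eigenvalues. Your determinant computation $\det J(u_s,v_s)=-au_s(1+\rho c)<0$ is exactly what the paper obtains, and this already forces real eigenvalues of opposite sign, so you can skip the change of basis the paper performs.

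For (iv)--(v) when $ac\neq 1$, you take a genuinely different route. The paper partitions the square into four sign regions $\mathcal{A}_1,\dots,\mathcal{A}_4$ according to $(\operatorname{sgn}\dot u,\operatorname{sgn}\dot v)$, rules out cycles region by region, and then shows $\mathcal{M}$ is the graph of a monotone function $\gamma$ by tracking which regions the stable manifold can occupy (Proposition~\ref{lemma:M}). Your index argument (a closed orbit must enclose equilibria of total index $+1$, but the only interior equilibrium is a saddle or there is none) is cleaner for excluding periodic orbits, and your Poincar\'e--Bendixson dichotomy is the same. What you lose is the explicit graph structure $v=\gamma(u)$, which the paper uses heavily downstream; for the bare statement of (iv) your topological separation is enough, but you correctly flag the global extension of $W^s$ to the boundary as the real work, and here the paper's sign-region machinery (showing one branch $\alpha$-limits to $(0,0)$ and the other must exit through $\{u=1\}\cup\{v=1\}$) is more concrete than a pure topology argument.

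There is a genuine gap at $ac=1$. Your reduction $\dot s=-Cs^2+O(s^3)$ correctly shows that \emph{on any} center manifold the dynamics converges to the origin from the first-quadrant side, which gives (v). But (iv) requires $\mathcal{M}$ to be a \emph{single} smooth curve: if two distinct center manifolds existed, the open wedge between them would consist of trajectories converging to $(0,0)$, hence lying in neither $\mathcal{B}$ nor $\mathcal{E}$, and the decomposition~\eqref{fml:division} would fail. Center manifolds are not unique in general, and the one-sided attracting reduction does not by itself rule this out here because the transverse eigenvalue $\rho$ is \emph{positive} (unstable), so there is no normal contraction to force trajectories onto a single leaf. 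The paper handles this with a tailor-made argument (Proposition~\ref{M:p045}): if a wedge of positive measure flowed to the origin, then since the divergence of the vector field near $(0,0)$ equals $\rho+O(|x|)>0$, the Lebesgue measure of the wedge would grow exponentially under the flow, contradicting convergence to a point. You will need either this divergence argument or an explicit invocation of a center-manifold uniqueness criterion for the semi-stable case; the phrase ``one-sidedly attracting and therefore plays the role of $\mathcal{M}$'' does not close the gap.
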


Figure~\ref{fig:dyn} below depicts the different dynamics in the cases $ac<1$ and $ac>1$.

In the case $ac\neq1$, i.e.~when $\mathcal{M}$ is the stable manifold of a saddle point,
the properties of $\mathcal{M}$ stated in Theorem~\ref{thm:dyn} follow from the general theory of 
dynamical systems (see e.g. \cite{dynsyst}).

Instead, the case~$ac=1$ needs a special treatment,
due to the degeneracy of one eigenvalue, and an ad-hoc argument will
be exploited
to show that also in this degenerate case orbits starting in~$\mathcal{M}$
are asymptotic to~$(0,0)$ in the~future.

As a matter of fact,~$\mathcal{M}$
acts as a dividing wall between the two basins of attraction, as described in~(iv)
of Theorem~\ref{thm:dyn} and in the forthcoming Proposition~\ref{prop:char}.

Moreover, in the forthcoming
Propositions~\ref{lemma:M}
and~\ref{M:p045}
we will show that~$\mathcal{M}$ can be written as the graph of a function. This is particularly useful because, by studying the properties of this function, we gain relevant pieces of information on the sets~$\mathcal{B}$ and~$\mathcal{E}$
in~\eqref{DEFB} and~\eqref{DEFE}.

We point out that in Theorem~\ref{thm:dyn}
we find that the set of initial data~$[0,1]\times[0,1]$ splits into three parts:
the set~$\mathcal{E}$, given in~\eqref{DEFE}, made
of points going to the extinction of the second population in finite time; the set~$\mathcal{B}$, given in~\eqref{DEFB}, which is the
basin of attraction of the equilibrium~$(0,1)$;
the set~$\mathcal{M}$, which is
a manifold of dimension~$1$ that separates~$\mathcal{B}$ from~$\mathcal{E}$. 

In particular,
Theorem~\ref{thm:dyn} shows that, also for our model, the Gause principle of exclusion is respected; that is, in general, two competing populations cannot (stably) coexist in the same territory, see e.g.~\cite{fath2018encyclopedia}. 

One peculiar feature of our system is that, if the aggressiveness is too strong, the equilibrium~$(0,0)$ changes its ``stability'' properties, passing from a source (as in (i) of
Theorem~\ref{thm:dyn})
to a saddle point (as in (ii) of
Theorem~\ref{thm:dyn}). This shows that the war may have self-destructive outcomes, therefore it is important for the first population to analyze the situation in order to choose a proper level of aggressiveness. 

\begin{figure}
	\begin{subfigure}{.5\textwidth}
		\centering
		\includegraphics[width=1.86\linewidth]{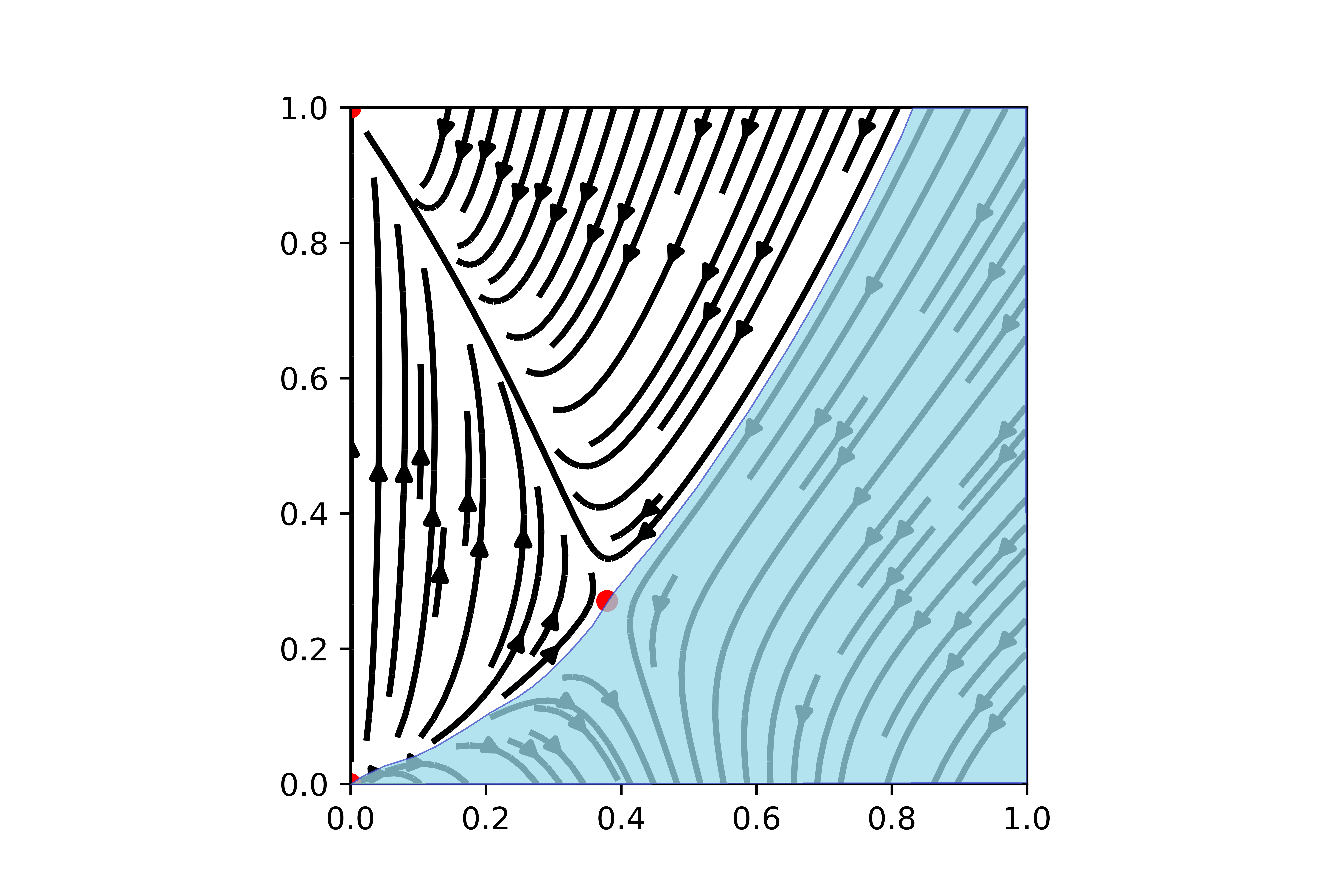}
		\caption{$a=0.5$,~$c=0.7$,~$\rho=2
		$ \\(note that~$ac<1$)}
	\end{subfigure}\\
	\begin{subfigure}{.5\textwidth}
		\centering
		\includegraphics[width=1.86\linewidth]{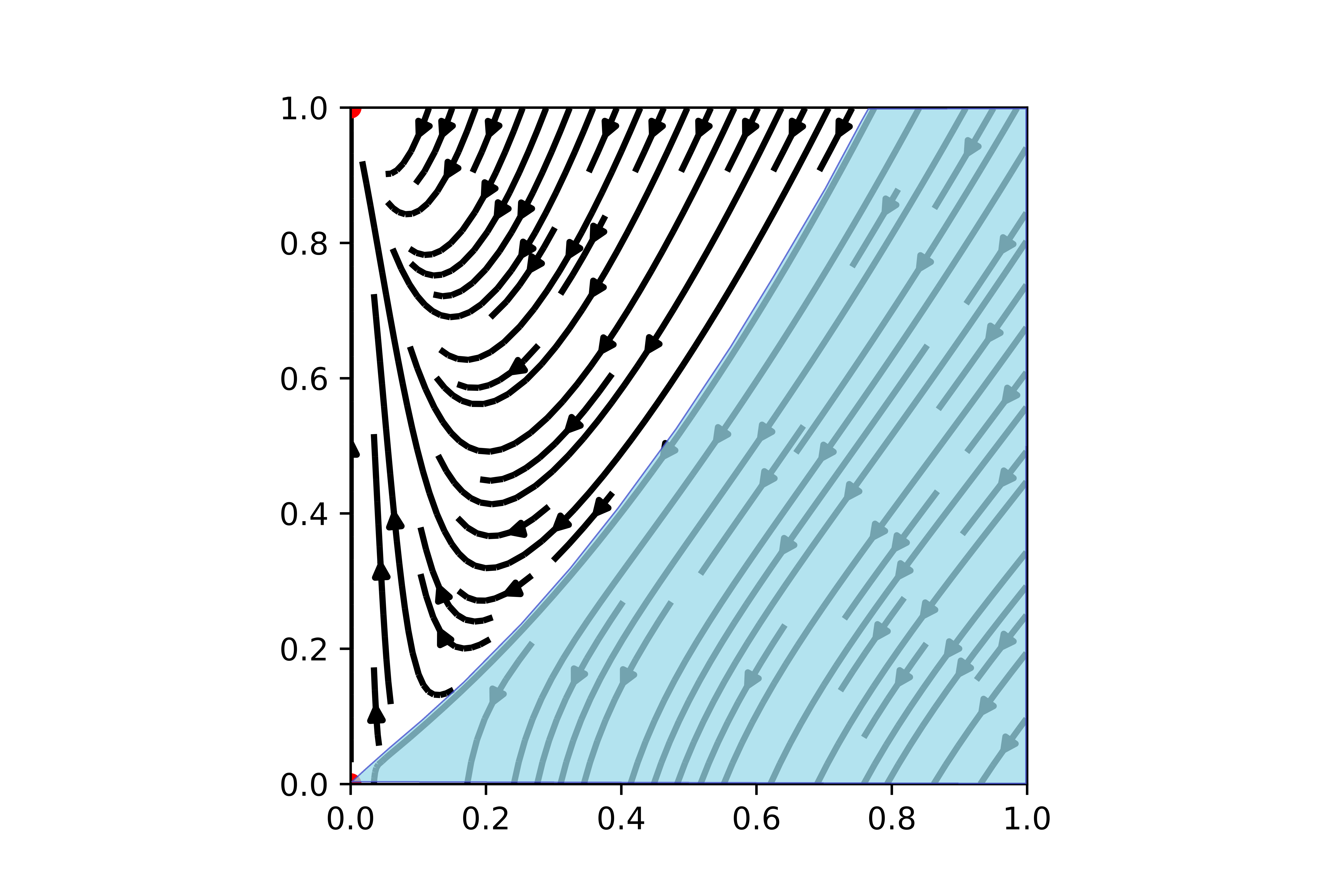}
		\caption{$a=1.5$,~$c=0.7$,~$\rho=2$ \\(note that~$ac>1$)}
	\end{subfigure}
	\caption{\em The figures show a phase portrait for the indicated values of the coefficients. In black, the orbits of the points. The red dots represent the equilibria. The light blue region correspond to $\mathcal{E}$,
	while the white region to~${\mathcal{B}}$.}
	\label{fig:dyn}
\end{figure}

\section{Winning strategies}\label{ss:strategy}

We now deal with the problem of choosing the strategy~$a$, 
in the set of all admissible strategies $\mathcal{A}$ (defined by~\eqref{DEFA}),
such that the first population wins, that is a problem of \emph{target reachability} for a control-affine system (see also \cite{boscain2003optimal}).
 As we will see, the problem is not \emph{controllable}, meaning that, starting from a given initial point, it is not always possible to reach a given target.
\medskip

Let us introduce some terminology, that will be employed throughout this monograph.
For a given strategy $a(\cdot)\in\mathcal{A}$, 
we let $\mathcal{E}(a(\cdot))$ denote the set defined by~\eqref{DEFE} corresponding to 
the system~\eqref{model} with $a\equiv a(t)$; namely, this is the set 
 of initial data~$(u_0,v_0)$ 
such that~$T_s(u_0,v_0)< +\infty$ for the strategy $a(\cdot)$.

Then, for a given set of strategies $\mathcal{T}\subset\mathcal{A}$,
we set
\begin{equation}\label{DEFNU}
\mathcal{V}_{\mathcal{T}}:= \underset{a(\cdot)\in \mathcal{T}}{\bigcup} \mathcal{E}(a(\cdot)),
\end{equation}
which represents the set of initial conditions for which~$u$ is able to win by choosing a suitable strategy in~$\mathcal{T}$; we call~$\mathcal{V}_{\mathcal{T}}$ the \emph{victory set} with strategies in~$\mathcal{T}$.
We also say that~$a(\cdot)$ is a \emph{winning strategy} for the point~$(u_0,v_0)$
if~$(u_0,v_0)\in \mathcal{E}(a(\cdot) )$.

Moreover, we set
\begin{equation}\label{u0v0}
(u_s^0, v_s^0):= \left(\frac{\rho c}{1+\rho c}, \frac{1}{1+\rho c}\right).
\end{equation}
Notice that~$(u_s^0, v_s^0)$ is the limit point as~$a\to0$ of the sequence of saddle points~$\{(u_s^a, v_s^a)\}_{a>0}$
defined in~\eqref{usvs}.
\medskip

With this notation,
the first question that we address is for which initial configurations it is possible for the population~$u$
to have a winning strategy, that is, to characterize the victory set. For this, we allow the strategy to take all the values in~$[0, +\infty)$ (the details on the behavior of the system for $a=0$ are contained in Proposition~\ref{prop:bhvaPRE}).
In this setting, we have the following result:

\begin{theorem}\label{thm:Vbound} We have:
	\begin{itemize}
		\item[(i)] For~$\rho=1$, we have that
		\begin{equation}\label{Vbound1}
			\mathcal{V}_{\mathcal{A}} = \Big\{ (u,v)\in[0,1] \times [0,1] \;
			{\mbox{ s.t. }}\;  v-\frac{u}{c}<0\Big\}.
		\end{equation}
		\item[(ii)] 
		For~$\rho<1$, we have that
		\begin{equation}\label{bound:rho<1}
		\begin{split}
		\mathcal{V}_{\mathcal{A}} &\;= \Bigg\{ (u,v)\in[0,1] \times [0,1] \;{\mbox{ s.t. }}\\
		&\qquad\qquad v< \gamma_0(u) \ \text{if} \ u\in [0, u_s^0], \\ 
		&\qquad\qquad
		 v< \frac{u}{c} + \frac{1-\rho}{1+\rho c} \ \text{if} \ u\in \left(u_s^0, 
		 \frac{\rho c(c+1)}{1+\rho c}\right]
		 \Bigg\},
		\end{split}
		\end{equation}
		where 
		\begin{equation}\label{def:gamma0}
			\gamma_0(u):= \frac{v_s^0}{{(u_s^0)}^{\rho}} u^{\rho}.
		\end{equation}
		\item[(iii)] For~$\rho>1$, we have that
		\begin{equation}\label{bound:rho>1}
		\begin{split}
		\mathcal{V}_{\mathcal{A}} &\;= \Bigg\{ (u,v)\in[0,1] \times [0,1]\; {\mbox{ s.t. }}\\ &\qquad\qquad
		 v< \frac{u}{c} \ \text{if} \ u\in [0, u_{\infty}],\\&\qquad
\qquad
v< \zeta(u)  \ \text{if} \ u\in\left(u_{\infty}, \frac{c}{(c+1)^{\frac{\rho-1}\rho}}\right] 
 \Bigg\},
		\end{split}
		\end{equation}
		where
		\begin{equation}\label{ZETADEF}
		u_{\infty}:= \frac{c}{c+1}
		\quad {\mbox{ and }}\quad \zeta (u):= \frac{u^{\rho}}{c \, u_{\infty}^{\rho-1}}\, .
		\end{equation}   
	\end{itemize}
\end{theorem}

With this result in hand, it is natural to wonder whether the scenario changes if one restricts to
constant strategies.
Indeed, in practice, 
these are certainly easier to implement.
The next result addresses this problem by showing that when~$\rho=1$
constant strategies are as good as all strategies,
but instead when $\rho\ne 1$ victory cannot be achieved by only
exploiting constant strategies:

\begin{theorem}\label{thm:W}
	Let $\mathcal{K}\subset \mathcal{A}$ be the set of constant functions. Then the following holds:
\begin{itemize}
\item[(i)] For~$\rho= 1$, we have that~$ \mathcal{V}_{\mathcal{A}}=\mathcal{V}_{\mathcal{K}}=\mathcal{E}(a)$ for 
any $a>0$.
\item[(ii)] For~$\rho\neq 1$, we have that~$\mathcal{V}_{\mathcal{K}} \subsetneq \mathcal{V}_{\mathcal{A}}$.
\end{itemize}	
\end{theorem}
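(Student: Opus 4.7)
\emph{Part (i).} The cornerstone is a conservation-type identity available precisely in the symmetric case $\rho=1$: setting $w(t):=v(t)-u(t)/c$, a direct computation gives
\begin{equation*}
\dot w \;=\; v(1-u-v)-au \;-\; \frac{u(1-u-v)-ac\,u}{c} \;=\; w\,(1-u-v),
\end{equation*}
and the right-hand side is \emph{independent of the strategy} $a(\cdot)$. Therefore $w$ never changes sign along any admissible trajectory, so the open region $\{v<u/c\}$ and its complement are both invariant under every strategy. An extinction at time $T_s$ forces $w(T_s)=-u(T_s)/c<0$, hence $w(0)<0$; this gives $\mathcal{V}_{\mathcal{A}}\subseteq\{v<u/c\}\cap[0,1]^2$, matching Theorem~\ref{thm:Vbound}(i). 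For the converse inclusion, fix any $a>0$: the line $\{v=u/c\}$ is invariant (apply the identity with $w\equiv 0$), contains the unique saddle predicted by Theorem~\ref{thm:dyn} (namely $(u_s^a,v_s^a)$ if $ac<1$ and the origin otherwise), and is tangent to the stable eigenvector at that saddle, so the stable manifold $\mathcal{M}$ must coincide with the portion of this line inside the unit square. Theorem~\ref{thm:dyn}(iv) then forces $\mathcal{E}(a)=\{v<u/c\}\cap[0,1]^2$, yielding the chain $\mathcal{V}_{\mathcal{A}}=\mathcal{V}_{\mathcal{K}}=\mathcal{E}(a)$.

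\emph{Part (ii).} Since no such strategy-independent identity is available for $\rho\neq 1$, the plan is to exhibit explicit initial data in $\mathcal{V}_{\mathcal{A}}\setminus\mathcal{V}_{\mathcal{K}}$. The natural candidates come from the upper boundary curves provided by Theorem~\ref{thm:Vbound}: for $\rho<1$, the curve $\gamma_0$, and for $\rho>1$, the curve $\zeta$. A direct check shows that both of them are orbits of the $a\equiv 0$ flow, whose trajectories solve $\frac{dv}{du}=\rho v/u$ and are therefore power curves $v=Cu^{\rho}$; they pass through the ``limit saddles'' $(u_s^0,v_s^0)$ and $(u_\infty,1/(c+1))$, respectively. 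For each fixed $a>0$, however, $\mathcal{E}(a)$ is bounded from above by the stable manifold $\mathcal{M}_a$ of the genuine saddle $(u_s^a,v_s^a)$, which sits strictly inside $\mathcal{V}_{\mathcal{A}}$. I would then show that these stable manifolds fail to reach $\gamma_0$ (or $\zeta$) uniformly in $a$, producing an open strip between $\bigcup_{a>0}\mathcal{M}_a$ and the upper boundary of $\mathcal{V}_{\mathcal{A}}$; any point in this strip serves as the required witness.

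\emph{Main obstacle.} The delicate step is the envelope estimate, uniform in $a\in(0,\infty)$. I would parametrize $\mathcal{M}_a$ via the phase-plane ODE
\begin{equation*}
\frac{dv}{du}\;=\;\frac{\rho v(1-u-v)-au}{u(1-u-v)-ac\,u},
\end{equation*}
imposed with the correct tangent at the saddle (obtained by diagonalizing the linearization), and analyze its monotone dependence on $a$, the limit $a\to 0^+$, and the transition at $a=1/c$ after which the saddle collapses into the origin. The asymmetry $\rho\neq 1$ is crucial: it guarantees that two distinct orbits of the $a=0$ flow, of the form $v=Cu^{\rho}$ and passing through different saddles, cannot coincide, and a comparison argument in the phase plane then yields a strictly positive gap on a nontrivial sub-interval of $u$, completing the separation $\mathcal{V}_{\mathcal{K}}\subsetneq\mathcal{V}_{\mathcal{A}}$.
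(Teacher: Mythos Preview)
Your Part~(i) is correct and in some ways cleaner than the paper's argument. The paper works with the ratio $\mu=v/u$ (after a time rescaling) and solves $\dot\mu=c\mu-1$ explicitly; you instead use the linear combination $w=v-u/c$ satisfying $\dot w=w(1-u-v)$. Both exploit the same structural fact---that for $\rho=1$ the quantity $cv-u$ evolves independently of $a$---and both identify $\mathcal{M}$ with the line $v=u/c$. Your argument that this line is the stable (rather than unstable) manifold is a touch casual: rather than invoking the eigenvector, it is quicker to note that on the invariant line $\dot u=u(1-ac-u(1+1/c))$ changes sign at $u_s$ (or is negative throughout when $ac\ge1$), so flow is toward the saddle.

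Part~(ii), however, is a plan and not a proof. You have correctly located the heart of the matter---a bound on $\gamma_a$ that is \emph{uniform in $a\in(0,\infty)$}---and you correctly flag this as the ``main obstacle,'' but you do not resolve it. The paper devotes two preparatory lemmas and several pages of case analysis to precisely this point: (a) for each $a$, the curve $\gamma_a$ lies on a definite side of the line $v=u/(\rho c)$ depending on whether $u$ is below or above $u_s^a$ (Lemma~\ref{lemma:vett_tg}); (b) an ordering $\gamma_a\le\gamma_{a^*}$ on $[u_s^{a^*},u_d^{a^*}]$ for $a>a^*$ when $\rho<1$ and for $a<a^*$ when $\rho>1$ (Lemma~\ref{lemma:ord}). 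These are then combined with the uniform convergence $\gamma_a\to\gamma_0$ as $a\to0$ (Lemma~\ref{lemma:conv_gamma}) to produce, for a carefully chosen $\bar u$ (essentially $m=\min\{1,\rho c(c+1)/(1+\rho c)\}$ for $\rho<1$), an explicit $\theta>0$ with $\gamma_a(\bar u)+\theta<f(\bar u)$ for \emph{every} $a>0$. Your sketch gestures at ``monotone dependence on $a$'' and ``a comparison argument in the phase plane,'' but the dependence is not globally monotone, the comparison has to be set up on a moving interval $[u_s^a,u_d^a]$, and the small-$a$ and large-$a$ regimes require separate treatment. Until those estimates are actually carried out, there is no proof that the strip you describe is nonempty.
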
 

The result of Theorem~\ref{thm:W}, part~(i),
reveals a special rigidity of the case~$\rho=1$
in which, no matter which strategy~$u$ chooses,  the victory depends only on the initial conditions, but it is independent of the strategy~$a(t)$.

Instead, as stated in
Theorem~\ref{thm:W}, part~(ii),
for~$\rho \neq 1$ the choice of~$a(t)$ plays a crucial role in determining which population is going to win and constant strategies do not exhaust all the
possible winning strategies.

Roughly speaking, when~$\rho=1$ constant strategies suffice
to detect all possible winning configurations, while when~$\rho\ne1$ non-constant strategies are necessary to detect
all winning configurations.

We stress that~$\rho=1$ plays also
a special role in the biological interpretation of the model, since in this case the two
populations have the same fit to the environmental resource, and hence, in a sense,
they are indistinguishable, up to the possible aggressive behavior of the first population.

Next, we show that the set~$\mathcal{V}_{\mathcal{A}}$ can be recovered if we use piecewise constant functions with at most one discontinuity, that we call Heaviside functions. 

\begin{theorem}\label{thm:H}
	There holds that~$\mathcal{V}_{\mathcal{A}} = \mathcal{V}_{\mathcal{H}}$, where~$\mathcal{H}$ is the set of Heaviside functions.
\end{theorem}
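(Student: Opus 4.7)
The inclusion $\mathcal{V}_{\mathcal{H}}\subseteq\mathcal{V}_{\mathcal{A}}$ is immediate, so the plan focuses on the converse. When $\rho=1$, Theorem~\ref{thm:W}(i) gives $\mathcal{V}_{\mathcal{A}}=\mathcal{V}_{\mathcal{K}}$; since every constant strategy is a (zero-jump) Heaviside, this case is done. Assume henceforth $\rho\neq 1$ and pick $(u_0,v_0)\in\mathcal{V}_{\mathcal{A}}$. If $(u_0,v_0)\in\mathcal{V}_{\mathcal{K}}$, a winning constant strategy is already Heaviside, so the substantive task is to produce a Heaviside with exactly one jump when $(u_0,v_0)\in\mathcal{V}_{\mathcal{A}}\setminus\mathcal{V}_{\mathcal{K}}$.

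The structural input is the explicit description of the boundary of $\mathcal{V}_{\mathcal{A}}$ in Theorem~\ref{thm:Vbound}, whose arcs are of two natural kinds. The curves $\gamma_0(u)$ and $\zeta(u)$, both of the form $v=Ku^{\rho}$, are orbits of the $a=0$ system, because setting $a=0$ in \eqref{model} yields $\dot v/\dot u=\rho v/u$, which integrates to $v=K u^{\rho}$. The straight pieces of slope $1/c$ (the line $v=u/c$ for $\rho>1$, and the shifted line $v=u/c+(1-\rho)/(1+\rho c)$ for $\rho<1$) are integral curves of the $a\to+\infty$ singular limit of \eqref{model}, since in that regime the equations reduce to $\dot u\approx -acu$ and $\dot v\approx -au$, whence $v-u/c=\mathrm{const}$. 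In short, the boundary of $\mathcal{V}_{\mathcal{A}}$ is itself the envelope of two-phase trajectories with the extreme parameters $a=0$ followed by $a=+\infty$, which is precisely the Heaviside structure.

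Guided by this, I would look for a winning Heaviside of the form
\[
a(t)=\begin{cases} 0, & 0\le t<T_1,\\ A, & t\ge T_1,\end{cases}
\]
with $T_1>0$ and $A$ large, both to be determined. During $[0,T_1)$ the state stays on the orbit $v=(v_0/u_0^{\rho})u^{\rho}$ of the $a=0$ system; the strict inequality in Theorem~\ref{thm:Vbound} ensures this orbit lies strictly below the corresponding boundary arc of $\mathcal{V}_{\mathcal{A}}$. The objective of the first phase is to reach a point $(u(T_1),v(T_1))\in\mathcal{E}(A)$ for some constant $A$: by Theorem~\ref{thm:dyn} and the $a\to+\infty$ limit analysis recalled above, $\mathcal{E}(A)$ grows with $A$ and, in the limit, covers the region strictly below $v=u/c$. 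Hence it suffices to drive the state strictly below $v=u/c$ before switching, after which $a\equiv A$ extinguishes $v$ in finite time by Proposition~\ref{prop:dyn}.

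The principal technical step is to verify the crossing property: the $a=0$ orbit $v=Ku^{\rho}$ with $K=v_0/u_0^{\rho}$ enters, at some finite time, the region $\{v<u/c\}$ (resp.~$\{v<u/c+(1-\rho)/(1+\rho c)\}$ in the $\rho<1$ case). This reduces to a one-variable comparison with the function $u\mapsto Ku^{\rho}-u/c$, which vanishes at a unique positive $u_K$; the strict bound on $K$ coming from $(u_0,v_0)\in\mathcal{V}_{\mathcal{A}}$ is exactly what forces the $a=0$ trajectory to reach and cross $u=u_K$ before stalling on the equilibrium line $\{u+v=1\}$ of the $a=0$ flow. The verification must be performed separately for $\rho<1$ and $\rho>1$ because the relative positions of $u_K$, the limit saddle $(u_s^0,v_s^0)$, and the line $\{u+v=1\}$ differ in the two regimes; but in both the geometric picture is identical---a sub-boundary $a=0$ orbit crosses into the asymptotic high-$a$ winning region, and any switching time $T_1$ inside that crossing window, paired with a suitably large $A$, delivers a winning Heaviside strategy.
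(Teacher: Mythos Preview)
Your argument is sound for $\rho=1$ and for $\rho>1$; in the latter case it coincides with the paper's construction (Proposition~\ref{prop:construction}, case~2): the $a=0$ orbit through a point of $\mathcal{Q}$ drifts toward a point on $\{u+v=1\}$ strictly to the right of $(u_\infty,v_\infty)$, hence eventually strictly below $v=u/c$, after which any sufficiently large constant $A$ wins.

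For $\rho<1$, however, the proposed order (first $a=0$, then large $A$) does \emph{not} work, and the gap is essential. Take $(u_0,v_0)\in\mathcal{P}$ with $v_0>u_0/c$; this is where the nontrivial points lie, since if $v_0<u_0/c$ a large constant already wins by Proposition~\ref{prop:bhva}(ii). Along the $a=0$ orbit $v=Ku^\rho$ one has $v/u=Ku^{\rho-1}$; since $\rho<1$ and $u$ is \emph{decreasing} (because $u_0+v_0>1$ throughout $\mathcal{P}$), this ratio \emph{increases}: the state moves \emph{away} from $\{v<u/c\}$, never entering it. Your parenthetical target $\{v<u/c+(1-\rho)/(1+\rho c)\}$ does not help either: that line is the boundary of $\mathcal{V}_{\mathcal{A}}$, not of any $\mathcal{E}(A)$, and the state is already below it. Worse, since $v_0>\gamma_0(u_0)$ means $K>v_s^0/(u_s^0)^\rho$, the $a=0$ orbit lies strictly above $\gamma_0$ for all $u>0$; when $u$ drops to $u_s^0$ one has $v>v_s^0=\gamma_0(u_s^0)$, so by Theorem~\ref{thm:Vbound}(ii) the trajectory has \emph{left} $\mathcal{V}_{\mathcal{A}}$ altogether, and no subsequent strategy can win.

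The paper's remedy (Proposition~\ref{prop:construction}, case~1) is to reverse the order: first take $a=a^*$ very large, which drives the state along a line of slope close to $1/c$ with $u$ decreasing, until $u=u_s^0$ is reached with $v$ strictly below $v_s^0$; then switch to $a=a_*$ very small, for which $\gamma_{a_*}$ is uniformly close to $\gamma_0$ (Lemma~\ref{lemma:conv_gamma}), so the state now lies in $\mathcal{E}(a_*)$. Your structural insight---that the two boundary arcs of $\mathcal{V}_{\mathcal{A}}$ are orbits of the extreme parameters $a=0$ and $a\to\infty$---is exactly right, but for $\rho<1$ the roles of the two phases are exchanged: the slope-$1/c$ (large-$a$) phase must come first, and the $a\approx 0$ phase second.
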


In proving Theorem~\ref{thm:H} we will actually answer to the third question mentioned in the Introduction:
for each point in $\mathcal{V}_{\mathcal{A}}$ we either have a constant winning strategy or
a winning strategy of the type 
\begin{equation*}
a(t) = \left\{
\begin{array}{lr}
a_1  &{\mbox{ if }} t<T ,\\
a_2  &{\mbox{ if }} t\geq T,
\end{array}
\right.
\end{equation*}
for a suitable~$T\in(0,T_s)$ and
for~$a_i$ very small and~$a_j$ very large, the values of $i$ and $j$ depending on~$\rho$. 
Our construction also enlightens the fact that the choice of the strategy depends on the initial datum, answering 
to the fourth question as well. 

It is interesting to observe that the winning strategy that switches abruptly from a small to a large value
could be considered, in the optimal control terminology, as a ``bang-bang'' strategy.
Even in a target reachability problem, the structure predicted by Pontryagin's Maximum Principle is brought in light: the bounds of the set~$\mathcal{V}_{\mathcal{A}}$, as
given in Theorem~\ref{thm:Vbound}, depend on the bounds that
we impose on the strategy, that are,~$a \in[0,+\infty)$.

It is natural to consider also the case
in which the level of aggressiveness 
is constrained between a minimal and maximal threshold,
which corresponds to imposing~$a\in[m,M)$ for given~$0\leq m\leq M\leq +\infty$, with $M>0$.
In this setting, we denote by~$\mathcal{A}_{m,M}$ the class of piecewise continuous strategies~$a(\cdot)$
in~${\mathcal{A}}$ such that~$
m\leq a(t)\leq M$ for all~$t>0$ and we call
\begin{equation}\label{SPE}
\mathcal{V}_{m,M}:=\mathcal{V}_{\mathcal{A}_{m,M}}=\underset{{a(\cdot)\in \mathcal{A}}\atop{m\leq a(t)\leq M}
}{\bigcup} \mathcal{E}(a(\cdot)).\end{equation}
Observe that in the case $M=+\infty$, the strategy actually satisfies $m\leq a(t) < +\infty$ since $a(\cdot)\in\mathcal{A}$.
Then we have the following: 

\begin{theorem}\label{thm:limit}
	Let $M$ and $m$ be two real numbers such that $0\leq m\leq M\leq +\infty$ with $M>0$ and either $m\neq 0$ or $M\neq +\infty$. Then, for $\rho\neq 1$ we have the strict inclusion
$$\mathcal{V}_{{m,M}}\subsetneq \mathcal{V}_{\mathcal{A}}.$$
\end{theorem}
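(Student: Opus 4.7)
The plan is to exhibit, for $\rho \neq 1$, a specific initial datum lying in $\mathcal{V}_{\mathcal{A}}$ but outside $\mathcal{V}_{m, M}$, exploiting the fact that the boundary of $\mathcal{V}_{\mathcal{A}}$ described in Theorem~\ref{thm:Vbound} contains segments of slope $1/c$ that are attained only in the extreme limit $a \to +\infty$. The key algebraic observation I would use is that, along any solution of~\eqref{model}, the function $\psi := v - u/c$ satisfies
\[
\dot\psi = (1-u-v)(\rho v - u/c),
\]
which is \emph{independent of $a$}. Hence the evolution of $\psi$ is dictated purely by the position $(u, v)$ in the plane and cannot be counteracted by any choice of strategy. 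I would present the case $\rho > 1$ in detail; for $\rho < 1$ one runs the analogous argument with the shifted function $v - u/c - (1-\rho)/(1+\rho c)$, whose zero set is exactly the linear piece of $\partial\mathcal{V}_{\mathcal{A}}$ appearing in Theorem~\ref{thm:Vbound}(ii).

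For $\rho > 1$, I would fix $u_0 \in (0, u_\infty)$ with $u_\infty := c/(c+1)$ and set $v_0 := u_0/c - \epsilon$ for a small $\epsilon > 0$, so that $(u_0, v_0) \in \mathcal{V}_{\mathcal{A}}$ by Theorem~\ref{thm:Vbound}(iii). Direct evaluation gives
\[
\dot\psi\big|_{t=0} = (1-u_0-v_0)\bigl(u_0(\rho-1)/c - \rho\epsilon\bigr) \geq \kappa_0 > 0
\]
for $\epsilon$ small, with $\kappa_0$ depending only on $u_0$, $\rho$, $c$. For any $a(\cdot) \in \mathcal{A}_{m, M}$ the derivatives $|\dot u|, |\dot v|$ are uniformly bounded by a constant depending on $M$, so the trajectory stays in a small neighborhood of $(u_0, v_0)$ on an interval $[0, T_0]$ whose length is independent of the specific strategy, and on that neighborhood $\dot\psi \geq \kappa_0/2$ by continuity. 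Integrating, $\psi(T^*) > 0$ at some $T^* = \Theta(\epsilon)$, and by shrinking $\epsilon$ further I can arrange simultaneously $T^* \leq T_0$ and $u(T^*) < u_\infty$.

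At the time $T^*$ we then have $v(T^*) > u(T^*)/c$ with $u(T^*) \in (0, u_\infty)$, so Theorem~\ref{thm:Vbound}(iii) certifies that $(u(T^*), v(T^*)) \notin \mathcal{V}_{\mathcal{A}}$. Since $\mathcal{V}_{\mathcal{A}}$ collects every initial datum admitting at least one winning strategy, no continuation of the trajectory beyond $T^*$ can reach $v = 0$ with $u > 0$; hence the trajectory starting at $(u_0, v_0)$ with the given $a(\cdot)$ cannot win. As this holds for every $a(\cdot) \in \mathcal{A}_{m, M}$, we conclude $(u_0, v_0) \notin \mathcal{V}_{m, M}$, and the strict inclusion follows. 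The hard part will be the quantitative bookkeeping in this last step: one has to verify that during the (short) interval $[0, T^*]$ the trajectory really stays in the portion of $[0,1]^2$ where the relevant piece of $\partial\mathcal{V}_{\mathcal{A}}$ is the line $v = u/c$, i.e.\ that $u(t) < u_\infty$ throughout (and analogously $u(t) \in (u_s^0, \rho c(c+1)/(1+\rho c))$ in the $\rho<1$ case). This is handled by taking $\epsilon$ (and, for $\rho<1$, an auxiliary parameter $\delta$ measuring the distance from the linear piece of the boundary) sufficiently small, with all bounds depending only on $M$, $\rho$ and $c$.
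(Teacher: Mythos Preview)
Your argument is correct and takes a genuinely different, more economical route than the paper's.

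The paper proves Theorem~\ref{thm:limit} via Proposition~\ref{prop:limit}, which builds an explicit piecewise-defined barrier function $f_\varepsilon$ (resp.\ $g_\varepsilon$) and shows that the super-level set above it is forward-invariant for every $a\in[m,M]$; this barrier is strictly below the boundary of $\mathcal{V}_{\mathcal A}$ on part of its range, and a point in the resulting gap is then exhibited explicitly. Your approach avoids building a global invariant region: you stay local, placing $(u_0,v_0)$ at distance $\epsilon$ below the \emph{linear} piece of $\partial\mathcal{V}_{\mathcal A}$ and exploiting directly that $\dot\psi=(1-u-v)(\rho v-u/c)$ is independent of~$a$ and strictly positive there. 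The uniform bound on $|\dot u|,|\dot v|$ coming from $a\le M$ is exactly what makes the short-time estimate work and is where the constraint $M<\infty$ enters. Both proofs rest on the same algebraic identity (the paper uses it repeatedly, e.g.\ in~\eqref{1853} and~\eqref{7undws8uf8v}); the difference is that you use it once, locally, while the paper combines it with additional curve pieces to obtain the quantitative inclusion of Proposition~\ref{prop:limit}. Your route is shorter for the bare strict-inclusion statement; the paper's detour yields the explicit bound on $\mathcal{V}_{m,M}$ as an independent result. Two small points worth making explicit when you write it up: (a) fix the neighbourhood radius and the lower bound for $\dot\psi$ \emph{before} choosing $\epsilon$, so that $\kappa_0$ and $T_0$ genuinely do not depend on $\epsilon$; (b) note that on $[0,T^*]$ the trajectory stays away from $\{v=0\}$, so victory cannot occur before $T^*$ either.
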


Notice that for $\rho=1$, Theorem \ref{thm:W} gives instead that $\mathcal{V}_{{m,M}}= \mathcal{V}_{\mathcal{A}}$.

\section{Time minimizing strategy}

Once established that it is possible to win starting at a certain initial condition, we are interested in knowing which of the possible strategies is best to choose. One condition that may be taken into account is the duration of the war. Now, this question can be written as a minimization problem with a proper functional to minimize and therefore the classical Pontryagin theory applies. 

To state our next result, we consider a given $(u_0, v_0)\in \mathcal{V}_{m,M}$ and,
recalling the setting in~\eqref{SPE}, we define
\begin{equation*}
\mathcal{S}(u_0, v_0) := \Big\{ a(\cdot)\in \mathcal{A}_{m,M}
\;\mbox{ s.t. }\; (u_0, v_0) \in \mathcal{E}(a(\cdot))  \Big\}.
\end{equation*}
This is the set of all bounded strategies for which the trajectory starting at~$(u_0, v_0)$ leads to the victory of the first population.

To each~$a(\cdot)\in\mathcal{S}(u_0, v_0)$ we associate the stopping time defined in~\eqref{def:T_s}, and we express its dependence on~$a(\cdot)$ by writing~$T_s(a(\cdot))$.

In this setting, we provide the following statement concerning the strategy leading
to the quickest possible victory for the first population:

\begin{theorem}\label{thm:min}
	Given a point~$(u_0, v_0)\in \mathcal{V}_{m,M}$, there exists a winning strategy~$\tilde{a}(t)\in
	\mathcal{S}(u_0, v_0)$ for which
	\begin{equation*}
		T_s(\tilde{a}(\cdot)) = \underset{a(\cdot)\in\mathcal{S}}{\min} T_s(a(\cdot)).
	\end{equation*}
	
	Moreover, the optimal strategy satisfies
	\begin{equation*}
	\tilde{a}(t)\in \left\{m, \ M, \    a_s(t) \right\},
	\end{equation*}	 
	where 
\begin{equation}\label{KSM94rt3rjjjdfe}
	{a}_s(t) := \dfrac{(1-\tilde{u}(t)-\tilde{v}(t))[\tilde{u}(t) \, (2c+1-\rho c)+\rho c]}{\tilde{u}(t) \, 2c(c+1)},
\end{equation}	
and $(\tilde{u}(t), \tilde{v}(t) )$ is the trajectory emerging from~$(u_0,v_0)$
associated with $\tilde{a}(t)$.
\end{theorem}

The surprising fact given by Theorem~\ref{thm:min}
is that the
minimizing strategy is not only of bang-bang type, but it may assume some values along a \emph{singular arc}, given by~$a_s(t)$.
This possibility is realized in some concrete cases, as we verified by running some numerical simulations, whose results can be visualized in Figure~\ref{fig:min}. 

\begin{figure} 
	\begin{subfigure}{.5\textwidth}
		\centering
		\includegraphics[width=1.9\linewidth]{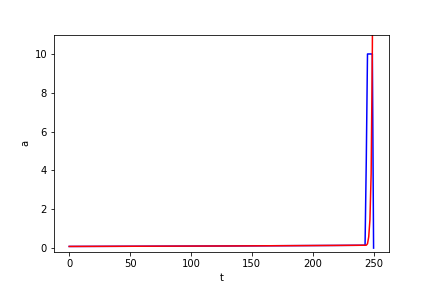}
	\end{subfigure}\\
	\begin{subfigure}{.5\textwidth}
		\centering
		\includegraphics[width=1.9\linewidth]{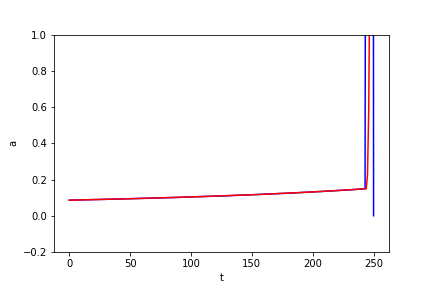}
	\end{subfigure}
	\caption{\em The figure shows the result of a numerical simulation searching a minimizing time strategy~$\tilde{a}(t)$ for the problem starting in~$(0.5, 0.1875)$ for the
	parameters~$\rho=0.5$,~$c=4.0$,~$m=0$ and~$M=10$. In blue, the value
	found for~$\tilde{a}(t)$; in red, the value of~$a_s(t)$ for the corresponding trajectory~$(u(t), v(t))$. As one can observe,~$\tilde{a}(t)\equiv a_s(t)$ in a long trait.
	The simulation was done using AMPL-Ipopt on the server NEOS and pictures have been made with Python. 
	}
	\label{fig:min}
\end{figure}

\section{Discussion of the results}

We now give the interpretation of the analyses of the aggressive competition model that we propose. We discuss the results in the key of the economic model because it seems to us the interpretation most applicable to reality, while we again dissociate ourselves from the use of violence between human beings. 

We emphasize that, unlike the Lotka-Volterra model, our system allows for the case in which one of the two firms (the ``attacked'' one) fails in finite time 
(cf.~Theorem \ref{thm:dyn}). Once the competition is eliminated, the first firm obtains a monopoly in the market, with all the benefits of the case and the consequent problems for consumers.

In the absence of aggression ($a=0$), the two companies would eventually 
end up on a coexisting equilibrium in the market (see Proposition~\ref{prop:bhvaPRE}). 
When bringing in an aggressive strategy, we can generally see three aspects. 
When the aggressive company ($u$) is already clearly preponderant in the market compared to the competitor ($v$), regardless of the strategy adopted, it will succeed in wiping out the competitor from the market. In contrast, if the competitor is very established in the market, it will not always be possible to supplant it, even in the case where the aggressive firm is more efficient than the opponent ($\rho <1$), and in this case the aggressive strategy harms the attacking firm much more. 

Interestingly, in cases close to the limit, the aggressive firm could only completely supplant the second with aggressive strategies that would bring itself close to failure.

As for the cases of intermediate initial situations, we observe that indeed the choice of strategy influences the final outcome. We can draw the following conclusions, depending on whether the market is initially 
saturated or not, and whether the aggressive firm is more or less efficient than its competitor in reusing its capital to generate more.

We emphasize that the case of an oversaturated market may occur, for example, when there is a shrinking pool of buyers due to an economic or demographic crisis, or when some consumers have the products of both companies for a period out of curiosity.

A more specific description of the different scenarios can be summarized as follows:

\begin{itemize}
	\item \textit{Scenario 1: the two firms have the same efficiency ($\rho=1$)}. \\
	In this case, the choice of the strategy does not affect the outcome. Namely, 
	one of the two firms will eventually prevail according only to the initial conditions, independently of
	the aggressive strategy. The strategy just modulates the speed of the dynamics.
	This is shown in 
	Theorem~\ref{thm:Vbound} part (i) and Lemma \ref{lemma:rho=1}. 
    \item \textit{Scenario 2: the first firm is more efficient than the second ($\rho <1$)}.\\ 
    If the market is not oversaturated ($u+v\leq1$), then it is more convenient for the first firm to ``let the market flow'' and use light aggression ($a$ very small).  If the market, on the other hand, is oversaturated ($u+v>1$), it is convenient for the first firm to adopt a very aggressive strategy ($a$ very large), to bring the market to an undersaturated condition which is, however, as convenient as possible for it, and once this intermediate goal is achieved, to continue with less intense aggression. In particular, in this latter case, nonconstant strategies are better than constant strategies.
These results are contained in Theorem \ref{thm:Vbound}, \ref{thm:W}, and \ref{thm:H}, as well as Proposition \ref{prop:construction} part 1.
 \item \textit{Scenario 3: the first firm is less efficient than the second ($\rho >1$)}.\\ 
 If the market is not over-saturated ($u+v\leq1$), a very aggressive strategy allows the first firm to obtain monopoly even in cases where light aggressiveness would 
 not allow it. 
 
 If, on the contrary, the market is oversaturated ($u+v<1$), it is convenient for the first firm not to use an aggressive strategy ($a=0$)
 until the market reaches an unsaturated state, then, adopting a strongly aggressive strategy, the firm will be able to 
 eventually wipe the rival out
(as long as the initial data belong to the set presented in the formula~\eqref{DEFQ}).
 
These results are presented in Theorems  \ref{thm:Vbound}, \ref{thm:W}, and \ref{thm:H}, and Proposition~\ref{prop:construction} part 2.
\end{itemize}

We also analyzed, in the case of initial situations that allow the first firm for 
``winning'' strategies, which strategy eliminates competitors from the market in the fastest way possible. What is highlighted is that this strategy can be very sophisticated, in particular it can alternate between very high and very low values of aggressiveness, or follow a certain function (the singular arc function $a_s$ defined in Theorem \ref{thm:min}). Although it is difficult to give a general expression for the fastest strategy, it is possible to calculate or simulate it numerically from the initial data using well-known optimal control tools (see Figure \ref{fig:min} and the proof of the Theorem \ref{thm:min} in Section \ref{s:dimthmmin}).

\chapter{Toolbox}\label{TOOLB}

\begin{center}
\begin{minipage}{25em}
\noindent{\bf Abstract of Chapter~\ref{TOOLB}.}
{\sl In this chapter we collect some auxiliary results about dynamical systems which will come in handy during the proofs of the main results.}\end{minipage}\end{center}
\bigskip\bigskip\bigskip\bigskip\bigskip\bigskip

We start with some technical notation that we will often use in this work,
in addition to the basic ones presented in Section~\ref{ss:notation}.
%
We will sometimes
indicate the solutions~$(u(t),v(t))$  of this system by~$\phi_p(t)$, where~$p=(u(0),v(0))$,
in order to stress out the dependence on the initial position.

	Throughout this monograph, 
	solutions, trajectories and orbits are always associated with system \eqref{model}. 

Given $(u_0,v_0)\in [0,1]\times[0,1]$ such that $T_s(u_0,v_0)=+\infty$, we define the $\omega-$\emph{limit set} of $(u_0,v_0)$ as 
\begin{equation*}\label{def:omega}\begin{split}
	\omega(u_0,v_0):=\;&\big\{ (x,y)\in\R^2 \;{\mbox{ s.t. }}\\&\qquad \phi_{(u_0,v_0)}(t)\in [0,1]\times[0,1] \; \text{ for all } \; t\geq 0, \\&\qquad {\mbox{and there exists }} \;\{t_i\}_{i\in\N}\; {\mbox{ s.t. }}\;\ t_i\to+\infty\\&\qquad {\mbox{and }}\;\underset{i\to+\infty}{\lim} \phi_{(u_0,v_0)}{(t_i)}=(x,y) \big\}.\end{split}
\end{equation*}
We also define the limit in the past as the $\alpha-$\emph{limit set} of~$(u_0,v_0)$ if~$\phi_{(u_0,v_0)}(t)\in [0,1]\times[0,1]$ for all~$t\leq 0$, that is
\begin{equation*}\label{def:alpha}
\begin{split}
\alpha(u_0,v_0):=\;&\big\{ (x,y)\in\R^2 \;{\mbox{ s.t. }}\\&\qquad \phi_{(u_0,v_0)}(t)\in [0,1]\times[0,1] \; \text{ for all } \; t\leq 0, \\&\qquad {\mbox{and there exists }} \;\{t_i\}_{i\in\N}\; {\mbox{ s.t. }}\;\ t_i\to-\infty\\&\qquad {\mbox{and }}\;\underset{i\to+\infty}{\lim}\phi_{(u_0,v_0)}{(t_i)}=(x,y) \big\}.
\end{split}
\end{equation*}
We will refer to a periodic trajectory as a \emph{closed orbit} (see for example~\cite{dynsyst}).

Also, when we talk about \emph{open} or \emph{closed} sets contained in $[0,1]\times [0,1]$, 
it is always understood {with respect to the relative topology of $[0,1]\times [0,1]$}.

\begin{figure}
	\centering
\includegraphics[scale=0.4]{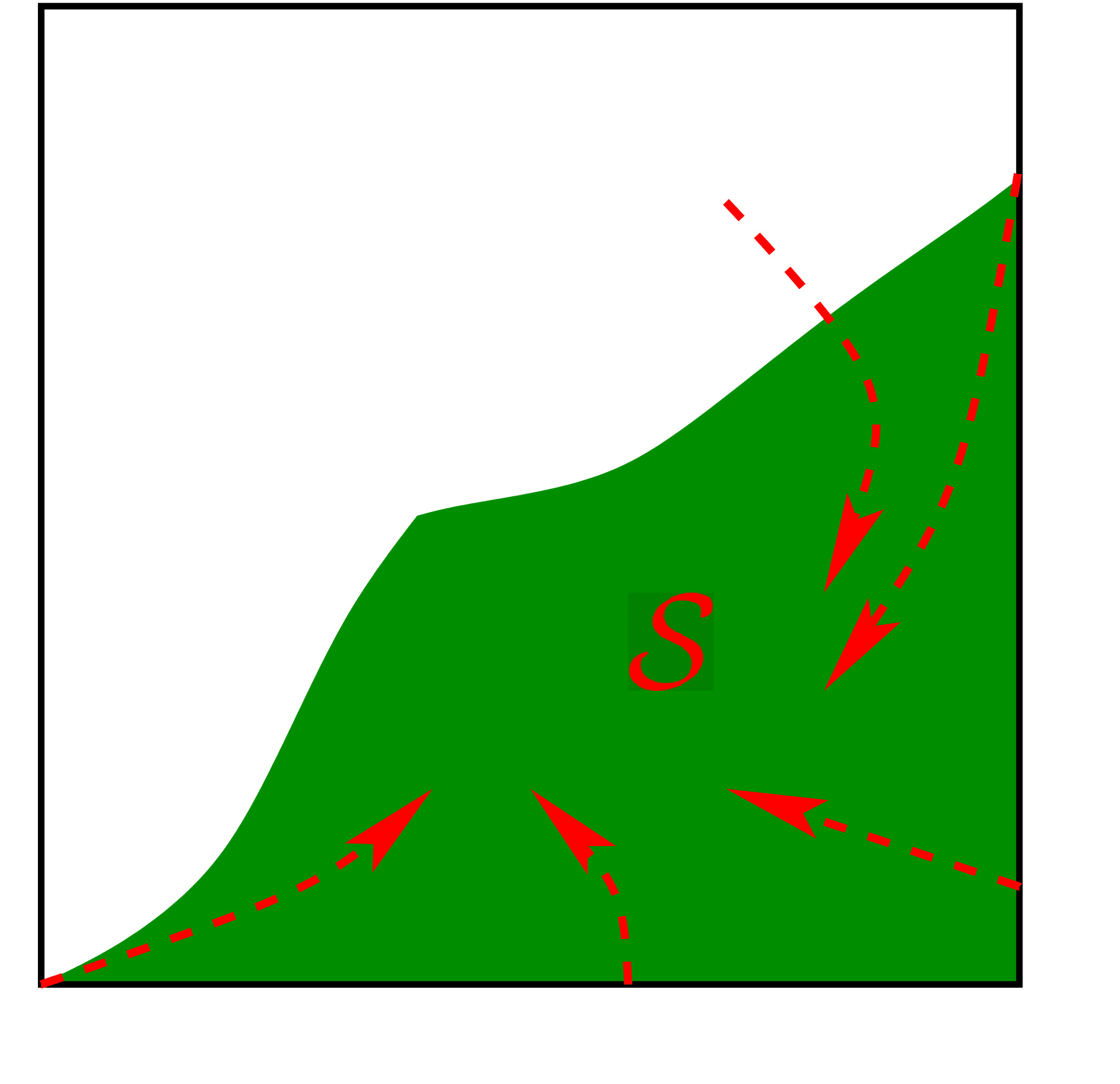}
\caption{Trajectories entering~${\mathcal{S}}$.}
\label{fig:intervals:KMD9uoijhr2lkngr}
\end{figure}

\begin{definition}
Given an open set $\mathcal{S}$ of $[0,1]\times [0,1]$, we say that a trajectory \emph{enters}  $\mathcal{S}$ through
a point~$p\in\partial S$ if, letting $(u(t),v(t))$ be a solution generating such a trajectory, 
there exist a time $T\geq0$ and a strictly  
decreasing sequence $(t_n)_{n\in\N}$ converging to $T$ such that
\begin{itemize}
	\item $(u(T),v(T))=p$,
	\item~$(u(t_n),v(t_n))\in \mathcal{S}$ for all~$n\in\N$.
\end{itemize}\end{definition}
See Figure~\ref{fig:intervals:KMD9uoijhr2lkngr} for a sketch of this notion of entering.

\begin{remark}\label{rmk:enter}
	Notice that the side $\{0\}\times [0,1]$ coincide with the orbit starting in the equilibrium $(0,0)$ and arriving to the equilibrium $(0,1)$.
	We mostly consider sets which are subgraphs of a continuous function (see Lemma \ref{lemma:entrance}). Thus, by the uniqueness of the Cauchy problem, no trajectory can enter these sets  through the left side $\{0\}\times [0,1]$.
\end{remark}

\begin{definition}\label{def:exiting}
If $\mathcal{S}$ is a closed set in the topology of $[0,1]\times[0,1]$,
we also say that a trajectory \emph{exits} the set $\mathcal{S}$ through
a point~$p\in\partial S$ if it enters $[0,1]\times[0,1]\setminus \mathcal{S}$ through $p$ (see Figure~\ref{fig:intervals:KMD9uoijhr2lkngr2}).
\end{definition}

\begin{figure}
	\centering
\includegraphics[scale=0.4]{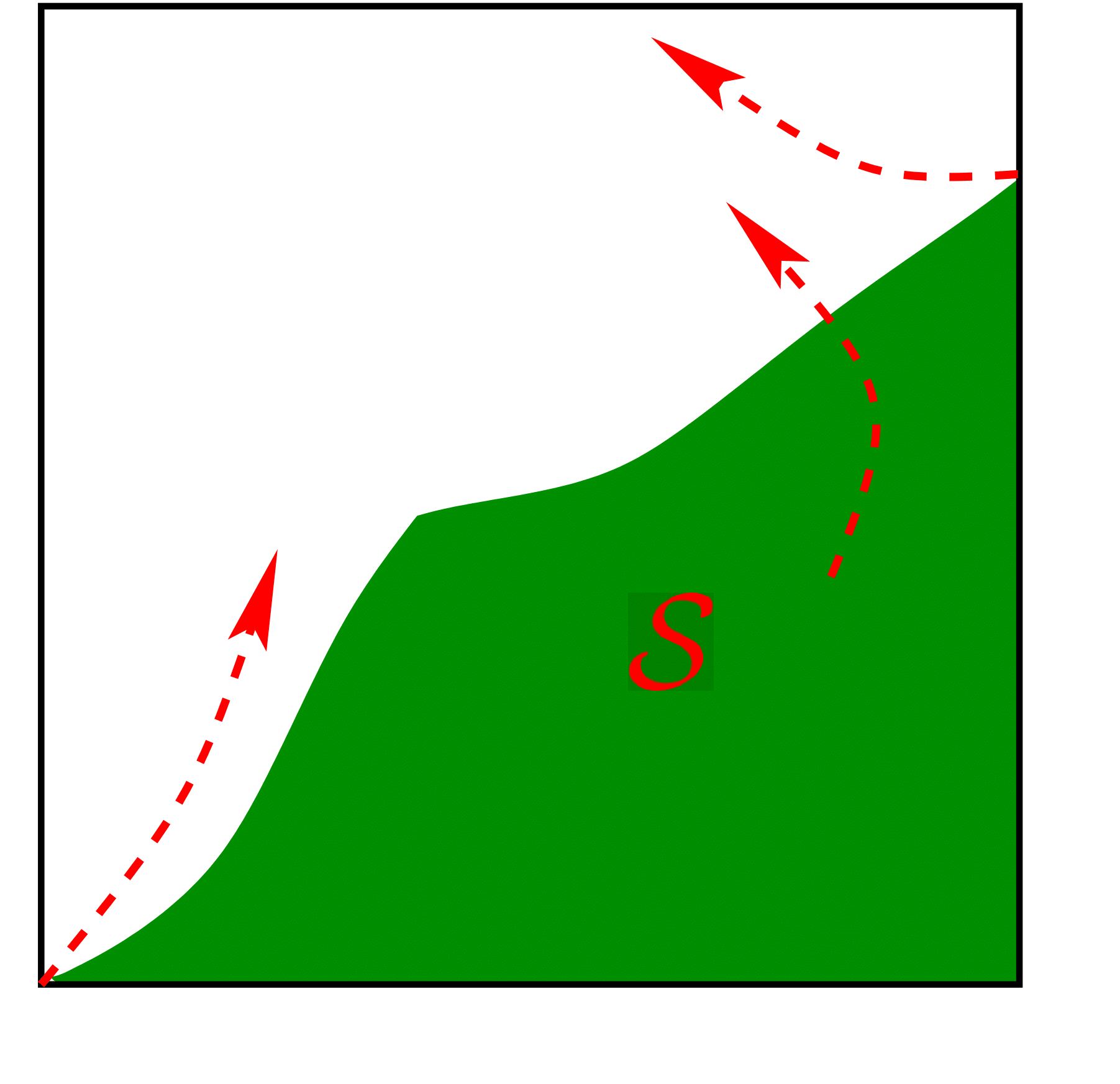}
\caption{Trajectories exiting~${\mathcal{S}}$.}
\label{fig:intervals:KMD9uoijhr2lkngr2}
\end{figure}

\begin{remark}
	As we already discussed in Remark \ref{rmk:dichotomy}, a trajectory either stays in $[0,1]\times[0,1]$ for all positive times, or
has a finite stopping time. This is why we do not take into account the situations in which a trajectory could exit ``through the sides of $[0,1]\times[0,1]$'' in all the rest of the monograph without further mention. 
\end{remark}

Now, we enunciate a useful lemma about entrance and exit of the trajectories in a set. 
In the following statement, we have a set~$\mathcal{S}$ that is the set of points given by the subgraph of a function~$g(u)$.

We define the outward unit normal vector to the surface $\partial \mathcal{S}$ at the point~$(\check u, g(\check u))$ by
\begin{equation}\label{01u2ohf90437ut943EDFRHJKD-0ihHYGF}
\nu = \left(- \frac{g'(\check u)}{\sqrt{1+(g'(\check u))^2}}, \frac{1}{\sqrt{1+(g'(\check u))^2}}  \right)
\end{equation}
whenever $g'(\check u)$ exists. We observe that ``outward'' here is intended with respect to the subgraph of~$g$.

We also extend this notion of outward unit normal vector
at points where  $\partial \mathcal{S}$ has a corner or a vertical tangency\footnote{Actually, later on, we will focus our attention
on monotone increasing functions~$g$, thus the case in which~$\underset{u\to \check u}{\lim} g'(u)=-\infty$
will not be used (we mentioned it at this level just for completeness).}
(hence~$g'(\check u)$ does not exist or is infinite). In this situation,
\begin{itemize}
	\item if $\underset{u\to \check u}{\lim} g'(u)=+\infty$, then we take $\nu=(-1,0)$
	(i.e., the one obtained from~\eqref{01u2ohf90437ut943EDFRHJKD-0ihHYGF} by formally replacing~$g'(\check u)$ by~$+\infty$);
		\item if $\underset{u\to \check u}{\lim} g'(u)=-\infty$, then we take $\nu=(1,0)$
			(i.e., the one obtained from~\eqref{01u2ohf90437ut943EDFRHJKD-0ihHYGF} by formally replacing~$g'(\check u)$ by~$-\infty$);
	\item if $\ell_+=\underset{u\to \check u^+}{\lim} g'(u)$ and $\ell_-=\underset{u\to \check u^-}{\lim} g'(u)$ exist (possibly infinite) but are different, we admit that $\partial \mathcal{S}$ has two outward unit normal vectors at $\check u$ (i.e., the ones obtained from~\eqref{01u2ohf90437ut943EDFRHJKD-0ihHYGF} by replacing~$g'(\check u)$ by~$\ell_-$ and~$\ell_+$, possibly using the conventions in the first two points on this lists).
\end{itemize} 

Furthermore, for the sake of clarity, for all $(\check u, \check v)\in\partial \mathcal{S}$ we denote by~$N(\check u, \check v)$  \emph{the set of outward unit normal vectors 
	to $\mathcal{S}$ at~$(\check u, \check v)$}.
Notice that for the points $(\check u, g(\check u))$ this set has one element when~$g$ is differentiable at~$\check u$ or one of the first two cases in the previous list occurs, while when the last case of the previous list occurs, it has two elements.  

In this chapter we call
\begin{equation*}
F(u,v)= u(1-u-v-ac), \qquad G(u,v)=\rho v (1-u-v)-ac,
\end{equation*}
so that system \eqref{model} becomes
\begin{equation*}
\left\{
\begin{array}{llr}
\dot{u}&= F(u,v), & {\mbox{ for }}t>0,\\
\dot{v}&= G(u,v), & {\mbox{ for }}t>0,
\end{array}
\right.
\end{equation*}
where $F$ and $G$ are locally Lipschitz-continuous functions.

In the forthcoming Lemma \ref{lemma:entrance}, we will require that for all~$(\check u, g(\check u))\in\partial \mathcal{S}$, the trajectory starting at $(\check u, g(\check u))$ satisfies 
\begin{equation*}
	(F(\check u, g(\check u)), G(\check u, g(\check u))) \cdot \nu \geq 0 \qquad \text{for all } \ \nu \in N(\check u, g(\check u)). 
\end{equation*}

\begin{remark}\label{rmk:trajectory}
	Notice that, if $(\bar{u}, \bar{v})$ is not an equilibrium for the system, 
	the trajectory starting at some $(\bar{u}, \bar{v})$ has the vector  $(F(\check u, g(\check u)), G(\check u, g(\check u)))$ as tangent vector at $(\bar{u}, \bar{v})$.
	Hence, the scalar product $(F(\check u, g(\check u)), G(\check u, g(\check u))) \cdot \nu $ gives us information on the relative position of the trajectory and the vector~$\nu$.
	In particular, if the scalar product has a sign, this tell us in which direction the trajectory crosses the graph of $g$.
\end{remark}

However, we want to specify the structure of the set where the scalar product is equal to zero for at least one normal vector. 
In fact, we only treat the cases where the product is zero in a finite number of closed intervals and isolated singletons. 
The reason for this is to avoid pathological cases, i.e. when there is a dense sequence of singletons where the scalar product is zero.
 
Let $C$ be a finite subset of~$\N$ and let $\{K_j\}_{j\in C}$ be a collection of closed intervals, possibly coinciding with singletons.
Then, notice that in the topology of $[0,1]$, the set $$[0,1]\setminus \underset{j\in C}{\bigcup} K_j$$ is an open set and can be written as a collection of open intervals~$\{I_k \}_{k\in A}$ for some finite subset~$A$ of~$ \N$.
This setting can be visualized in Figure~\ref{fig:intervals}.

\begin{figure}
	\centering
\includegraphics[scale=0.2]{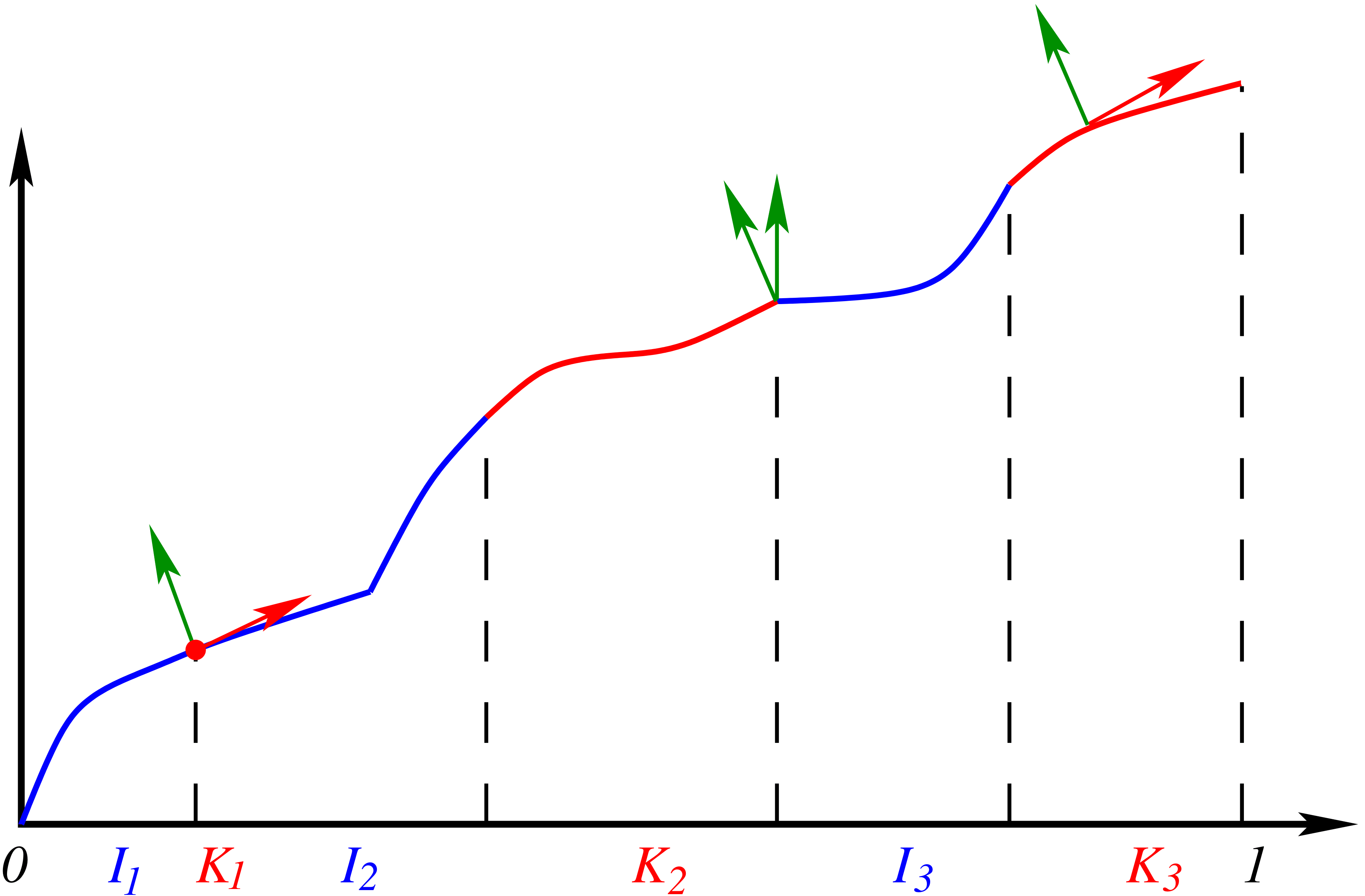}	
\caption{A possible choice for the function $g(u)$ and intervals $\{I_k\}_{k\in A}$, $\{ K_j \}_{j\in C}$ satisfying the hypothesis of Lemma \ref{lemma:entrance}. In blue, the traits for $u\in I_k$ with $k=1,2,3$. In red, the points and traits for $u\in K_j$ with $j=1,2,3$.	
	In green, the outward unit normal vectors; in red, the tangent vectors to the graph of $g$.}
\label{fig:intervals}
\end{figure}

Now we are ready to give the following:

\begin{lemma}\label{lemma:entrance}
	Let
	\begin{equation*}
		\mathcal{S}:= \{ (u,v)\in [0,1]\times[0,1] \;{\mbox{ s.t. }}\;  v < g(u) \},
	\end{equation*}
	where  $g:[0,1]\to [0,+\infty)$
	is a monotone increasing, continuous function such that there exists a finite (possibly empty) set~$Z\subset[0,1]$
		for which~$g\in C^1([0,1]\setminus Z)$, and in addition
		the limits
		$$\lim_{u\to z^\pm}g'(u)$$
		exist for all $z\in Z$ (possibly distinct and possibly equal to $+\infty$).
		Let $N(\check u,g(\check u))$ be {the set of outward unit normal vectors 
			to $\mathcal{S}$ at~$(\check u,g(\check u))$, as defined above}.
	Let $\{I_k\}_{k\in A}$ and $\{K_j\}_{j\in C}$ be two finite collections of disjoint intervals in $[0,1]$,
	the $I_k$ being open and the $K_j$ being closed (possibly coinciding with singletons)
	in the topology induced from $[0,1]$, 
	such that $$\underset{k\in A}{\bigcup} I_k  \cup \underset{j\in C}{\bigcup} K_j=[0,1].$$
	
	Suppose that for all $\check u\in \underset{k\in A}{\bigcup} I_k$
	it holds that
		\begin{equation}\label{property}
		\underset{\nu \in N(\check u, g(\check u))}{\min} (F(\check u, g(\check u)),G(\check u, g(\check u))) \cdot \nu > 0,
		\end{equation}
and that for all $\check u\in \underset{j\in C}{\bigcup} K_j$
	it holds that
		\begin{equation}\label{property2}
		\underset{\nu \in N(\check u, g(\check u))}{\min} (F(\check u, g(\check u)),G(\check u, g(\check u))) \cdot \nu = 0.
		\end{equation}

	Then, no trajectory enters $\mathcal{S}$ through
	a point of the set
$$\partial \mathcal{S}\cap \{ (u,v) \;{\mbox{ s.t. }}\; u\in[0,1],\ v=g(u)\}.$$
\end{lemma}

\begin{proof}
	Let us call 
	$$\mathcal{G}:=\{ (u,v) \;{\mbox{ s.t. }}\; u\in[0,1],\ v=g(u)\}.$$
	We suppose that
there exists $u_M\in(0,1]$ such that~$g(u)<1$ for all~$u\in[0,u_M)$ and~$(u_M, g(u_M))\in \partial ([0,1]\times[0,1])$; the case~$g(u)>1$ for all~$u$ is trivial because in this case~$\mathcal{S}$ coincides with~$[0,1]\times[0,1]$ and~$\partial \mathcal{S}\cap  \mathcal{G}= \varnothing$.

	We argue by contradiction and suppose that there exists a trajectory entering $\mathcal{S}$ through
		a point $(\check u,g(\check u))$, with $\check u\in[0, u_M]$. Let $(\widehat u,\widehat v)$ be the solution generating 
		such a trajectory, with (up to translation in time) $(\widehat u(0),\widehat v(0))=(\check u,g(\check u))$.
	
	Notice 
	that either $\check u\in I_k$ for a unique $k\in A$, or $\check u\in K_j$ for 
	a unique $j\in C$.

	The assumptions on the regularity of $g(u)$ imply that $\partial \mathcal{S}\cap \mathcal{G}$ has a unique normal vector,
	except at the corner points, where it has 2.
	
We now distinguish three cases. In the first two cases, we distinguish whether $g$ is differentiable  at $\check u$, or~$g$ is not differentiable at~$\check u$.
In the last case we analyze the role of the extrema~$\check u=0$ and~$\check u=u_M$.
	
	\smallskip
		{\em Case 1: $\check u\notin Z \cup \{0, u_M\}$.
			}
   
		In this case, $g$ is differentiable in $\check u$.
		Notice that, since $g$ is piecewise $C^1$ with a finite number of non differentiability points, then there exists a neighborhood of $\check u$ (in $[0,1]$) where $g$ is $C^1$.

	If $\check u\in I_k$ for some $k\in A$, then by \eqref{property} and Remark \ref{rmk:trajectory} one has a 
	contradiction with the fact that $(\widehat u,\widehat v)$
	 enters~$\mathcal{S}$ through the point~$(\check u, g(\check u))$.
	
	If $\check u\in \text{int} K_j$ for some $j\in C$, then by~\eqref{01u2ohf90437ut943EDFRHJKD-0ihHYGF} and~\eqref{property2}
	there exists a neighborhood $U$ of $\check u$ such that for each $u\in U$ it holds that
	\begin{equation}\label{FG}
	-g'(u)F(u, g(u))+G(u, g(u)) = 0.
	\end{equation} 
	Let $\varphi$ be the solution of the Cauchy problem
	\begin{equation}\label{1813}
	\begin{cases}
	\varphi'(t)=F(\varphi(t),g(\varphi(t))),\\
	\varphi(0)=\check u,
	\end{cases}
	\end{equation}
	which exists for $t\geq0$ sufficiently small. It follows from~\eqref{FG} that~$(\varphi(t),g(\varphi(t)))$ solves~\eqref{model} for~$t\geq0$ sufficiently small,
	hence for these~$t$'s, we have that $$(\widehat u(t),\widehat v(t))=(\varphi(t),g(\varphi(t))).$$
	This contradicts the fact that the associated trajectory enters~$\mathcal{S}$ through~$(\check u,g(\check u)))$.
	
	If $\check u\in\partial K_j$ for some fixed $j\in C$, we distinguish two cases depending on whether $K_j$ is a singleton or not. 

	If $K_j=\{\check u\}$, recalling that the trajectory generated by 
	$(\hat u(t),\hat v(t))$ enters~$\mathcal{S}$ through~$(\check u, g(\check u))$ (at time $0$),
	there exists an arbitrarily small time $\tau>0$ such that
	$(\hat u(\tau),\hat v(\tau))\in \mathcal{S}$.
	Then, by the continuous dependence with respect to initial data, and because $\mathcal{S}$
	is an open set,
	there exists a ball  $B$ centered at~$(\check u,g(\check u))$ and of radius $\varepsilon$
	sufficiently small such that~$\phi_q(\tau)\in\mathcal{S}$  for every~$q\in B$. 
	In addition, for given $\delta>0$, up to reducing the time $\tau>0$ and the radius $\varepsilon$ of the ball $B$ if need be, we have that 
	\begin{equation*}\label{Kj}
		\forall t\in[0,\tau],\ \forall q\in B,\quad
		\phi_q(t)\in (\check u-\delta,\check u+\delta)\times[0,1].
	\end{equation*}	
As a consequence, for $q\in B\setminus \mathcal{S}$, there must exist $\overline{t}\in (0, \tau)$ 
and $u\in(\check u-\delta,\check u+\delta)$ such that 
$$\phi_{q}(\overline{t})=(\underline{u}, g(\underline{u}))\in\partial \mathcal{S}.$$
Then, taking $\delta$ smaller than the distance between the point $\check u$ and the compact set $\bigcup_{j'\neq j}K_{j'}$,
we infer
$$\underline u\in K_j \cup \underset{k\in  A}{\bigcup} I_k.$$
In addition, since $g$ is differentiable at $\check u$, it must be differentiable in a neighborhood of $\check u$, by hypothesis,
hence, for even smaller $\delta$ we have that $g$ is differentiable at
 $\underline{u}$. 
	But we have shown before that trajectories cannot enter~${\mathcal{S}}$ through a point~$(u, g(u))$ with $u\in I_k$ and $g$ differentiable at $u$, 
	therefore it has to be $\underline{u}=\check u$. Summing up, we have shown that	
	\begin{equation}\label{2002}
		\forall q\in B\setminus \mathcal{S},\ \exists \bar t\in(0, \tau),\quad
	\phi_{q}(\bar t)=(\check u, g(\check u)).
	\end{equation}
	This means that $B\setminus \mathcal{S}$ is a subset of the trajectory
	$$\{\phi_{(\check u, g(\check u))}(t)\ :\ t\in[-\tau,0]\},$$
	which is impossible because the trajectory has zero measure whereas
	$B\setminus \mathcal{S}$ has positive measure, being~$g$ a continuous function.

Thus, we are left with the case where $\check u\in\partial K_j$ and $K_j$ is an interval. Then, it must be that $\check u\in \partial I_k$ for some $j\in A$. 
	Without loss of generality we can suppose that~$I_k=(u_1,\check u)$ and $K_j=[\check u, u_2]$ for some $0\leq u_1< \check u < u_2\leq u_M$.
	
Let us denote by~$\mu$ the vector that is tangent to the graph of $g(u)$ at $(\check u, g(\check u))$ and such that $\mu$ has positive components.
	
	We now distinguish three cases.
	
	If $(F(\check u, g(\check u)), G(\check u, g(\check u))) \cdot \mu =0$, then this and \eqref{property2} give that $(\check u,g(\check u))$ is an equilibrium, hence no trajectory can enter through~$(\check u,g(\check u))$.
	
	If $(F(\check u, g(\check u)), G(\check u, g(\check u))) \cdot \mu >0$,  then
	at least one between $F(\check u, g(\check u))$ and $G(\check u, g(\check u))$ must be positive. 
	Moreover, by \eqref{property2}, we have
	\begin{equation}\label{1837}
	-g'(\check u)F(\check u, g(\check u))+G(\check u, g(\check u)) = 0.
	\end{equation} 
	Notice that, if $F(\check u, g(\check u))<0$, by the fact that $g$ is increasing and~\eqref{1837} we  get that~$G(\check u, g(\check u))<0$, which contradicts our assumption.

Consequently,
	\begin{equation}\label{1838}
		F(\check u, g(\check u))>0.
	\end{equation}
Also, there is a right neighborhood $U$ of $\check u$ such that for all $u\in U$ \eqref{FG} holds true. Let $\varphi(t)$ be a solution of \eqref{1813}. Then, by \eqref{1838}, we get that $\varphi(t)$ is increasing, thus $\varphi(t)\in U$ for small $t$. 
	Hence, \eqref{FG} holds true and $(\varphi(t), g(\varphi(t)))$ is a solution of the system in~\eqref{model} starting at $(\check u, g(\check u))$ and laying on the graph of $g$ for small $t$. 
	This contradicts the fact that a trajectory enters~$\mathcal{S}$ through~$(\check u, g(\check u))$. 
	
	We are left with the case $(F(\check u, g(\check u)), G(\check u, g(\check u))) \cdot \mu <0$. 
	Then, arguing as in the previous case, we can prove that $F(\check u, g(\check u))<0$. By continuity, we have that 
	\begin{equation}\label{2032}
		F(u, g(u))<0
	\end{equation}
	for $u\in U$, being~$U$ a right neighborhood of $\check u$. 

	Let~$\bar u \in U$. 
	Then, by arguing as in the previous case, we have that the trajectory of $(\bar u, g(\bar{u}))$ can be written as $(\psi(t), g(\psi(t) ))$ where $\psi(t)$ is a solution of 
	\begin{equation*}
	\begin{cases}
	\psi'(t)=F(\psi(t),g(\psi(t))),\\
	\psi(0)=\bar u.
	\end{cases}
	\end{equation*}
	By \eqref{2032}, we have that $\psi(t)$ is decreasing. Moreover, since \eqref{2032} holds true for all $u\in U$, it follows that $\psi (\bar{t})=\check u$ for some $\bar t >0$.  Hence, since for all $u\in U\setminus \{\check u\}$ the point $(u, g(u))$ belongs to the trajectory of $(\bar{u}, g(\bar{u}))$, no trajectory can enter $\mathcal{S}$ through $(u, g(u))$ for $u\in U\setminus \{\check u\}$. 
	
	Now take $ V \subset I_k$ where $V$ is a left neighborhood of $\check u$ where $g$ is differentiable. Then for all $u\in V\setminus \{\check u\}$, as seen in the first case, no trajectory can enter $\mathcal{S}$ through $(u, g(u))$. 
	
	Now, take a ball $B$ of radius $\varepsilon$ centered at $(\check u, g(\check u))$. By arguing as in the case when $K_j$ is a singleton, we can prove that for all $q\in B\setminus \mathcal{S}$ (which has positive measure) it holds that $\phi_q(\bar t) =(\check u, g(\check u))$ for some $\bar t \in (0, \delta)$ because no trajectory can enter through other points in a neighborhood of $\check u$. So  
	\begin{equation*}
	\phi_{B\setminus \mathcal{S}}(t) \subset \Upsilon  \quad \text{for} \ t\in(\delta, 2\delta)
	\end{equation*}
	where
	\begin{eqnarray*}\Upsilon&: = &\big\{ (u,v)\in[0,1]\times[0,1] \ ;{\mbox{ s.t. }}\\&&\qquad\quad (u,v)= \phi_{(\check u, g(\check u))}(t) \ \text{for} \ t\in[0,2 \delta]  \big\}. \end{eqnarray*}
	But $\Upsilon$ has measure 0 and $\phi_{B\setminus \mathcal{S}}(t)$ has positive measure, thus giving a contradiction.
	
	\medskip
	\emph{Case 2: $\check u\in Z \setminus \{0, u_M\}$.}

    Since $\check u\in Z$, then $g$ is not differentiable at $\check u$.
	By hypothesis, there are a finite number of non differentiability points, hence they are isolated points. 
	So, given a neighborhood $U$ of $\check u$, it holds that $g$ is differentiable in $U\setminus \{\check u\}$. Hence, no trajectory can enter~${\mathcal{S}}$ through a point of the form $(u, g(u))$ with~$u\in U\setminus \{\check u\}$,
in light of Case~1.
	
	Now, take a ball $B$ of radius $\varepsilon$ centered in $(\check u, g(\check u))$. By the continuity of $g$, we see that $B\setminus \mathcal{S}$ has positive measure. By arguing as in the case when $K_j$ is a singleton, for all $q\in B\setminus \mathcal{S}$  it holds that $\phi_q(\bar t) =(\check u, g(\check u))$ for some $\bar t \in (0, \delta)$ because no trajectory can enter through other points in a neighborhood of $\check u$. So  
	\begin{equation*}
	\phi_{B\setminus \mathcal{S}}(t) \subset \Upsilon  \quad \text{for} \ t\in(\delta, 2\delta)
	\end{equation*}
	where
	\begin{eqnarray*}\Upsilon&: =& \big\{ (u,v)\in[0,1]\times[0,1] \;{\mbox{ s.t. }}\\&&\qquad\quad (u,v)= \phi_{(\check u, g(\check u))}(t) \ \text{for} \ t\in[0,2 \delta]  \big\}. \end{eqnarray*}
	But $\Upsilon$ has measure 0 and $\phi_{B\setminus \mathcal{S}}(t)$ has positive measure, thus providing the desired contradiction.

	\medskip
	\emph{Case 3: $\check u=0$ or $\check u=u_M$.}
	
	For the sake of concreteness, let us consider the case $\check u=0$, the other one being analogous.
	Let us consider a right neighborhood $U$ of $\check u=0$ (in the topology of $[0,1]$). By Cases~1 and~2, 
\begin{equation}\label{previousaffirmations}\begin{split}
&{\mbox{no trajectory can enter~${\mathcal{S}}$ through a point}}\\&{\mbox{of the form $(u, g(u))$ with~$u\in U\setminus \{0\}$.}}\end{split}\end{equation}
	
	Suppose that a trajectory enters $\mathcal{S}$ through $(0, g(0))$. 
	Also, consider a ball $B$ centered at $(0, g(0))$ of radius $\varepsilon$ sufficiently small and
	the set 
	$$D=B\cap ([0,1]\times [0,1]) \setminus \mathcal{S}, $$
	which has positive measure. 

By continuity with respect to initial data, 
	there exists $\delta>0$
	such that~$\phi_q(\delta)\in\mathcal{S}$    for every~$q\in D$.

As a consequence, there exists a point $(\bar{u}, \bar{v})\in \partial \mathcal{S}$ such that $\phi_q(\bar{t})=(\bar{u}, \bar{v})$ for some $\bar{t}\in (0, \delta)$. 
	Notice that $(\bar{u}, \bar{v})$ cannot be of the form $(u, g(u))$ with~$u\in U\setminus \{\check u\}$, in light of~\eqref{previousaffirmations}. 
	
Also, the trajectory of $q$ is contained in $[0,1]\times [0,1]$ for all $t<T_s(q)$ (where $T_s$ is the stopping time defined in \eqref{def:T_s}). Thus, it cannot be that $\phi_q(\bar{t})=(0, \underline{v})$ with $\underline{v}\leq g(0)$, unless   $\phi_q(\bar{\tau})=(u_0, g(\check u))$ for some $\bar{\tau}\in (0,\delta)$.
	
	Therefore, for $t\in (\delta, 2\delta)$ we have that $\phi_D(t) \subset \Upsilon$
		where
	\begin{eqnarray*}\Upsilon&: =& \big\{ (u,v)\in[0,1]\times[0,1] \;{\mbox{ s.t. }}\\&&\qquad\quad (u,v)= \phi_{(\check u, g(\check u))}(t) \ \text{for} \ t\in[0,2 \delta]  \big\}. \end{eqnarray*}
	But $\Upsilon$ has measure 0 and $\phi_{B\setminus \mathcal{S}}(t)$ has positive measure, thus providing the desired contradiction
and completing the proof of Lemma~\ref{lemma:entrance}.
\end{proof}

We also provide a stronger statement for exiting trajectories. Notice that here we take a closed set $\mathcal{S}$ to use the definition \ref{def:exiting} of exiting trajectories.

\begin{lemma}\label{lemma:exit}
    Let
	\begin{equation*}
	\mathcal{S}:= \{ (u,v)\in [0,1]\times[0,1] \;{\mbox{ s.t. }}\; v \leq g(u) \},
	\end{equation*}
	where $g$ satisfies the same hypotheses as in Lemma \ref{lemma:entrance}. Suppose also that $I_k$ and $K_j$ are as in Lemma~\ref{lemma:entrance}.
	
	Suppose that for all $\check u\in \underset{k\in A}{\bigcup} I_k$
	it holds that
	\begin{equation}\label{property21}
	\underset{\nu \in N(\check u, g(\check u))}{\max} (F(\check u, g(\check u)),G(\check u, g(\check u))) \cdot \nu < 0,
	\end{equation}
	and that for all $\check u\in \underset{j\in C}{\bigcup} K_j$
	it holds that
	\begin{equation}\label{property22}
	\underset{\nu \in N(\check u, g(\check u))}{\max} (F(\check u, g(\check u)),G(\check u, g(\check u))) \cdot \nu = 0.
	\end{equation}

	Then, no trajectory exits $\mathcal{S}$. 
\end{lemma}

\begin{proof}
	Let us suppose by contradiction that there exists a trajectory exiting $\mathcal{S}$ through a point $(\check u, \check v)$. 
	
	If
$$(\check u, \check v)\in \partial \mathcal{S}\cap\{ (u,v) \:{\mbox{ s.t. }}\; u\in[0,1],\ v=g(u)\},$$ then repeating the arguments of Lemma~\ref{lemma:entrance} we get a contradiction.
	
	We also observe that no trajectory can exit $\mathcal{S}$ by leaving $[0,1]\times[0,1]$. This rules out all the possible cases.
\end{proof}

\medskip

Finally, to lighten the text, all along this monograph, we will call \emph{outward normal derivative} at some point  $(\check u,\check u)\in\partial \mathcal{S}$ the scalar product
\begin{equation*}
	(F(\check u, \check v), G(\check u, \check v)) \cdot \nu 
\end{equation*}
with $\nu \in N(\check u, \check v)$.

Also, we call the \emph{inward normal derivative} at some point  $(\check u,\check v)\in\partial \mathcal{S}$ the scalar product
\begin{equation*}
-(F(\check u, \check v), G(\check u, \check v)) \cdot \nu 
\end{equation*}
with $\nu \in N(\check u, \check v)$.


\chapter{Basins of attractions}\label{IKJM:plrg777}

\begin{center}
\begin{minipage}{25em}
\noindent{\bf Abstract of Chapter~\ref{IKJM:plrg777}.} {\sl
In this chapter we provide some useful results on the behavior of the solutions
of the system in~\eqref{model} and on the basins of attraction in the case of constant strategies, also characterizing a separating invariant manifold in dependence of the structural parameters.}\end{minipage}\end{center}
\bigskip\bigskip\bigskip\bigskip\bigskip\bigskip

In this chapter we provide some useful results on the behavior of the solutions
of the system in~\eqref{model} and on the basins of attraction in the case of constant strategies $a$. 
In particular,
we provide the proof of Theorem~\ref{thm:dyn}
and
we state a characterization of  the sets~$\mathcal{B}$
and~$\mathcal{E}$ given in~\eqref{DEFB} and~\eqref{DEFE}, respectively, see Propositions~\ref{prop:char}.

This material will be extremely useful for the analysis of the strategy that we operate later. 

\smallskip

We are now in a position to derive the first three statements of Theorem~\ref{thm:dyn}.

\begin{proof}[Proof of (i), (ii) and~(iii) of Theorem~\ref{thm:dyn}]
We first consider equilibria with first coordinate~$u=0$. In this case, 
from the second equation in~\eqref{model}, we have that
the equilibria must satisfy~$\rho v(1-v)=0$, thus~$v=0$ or~$v=1$.
As a consequence,~$(0,0)$ and~$(0,1)$ are two equilibria of the system.

Next, we consider equilibria with first coordinate~$u>0$.
From the first equation in~\eqref{model} we get
	\begin{equation}\label{curve:u'}
	1-u-v-ac=0,
	\end{equation}
while, from the second one,
	\begin{equation} \label{curve:v'}
	\rho v(1-u-v)-au=0.
	\end{equation}
Putting together~\eqref{curve:u'} and~\eqref{curve:v'} we get~\eqref{usvs}.

{F}rom now on, we distinguish the three situations in~(i), (ii), (iii) of 
Theorem~\ref{thm:dyn}.

\medskip
{\emph{(i)}} If~$0<ac<1$, we have that the point~$(u_s,v_s)$ given in~\eqref{usvs}
lies in~$(0,1)\times(0,1)$. As a result, in this case the system has~$3$ equilibria,
given by~$(0,0)$,~$(0,1)$ and~$(u_s,v_s)$.

The Jacobian of the system~\eqref{model} is
	\begin{equation} \label{Jmatrix}
J(u,v)=	\begin{pmatrix}
	1-2u-v -ac & -u \\ 
	-\rho v-a & \rho (1-u-2v)
	\end{pmatrix}.
	\end{equation}
	At the point~$(0,0)$, the matrix has eigenvalues~$\rho >0$ and~$1-ac >0$, thus~$(0,0)$ is a source. 
	
	At the point~$(0,1)$, the Jacobian~\eqref{Jmatrix} has eigenvalues~$-ac <0$ and~$-\rho <0$, thus~$(0,1)$ is a sink.
	
	At the point~$(u_s,v_s)$, by exploiting the relations~\eqref{curve:u'} and~\eqref{curve:v'} we have that
	\begin{equation*}
	J(u_s,v_s)=\begin{pmatrix}
	-u_s  & -u_s \\ 
	-\rho v_s-a & \rho (ac-v_s)
\end{pmatrix},
	\end{equation*}
	which, by the change of basis given by the matrix
	$$
\begin{pmatrix}
	-\frac1{u_s}  & 0 \\ 
	-\frac1{u_s}\left[\left(\frac{u_s}{c}+a\right)\left(\frac{\rho c-c}{1+\rho c}\right)+ac	\right] & \frac{\rho c-c}{1+\rho c}
\end{pmatrix},
	$$
	becomes	
	\begin{equation*}
\begin{pmatrix}
	1  & 1 \\ 
	ac & \rho ac
\end{pmatrix}.	\end{equation*}
	The characteristic polynomial of this matrix 
	is $$\lambda^2-\lambda(1+\rho a c)+\rho a c-ac,$$ that has two real roots, as one can see by inspection. Hence, the matrix~$J(u_s, v_s)$ has two real eigenvalues.
	
	Moreover,
	the determinant of~$J(u_s, v_s)$ is $$-\rho ac u_s-au_s <0,$$ which implies
	that~$J(u_s, v_s)$ has one positive and one negative eigenvalues. These considerations
	give that~$(u_s, v_s)$ is a saddle point. This completes the proof
	of~(i) in Theorem~\ref{thm:dyn}.
\medskip
	
{\emph{(ii) and (iii)}} We assume that~$ac\ge1$.
We observe that the equilibrium described by the
coordinates~$(u_s,v_s)$ in~\eqref{usvs}
coincides with~$(0,0)$ for~$ac=1$,
and lies outside~$[0,1]\times[0,1]$ for~$ac>1$. 
As a result, when~$ac\ge1$ the system has~$2$ equilibria, given by~$(0,0)$ and~$(0,1)$.

Looking at the Jacobian in~\eqref{Jmatrix},
one sees that
at the point~$(0,1)$, it has eigenvalues~$-ac <0$ and~$-\rho <0$,
and therefore~$(0,1)$ is a sink when~$ac\ge1$.

Furthermore, from~\eqref{Jmatrix}
one finds that
if~$ac>1$ then~$J(0,0)$ has the positive eigenvalue~$\rho$ and the negative
eigenvalue~$1-ac$, thus~$(0,0)$ is a saddle point.

If instead~$ac=1$, then~$J(0,0)$
has one positive eigenvalue and one null eigenvalue, as desired.
\end{proof}

To complete the proof of Theorem~\ref{thm:dyn},
we will deal with the cases~$ac\neq\!1$ and~$ac=\!1$ separately. This analysis will be performed
in the forthcoming Sections~\ref{sec:deg0} and~\ref{sec:deg} respectively.
The completion of the proof of
Theorem~\ref{thm:dyn} will then be
given in Section~\ref{sec:deg2}.

\section{Characterization of~$\mathcal{M}$ when~$ac\ne1$}
\label{sec:deg0}

We point out that
in the proof of~(i) and~(ii) in Theorem~\ref{thm:dyn} we found a saddle point 
in both cases.
By the Stable Manifold Theorem (see for example~\cite{dynsyst}), the point~$(u_s, v_s)$ in~\eqref{usvs} in the case~$0<ac<1$ and the
point~$(0,0)$ in the case~$ac> 1$ have a stable manifold and an unstable manifold.
These manifolds are unique,
they have dimension~$1$, and they are tangent to the eigenvectors of the linearized system.

We will denote by~$\mathcal{M}$ the stable manifold associated
with these saddle points.
Since we are interested in the dynamics in the square~$[0,1]\times[0,1]$, with a slight abuse of notation we will only consider the restriction of ~$\mathcal{M}$ in~$[0,1]\times[0,1]$.

We now analyze
some properties of~$\mathcal{M}$:

\begin{proposition} \label{lemma:M}
	For $ac\ne1$ the set~$\mathcal{M}$ can be written as the graph of a unique increasing~${C}^2$ function $$\gamma:[0,u_{\mathcal{M}}] \to [0, v_{\mathcal{M}}]$$ for some
	$$(u_{\mathcal{M}}, v_{\mathcal{M}}) \in
	\big(\{1\}\times[0,1]\big)\cup
\big((0,1]\times\{1\}\big),$$ such that~$\gamma(0)=0$,~$\gamma(u_{\mathcal{M}})=v_{\mathcal{M}}$ and
	\begin{itemize}
		\item if~$0<ac<1$,~$\gamma(u_s)=v_s$ and in $u=u_s$ the function $\gamma(u)$ is tangent to the line $(v-v_s)c-(u-u_s)=0$;
		\item if~$ac> 1$, in~$u=0$ the function~$\gamma$ is tangent to the
		line~$(\rho-1+ac)v-au=0$.  
	\end{itemize}
\end{proposition}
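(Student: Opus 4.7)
The plan is to combine the Stable Manifold Theorem (SMT) for local existence of $\mathcal{M}$ near the saddle with a global extension argument that follows orbits under the flow, using nullcline analysis to control monotonicity. Applying SMT at the relevant saddle -- namely $(0,0)$ when $ac>1$, and $(u_s,v_s)$ when $0<ac<1$ -- gives a unique one-dimensional $C^2$ local invariant stable manifold tangent to the stable eigenvector of the linearisation. I would compute these tangent directions explicitly. For $ac>1$, the Jacobian at the origin is $J(0,0)=\begin{pmatrix}1-ac & 0 \\ -a & \rho\end{pmatrix}$, and the stable eigenvalue $1-ac<0$ admits the eigenvector proportional to $(\rho-1+ac,\,a)$; since $\rho>0$ and $ac>1$ imply $\rho-1+ac>0$, this has positive slope $a/(\rho-1+ac)$ and produces precisely the tangent line $(\rho-1+ac)v-au=0$ required by the proposition. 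For $0<ac<1$, I would show that the negative eigenvalue $\lambda_-$ of $J(u_s,v_s)$ satisfies $\lambda_-<-u_s$: using $\operatorname{tr} J(u_s,v_s)=-u_s+\rho(ac-v_s)$ and $\det J(u_s,v_s)=-au_s(1+\rho c)$ from the proof of Theorem~\ref{thm:dyn}, the characteristic polynomial evaluated at $-u_s$ simplifies to $p(-u_s)=-u_s(\rho v_s+a)<0$, so $-u_s$ lies strictly between the two real eigenvalues. The first row of $(J-\lambda_- I)e=0$ then gives the slope $e_2/e_1=-(1+\lambda_-/u_s)>0$, so the stable eigenvector at the saddle has positive slope in both cases.

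Once the local graph $v=\gamma(u)$ is in place with positive slope at the saddle, I would extend it globally by following the flow along $\mathcal{M}$; invariance of the stable manifold guarantees the extension stays on $\mathcal{M}$. The essential task is to show that $u$ remains strictly monotone along orbits on $\mathcal{M}$, so that the graph structure persists. For $ac>1$, this is straightforward because $\dot u=u(1-u-v-ac)<0$ throughout the open square, hence $u$ strictly increases backwards in time from the saddle $(0,0)$ until $\mathcal{M}$ exits through $\{u=1\}\cup\{v=1\}$. For $0<ac<1$, the nullcline $\{\dot u=0\}$ is the segment $u+v=1-ac$ of slope $-1$ through $(u_s,v_s)$; since the stable eigenvector has positive slope, $\mathcal{M}$ meets this nullcline transversely at the saddle, so locally each branch lies strictly on one side of it. To rule out further crossings I would examine the signs of $\dot u,\dot v$ in each of the four regions cut out by the two nullclines and invoke Poincaré--Bendixson: a planar orbit that would recross the nullcline would be forced either to accumulate on an equilibrium (but the only equilibria are $(0,0)$, $(0,1)$, $(u_s,v_s)$, none of which are compatible with this) or to close up into a periodic orbit (excluded by the sign analysis, or by Dulac's criterion applied to a suitable multiplier). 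This yields strict monotonicity of $u$ on each of the two branches of $\mathcal{M}$ emanating from $(u_s,v_s)$.

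It remains to identify the endpoints of $\gamma$ and to verify $C^2$ regularity. For $ac>1$, the saddle itself is the origin, so $\gamma(0)=0$ is automatic, while the backward branch exits through $\{u=1\}\cup\{v=1\}$ providing $(u_{\mathcal{M}},v_{\mathcal{M}})$. For $0<ac<1$, the ``down--left'' branch of $\mathcal{M}$ from $(u_s,v_s)$ has an $\alpha$-limit inside $[0,1]\times[0,1]$ consisting of an equilibrium (again by Poincaré--Bendixson and absence of periodic orbits), and since the orbit has $u,v$ strictly decreasing backwards, the only possible $\alpha$-limit is the source $(0,0)$; this gives $\gamma(0)=0$, while the other branch exits through the top or right edge. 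The $C^2$ regularity of $\gamma$ on the open interval follows from smoothness of the polynomial vector field together with $\dot u\ne0$ on $\mathcal{M}$ away from the saddle, while regularity at the endpoint attached to the saddle is inherited from the SMT. I expect the main obstacle to be precisely the global monotonicity in the case $0<ac<1$: transversality of $\mathcal{M}$ and the nullcline at the saddle is immediate, but a clean ruling--out of a second crossing requires an attentive sign analysis together with a Poincaré--Bendixson (or Dulac) argument tailored to the present system.
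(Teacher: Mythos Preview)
Your proposal is correct and follows essentially the same architecture as the paper: Stable Manifold Theorem for the local graph, nullcline/region analysis plus Poincar\'e--Bendixson for the global extension, and identification of the endpoints via the $\alpha$-limit on one side and exit through $\{u=1\}\cup\{v=1\}$ on the other. Your explicit eigenvector computations (in particular the nice observation that $p(-u_s)=-u_s(\rho v_s+a)<0$ forces $\lambda_-<-u_s$, hence a strictly positive stable slope) are more detailed than what the paper does; the paper only checks that $(1,-1)$ is \emph{not} an eigenvector at $(u_s,v_s)$, which already suffices to conclude that $\mathcal{M}$ is transverse to the line $\{\dot u=0\}$ there and hence meets both adjacent regions.

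The one place where the paper is sharper than your sketch is exactly the point you flag as the main obstacle: ruling out a second nullcline crossing when $0<ac<1$. Rather than arguing directly about recrossings, the paper shows that the two ``bad'' regions $\mathcal{A}_1=\{\dot u\le0,\dot v\ge0\}$ and $\mathcal{A}_3=\{\dot u\ge0,\dot v\le0\}$ are each forward-invariant and are absorbed, respectively, into the basin $\mathcal{B}$ of $(0,1)$ and into the extinction set $\mathcal{E}$ (this is where the sign analysis and Poincar\'e--Bendixson are actually used). Since orbits on $\mathcal{M}$ converge to the saddle, they cannot lie in $\mathcal{B}$ or $\mathcal{E}$, so $\mathcal{M}\setminus\{(u_s,v_s)\}\subset\mathcal{A}_2\cup\mathcal{A}_4$, where $\dot u$ and $\dot v$ share a strict sign; monotonicity of $\gamma$ and the graph structure follow immediately. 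This basin argument is cleaner than a direct ``no second crossing'' argument and also yields, as a free by-product, the inclusions $\mathcal{A}_1\subset\mathcal{B}$ and $\mathcal{A}_3\subset\mathcal{E}$ used later in the paper; your route would still work but would need a little more care at this step.
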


As a byproduct of the proof of Proposition~\ref{lemma:M}, we also
obtain some useful information on the structure of the stable manifold and
the basins of attraction, that we summarize here below:

\begin{corollary}\label{lemma:M1}
Suppose that~$0<ac<1$. Then,
the curves~\eqref{curve:u'} and~\eqref{curve:v'}, loci of the points such that~$\dot{u}=0$ and~$\dot{v}=0$ respectively, divide the square~$[0,1]\times[0,1]$ into four regions:
	\begin{equation}\begin{split}\label{DEFA1234}
	&\mathcal{A}_1 := \big\{ (u, v) \in [0,1]\times[0,1] \;{\mbox{ s.t }}\; \dot{u}\leq 0,\; \dot{v}\geq 0 \big\}, \\&
	\mathcal{A}_2 := \big\{ (u, v) \in [0,1]\times[0,1] \;{\mbox{ s.t }}\; \dot{u}\leq 0,\; \dot{v}\leq 0 \big\}, \\&
	\mathcal{A}_3 := \big\{ (u, v) \in [0,1]\times[0,1] \;{\mbox{ s.t }}\;\dot{u}\geq 0,\; \dot{v}\leq 0 \big\}\\{\mbox{and }}\quad&
	\mathcal{A}_4 := \big\{ (u, v) \in [0,1]\times[0,1] \;{\mbox{ s.t }}\;\dot{u}\geq 0, \;\dot{v}\geq 0 \big\}.
	\end{split}\end{equation}
		
Furthermore, the sets~$\mathcal{A}_1\cup \mathcal{A}_4$
and~$\mathcal{A}_2\cup\mathcal{A}_3$ are separated by the curve~$\dot{v}=0$, given by the graph of the continuous function
	\begin{equation}\label{f:sigma}
	\sigma(v):= 1- \frac{\rho v^2+a}{\rho v+a},
	\end{equation}
that satisfies~$\sigma(0)=0$,~$\sigma(1)=0$
and~$0<\sigma(v)<1$ for all~$v\in (0,1)$.

In addition,
\begin{equation}\label{aggiunto}
{\mbox{$\mathcal{M}\setminus \{(u_s,v_s) \}$
is contained in~$\mathcal{A}_2\cup\mathcal{A}_4$,}}
\end{equation}
\begin{equation}\label{primaBIS}
(\mathcal{A}_3 \setminus \{ (0,0), (u_s, v_s) \} ) \subseteq \mathcal{E}
\end{equation}
and
\begin{equation}\label{prima2BIS}
\mathcal{A}_1\setminus \{(u_s,v_s) \} \subset \mathcal{B},\end{equation}
where the notation in~\eqref{DEFB} and~\eqref{DEFE} has been utilized.
\end{corollary}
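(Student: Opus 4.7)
The plan is to proceed in four steps: identify the separating curve $\{\dot{v}=0\}$ with the graph of $\sigma$; establish positive invariance of $\mathcal{A}_1$ and $\mathcal{A}_3$; locate the stable manifold $\mathcal{M}$ near $(u_s,v_s)$ via eigenvector analysis and combine with invariance to prove~\eqref{aggiunto}; finally deduce~\eqref{primaBIS} and~\eqref{prima2BIS} from monotonicity. For the first step I would solve $\dot{v}=0$ for $u$ when $v>0$: the equation $\rho v(1-u-v)=au$ is linear in $u$ and yields $u=\rho v(1-v)/(\rho v+a)$, which rearranges to $\sigma(v)=1-(\rho v^2+a)/(\rho v+a)$ as in~\eqref{f:sigma}. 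The values $\sigma(0)=\sigma(1)=0$ follow by substitution, and $0<\sigma(v)<\rho v/(\rho v+a)<1$ for $v\in(0,1)$ is immediate. Since $\dot{v}$ is continuous and strictly positive on $\{u=0,\,0<v<1\}$, the graph of $\sigma$ separates $\{\dot{v}\geq 0\}=\mathcal{A}_1\cup\mathcal{A}_4$ from $\{\dot{v}\leq 0\}=\mathcal{A}_2\cup\mathcal{A}_3$.

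Next, differentiating the equations in~\eqref{model} along a trajectory gives the identities $\ddot{u}=-u\dot{v}$ on $\{\dot{u}=0\}\cap\{u>0\}$ and $\ddot{v}=-(\rho v+a)\dot{u}$ on $\{\dot{v}=0\}$. Using these I would show that $\mathcal{A}_1$ and $\mathcal{A}_3$ are positively invariant by contradiction: a trajectory in $\mathcal{A}_1$ crossing $\{\dot{u}=0\}$ into $\{\dot{u}>0\}$ would need $\ddot{u}\geq 0$ at the crossing, clashing with $\ddot{u}=-u\dot{v}\leq 0$ since $\dot{v}\geq 0$ in $\mathcal{A}_1$; the equality case forces the crossing point to be an equilibrium, where the trajectory stops. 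The symmetric argument on $\{\dot{v}=0\}$ rules out transition to $\mathcal{A}_2$, and Proposition~\ref{prop:dyn} rules out exit through $\partial([0,1]\times[0,1])$; the same reasoning applies to $\mathcal{A}_3$ with signs reversed.

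The hardest part will be to show that the stable manifold $\mathcal{M}$ enters $\mathcal{A}_2\cup\mathcal{A}_4$, i.e., that the stable eigenvector $\xi^s=(\alpha,\beta)$ of $J(u_s,v_s)$ has components of the same sign. From the first row of $J(u_s,v_s)-\lambda_s I$ I would extract $\beta=-\alpha(u_s+\lambda_s)/u_s$, so $\alpha\beta>0$ if and only if $\lambda_s<-u_s$. The key algebraic trick is to evaluate the characteristic polynomial $p(\lambda)=\lambda^2-(\mathrm{tr}\,J)\lambda+\det J$ at $\lambda=-u_s$: using the equilibrium relations $u_s=\rho c v_s$ and $u_s+v_s=1-ac$, the expression simplifies to $p(-u_s)=-u_s(\rho v_s+a)<0$. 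Since $p$ has roots $\lambda_s<0<\lambda_u$ by Theorem~\ref{thm:dyn}(i), this forces $\lambda_s<-u_s$, hence $\alpha\beta>0$. Consequently, near $(u_s,v_s)$ along $\mathcal{M}$ the linearization shows $\dot{u}$ and $\dot{v}$ share a sign, placing the local stable manifold in $\mathcal{A}_2\cup\mathcal{A}_4$. Combining with positive invariance: any point of $\mathcal{M}\setminus\{(u_s,v_s)\}$ flows forward to $(u_s,v_s)$ and must eventually enter this local neighborhood; if it were ever in $\mathcal{A}_1\cup\mathcal{A}_3$, invariance would trap it there and prevent convergence to the saddle, giving~\eqref{aggiunto}.

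For the basin inclusions I would argue by monotonicity. In $\mathcal{A}_1\setminus\{(u_s,v_s)\}$, invariance forces $u$ non-increasing and $v$ non-decreasing for all time, and boundedness gives convergence to an equilibrium in $\overline{\mathcal{A}_1}$. By~\eqref{aggiunto} the limit cannot be $(u_s,v_s)$; trajectories with $u(0)=0$ and $v(0)>0$ reduce to the logistic equation for $v$ and converge to $(0,1)$; the remaining points monotonically approach the same equilibrium $(0,1)$, giving~\eqref{prima2BIS}. In $\mathcal{A}_3\setminus\{(0,0),(u_s,v_s)\}$, invariance yields $u$ non-decreasing and $v$ non-increasing, with $u(0)>0$; the trajectory either converges to an equilibrium in $\overline{\mathcal{A}_3}$ or exits through $\{v=0\}$. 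The first possibility is ruled out because $u(t)\geq u(0)>0$ excludes $(0,0)$ and~\eqref{aggiunto} excludes $(u_s,v_s)$; hence the trajectory exits through $v=0$ in finite time and lies in $\mathcal{E}$, proving~\eqref{primaBIS}.
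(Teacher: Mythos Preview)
Your proposal is correct, and it takes a genuinely different route from the paper's argument. The paper proves the inclusions~\eqref{primaBIS} and~\eqref{prima2BIS} \emph{first}, by a Poincar\'e--Bendixson argument: in $\mathcal{A}_3$ there are no cycles (the velocity components have a sign), and convergence to $(u_s,v_s)$ is excluded by the geometric observation that every point of $\mathcal{A}_3\setminus\{(u_s,v_s)\}$ satisfies $v_0<v_s$ while $\dot v\le 0$; a symmetric argument handles $\mathcal{A}_1$. Only \emph{after} establishing these basin inclusions does the paper conclude~\eqref{aggiunto} as a consequence (since $\mathcal{M}$ can meet neither $\mathcal{B}$ nor $\mathcal{E}$), supplementing this with the observation that $(1,-1)$ is not an eigenvector of $J(u_s,v_s)$ to see that $\mathcal{M}$ actually visits both $\mathcal{A}_2$ and $\mathcal{A}_4$.

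You reverse the order: you prove~\eqref{aggiunto} directly via the eigenvector computation $p(-u_s)=-u_s(\rho v_s+a)<0$, which forces $\lambda_s<-u_s$ and hence $\alpha\beta>0$ for the stable eigenvector $(\alpha,\beta)$; this is a clean algebraic identity not present in the paper. You then combine it with positive invariance of $\mathcal{A}_1,\mathcal{A}_3$ (obtained from the second-derivative identities $\ddot u=-u\dot v$ and $\ddot v=-(\rho v+a)\dot u$ on the respective nullclines) to globalize, and finally deduce the basin inclusions by monotone convergence plus exclusion of the saddle via~\eqref{aggiunto}. What your approach buys is a self-contained linear-algebraic determination of the stable direction, avoiding the geometric fact that $\mathcal{A}_3\subset\{v\le v_s\}$; what the paper's order buys is that~\eqref{aggiunto} falls out for free without any spectral computation.
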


To visualize the statements in Corollary~\ref{lemma:M1}, one can see Figure~\ref{fig:zone}.

	\begin{figure}
		\centering
		\includegraphics[width=1.1\linewidth]{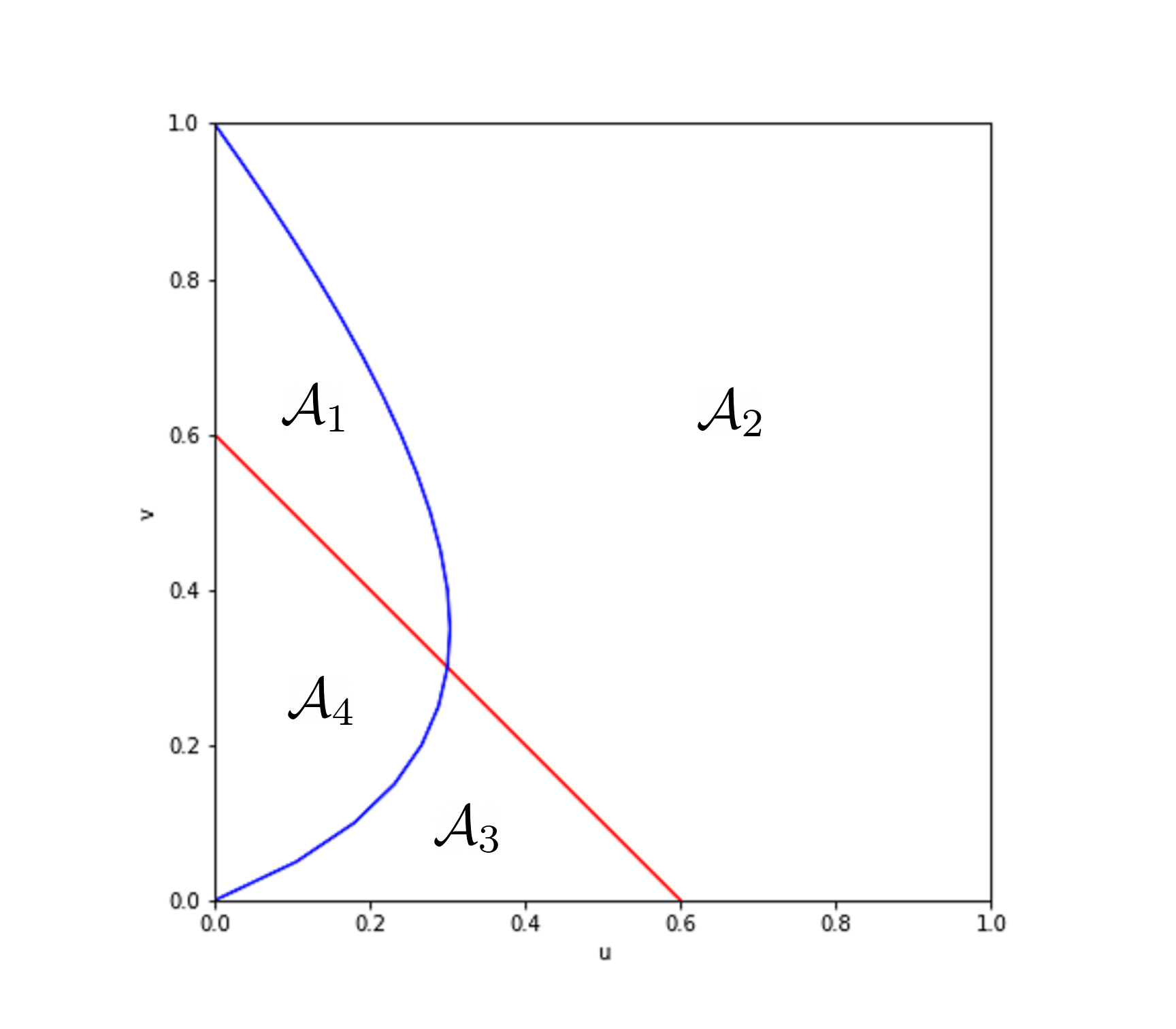}
		\caption{\em Partition of~$[0,1]\times[0,1]$ in the case~$a=0.8$,~$c=0.5$,~$\rho=2$,
		as given by~\eqref{DEFA1234}.
		In red, the curve~$\dot{u}=0$. In blue, the curve~$\dot{v}=0$, parametrized
	by the function~$\sigma$ in~\eqref{f:sigma}.}
		\label{fig:zone}
	\end{figure}

\begin{corollary}\label{lemma:M2}
Suppose that~$ac>1$.
Then , we have that~$\dot{u}\leq 0$ in~$[0,1]\times [0,1]$,
and the curve~\eqref{curve:v'}
divides the square~$[0,1]\times[0,1]$ into the regions
	\begin{equation}\begin{split}\label{DEFA12}
	&\mathcal{A}_1 := \big\{ (u, v) \in [0,1]\times[0,1]\;{\mbox{ s.t. }}\; \dot{u}\leq 0,\; \dot{v}\geq 0 \big\} \\
	{\mbox{and }}\quad&
	\mathcal{A}_2 := \big\{ (u, v) \in [0,1]\times[0,1]\;{\mbox{ s.t. }}\; \dot{u}\leq 0, \;
	\dot{v}\leq 0 \big\}.
	\end{split}\end{equation}

Furthermore, the sets~$\mathcal{A}_1$
and~$\mathcal{A}_2$ are separated by the curve~$\dot{v}=0$, given by the graph of the continuous function~$\sigma$ given in~\eqref{f:sigma}.

In addition,
\begin{equation}\label{aggiun2}
\mathcal{M}\subset \mathcal{A}_2.\end{equation} 
\end{corollary}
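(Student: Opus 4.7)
My plan is to proceed in parallel with the analysis behind Corollary \ref{lemma:M1}, but exploiting the fact that when $ac>1$ the sign of $\dot u$ is determined globally, which collapses the four-region partition down to two.

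First, to prove $\dot u\le 0$ throughout $[0,1]\times[0,1]$, I would simply bound
$$\dot u=u(1-u-v-ac)\le u(1-ac)\le 0,$$
using $1-u-v\le 1$ on $[0,1]^2$ together with the hypothesis $ac>1$. This forces $\mathcal{A}_1=\{\dot v\ge 0\}\cap[0,1]^2$ and $\mathcal{A}_2=\{\dot v\le 0\}\cap[0,1]^2$, so the partition is governed entirely by the zero set of $\dot v$.

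Next, solving $\rho v(1-u-v)-au=0$ for $u$ yields
$$u=\frac{\rho v(1-v)}{\rho v+a}=1-\frac{\rho v^2+a}{\rho v+a}=\sigma(v),$$
which coincides with the function in \eqref{f:sigma}. The properties $\sigma(0)=\sigma(1)=0$ follow at once, while $0<\sigma(v)<1$ for $v\in(0,1)$ reduces to the obvious inequality $-\rho v^2<a$. To verify that $\sigma$ genuinely separates $\mathcal{A}_1$ from $\mathcal{A}_2$, I would observe that $\partial_u\dot v=-\rho v-a<0$, so for each fixed $v\in(0,1]$ the map $u\mapsto \dot v(u,v)$ is strictly decreasing; hence $\dot v>0$ on $\{u<\sigma(v)\}$ and $\dot v<0$ on $\{u>\sigma(v)\}$, as required.

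Finally, for the inclusion $\mathcal{M}\subset\mathcal{A}_2$, I would invoke Proposition \ref{lemma:M}, which in the case $ac>1$ describes $\mathcal{M}$ as the graph of an increasing $C^2$ function $\gamma:[0,u_\mathcal{M}]\to[0,v_\mathcal{M}]$. Along any trajectory $(u(t),v(t))$ lying on $\mathcal{M}$, the first component is nonincreasing since $\dot u\le 0$ in $[0,1]^2$ by the first step, and $(u(t),v(t))\to(0,0)$ as $t\to+\infty$ because $\mathcal{M}$ is the stable manifold of the saddle at the origin. Then $v(t)=\gamma(u(t))$ is a composition of a nondecreasing function with a nonincreasing one, hence nonincreasing; therefore $\dot v\le 0$ on the whole of $\mathcal{M}$, giving $\mathcal{M}\subset\mathcal{A}_2$.

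The potentially delicate point is this last step, since one must rely on the global (not merely local) monotonicity of $\gamma$ to propagate the sign information along the whole stable manifold. Once Proposition \ref{lemma:M} is in hand, however, this reduces to a one-line chain-rule computation, and the rest of the statement is just a direct sign analysis on the explicit right-hand side of \eqref{model}.
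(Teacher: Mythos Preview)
Your treatment of $\dot u\le 0$ and of the partition by the curve $u=\sigma(v)$ is correct and essentially what the paper does. The gap is in your last step, the inclusion $\mathcal{M}\subset\mathcal{A}_2$.

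You invoke Proposition~\ref{lemma:M} to get that $\mathcal{M}$ is the graph of an \emph{increasing} function $\gamma$, and then use that monotonicity, via $\dot v=\gamma'(u)\dot u\le 0$, to conclude $\mathcal{M}\subset\mathcal{A}_2$. But look at how the paper organizes the argument: Corollaries~\ref{lemma:M1} and~\ref{lemma:M2} are not consequences of Proposition~\ref{lemma:M}; they are intermediate steps established \emph{inside} its proof. For $ac>1$ the paper first proves $\mathcal{M}\subset\mathcal{A}_2$ (by the ``Step~1'' localization argument), and only \emph{then} uses the fact that both $\dot u$ and $\dot v$ have a definite sign on $\mathcal{A}_2$ to conclude that $\mathcal{M}$ is globally the graph of an increasing function. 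Your argument reverses this dependency, so it is circular: the global monotonicity of $\gamma$ you are appealing to is itself obtained from $\mathcal{M}\subset\mathcal{A}_2$.

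To repair this you need a direct argument excluding $\mathcal{M}$ from the interior of $\mathcal{A}_1$. The paper's route (the ``obvious modification'' of Step~1) is: in $\mathcal{A}_1$ one has $\dot u\le 0$ and $\dot v\ge 0$, there are no cycles, and trajectories cannot leave $\mathcal{A}_1$ across the curve $u=\sigma(v)$ (the normal velocity there points inward). A Poincar\'e--Bendixson argument then forces every trajectory in $\mathcal{A}_1\setminus\{(0,0)\}$ to converge to $(0,1)$, i.e.\ $\mathcal{A}_1\setminus\{(0,0)\}\subset\mathcal{B}$. Since points of $\mathcal{M}$ converge to the saddle $(0,0)$, this rules out $\mathcal{M}\cap\big(\mathcal{A}_1\setminus\{(0,0)\}\big)\ne\varnothing$, giving $\mathcal{M}\subset\mathcal{A}_2$ without appealing to the monotonicity of $\gamma$.
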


Proposition~\ref{lemma:M} and Corollaries~\ref{lemma:M1}
and~\ref{lemma:M2}
are a bit technical, but provide fundamental information
to obtain a characterization of the sets~$\mathcal{E}$ and
$\mathcal{B}$, given in the forthcoming Proposition~\ref{prop:char}.

We now provide the proof of Proposition~\ref{lemma:M}
(and, as a byproduct, of Corollaries~\ref{lemma:M1} and~\ref{lemma:M2}).

\begin{proof}[Proof of Proposition~\ref{lemma:M} and
Corollaries~\ref{lemma:M1} and~\ref{lemma:M2}] In the forthcoming argument
we treat separately the cases $0<ac<1$ and~$ac> 1$.
We start with the case~$0<ac<1$, and divide the proof into three further parts.
	\medskip
	
\emph{Step 1: localizing~$\mathcal{M}$.}
Recall that a orbit is closed if the associate trajectory is periodic (see for example \cite{dynsyst}).
We first observe that
\begin{equation}\label{pouyi86}
\begin{split}
&{\mbox{there are no closed orbits other than fixed points}}\\
&{\mbox{entirely contained
		in~$\mathcal{A}_i$, $i=1, \dots, 4,$}}
	\end{split}\end{equation}
because~$\dot{u}$ and~$\dot{v}$ have a sign.

With the notation introduced in~\eqref{DEFA1234},
we prove that
	\begin{equation}\label{dkoegjerig94768}\begin{split}&
{\mbox{all points $(u_0,v_0)\in\mathcal{A}_3\setminus \{(0,0), (u_s,v_s) \}$}}\\&{\mbox{have~$T_s(u_0,v_0)<+\infty$.}}\end{split}\end{equation} 
To this aim, we show that 
\begin{equation}\label{pouyi861}{\mbox{no trajectory exits~$\mathcal{A}_3$.}}\end{equation}
First of all, we notice that no trajectory can exit $\mathcal{A}_3$ through $(0,0)$ or $(u_s,v_s)$ since they are equilibria. 

We remark that the side connecting~$(0,0)$ and~$(u_s, v_s)$ can be written as the set
of points belonging to
$$\big\{ (u,v)\in [0,1]\times (0,v_s) \;{\mbox{ s.t. }}\;
u=\sigma(v) \big\},$$
where the function~$\sigma$ is defined in~\eqref{f:sigma}.
In this set, it holds that~$\dot{v}=0$ and~$\dot{u}>0$, thus the normal derivative pointing outward~$\mathcal{A}_3$ is negative, so the trajectories cannot exit~$\mathcal{A}_3$ passing through this side. 

Furthermore, on the side connecting~$(u_s, v_s)$ with~$(1-ac, 0)$, that lies on the straight line~$v=1-ac-u$, we have that~$\dot{u}= 0$  and~$\dot{v}<0$ for ~$(u,v)\neq (u_s,v_s)$, so also here the outer normal derivative is negative. Therefore, the trajectories cannot exit~$\mathcal{A}_3$ passing through this side either. This completes the proof of \eqref{pouyi861}.

Now we show that
\begin{equation}\begin{split}\label{pouyi862}
&{\mbox{there are no equilibria where a trajectory}}\\&{\mbox{lying
in the interior
of~$\mathcal{A}_3$ can converge.}}\end{split}\end{equation}
Indeed, if~$p=(u_0,v_0)\in\mathcal{A}_3\setminus \{(0,0)\}$, then $u_0>0$. 
Now, $(0,0)\not\in\omega(p)$, because~$\dot{u}\geq 0$ in~$\mathcal{A}_3$. 
Also, for all~$(u_0, v_0)\in \mathcal{A}_3 \setminus(u_s, v_s)$, we have that~$v_0<v_s$.

On the other hand,~$\dot{v}\leq 0$ in~$\mathcal{A}_3$, so no trajectory that is entirely
contained in~$\mathcal{A}_3$ can converge to~$(u_s, v_s)$. These observations prove~\eqref{pouyi862}.

As a consequence of~\eqref{pouyi86},~\eqref{pouyi861},~\eqref{pouyi862} and the
Poincar\'e-Bendixson Theorem (see e.g.~\cite{MR1056699}, see also Lemma 9.0.4), we have that
all the points in the interior of~$\mathcal{A}_3$ must have $T_s(u_0,v_0)<+\infty$.

These considerations complete the proof of~\eqref{dkoegjerig94768}.
Accordingly, recalling the definition of~$ \mathcal{E}$ in~\eqref{DEFE}, we see
that
\begin{equation}\label{prima}
(\mathcal{A}_3 \setminus \{ (0,0), (u_s, v_s) \} ) \subseteq \mathcal{E}.
\end{equation}
In a similar way one can prove that all trajectories starting in~$\mathcal{A}_1\setminus \{(u_s,v_s) \}$ must converge to~$(0,1)$, which, recalling the definition of~$\mathcal{B}$
in~\eqref{DEFB}, implies that
\begin{equation}\label{prima2}
\left(\mathcal{A}_1\setminus \{(u_s,v_s) \} \right) \subset \mathcal{B}.\end{equation}
Thanks to~\eqref{prima} and~\eqref{prima2},
we have that the stable manifold~$\mathcal{M}$ has no intersection with~$\mathcal{A}_1\setminus \{(u_s,v_s) \}~$ and~$\mathcal{A}_3\setminus \{(0,0),(u_s,v_s) \}~$,
and therefore~$\mathcal{M}$ must lie
in~$\mathcal{A}_2\cup \mathcal{A}_4$.

Also, we know that~$\mathcal{M}$ is tangent to an eigenvector in~$(u_s, v_s)$,
and we observe that
\begin{equation}\label{poui985004}
{\mbox{$(1, -1)$ is not an eigenvector of the linearized system.}}\end{equation}
Indeed, if~$(1, -1)$ were an eigenvector, then
$$
\begin{pmatrix}
	1-ac-2u_s-v_s & -u_s \\ 
	-\rho v_s-a & \rho-\rho u_s-2\rho v_s
\end{pmatrix}\cdot
	\begin{pmatrix}
	1\\-1\end{pmatrix}=\lambda\begin{pmatrix}
	1\\-1\end{pmatrix},
	$$
so from the first component we would get~$\lambda=0$, which is not an eigenvalue of the saddle point $(u_s, v_s)$. This
establishes~\eqref{poui985004}.

In light of~\eqref{poui985004}, we conclude that~$\mathcal{M}\setminus \{(u_s,v_s) \}$
must have intersection with both~$\mathcal{A}_2$ and~$\mathcal{A}_4$.
	
	\medskip

	\emph{Step 2: defining~$\gamma(u)$ and tangential property.}
	Since~$\dot{u}> 0$ and~$\dot{v}>0$ in the interior of~$\mathcal{A}_4$,
	the portion of~$\mathcal{M}$
	in~$\mathcal{A}_4$ can be described globally as the graph of a
	monotone increasing smooth function~$\gamma_1:U\to[0,v_s]$,
	for a suitable interval~$U\subseteq[0,u_s]$ with~$u_s\in U$, and such that~$\gamma_1(u_s)=v_s$. 
	
	We stress that, for~$u>u_s$, the points~$(u,v)\in \mathcal{M}$ belong
	to~$\mathcal{A}_2$.
	
	Similarly, in the interior of~$\mathcal{A}_2$ we have that~$\dot{u}< 0$ and~$\dot{v}<0$.
	Therefore, we find that~$\mathcal{M}$ can be represented in~$\mathcal{A}_2$
	as the graph of a monotone increasing smooth function~$\gamma_2: V\to [v_s, 1]$, for a suitable interval~$V\subseteq[u_s,1]$ with~$u_s\in V$, and such that~$\gamma_2(u_s)=v_s$.  Notice that in the second case the trajectories and the parametrization run in opposite directions.
	
	Now, we define
	\begin{equation*}
		\gamma(u) := \begin{cases}
		\gamma_1(u)  & {\mbox{ if }}u\in U, \\
		\gamma_2(u)  &{\mbox{ if }} u\in V,
		\end{cases}
	\end{equation*}
	and we observe that it is an increasing smooth function locally
	parametrizing~$\mathcal{M}$ around~$(u_s,v_s)$ (thanks to the
	Stable Manifold Theorem).
	
	We point out that, in light of the
	Stable Manifold Theorem, the stable manifold~$\mathcal{M}$ is globally
	parametrized by
	an increasing smooth function on a set~$W\subset[0,1]$.
	
	We now study the tangent to $\gamma(u)$ at $u=u_s$.
	First, let us compute the derivative of $\gamma(u)$, which is given by
	\begin{equation*}
	\frac{d \gamma (u)}{du} = \frac{d v(t)}{dt} \cdot \frac{dt}{du(t)} = \frac{\dot v}{\dot u}.
	\end{equation*}
	Notice that, as $u\to u_s^{-}$ and $v\to v_s^{-}$, owing the expression of~$u_s$ and~$v_s$ given in formula~\eqref{usvs}, we get
	\begin{equation*}
		\underset{\substack{u\to u_s \\ v \to v_s}}{\lim} \, \frac{\dot v}{\dot u} = \underset{\substack{u\to u_s \\ v \to v_s}}{\lim} \, \frac{\rho v(1-u-v)-au}{u(1-u-v-ac)} = \frac{1}{c}.
	\end{equation*} 
	From this we can say that $$\gamma'(u)=\frac{1}{c}.$$
	This gives us that $\gamma(u)$ is tangent to the line
	$(v-v_s)c-(u-u_s)=0$, as desired.

	\medskip

	\emph{Step 3: showing that~$\gamma(0)=0$ and~$\gamma(u_{\mathcal{M}})=v_{\mathcal{M}}$
	for some~$(u_{\mathcal{M}},v_{\mathcal{M}})\in\partial\big([0,1]\times[0,1]\big)$.}
	We first prove that
	\begin{equation}\label{r4gyghj}
	\gamma(0)=0.\end{equation}
	For this, we claim that
	\begin{equation}\label{098ouitdbnb}
	{\mbox{no trajectory enters ~$\mathcal{A}_4\setminus \{(u_s,v_s)\}$.}}
	\end{equation}
		
	Indeed, it is easy to see that points in the form $(0,v_0)$ converge to $(0,1)$. Hence, by the uniqueness of the trajectory passing through a nonfixed point,  we get that no trajectory can enter through the side of $\mathcal{A}_4$ laying on $\{u=0\}$.
		
	No trajectory can enter through $(u_s,v_s)$ or $(0,0)$ since they are equilibria.	
		
	As for the side connecting~$(0,0)$ to~$(u_s, v_s)$, excluding the extrema, one has that~$\dot{u}>0$ and~$\dot{v}=0$, and so the inward pointing normal derivative is negative. Therefore,
	no trajectory can enter~$\mathcal{A}_4$ on this side, see Remark \ref{rmk:trajectory}.
	
Moreover, on the side connecting~$(u_s, v_s)$ to~$(0, 1-ac)$ the inward pointing normal derivative is negative, because~$\dot{u}=0$ and~$\dot{v}>0$, thus we have that no
trajectory can enter~$\mathcal{A}_4$ on this side either.
These considerations prove~\eqref{098ouitdbnb}.

Furthermore, by \ref{pouyi86}, we have that no closed orbits are allowed in~$\mathcal{A}_4$.

{F}rom~\eqref{098ouitdbnb},~\eqref{pouyi86} and the
Poincar\'e-Bendixson Theorem (see e.g.~\cite{TESCHL}), we conclude that,
given a point~$(\tilde u,\tilde v)\in\mathcal{M}$ in the
interior of~$\mathcal{A}_4$, the~$\alpha$-limit set of~$(\tilde u,\tilde v)$,
that we denote by~$\alpha{(\tilde u,\tilde v)}$, exists and  
\begin{equation}\label{09765gjkd}\begin{split}&
{\mbox{$\alpha{(\tilde u,\tilde v)}$ can be either an equilibrium}}\\&{\mbox{or a union of (finitely many)
equilibria}}\\&{\mbox{and non-closed orbits connecting these equilibria.}}\end{split}
\end{equation}

We stress that, being~$(\tilde u,\tilde v)$ in the interior of~$\mathcal{A}_4 $, we have that
\begin{equation}\label{8yfe993vcem}
\tilde u<u_s  \  \vee  \ \tilde v<v_s .
\end{equation}
Now, we observe that
\begin{equation}\label{degfiewgh}
{\mbox{$\alpha{(\tilde u,\tilde v)}$ cannot contain the saddle point~$(u_s,v_s)$.}}\end{equation}
Indeed, suppose by contradiction that~$\alpha{(\tilde u,\tilde v)}$ does
contain~$(u_s,v_s)$. Then,
we denote by $$\phi_{(\tilde u,\tilde v)}(t)=\big(u_{(\tilde u,\tilde v)}(t),v_{(\tilde u,\tilde v)}(t)\big)$$ the solution of~\eqref{model}
with~$\phi_{(\tilde u,\tilde v)}(0)=(\tilde u,\tilde v)$, and we have that
there exists a sequence~$t_j\to-\infty$ such
that~$\phi_{(\tilde u,\tilde v)}(t_j)$ converges to~$(u_s,v_s)$ as~$j\to+\infty$.
In particular, in light of~\eqref{8yfe993vcem},
there exists~$j_0$ sufficiently large
such that
$$ u_{(\tilde u,\tilde v)}(0)=\tilde u<u_{(\tilde u,\tilde v)}(t_{j_0}) \quad{\mbox{ or }}\quad  \ v_{(\tilde u,\tilde v)}(0)=\tilde v<v_{(\tilde u,\tilde v)}(t_{j_0}).$$
Consequently, there exists~$t_\star\in(t_{j_0},0)$ such that
$$\dot
u_{(\tilde u,\tilde v)}(t_\star)<0 \quad{\mbox{ or }}\quad\dot
v_{(\tilde u,\tilde v)}(t_\star)<0.$$

As a result, it follows that~$\phi_{(\tilde u,\tilde v)}(t_\star)\not\in\mathcal{A}_4$.
This, together with the fact that~$\phi_{(\tilde u,\tilde v)}(0)\in\mathcal{A}_4$,
is in contradiction with~\eqref{098ouitdbnb}, and the proof of~\eqref{degfiewgh}
is thereby complete.

Thus, from~\eqref{09765gjkd} and~\eqref{degfiewgh},
we deduce that~$\alpha_{(\tilde u,\tilde v)}=\{(0,0)\}$.
This gives that~$(0,0)$ lies on the stable manifold~$\mathcal{M}$,
and therefore the proof of~\eqref{r4gyghj} is complete.

Now, we show that
\begin{equation}\label{ifregkjh0000}\begin{split}&
{\mbox{there exists~$(u_{\mathcal{M}},v_{\mathcal{M}})\in\partial\big([0,1]\times[0,1]\big)$}}\\&{\mbox{such that~$\gamma(u_{\mathcal{M}})=v_{\mathcal{M}}$.}}\end{split}
\end{equation}
To prove it, we first observe that
\begin{equation}\label{ifregkjh0000pre}\begin{split}&
{\mbox{non-fixed point orbits converging to~$(u_s,v_s)$}}\\ &{\mbox{are not contained in~$ \mathcal{A}_2$}}.\end{split}
\end{equation}
Indeed, we suppose by contradiction that
{there exists a nontrivial orbit contained {in}~$\mathcal{A}_2$
converging to~$(u_s,v_s)$.}
We remark that, in this case, 
{the orbit contianed in~$\mathcal{A}_2$ cannot be a close orbit,}
because~$\dot{u}$ and~$\dot{v}$ have a sign in~$\mathcal{A}_2$.

Then, by the Poincar\'e-Bendixson Theorem (see e.g.~\cite{TESCHL}), we conclude that, given a point~$(\tilde u,\tilde v)\in\mathcal{M}$ in the interior of~$\mathcal{A}_2$, the~$\alpha$-limit set of~$(\tilde u,\tilde v)$,
that we denote by~$\alpha{(\tilde u,\tilde v)}$, exists and it is an equilibrium or a union of (finitely many) equilibria and non-closed orbits connecting these equilibria.

We notice that the set~$\alpha{(\tilde u,\tilde v)}$ cannot contain~$(0,1)$ or $(0,0)$,
since they lay outside~$\mathcal{A}_2$.

So, since $(u_s,v_s)$ is the only equilibria, the $\alpha-$ limit set must coincide with it and the orbit must be a omocline. This is in contradiction with \eqref{pouyi86}, proving \eqref{ifregkjh0000pre}.

Now, we observe that
the inward pointing normal derivative
at every point in~$\mathcal{A}_2 \cap \mathcal{A}_3 \setminus\{(u_s, v_s)\}$
is negative, since~$\dot{u}=0$ and~$\dot{v}<0$. Hence, no trajectory can enter
from this side (see Remark \ref{rmk:trajectory}).

Also, 
the inward pointing normal derivative
at every point in~$\mathcal{A}_1 \cap \mathcal{A}_2 \setminus\{(u_s, v_s)\}$
is negative, since~$\dot{u}>0$ and~$\dot{v}=0$. Hence, no trajectory can enter
from this side either.

These observations and~\eqref{ifregkjh0000pre} give the desired result
in~\eqref{ifregkjh0000}, and thus Proposition~\ref{lemma:M}
is established in the case~$ac<1$.
	
	\medskip
	
	Now we treat the case~$ac>1$, using the same ideas. In this setting,
	$\mathcal{M}$ is the stable manifold associated with the saddle point~$(0,0)$.
	We point out that, in this case, for all points in~$[0,1]\times [0,1]$ we have
	that~$\dot{u}\leq 0$.
	
	Hence, the curve of points satisfying~$\dot{v}=0$, that was also given in~\eqref{curve:v'}, divides the square~$[0,1]\times[0,1]$ into two regions~$
	\mathcal{A}_1$ and~$\mathcal{A}_2$, defined in~\eqref{DEFA12}.	
	
Now, one can repeat verbatim the arguments in {\emph{Step 1}} with obvious modifications,
to find that~$
\mathcal{M}\subset \mathcal{A}_2$.

Since the derivatives of~$u$ and~$v$ have a sign in~$\mathcal{A}_2$, and the
set~$\mathcal{M}$ in this case is the trajectory of a point converging to~$(0,0)$, the set~$\mathcal{M}$ can be represented globally as the graph of a smooth increasing function~$\gamma: U\to [0,1]$ for a suitable interval~$U\subseteq[0,1]$ containing the origin.

As a consequence, the condition~$\gamma(0)=0$ is trivially satisfied in this setting.
The existence of a suitable~$(u_{\mathcal{M}},v_{\mathcal{M}})$ can be derived reasoning as in {\emph{Step 3}}
with obvious modifications.

Now, we prove that
\begin{equation}\label{ofriyty98579}\begin{split}&
{\mbox{at~$u=0$ the function~$\gamma$ is tangent}}\\&
{\mbox{to the line~$(\rho-1+ac)v-au=0$.}}\end{split}\end{equation}
For this, we recall~\eqref{Jmatrix} and we see, by inspection, that	
the Jacobian matrix~$J(0,0)$ has two eigenvectors, namely~$(0,1)$ and~~$(\rho-1+ac, a)$. The first one is tangent to the line~$u=0$, that is the unstable manifold of~$(0,0)$,
as one can easily verify.

Thus, the second eigenvector is the one tangent to~$\mathcal{M}$, as prescribed by the Stable Manifold Theorem (see e.g.~\cite{dynsyst}).

Hence, in~$(0,0)$ the manifold~$\mathcal{M}$ is tangent to the line $$(\rho-1+ac)v-au=0$$ and so is the function~$\gamma$ in~$u=0$. This proves~\eqref{ofriyty98579}, and thus
Proposition~\ref{lemma:M}
is established in the case~$ac>1$ as well.
\end{proof}	

\section{Characterization of~$\mathcal{M}$ when~$ac=1$}\label{sec:deg}

Here we will prove the counterpart of Proposition~\ref{lemma:M}
in the degenerate case~$ac=1$.

To this end, looking at the velocity fields,
we first observe that
\begin{equation}\label{NOEX}
\begin{split}&
{\mbox{trajectories starting in~$(0,1)\times(-\infty,1)$ at time~$t=0$}}\\&{\mbox{remain in~$(0,1)\times(-\infty,1)$ for all time~$t>0$.}}\end{split}
\end{equation}
We also point out that
\begin{equation}\label{CALR}
\begin{split}&
{\mbox{trajectories entering the region}} \\&{\mathcal{R}}:=
\{u\in(0,1),\,u+v<0\} \quad {\mbox{at some time~$t_0\in\R$,}}\\&{\mbox{remain in that region for all time~$t>t_0$,}}\end{split}
\end{equation}
since $$\dot v=\rho v(1-u-v)-au=-\rho u-au<0$$
along~$\{u\in(0,1),\,u+v=0\}$ (see Remark \ref{rmk:trajectory}).

Also, by the
Center Manifold Theorem
(see e.g. Theorem~1 on page~16
of~\cite{MR635782} or pages 89-90 in~\cite{MR1031257}),
there exists a collection~$\mathcal{M}_0$
of invariant curves, which are all
tangent at the origin to the eigenvector corresponding to the null eigenvalue,
that is the straight line~$\rho v-au=0$.

Then, we define $$\mathcal{M}:=
\mathcal{M}_0\cap ([0,1]\times[0,1])$$ and we observe that this
intersection is nonvoid, given the tangency property of~$\mathcal{M}_0$
at the origin.

In what follows, for every~$t\in\R$, we denote by $$(u(t),v(t))=\phi_p(t)$$ the orbit of~$p\in\mathcal{M}\setminus\{(0,0)\}$. We start by providing an observation
related to negative times:

\begin{lemma} \label{NOCPA}
Suppose that~$ac=1$.
If~$p\in\mathcal{M}\setminus\{(0,0)\}$
then~$\phi_p(t)$ cannot approach the origin for negative values of~$t$.
\end{lemma}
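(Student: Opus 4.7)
I would argue by contradiction, assuming the existence of a sequence of negative times $t_n$ (either $t_n\to -\infty$ or $t_n\to T_0<0$) such that $\phi_p(t_n)\to(0,0)$. The core idea is to exploit the Center Manifold Theorem to obtain a local smooth graph representation of the invariant curve through $p$, then use the algebraic cancellation coming from $ac=1$ to derive strict monotonicity of $u$ along the reduced scalar flow, and finally use forward invariance of a small arc near the origin to trap the entire orbit of $p$ in a prescribed neighbourhood of $(0,0)$, contradicting $p\ne(0,0)$.

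Since $p\in\mathcal{M}$, the orbit $\phi_p$ lies on some invariant curve $W^c$ in the collection $\mathcal{M}_0$, and by the Center Manifold Theorem $W^c$ is at least $C^2$ and tangent at $(0,0)$ to the line $\rho v-au=0$. Hence there exists $\delta>0$ such that the portion of $W^c$ near the origin can be written as the graph
\begin{equation*}
V_\delta \;:=\; \bigl\{(u,g(u)) : u\in[0,\delta)\bigr\},
\qquad g(0)=0, \quad g'(0)=\frac{a}{\rho}.
\end{equation*}
Writing $\phi_p(t)=(u(t),g(u(t)))$ whenever the orbit lies in $V_\delta$, and using $ac=1$ in the first equation of \eqref{model}, the reduced scalar dynamics read
\begin{equation*}
\dot u \;=\; -u\bigl(u+g(u)\bigr) \;=\; -\left(1+\frac{a}{\rho}\right)u^2+O(u^3).
\end{equation*}
Shrinking $\delta$ if needed, one secures $\dot u<0$ on $V_\delta\cap\{u>0\}$, which makes $V_\delta$ forward-invariant for the flow on $W^c$: $u$ strictly decreases, stays in $[0,\delta)$, and $v=g(u)$ tracks it continuously.

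To close the argument, fix $\varepsilon\in(0,|p|)$, where $|p|$ is the Euclidean distance of $p$ from $(0,0)$, and further reduce $\delta$ so that $V_\delta\subset B_\varepsilon(0,0)$. Since $\phi_p(t_n)\to(0,0)$ and $\phi_p(t_n)$ remains on $W^c$ by invariance, for $n$ large enough $\phi_p(t_n)\in V_\delta$; uniqueness of solutions to \eqref{model}, together with the fact that $(0,0)$ is a fixed point, rules out $u(t_n)=0$, for otherwise $\phi_p$ would be constantly equal to $(0,0)$, contradicting $p\ne(0,0)$. Forward invariance of $V_\delta$ then yields $\phi_p(t)\in V_\delta\subset B_\varepsilon$ for every $t\ge t_n$, and evaluating at $t=0$ gives $p\in B_\varepsilon$, in contradiction with $|p|>\varepsilon$. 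The main subtlety is the passage from the purely local information supplied by the Center Manifold Theorem to a global statement about the location of $p$; the bridge is the strict inequality $\dot u<0$, which in turn hinges on the algebraic cancellation $1-ac=0$ that erases the linear contribution in the $u$-equation and uncovers the favourable negative quadratic coefficient $-(1+a/\rho)$.
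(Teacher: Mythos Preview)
Your argument is correct, but it takes a different route from the paper's. You lean on the smoothness of the center manifold: writing $W^c$ locally as $v=g(u)$ with $g'(0)=a/\rho$, you obtain $\dot u=-(1+a/\rho)u^2+O(u^3)<0$ on the arc $V_\delta$, deduce forward invariance of $V_\delta$, and then trap $p=\phi_p(0)$ in $B_\varepsilon$ once some $\phi_p(t_n)$ with $t_n<0$ lands in $V_\delta$. The paper instead avoids the graph representation entirely. It uses that the region $\mathcal R=\{u\in(0,1),\,u+v<0\}$ is forward invariant to conclude that $\phi_p(T)\notin\mathcal R$ for all $T\le0$, whence $u+v\ge0$ and thus $\dot u=-u(u+v)\le0$ along the backward orbit; then a subsequence with $u(t_{n+1})<u(t_n)$ (extracted from $u(t_n)\to0^+$) is incompatible with $\int_{t_{n+1}}^{t_n}\dot u\le0$.

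The trade-off is this: the paper's proof is more elementary---it uses only a sign computation and a trapping region, never invoking $C^2$-smoothness or the tangent slope of the center manifold. Your proof uses more structure but yields a cleaner endgame (forward invariance of a small set, rather than juggling monotone subsequences). One point worth making explicit in your write-up: you are using that $W^c$ is a \emph{globally} invariant curve, so that the whole orbit of $p$, including $\phi_p(t_n)$, remains on $W^c$; this is how you know $\phi_p(t_n)$, being both on $W^c$ and near the origin, actually lies on the local graph $V_\delta$. The paper's phrasing of $\mathcal{M}_0$ as ``a collection of invariant curves'' supports this reading, but since center manifolds from the standard theorem are a priori only \emph{locally} invariant, it is worth flagging the step.
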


\begin{proof}
We argue by contradiction
and denote by~$t_1,\dots,t_n,\dots$ a sequence of such negative values of~$t$, for which~$t_n\to-\infty$ and
$$ \lim_{n\to+\infty}\phi_p(t_n)=(0,0).$$
Up to a subsequence, we can also suppose that
\begin{equation}\label{bejv0565etP}
u(t_{n+1})<u(t_n).
\end{equation}
In light of~\eqref{CALR}, we have that, for all~$T\le0$,
\begin{equation}\label{0okf3233}
\phi_p(T)\not\in{\mathcal{R}}.
\end{equation}
Indeed, if~$\phi_p(T)\in{\mathcal{R}}$, we deduce from~\eqref{CALR}
that~$\phi_p(t)\in{\mathcal{R}}$ for all~$t\ge T$. In particular,
we can take~$t=0\ge T$ and conclude that~$p=\phi_p(0)\in{\mathcal{R}}$,
and this is in contradiction with the assumption that~$p\in{\mathcal{M}}\setminus\{(0,0)\}$.

As a byproduct of~\eqref{0okf3233}, we obtain that, for all~$T\le0$,
$$\phi_p(T)\in
\{u\in(0,1),\,u+v\ge0\}\subseteq\{\dot u=-u(u+v)\le0\}.$$
In particular
$$ u(t_n)-u(t_{n+1})=\int_{t_{n+1}}^{t_n}\dot u(\tau)\,d\tau\le0,$$
which contradicts~\eqref{bejv0565etP},
and consequently we have established the desired result.
\end{proof}

Now we show that the~$\omega$-limit set of any point lying
on the global center manifold coincides with the origin, according to the next result:

\begin{lemma}\label{lerptj:lemma}
Suppose that~$ac=1$.
If~$p\in\mathcal{M}$, then its~$\omega$-limit is~$\{(0,0)\}$.
\end{lemma}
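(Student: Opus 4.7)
The plan combines the monotonicity of $u$ along the orbit with the local graph structure of the center manifold at the origin. For $p=(0,0)$ the conclusion is immediate, so assume $p\in\mathcal{M}\setminus\{(0,0)\}$ and write $\phi_p(t)=(u(t),v(t))$.

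I would first confine $\phi_p$ to $[0,1]\times[0,1]$ for all $t\ge 0$. By~\eqref{NOEX} and the normal-derivative calculations behind Proposition~\ref{prop:dyn}, the orbit cannot leave through the edges $u=0$, $u=1$, or $v=1$. The delicate edge is $\{v=0\}$: at a crossing with $u>0$ one has $\dot v=-au<0$, which pushes the orbit into $\{v<0\}$; there $\dot v$ remains strictly negative as long as $v\in(-u,0)$ (since $\rho v(1-u-v)<0$ and $-au<0$), so the orbit must eventually enter $\mathcal{R}=\{u+v<0\}$, where by~\eqref{CALR} it is trapped with $v(t)\to-\infty$. This is incompatible with the orbit lying on the invariant curve $\mathcal{M}_0$, whose branch through the first quadrant is tangent at the origin to $\rho v=au$ (so $v\ge 0$ for $u\ge 0$ near the origin) and is bounded. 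This is a forward-in-time variant of the trapping argument of Lemma~\ref{NOCPA}.

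With confinement in hand, the identity $\dot u=-u(u+v)\le 0$ in $[0,1]\times[0,1]$ makes $u(t)$ non-increasing, so it converges to some $u_\infty\ge 0$. If $u_\infty>0$, then using $v\ge 0$ we obtain $\dot u\le -u^2\le -u_\infty^2<0$ uniformly in $t$, forcing $u(t)$ to become negative in finite time; hence $u_\infty=0$. For $t$ large enough, $u(t)\in[0,\delta]$ for a $\delta$ on which $\mathcal{M}_0$ admits the graph representation $v=h(u)$ with $h(0)=0$ and $h'(0)=a/\rho$, as provided by the Center Manifold Theorem. By invariance of $\mathcal{M}_0$ we have $v(t)=h(u(t))$ for such $t$, and continuity of $h$ at $0$ yields $v(t)\to 0$. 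Therefore $\phi_p(t)\to(0,0)$, which is exactly the statement that the $\omega$-limit of $p$ is $\{(0,0)\}$.

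The main obstacle I anticipate is the confinement step: carefully combining~\eqref{CALR} with the bounded-branch property of $\mathcal{M}_0$ at the origin to exclude escape through $\{v=0\}$, which is a forward-time analogue of Lemma~\ref{NOCPA}. The remaining ingredients are the sign of $\dot u$ together with a Gronwall-type comparison to pin down $u_\infty=0$, and the continuity of the local center-manifold graph to transfer convergence from $u$ to $v$.
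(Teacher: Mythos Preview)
Your argument has a genuine gap in the final step, and a smaller one in the confinement step.

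\textbf{The confinement step.} Your claim that once $v<0$ the orbit ``must eventually enter $\mathcal{R}$'' is not justified by the inequality $\dot v<0$ alone: since $u$ is also evolving, $v$ could in principle decrease while $u+v$ stays positive. You would need an additional argument (for instance, showing $u+v$ is strictly decreasing and cannot stabilise at a positive value). Your appeal to $\mathcal{M}_0$ being ``bounded'' is also unjustified: the paper only defines $\mathcal{M}_0$ as a collection of invariant curves tangent at the origin, with no global boundedness stated. The paper's confinement argument is simpler and avoids both issues: once the orbit enters $\{v<0,\,u\in(0,1)\}$ it remains there (by the velocity field on $\{v=0\}$), hence never approaches the tangent line $\rho v=au$ for positive times; combined with Lemma~\ref{NOCPA} for negative times, this contradicts the tangency of $\mathcal{M}_0$ at the origin directly, without needing to reach $\mathcal{R}$ or invoke boundedness.

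\textbf{The convergence step.} Your deduction that $u_\infty=0$ from $\dot u\le -u^2$ is correct and elegant. But the final sentence is circular: you assert that for large $t$ one has $v(t)=h(u(t))$ because $u(t)\in[0,\delta]$ and $\mathcal{M}_0$ is a graph there. The graph representation $v=h(u)$ holds only in a \emph{neighbourhood of the origin} in the $(u,v)$-plane, not merely on a $u$-interval. Knowing $u(t)\to 0$ does not place $(u(t),v(t))$ in that neighbourhood unless you already know $v(t)$ is small---which is precisely what you are trying to prove. Concretely, nothing in your argument yet excludes $(u(t),v(t))\to(0,1)$, which satisfies $u(t)\to 0$ but is far from the local graph.

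The paper closes this gap differently: after confinement, Poincar\'e--Bendixson (together with the sign of $\dot u$, which rules out cycles) forces the $\omega$-limit to be $(0,0)$ or $(0,1)$. To exclude $(0,1)$, the paper again invokes Lemma~\ref{NOCPA}: if the orbit converged to the sink $(0,1)$, then combining this with Lemma~\ref{NOCPA} the orbit would never approach the origin in either time direction, contradicting that $p$ lies on a curve tangent to $\rho v=au$ at the origin. Your monotonicity route is a legitimate alternative for reaching $u_\infty=0$, but you still need this NOCPA-based exclusion of $(0,1)$ (or an equivalent argument) to finish.
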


\begin{proof}
We observe that, for every~$t>0$,
\begin{equation}\label{T0z}
\phi_p(t)\in[0,1]\times[0,1].
\end{equation}
By Remark \ref{rmk:dichotomy}, the other possibility would be $T_s(p)<+\infty$.
Therefore, to prove~\eqref{T0z}, we suppose, by contradiction,
that there exists~$t_0\ge0$ such that~$\phi_p({t_0})\in[0,1]\times\{0\}$,
that is~$v(t_0)=0$.

Since~$(0,0)$ is an equilibrium, it follows that~$u(t_0)\ne0$.
In particular,~$u(t_0)>0$ and accordingly $$\dot v(t_0)=-au(t_0)<0.$$

This means that~$v(t_0+\varepsilon)<0$ for all~$\varepsilon\in
(0,\varepsilon_0)$ for a suitable~$\varepsilon_0>0$.
Looking again at the velocity fields, this entails that~$\phi_p(t)\in(0,1)\times(-\infty,0)$
for all~$t>\varepsilon_0$.

Consequently,
$\phi_p(t)$ cannot approach the
straight line~$\rho v-au=0$ for~$t>\varepsilon_0$.

This, combined with Lemma~\ref{NOCPA}, says that the trajectory
emanating from~$p$ can never
approach the
straight line~$\rho v-au=0$ at the origin, in contradiction with the definition
of~${\mathcal{M}}$,
and thus the proof of~\eqref{T0z}
is complete.

{F}rom~\eqref{T0z} 
and the Poincar\'e-Bendixson Theorem (see e.g.~\cite{TESCHL}),
we deduce that the~$\omega$-limit of~$p$
can be either a cycle, or an equilibrium,
or a union of (finitely many)
equilibria and non-closed orbits connecting these equilibria.
We observe that the~$\omega$-limit of~$p$ cannot be a cycle, since~$\dot u$
has a sign in~$[0,1]\times[0,1]$.

Moreover, it cannot contain the sink~$(0,1)$, due to
Lemma~\ref{NOCPA}. Hence, the only possibility is that
the~$\omega$-limit of~$p$ coincides with~$(0,0)$,
which is the desired result.

We also remark that the $\alpha-$limit of $p$ cannot be a cycle in $[0,1]\times[0,1]$ since $\dot{u}$ has a sign. Moreover, it cannot contain $(0,1)$, which is a sink. If the orbit of $p$ is all contained in $[0,1]\times[0,1]$, then the $\alpha-$limit set cannot contain $(0,0)$, since this would generate a close trajectory. Therefore, the orbit of $\phi_p(t)$ must intersect the complementary set of $[0,1]\times[0,1]$.
\end{proof}

As a consequence of Lemma~\ref{lerptj:lemma} and the fact
that~$\dot u<0$ in~$(0,1]\times[0,1]$, we obtain the following statement:

\begin{corollary}\label{N7u43566r}
For~$ac=1$, every trajectory in~$\mathcal{M}$ has
the form~$\{\phi_p(t),\,t\in\R\}$, with
$$\lim_{t\to+\infty}\phi_p(t)=(0,0)$$
and there exists~$t_p\in\R$ such that $$\phi_p(t_p)\in\big(\{1\}\times[0,1]\big)
\cup\big([0,1]\times\{1\}\big).$$
\end{corollary}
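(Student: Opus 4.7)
The plan is to combine Lemma~\ref{lerptj:lemma} with a backward monotonicity argument. Since $ac=1$, the first equation in~\eqref{model} reduces to $\dot u=-u(u+v)$, which is strictly negative throughout $(0,1]\times[0,1]$; therefore along any portion of a trajectory lying in the square and satisfying $u>0$ the coordinate $u(t)$ is strictly decreasing in $t$. Moreover, for $p\in\mathcal{M}\setminus\{(0,0)\}$ one must have $u(0)>0$: otherwise $p$ would lie on the invariant $v$-axis, but every orbit on that axis with $v(0)\in(0,1]$ has $\omega$-limit $(0,1)$, contradicting Lemma~\ref{lerptj:lemma}. The forward part of the statement is already supplied by Lemma~\ref{lerptj:lemma}, which gives $\lim_{t\to+\infty}\phi_p(t)=(0,0)$ and existence of $\phi_p$ on $[0,+\infty)$.

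To produce the time $t_p$, I would argue by contradiction: suppose $\phi_p(t)\in[0,1]\times[0,1]$ for every $t\le 0$. Boundedness then yields existence on all of $(-\infty,0]$ and a non-empty, compact, connected, invariant $\alpha$-limit set $\alpha_p$. By the Poincaré--Bendixson theorem, $\alpha_p$ must be an equilibrium, a cycle, or a union of equilibria joined by heteroclinic orbits. Cycles are excluded because $\dot u<0$ on the trajectory; the only equilibria in the square are $(0,0)$ and $(0,1)$, the first being ruled out by Lemma~\ref{NOCPA} and the second being incompatible with $u(t)\ge u(0)>0$ for all $t\le 0$ (which follows from the backward monotonicity of $u$). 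The resulting contradiction shows that the set $\{t\le 0 : \phi_p(t)\notin[0,1]\times[0,1]\}$ is non-empty; I define $t_p$ as its supremum, so $t_p\le 0$, $\phi_p(t_p)\in\partial([0,1]\times[0,1])$ by continuity, and $\phi_p(t)\in[0,1]\times[0,1]$ for $t\in(t_p,0]$.

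It then remains to identify which piece of the boundary is touched at $t_p$. The side $\{0\}\times[0,1]$ is excluded by invariance of the $v$-axis together with $u(0)>0$. The side $[0,1]\times\{0\}$ is excluded by noting that at such a point $\dot v(t_p)=-a\,u(t_p)<0$, so $v$ would become strictly negative immediately after $t_p$, contradicting our definition of $t_p$ as the supremum of exit times. Hence $\phi_p(t_p)\in\bigl(\{1\}\times[0,1]\bigr)\cup\bigl([0,1]\times\{1\}\bigr)$. Extension of $\phi_p$ to all of $\mathbb{R}$ follows from standard ODE theory applied to the polynomial system, combined with the forward boundedness already established and the local existence to the left of $t_p$.

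The step I expect to require the most care is the Poincaré--Bendixson analysis of the putative backward $\alpha$-limit set: one must systematically rule out every type of limiting object allowed by the theorem, and the argument rests crucially on the rigid monotonicity $\dot u<0$ and on Lemma~\ref{NOCPA}. The remaining identification of the exit boundary is then a short normal-velocity computation, exactly of the type already used in the proof of Proposition~\ref{prop:dyn}.
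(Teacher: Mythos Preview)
Your argument is correct and aligns with the paper's approach, which simply records the corollary as a consequence of Lemma~\ref{lerptj:lemma} together with the sign condition $\dot u<0$ on $(0,1]\times[0,1]$, leaving the details implicit. Your Poincar\'e--Bendixson analysis of the backward $\alpha$-limit is sound, though a shorter route is available: once $u(t)\ge u(0)>0$ for $t\le 0$ one has $-\dot u=u(u+v)\ge u(0)^2$ while the trajectory remains in the square, which forces exit in finite backward time directly and bypasses the need to classify $\alpha$-limit sets.
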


The result in Corollary~\ref{N7u43566r} can be sharpened
in view of the following statement (which can be seen as the counterpart
of Proposition~\ref{lemma:M}
in the degenerate case~$ac=1$): namely, since the center manifold can in principle contain
many different trajectories
(see e.g. Figure~5.3 in~\cite{MR635782}), we provide a
tailor-made argument that excludes this possibility in the specific case
that we deal with.

\begin{proposition}\label{M:p045}
For $ac=1$, the set	
$\mathcal{M}$ contains one, and only one, orbit,
which is asymptotic to the origin as~$t\to+\infty$, and that can be written
as a graph $$\gamma:[0,u_{\mathcal{M}}]\to[0,v_{\mathcal{M}}],$$ for some $$(u_{\mathcal{M}},v_{\mathcal{M}})\in\big(\{1\}\times[0,1]\big)\cup
\big((0,1]\times\{1\}\big),$$ where~$\gamma$ is an increasing~$C^2$ function such that~$\gamma(0)=0$,
$\gamma(u_{\mathcal{M}})=v_{\mathcal{M}}$ and the graph of~$\gamma$ at the origin is tangent to the line~$\rho v-au=0$.
\end{proposition}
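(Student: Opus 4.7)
The plan is to combine the general properties already established for $\mathcal{M}$ with a tailor-made local analysis near the origin that resolves the non-uniqueness typically associated with center manifolds. Corollary~\ref{N7u43566r} already guarantees that every trajectory in $\mathcal{M}$ tends to $(0,0)$ as $t\to+\infty$ and, going backward in time, meets the top or right side of the square at some finite time $t_p$, so the only substantive tasks remaining are (a) to verify that there is only one such trajectory, and (b) to show that this trajectory is the graph of an increasing $C^2$ function.

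For the graph representation, I would first localize $\mathcal{M}$ inside the region $\{\dot v\le 0\}$, in the spirit of Corollary~\ref{lemma:M2}. Since $ac=1$ gives $\dot u=-u(u+v)\le 0$ on $[0,1]\times[0,1]$, no trajectory can increase in $u$. A short expansion of $\sigma$ in~\eqref{f:sigma} near $v=0$ reveals that the curve $\dot v=0$ is itself tangent to the line $\rho v-au=0$ at the origin, and combining this with the sign analysis of $\dot v$ in the regions $\mathcal{A}_1$ and $\mathcal{A}_2$ defined in~\eqref{DEFA12}, I would argue, as in the proof of Proposition~\ref{lemma:M}, that any nontrivial trajectory in $\mathcal{M}$ lies entirely in $\mathcal{A}_2$: otherwise it would enter $\mathcal{A}_1$, whose only sink is $(0,1)$, contradicting the $\omega$-limit property from Lemma~\ref{lerptj:lemma}. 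Since $\dot u<0$ and $\dot v<0$ strictly in the interior of $\mathcal{A}_2$, the orbit parametrizes as $v=\gamma(u)$ with $\gamma$ increasing, and smoothness of $\gamma$ follows from smoothness of the flow together with $\dot u\ne 0$ along the orbit.

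The main obstacle is the uniqueness step. Here I would diagonalize the Jacobian at the origin via $\xi=u/\rho$ and $\eta=v-au/\rho$, after which the system becomes
\begin{equation*}
\dot\xi=-(a+\rho)\xi^2-\xi\eta,\qquad \dot\eta=\rho\eta+a(a+\rho)(1-\rho)\xi^2+(a-2\rho a-\rho^2)\xi\eta-\rho\eta^2.
\end{equation*}
Any orbit in $\mathcal{M}$ is locally the graph $\eta=h(\xi)$ of a $C^2$ function with $h(0)=h'(0)=0$, and matching powers in the invariance relation $h'(\xi)\dot\xi=\dot\eta|_{\eta=h(\xi)}$ forces $h(\xi)=a(a+\rho)(\rho-1)\xi^2/\rho+O(\xi^3)$. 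If $h_1$ and $h_2$ are two candidates and $\psi:=h_1-h_2$, subtracting the two invariance identities and collecting orders yields the singular linear ODE
\begin{equation*}
(a+\rho)\xi^2\,\psi'(\xi)=-\rho\,\psi(\xi)+O\bigl(\xi\,\psi(\xi)\bigr).
\end{equation*}
Every nontrivial solution behaves like $\exp\!\bigl(\rho/[(a+\rho)\xi]\bigr)$ as $\xi\to 0^+$ and thus blows up, while $\psi$ is continuous and vanishes at $\xi=0$. Hence $\psi\equiv 0$ in a right neighborhood of $0$, and uniqueness of the flow away from equilibria propagates the equality to the full orbits.

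Once uniqueness is secured, the remaining claims are immediate: the tangency of $\gamma$ to the line $\rho v-au=0$ at the origin is the defining property of $\mathcal{M}_0$ inherited from the Center Manifold Theorem, and the existence of the endpoint $(u_{\mathcal{M}},v_{\mathcal{M}})\in\bigl(\{1\}\times[0,1]\bigr)\cup\bigl((0,1]\times\{1\}\bigr)$ with $\gamma(u_{\mathcal{M}})=v_{\mathcal{M}}$ is precisely the content of Corollary~\ref{N7u43566r}, completing the proof.
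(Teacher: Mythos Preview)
Your argument is correct, and the uniqueness step follows a genuinely different route from the paper. The paper argues by contradiction via a \emph{volume expansion} argument: if two distinct orbits $\mathcal{M}_-$ and $\mathcal{M}_+$ existed in $\mathcal{M}$, the region $\mathcal{P}$ trapped between them would satisfy $\phi_p(t)\to(0,0)$ for every $p\in\mathcal{P}$, so the Lebesgue measure $\mu(t)$ of the image of a small ball $B\subset\mathcal{P}$ would tend to $0$; on the other hand, computing $\frac{d\mu}{dt}$ through the divergence of the vector field gives $\mathrm{Tr}\,D_x\dot x=\rho+O(|x|)$ near the origin, forcing $\mu(t)$ to grow exponentially---a contradiction. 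Your approach instead diagonalizes explicitly, writes two candidate center-manifold graphs $\eta=h_i(\xi)$, and shows that their difference $\psi$ satisfies a singular linear ODE whose only bounded solution near $\xi=0^+$ is the trivial one. Both methods exploit the same underlying fact (the transverse eigenvalue $\rho>0$ is repelling, which is precisely what makes the one-sided center manifold unique), but the paper's argument is coordinate-free and geometric, while yours is analytic and closer in spirit to the classical center-manifold uniqueness proofs; your computation is heavier but makes the mechanism (the $e^{\rho/((a+\rho)\xi)}$ growth) completely explicit.

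Two small points worth tightening. First, your justification that $\mathcal{M}\subset\mathcal{A}_2$ (``whose only sink is $(0,1)$'') should be phrased more carefully: the point is that on the separating curve $u=\sigma(v)$ the flow satisfies $\dot v=0$, $\dot u<0$, so the inward normal velocity into $\mathcal{A}_1$ is positive; hence a trajectory that enters the interior of $\mathcal{A}_1$ stays there and has $\dot v>0$, making convergence to $(0,0)$ impossible. Second, the blow-up conclusion for $\psi$ deserves one line of rigor: for $\xi\in(0,\delta)$ the equation gives $\frac{d}{d\xi}\ln|\psi|\le -\tfrac{\rho}{2(a+\rho)\xi^2}$, so integrating from $\xi$ to any $\xi_0$ with $\psi(\xi_0)\ne 0$ forces $|\psi(\xi)|\to\infty$ as $\xi\to0^+$, contradicting continuity and $\psi(0)=0$.
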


\begin{proof} First of all, we show that
\begin{equation}\label{LIMSDD-0}
{\mbox{$\mathcal{M}$ contains one, and only one, orbit.}}\end{equation}
Suppose, by contradiction, that~${\mathcal{M}}$
contains two different orbits, that we denote by~${\mathcal{M}}_-$
and~${\mathcal{M}}_+$.
Using Corollary~\ref{N7u43566r},
we can suppose that~${\mathcal{M}}_+$
lies above~${\mathcal{M}}_-$
and
\begin{equation}\label{CQPSKD}
\begin{split}&
{\mbox{the region~${\mathcal{P}}\subset[0,1]\times[0,1]$ contained}}\\&{\mbox{between~${\mathcal{M}}_+$
and~${\mathcal{M}}_-$ lies in~$\{\dot u<0\}$.}}\end{split}
\end{equation}
Consequently, for every~$p\in{\mathcal{P}}$,
it follows that
\begin{equation}\label{9ikfjty} \lim_{t\to+\infty}\phi_p(t)=(0,0).\end{equation}
In particular, we can take an open ball~$B\subset
{\mathcal{P}}$ in the vicinity of the origin, denote by~$\mu(t)$ the Lebesgue measure of $${\mathcal{S}}(t):=\{\phi_p(t),\;
p\in B\},$$ and write that~$\mu(0)>0$
and
\begin{equation}\label{LIMSDD} \lim_{t\to+\infty}\mu(t)=0.\end{equation}
We point out that~${\mathcal{S}}(t)$
lies in the vicinity of the origin for all~$t\ge0$, thanks to~\eqref{CQPSKD}.
As a consequence, for all~$t$,~$\tau>0$, changing variable
$$ y:=\phi_{x}(\tau)=x+\int_0^\tau \frac{d\phi_x(\theta)}{d\theta}\,d\theta=
x+\tau \frac{d\phi_x(0)}{dt}+O(\tau^2),
$$
we find that
\begin{eqnarray*}
\mu(t+\tau)&=&\int_{{\mathcal{S}}(t+\tau)}dy\\&=&
\int_{{\mathcal{S}}(t)}\big|\det \big(D_x \phi_x(\tau)\big)\big|\,dx\\&=&
\int_{{{\mathcal{S}}(t)}}\left|\det D_x \left(x+\tau \frac{d\phi_x(0)}{dt}
+O(\tau^2)
\right)\right|\,dx\\
&=&
\int_{{{\mathcal{S}}(t)}}\left( 1+\tau\,{\rm Tr}\left(D_x \frac{d\phi_x(0)}{dt}\right)
+O(\tau^2)
\right)\,dx
\\&=&\mu(t)+\tau
\int_{{{\mathcal{S}}(t)}} {\rm Tr}\left(D_x \frac{d\phi_x(0)}{dt}\right)\,dx
+O(\tau^2)
,
\end{eqnarray*}
where~${\rm Tr}$ denotes the trace of a~$(2\times2)$-matrix.

As a consequence,
\begin{equation}\label{090987t4oyyorfg4}
\frac{d\mu}{dt}(t)=
\int_{{{\mathcal{S}}(t)}} {\rm Tr}\left(D_x \frac{d\phi_x(0)}{dt}\right)\,dx.\end{equation}
Also, using the notation~$x=(u,v)$, we can write~\eqref{model} when~$ac=1$ in the form
$$ \frac{d\phi_x}{dt}(t)=
\dot x(t)=\left(
\begin{matrix}
\dot u(t)\\
\dot v(t)
\end{matrix}\right)=
\left(
\begin{matrix}
-u(t)(u(t)+v(t))\\
\rho v(t)(1-u(t)-v(t))-au(t)
\end{matrix}\right).
$$
Accordingly,
\begin{eqnarray*} &&D_x \frac{d\phi_x(0)}{dt}\\&&\qquad=
\left(
\begin{matrix}
-\partial_u\big(u(u+v)\big)&-\partial_v\big(u(u+v)\big)
\\
\partial_u\big(\rho v (1-u-v)-au\big)&\partial_v\big(\rho v (1-u-v)-au\big)
\end{matrix}\right),
\end{eqnarray*}
whence
\begin{equation}\label{0oj476ytgf9846}
\begin{split}&
{\rm Tr}\left(D_x \frac{d\phi_x(0)}{dt}\right)\\=\;&-\partial_u\big(u(u+v)\big)
+\partial_v\big(\rho v (1-u-v)-au\big)\\=\;&-2u-v+\rho(1-u-v)-\rho v\\=\;&\rho+O(|x|)
\end{split}
\end{equation}
for~$x$ near the origin.

As a result, recalling~\eqref{9ikfjty}, we can take~$t$ sufficiently large,
such that~${{{\mathcal{S}}(t)}}$ lies in a neighborhood of the origin, exploit~\eqref{0oj476ytgf9846}
to write that $${\rm Tr}\left(D_x \frac{d\phi_x(0)}{dt}\right)\ge\frac\rho2$$
and then~\eqref{090987t4oyyorfg4} to conclude that
$$ \frac{d\mu}{dt}(t)\ge \frac{\rho}2
\int_{{{\mathcal{S}}(t)}} dx=\frac{\rho}{2}\,\mu(t).$$
This implies that~$\mu(t)$ diverges (exponentially fast)
as~$t\to+\infty$, which is in contradiction with~\eqref{LIMSDD}.
The proof of~\eqref{LIMSDD-0}
is thereby complete.

Now, we check the other claims in the statement of Proposition~\ref{M:p045}.
The asymptotic property as~$t\to+\infty$ is a consequence of Corollary~\ref{N7u43566r}.
Also, the graphical property as well as the monotonicity
property of the graph follow from the fact that~${\mathcal{M}}\subset\{\dot u<0\}$.
The smoothness of the graph follows from the smoothness of the center manifold.
The fact that~$\gamma(0)=0$ and
$\gamma(u_{\mathcal{M}})=v_{\mathcal{M}}$ follow also from
Corollary~\ref{N7u43566r}. The tangency property at the origin is a consequence of
the tangency property of the center manifold to the center eigenspace.
\end{proof}

As a byproduct of the proof of Proposition~\ref{M:p045}
we also obtain the following information:

\begin{corollary}\label{lemma:Mdeg}
Suppose that~$ac=1$.
Then , we have that~$\dot{u}\leq 0$ in~$[0,1]\times [0,1]$,
and the curve~\eqref{curve:v'}
divides the square~$[0,1]\times[0,1]$ into two regions~$\mathcal{A}_1$
and~$\mathcal{A}_2$,
defined in~\eqref{DEFA12}.

Furthermore, the sets~$\mathcal{A}_1$
and~$\mathcal{A}_2$ are separated by the curve~$\dot{v}=0$, given by the graph of the continuous function~$\sigma$
given in~\eqref{f:sigma}.

Finally, there holds that
\begin{equation}\label{aggiun2BIS}
\mathcal{M}\subset \mathcal{A}_2.\end{equation} 
\end{corollary}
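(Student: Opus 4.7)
The plan is to verify the four assertions of Corollary~\ref{lemma:Mdeg} essentially as direct consequences of the ingredients already at our disposal, with only the inclusion $\mathcal{M}\subset\mathcal{A}_2$ requiring a short argument.

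First, using $ac=1$ directly in the first line of \eqref{model}, I would compute
\begin{equation*}
\dot u = u(1-u-v)-acu = -u(u+v),
\end{equation*}
which is manifestly nonpositive on $[0,1]\times[0,1]$. Hence every point of the square falls either into the set $\mathcal{A}_1$ (where $\dot v\ge0$) or into $\mathcal{A}_2$ (where $\dot v\le0$) of \eqref{DEFA12}.

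Next, to describe the dividing curve, I would solve $\rho v(1-u-v)-au=0$ for $u$ in terms of $v$: factoring out gives $u(\rho v+a)=\rho v(1-v)$, i.e.
\begin{equation*}
u = \frac{\rho v(1-v)}{\rho v+a} = 1 - \frac{\rho v^2+a}{\rho v+a} = \sigma(v),
\end{equation*}
which is exactly \eqref{f:sigma}. A direct check shows $\sigma(0)=\sigma(1)=0$, and $0<\sigma(v)<1$ for $v\in(0,1)$ since $0<\rho v^2+a<\rho v+a$ in that range. Since $\partial_u(\rho v(1-u-v)-au) = -\rho v - a < 0$ when $v>0$, the sign of $\dot v$ is nonpositive to the right of the graph of $\sigma$ and nonnegative to the left, giving the desired separation of $\mathcal{A}_1$ and $\mathcal{A}_2$.

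Finally, for the inclusion $\mathcal{M}\subset\mathcal{A}_2$, I would invoke Proposition~\ref{M:p045}: $\mathcal{M}$ is a single trajectory, parametrized as the graph of an increasing $C^2$ function $\gamma:[0,u_{\mathcal{M}}]\to[0,v_{\mathcal{M}}]$ with $\gamma(0)=0$, and by Corollary~\ref{N7u43566r} we have $\phi_p(t)\to(0,0)$ as $t\to+\infty$. Along this trajectory, since $u>0$ away from the origin, the identity $\dot u=-u(u+v)<0$ shows that $u(t)$ is strictly decreasing. As $v(t)=\gamma(u(t))$ and $\gamma$ is increasing, $v(t)$ is also decreasing, so $\dot v(t)\le0$ at every point of the trajectory, i.e.\ $\mathcal{M}\subset\mathcal{A}_2$, which proves \eqref{aggiun2BIS}.

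The only mildly delicate point is the last step, where one must make sure that monotonicity of $\gamma$ is genuinely promoted to monotonicity of $v(t)$; but this follows simply from the chain rule together with $\dot u<0$, so no real obstacle arises, and the corollary reduces to routine bookkeeping once Proposition~\ref{M:p045} is in hand.
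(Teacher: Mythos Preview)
Your treatment of the first three assertions is correct: the identity $\dot u=-u(u+v)\le0$ and the derivation of $\sigma$ are straightforward and match what the paper has in mind.

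For \eqref{aggiun2BIS}, your deduction $\dot v=\gamma'(u)\,\dot u\le0$ from the monotonicity of $\gamma$ is formally valid, but it risks circularity. The only justification the paper offers for $\gamma$ being increasing in the proof of Proposition~\ref{M:p045} is that it ``follows from the fact that $\mathcal{M}\subset\{\dot u<0\}$''; however, $\dot u<0$ alone only yields graphicality over $u$, not monotonicity. To obtain $\gamma'\ge0$ one needs $\dot v\le0$ along $\mathcal{M}$, i.e.\ precisely \eqref{aggiun2BIS}. So the monotonicity of $\gamma$ and the inclusion $\mathcal{M}\subset\mathcal{A}_2$ are two faces of the same fact, and one of them must be proved independently rather than each being deduced from the other.

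The self-contained route---which is presumably what ``byproduct of the proof of Proposition~\ref{M:p045}'' refers to, and which parallels the $ac>1$ case in Corollary~\ref{lemma:M2}---uses Lemma~\ref{lerptj:lemma} directly. On the separating curve $u=\sigma(v)$ (for $v\in(0,1)$) the velocity is $(\dot u,0)$ with $\dot u<0$, which points into $\mathcal{A}_1=\{u\le\sigma(v)\}$; combined with \eqref{T0z}, any forward trajectory that enters the interior of $\mathcal{A}_1$ remains there. Since $\dot v\ge0$ in $\mathcal{A}_1$, this forces $v(t)\ge v(0)>0$ for all $t\ge0$, contradicting $\phi_p(t)\to(0,0)$ for $p\in\mathcal{M}$. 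Hence no point of $\mathcal{M}\setminus\{(0,0)\}$ lies in the interior of $\mathcal{A}_1$, and \eqref{aggiun2BIS} follows. From this, the monotonicity of $\gamma$ is then an immediate consequence via the chain rule, not the other way around.
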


\section{Study of the dynamics}
\label{sec:deg2}

We observe that, by the Stable Manifold Theorem
and the Center Manifold Theorem, the statement in~(v) of Theorem~\ref{thm:dyn}
is obviously fulfilled. 

Hence, to complete the proof of Theorem~\ref{thm:dyn},
it remains to show that the statement in~(iv) holds true.
To this aim, exploiting
the useful pieces of
information in Propositions~\ref{lemma:M} and~\ref{M:p045}, we first give a characterization of the sets~$\mathcal{E}$ and~$\mathcal{B}$:

\begin{proposition} \label{prop:char}
	The sets in~\eqref{DEFB} and~\eqref{DEFE}
	are characterized by
	\begin{equation}\label{char:E}
	\begin{split}
	\mathcal{E}=&\big\{ (u,v)\in [0,u_{\mathcal{M}}]\times [0,1]\;{\mbox{ s.t. }}\; v < \gamma(u) \big\}\\&\qquad
\cup\,\big((u_{\mathcal{M}},1]\times[0,1]\big)
	\end{split}
	\end{equation}
	and
	\begin{equation}\label{char:B}
	\mathcal{B}=\Big\{ (u,v)\in [0,u_{\mathcal{M}}]\times [0,1]\;{\mbox{ s.t. }}\;  v>\gamma(u)   \Big\},
	\end{equation}
	where $\gamma$ is the parametrization of~$\mathcal{M}$,
	as given by Propositions~\ref{lemma:M} (when~$ac\neq1$)
	and~\ref{M:p045} (when~$ac=1$).
\end{proposition}

In~\eqref{char:E} we use the convention that $(u_{\mathcal{M}},1]\times[0,1]=\varnothing$ 
in the case~$u_{\mathcal{M}}=1$.
The sets~$\mathcal{E}$,~$\mathcal{B}$ can be visualized in two particular cases
 in Figure~\ref{fig:char}
 .

\begin{figure} 
	\begin{subfigure}{.5\textwidth}
		\centering
		\includegraphics[width=1.8\linewidth]{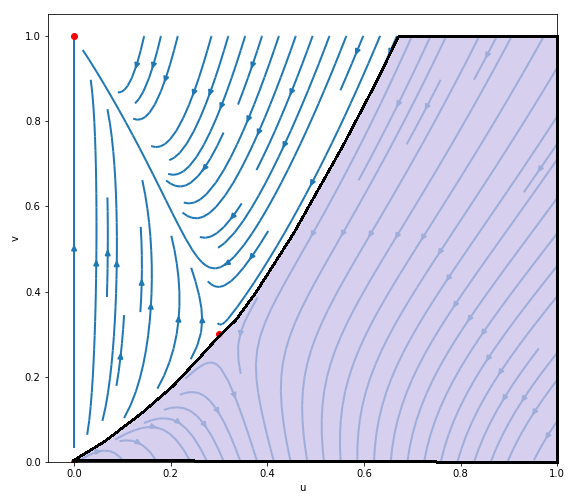}
		\caption{$a=0.8$,~$c=0.5$,~$\rho=2$}
	\end{subfigure}\\
	\begin{subfigure}{.5\textwidth}
		\centering
		\includegraphics[width=1.8\linewidth]{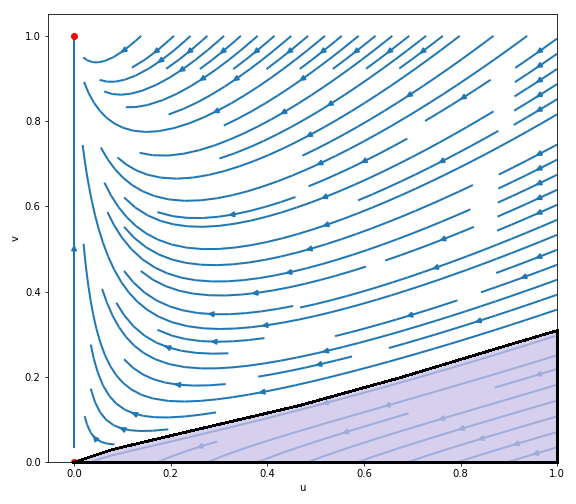}
		\caption{$a=0.8$,~$c=3$,~$\rho=2$}
	\end{subfigure}
	\caption{\em The figures show the phase portrait for the indicated values of the coefficients. In blue, the orbits of the points. The red dots show the equilibria. In violet, the set~$\mathcal{E}$.}
	\label{fig:char}
\end{figure}

\begin{proof}[Proof of Proposition~\ref{prop:char}]
	We let~$\gamma$ be the parametrization of~$\mathcal{M}$,
	as given by Propositions~\ref{lemma:M} (when~$ac\neq1$)
	and~\ref{M:p045} (when~$ac=1$).
	Let us call $\mathcal{X}$ and~$\mathcal{Y}$ the sets in~\eqref{char:E},~\eqref{char:B}, that is,
	\begin{eqnarray*}
		&&\mathcal{X}:= \big\{ (u,v)\in [0,u_{\mathcal{M}}]\times [0,1]\;{\mbox{ s.t. }}\; v < \gamma(u) \big\}\\&&\qquad\qquad\cup\,\big((u_{\mathcal{M}},1]\times[0,1]\big),\\
		{\mbox{ and }}&&
		\mathcal{Y}:= \big\{ (u,v)\in [0,u_{\mathcal{M}}]\times [0,1] \;{\mbox{ s.t. }}\;  v > \gamma(u) \big\}
	\end{eqnarray*}
	(the second set in the definition of $\mathcal{X}$ is understood to be $\varnothing$ 
	if~$u_{\mathcal{M}}=1$).  
	The goal is to prove that~$\mathcal{X}\equiv\mathcal{E}$ and~$\mathcal{Y}\equiv\mathcal{B}$. 
	We recall from Propositions~\ref{lemma:M} and~\ref{M:p045} that
	$(\mathcal{X},\mathcal{Y},\mathcal{M})$ is a partition of $[0,1]\times[0,1]$.
	Hence, since the sets $\mathcal{E}$, $\mathcal{B}$, $\mathcal{M}$ are disjoint,
	if we show that $\mathcal{X}\subseteq\mathcal{E}$ and $\mathcal{Y}\subseteq\mathcal{B}$
	we are done. 
	
	We first deal with the inclusion~$\mathcal{X}\subseteq\mathcal{E}$. 
	Namely, recalling~\eqref{DEFE},
	we will show~that
	\begin{equation}\label{alltra}
	{\mbox{all~$(u_0,v_0)\in\mathcal{X}$ have~$T_s(u_0,v_0)<+\infty$.}}\end{equation} 
	For this, we first notice that, gathering together~\eqref{aggiunto}, \eqref{primaBIS},~\eqref{prima2BIS}, and~\eqref{pouyi86}, we find that
	\begin{equation}\label{07960789djiewf2}
	{\mbox{no closed orbit exists in~$[0,1]\times[0,1]$}}\end{equation}
	(in the case~$0<ac<1$, and this holds true in the case~$ac\ge1$ where~$\dot u$ has a sign). 
	
	In addition, 
	\begin{equation}\label{07960789djiewf}\begin{split}&
	{\mbox{the~$\omega$-limit of any point in~$\mathcal{X}$, if it exists,}}\\ &{\mbox{does not contain equilibria.}}\end{split}\end{equation}
	Indeed, by Propositions~\ref{lemma:M} (when~$(ac\neq1$)
	and~\ref{M:p045} (when~$ac=1$),
	we have that~$\gamma(0)=0<1$, and therefore~$(0,1)\notin \overline{\mathcal{X}}$.
	Moreover, if~$ac<1$, 
	a trajectory in~$\mathcal{X}$ cannot converge to~$(u_s, v_s)$, since~$\mathcal{X}$ does not
	contain points of the corresponding stable manifold~$\mathcal{M}$, nor to~$(0,0)$,
	since this is a repulsive equilibrium, cf.~Theorem~\ref{thm:dyn}.
	If instead~$ac\geq 1$, then trajectories cannot converge to~$(0,0)$, since~$\mathcal{X}$ does not
	contain points of~$\mathcal{M}$, which in this case coincides with the stable or center manifold of $(0,0)$.
	These observations complete the proof of~\eqref{07960789djiewf}.
	
	Moreover, by Lemma \ref{lemma:exit}, no trajectory can exit $\mathcal{X}$, since $\partial \mathcal{X} \setminus \partial ([0,1]\times [0,1])$ coincide with a trajectory (which contains also the point at the boundary $(u_{\mathcal{M}},\gamma (\mathcal{M}) )$).
	
	{F}rom the latter observation,~\eqref{07960789djiewf2},~\eqref{07960789djiewf} and the 
	Poincar\'e-Bendixson Theorem (see e.g.~\cite{TESCHL}),
	we have that
	every trajectory with initial point$$(u_0,v_0)\in\mathcal{X}$$ cannot remain in $[0,1]\times[0,1]$
	for all $t>0$,
	that is, $T_s(u_0,v_0)<+\infty$.
	This shows that $\mathcal{X}\subseteq\mathcal{E}$.

	We now claim that
	\begin{equation}\label{po0584yiugherghegkbis}
	\big((u_{\mathcal{M}},1]\times[0,1]\big)\subseteq\mathcal{E}.
	\end{equation}
	To this end, we observe that there are neither cycles nor equilibria in~$(u_{\mathcal{M}},1]\times[0,1]$, and therefore
	we can use the Poincar\'e-Bendixson Theorem (see e.g.~\cite{TESCHL}) to conclude that for any point~$(u_0,v_0)\in(u_{\mathcal{M}},1]\times[0,1]$, its $\omega-$limit set, if exists, must satisfy 
	\begin{equation}\label{1602}
		\omega(u_0,v_0)\cap ((u_{\mathcal{M}},1]\times[0,1]) = \varnothing.
	\end{equation}
	Now, a trajectory can exit~$(u_{\mathcal{M}},1]\times[0,1]$ only
	from the side~$\{u_{\mathcal{M}}\}\times(0,1)$, and entering the set~$\mathcal{X}$, and therefore~\eqref{po0584yiugherghegkbis}
	is a consequence of~\eqref{alltra} in this case.
	
	Thanks to \eqref{1602}, the only other possibility is that $T_s(u_0,v_0)<+\infty$,
	giving directly~\eqref{po0584yiugherghegkbis}.
	
{F}rom~\eqref{alltra} and~\eqref{po0584yiugherghegkbis} we obtain~\eqref{char:E},
	as desired.
	
	We now prove~\eqref{char:B}, namely we show that
	\begin{equation}\label{089ghvbdflpoiuytr}\begin{split}
&	{\mbox{for all~$(u_0,v_0)\in \mathcal{Y}$ we have that}}\\&{\mbox{$(u(t), v(t))\to (0,1)$
			as~$t\to +\infty$.}}\end{split}\end{equation}
	Hence, again by Lemma \ref{lemma:exit}, we have that no trajectory exits $\mathcal{Y}$, since no trajectory can cross~$\mathcal{M}$. 
	
	Also, $$\mathcal{Y }\cap ((0,1)\times \{0\})=\varnothing.$$	
	Thus, for $(u_0,v_0)\in\mathcal{Y}$ it must be $T_s(u_0,v_0)=+\infty$ and
	\begin{equation}\label{1628}
		\omega(u_0,v_0)\subset \mathcal{Y}.
	\end{equation}	
	Let us now investigate $\omega(u_0,v_0)$.		
	To this end, we observe that~$(u_s, v_s)$ (if~$0<ac<1$) and~$(0,0)$ are not in~$\mathcal{Y}$.
	Moreover,
	no trajectory starting in~$\mathcal{Y}$ converges to~$(u_s, v_s)$ (if~$0<ac<1$), nor to~$(0,0)$,
	since~$\mathcal{Y}$ does not contain points on~$\mathcal{M}$.
	
	In addition, recalling~\eqref{07960789djiewf2}, we have that
	there are no limit cycles in~$\mathcal{Y}$. 
	As a consequence, by the
	Poincar\'e-Bendixson Theorem (see e.g.~\cite{TESCHL}),
	we have that
	every trajectory starting in~$\mathcal{Y}$ converges to~$(0,1)$, proving \eqref{089ghvbdflpoiuytr}.
	Hence, the proof of~\eqref{char:B} is complete as well.
\end{proof}

With this, we are now able to complete the
proof of Theorem~\ref{thm:dyn}:

\begin{proof}[Proof of (iv) of Theorem~\ref{thm:dyn}]
	The statement in~(iv) of Theorem~\ref{thm:dyn}
	is a direct consequence of the parametrization
	of the manifold~$\mathcal{M}$,
	as given by Proposition~\ref{lemma:M} for~$ac\neq 1$ and by Proposition~\ref{M:p045} for~$ac=1$,
	and the characterization of the sets~$\mathcal{B}$
	and~$\mathcal{E}$, as given by Proposition~\ref{prop:char}.
\end{proof}

\chapter{Parameters dependence}\label{ss:dependence}

\begin{center}
\begin{minipage}{25em}
\noindent{\bf Abstract of Chapter~\ref{ss:dependence}.}
{\sl Here we analyze the bifurcation patterns of the model under consideration in dependence of the structural parameters. Since the system does not possess a variational structure, a bespoke analysis is needed for this.}\end{minipage}\end{center}
\bigskip\bigskip\bigskip\bigskip\bigskip\bigskip

In this chapter we discuss the dependence on the
parameters involved in the system~\eqref{model}.

The dynamics of the system in~\eqref{model} depends qualitatively
only on~$ac$, but of course the position of the saddle equilibrium
and the size and shape of the basins of attraction depend quantitatively
upon all the parameters. Here
we perform a deep analysis on each parameter separately.

We notice that the system in~\eqref{model} does
not present a variational structure, due to the presence
of the
terms~$-acu$ in the first equation and~$-au$
in the second one, that are of first order in~$u$.
Thus, the classical methods of the calculus of variations cannot
be used and we have to make use of ad-hoc arguments,
of geometrical flavour.

\section{Dependence on the parameter~$c$}

We start by studying the dependence on~$c$, that represents the losses
(soldier death and missing births) caused
by the war for the first population with respect to the second one.
In the following proposition, we will express the dependence on~$c$
of the basin of attraction~$\mathcal{E}$
in~\eqref{DEFE} by writing explicitly~$\mathcal{E}(c)$.

\begin{proposition}[Dependence of the dynamics on~$c$] \label{prop:bhvc}
With the notation in~\eqref{DEFE}, we have that
	\begin{itemize}
		\item[(i)] If~$0< c_1 < c_2$, then 
		$\mathcal{E}(c_2) \subset \mathcal{E}(c_1)~$. 
		
		\item[(ii)] 	It holds that
		\begin{equation}\label{242}
		\underset{c>0}{\bigcap} \, \mathcal{E}(c)= (0,1]\times \{0\}.
		\end{equation}
		
	\end{itemize}
\end{proposition}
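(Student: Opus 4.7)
The plan splits naturally into the two parts of the statement. For part~(i), I would run a monotone ODE comparison between the solutions $(u_i(t),v_i(t))$ of~\eqref{model} with parameters $c_1<c_2$ starting from the same initial datum $(u_0,v_0)\in \mathcal{E}(c_2)$. The degenerate cases are immediate: if $u_0=0$, the trajectory stays on $\{u=0\}$ and converges to $(0,1)$, so $(0,v_0)\notin\mathcal{E}(c)$ for any $c$; and if $v_0=0$ then $T_s=0$, placing the point in every $\mathcal{E}(c)$. In the genuine case $u_0,v_0>0$, letting $T:=T_s^{c_2}(u_0,v_0)<\infty$, the heart of the argument will be the claim that $u_1(t)>u_2(t)$ and $v_1(t)<v_2(t)$ for all $t\in(0,T]$. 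Once this is available, continuity forces $v_1$ to vanish at some $t^\star\in(0,T]$, with $u_1(t^\star)>u_2(t^\star)\ge 0$, so $(u_0,v_0)\in\mathcal{E}(c_1)$.

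To establish the comparison I would use a first-crossing argument. At $t=0$ the two trajectories agree, while $\dot u_1(0)-\dot u_2(0)=a(c_2-c_1)u_0>0$ and $\dot v_1(0)-\dot v_2(0)=0$, so the strict ordering $u_1>u_2$, $v_1<v_2$ holds for small $t>0$. If it failed to persist, at the first crossing time $t_0$ one would have either $u_1(t_0)=u_2(t_0)=:u$ with $v_1(t_0)<v_2(t_0)$, which by direct computation from~\eqref{model} yields
\begin{equation*}
\dot u_1(t_0)-\dot u_2(t_0)=u\bigl(v_2(t_0)-v_1(t_0)\bigr)+au(c_2-c_1)>0,
\end{equation*}
contradicting the fact that $u_1-u_2$ decreases to $0$ at $t_0$; or else $v_1(t_0)=v_2(t_0)=:v$ with $u_1(t_0)>u_2(t_0)$, which gives
\begin{equation*}
\dot v_1(t_0)-\dot v_2(t_0)=\bigl(u_2(t_0)-u_1(t_0)\bigr)(\rho v+a)<0,
\end{equation*}
contradicting the analogous monotonicity of $v_2-v_1$. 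The case of simultaneous equality at $t_0$ is absorbed by the first of these computations.

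For part~(ii), the inclusion $(0,1]\times\{0\}\subseteq \bigcap_{c>0}\mathcal{E}(c)$ is immediate since $T_s(u_0,0)=0$. For the converse, I would show that any $(u_0,v_0)\in ([0,1]\times[0,1])\setminus\{(0,0)\}$ with $v_0>0$ escapes $\mathcal{E}(c)$ for $c$ sufficiently large. The case $u_0=0$ is trivial, as $\{u=0\}$ is invariant and $v(t)\to 1$. If $u_0>0$, for $ac>1$ the elementary bound $\dot u=u(1-u-v-ac)\le (1-ac)u$ yields $u(t)\le u_0 e^{-(ac-1)t}$, whence $\int_0^{\infty}u(s)\,ds\le u_0/(ac-1)$. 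Combining with the lower bound
\begin{equation*}
\dot v=\rho v(1-v)-\rho uv-au\ge -(\rho+a)u,
\end{equation*}
valid in $[0,1]\times[0,1]$, integration gives $v(t)\ge v_0-(\rho+a)u_0/(ac-1)$, which stays above $v_0/2$ as soon as $c\ge \bigl(1+2(\rho+a)u_0/v_0\bigr)/a$. For such $c$ the trajectory never reaches $\{v=0\}$, so $T_s=+\infty$ and $(u_0,v_0)\notin\mathcal{E}(c)$.

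The main technical point I anticipate is the joint first-crossing step in part~(i): because the system is non-variational and the comparisons for $u$ and $v$ run in opposite directions (larger $c$ drives $u$ down, which in turn pushes $v$ up), the invariant $\{u_1>u_2,\ v_1<v_2\}$ has to be propagated as a pair rather than coordinate by coordinate. The estimates for part~(ii) are quantitatively cheap once one exploits that the exponential decay rate of $u$ can be made arbitrarily large by increasing $c$.
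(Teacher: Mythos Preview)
Your proposal is correct and takes a genuinely different route from the paper's proof.

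For part~(i), the paper does not compare trajectories directly. Instead it uses the characterization of~$\mathcal{E}(c)$ from Proposition~\ref{prop:char} in terms of the graph~$\gamma_c$ of the separating manifold~$\mathcal{M}$, and shows the pointwise inequality $\gamma_{c_1}(u)>\gamma_{c_2}(u)$ on the relevant interval. This requires a three-step argument splitting according to the relative positions of the saddle points~$v_s^{c_1},v_s^{c_2}$, and in each region compares the slopes~$\frac{df_{c_i}}{dv}$ of the inverse parametrizations via a first-touching argument in the~$u$ variable. Your trajectory-comparison argument is considerably more elementary: it avoids any reference to the stable manifold structure and works directly from the ODE. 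The price is that you get only the inclusion $\mathcal{E}(c_2)\subseteq\mathcal{E}(c_1)$, whereas the paper's manifold inequality is strict; but since the paper's usage of~$\subset$ is non-strict throughout, this is not a gap. One small point you should make explicit: the claim that $v_1<v_2$ for small~$t>0$ does not follow from first derivatives alone, since $\dot v_1(0)=\dot v_2(0)$; you need the second-order expansion $\ddot v_1(0)-\ddot v_2(0)=-(\rho v_0+a)\bigl(\dot u_1(0)-\dot u_2(0)\bigr)<0$, or an equivalent argument.

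For part~(ii), the paper argues geometrically, showing that for any~$\varepsilon>0$ the wedge $\{v\ge\varepsilon u\}$ is forward-invariant once~$c$ is large enough (by checking the sign of the normal velocity on the boundary and then locating the stable manifold of~$(0,0)$ below the wedge). Your integral estimate---using the exponential decay of~$u$ at rate~$ac-1$ to bound $\int_0^\infty u\,ds$ and then integrating $\dot v\ge-(\rho+a)u$---is shorter and gives an explicit threshold for~$c$. Both approaches are valid; yours is more quantitative, while the paper's fits the invariant-region machinery used elsewhere in Section~\ref{ss:dependence}.
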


We remark that the behavior for~$c$
small is included by~(i) of Theorem~\ref{thm:dyn}: in this case,
there is a saddle point $(u_s,v_s\in(0,1)\times(0,1)$ and for all $c>0$ we get~$\mathcal{E}(c)\neq (0,1]\times [0,1]$. We do not investigate the case $c=0$, which results in a special case where the saddle $(u_s,v_s)$ and the sink $(0,1)$ collapse in a point with one negative and one zero eigenvalue.

On the other hand, as~$c\to +\infty$, the set~$\mathcal{E}(c)$ gets
smaller and smaller until the first population has no
chances of victory if the second population has a positive size. 

As one would expect, Proposition~\ref{prop:bhvc}
tells us that
the greater the cost of the war for the first population,
the fewer possibilities of victory there are for it.

\begin{proof}[Proof of Proposition~\ref{prop:bhvc}]
{\emph{(i)}} 
We take~$c_2 > c_1 > 0$. 
Now, in the notation
of Propositions~\ref{lemma:M} (if~$ac\neq1$)
and~\ref{M:p045} (if~$ac=1$), thanks to the characterization
in~\eqref{char:E}, the inclusion in~(i) reduces to 
\begin{equation}\label{19071}
\gamma_{c_1}(u)>\gamma_{c_2}(u)
\quad \text{for any} \ u\in (0, u_{\mathcal{M}}^{c_1}].
\end{equation}
Notice that this gives us automatically $ u_{\mathcal{M}}^{c_1}< u_{\mathcal{M}}^{c_2}$, and the inclusion $$(\mathcal{E}(c_2) \cap (u_{\mathcal{M}}^{c_1}, 1]\times [0,1] )\subset (\mathcal{E}(c_1) \cap (u_{\mathcal{M}}^{c_1}, 1]\times [0,1])  $$ is trivial.

Suppose by the absurd that \eqref{19071} is not true, hence there is some value $\bar{u}\in(0, u_{\mathcal{M}}^{c_1}]$ such that $\gamma_{c_1}(\bar u)<\gamma_{c_2}(\bar u)$.  
Let us now consider a value $\bar{v} \in (\gamma_{c_1}(\bar u),\gamma_{c_2}(\bar u))$. 
Then,  $(\bar u,\bar v)\in\mathcal{E}(c_2) \setminus \mathcal{E}(c_1)$. 
Observe that $(0,1]\times \{0\} \subset \mathcal{E}(c_1)$ and $T_s(\bar u, \bar v)<+\infty$.
In particular, 
\begin{equation}\label{20151}\begin{split}
&	{\mbox{there must be a time $T\in (0, T_s(\bar{u}, \bar{v}))$}}\\ &{\mbox{such that the trajectory $\phi_{(\bar u, \bar v)}^{c_2}(t)$ enters $\mathcal{E}(c_1)$.}}\end{split}
\end{equation}

Moreover, by Remark \ref{rmk:dichotomy} (see also Remark \ref{rmk:enter}), the trajectory~$\phi_{(\bar u, \bar v)}^{c_2}(t)$ does not leave $[0,1]\times [0,1]$ for $t\in(0, T_s(\bar{u}, \bar{v}))$, therefore it enters~$\mathcal{E}(c_1)$ from the side given by $\mathcal{M}(c_1)$.

Let us now compute the normal derivative to $\mathcal{M}(c_1)$ at the point $(\tilde{u}, \tilde v) = \phi_{(\bar u, \bar v)}^{c_2}(T)$. 
We use the notation
\begin{align*}
	&\dot{u}_1 := \tilde{u}(1-ac_1-\tilde{u}-\tilde{v}),  \\
	&\dot{u}_2 := \tilde{u}(1-ac_2-\tilde{u}-\tilde{v})  \\
	{\mbox{and }}\qquad&\dot{v} := \rho \tilde{v}(1-\tilde{u}-\tilde{v})-a\tilde{u}.
\end{align*}
Since $\mathcal{M}(c_1)$ is the graph of $\gamma_{c_1}$ and by the properties of $\mathcal{M}$ given in Corollary \ref{lemma:M1} for $0<ac<1$, Corollary \ref{lemma:M2} for $ac>1$, and Proposition \ref{M:p045} for $ac=1$, the normal vector $\nu$ to $\mathcal{M}(c_1)$ at $(\tilde{u}, \tilde v)$ is given by:
\begin{enumerate}
	\item if $\dot{v}>0$: \begin{equation*}
	\nu :=\left( \frac{-\dot{v}}{||(\dot{u}_1, \dot{v})||}, \frac{\dot{u}_1}{||(\dot{u}_1, \dot{v})||}  \right).
	\end{equation*}
	\item if $\dot{v}< 0$: \begin{equation*}
	\nu:=\left( \frac{\dot{v}}{||(\dot{u}_1, \dot{v})||}, \frac{-\dot{u}_1}{||(\dot{u}_1, \dot{v})||}  \right).
	\end{equation*}
	\item if $\dot{v}=0$ (that is, when we are at $(u_s^{c_1}, v_s^{c_1})$ for $0<ac<1$), by Proposition \ref{lemma:M}: \begin{equation*}
	\nu := \frac{1}{\sqrt{1+c_1^2}}\left(-1,c_1   \right).
	\end{equation*}
\end{enumerate}

Now, for each case, we compute the product between the normal vector and the direction $(\dot{u}_2, \dot{v})$ of the trajectory $\phi_{(\bar u, \bar v)}^{c_2}(t)$ at $(\tilde{u}, \tilde{v})$. 
We get
\begin{enumerate}
	\item if $\dot{v}>0$, then 
	\begin{equation*}
		\frac{1}{||(\dot{u}_1, \dot{v})||} \dot{v} \left(\dot u_1-\dot u_2 \right)= \frac{a \tilde{u}\dot v(c_2-c_1) }{||(\dot{u}_1, \dot{v})||} >0;
	\end{equation*}
	\item if $\dot{v}<0$, then
	\begin{equation*}
	\frac{1}{||(\dot{u}_1, \dot{v})||} \dot{v} \left(\dot u_2-\dot u_1 \right)= \frac{a \tilde{u}\dot v(c_1-c_2) }{||(\dot{u}_1, \dot{v})||} >0;
	\end{equation*}
	\item if $\dot{v}=0$, owning the formula \eqref{usvs} for $(u_s^{c_1}, v_s^{c_2})$, then
	\begin{equation*}
		\frac{1}{\sqrt{1+c_1^2}}(-\dot{u}_2+c \dot{v})=\frac{1}{\sqrt{1+c_1^2}}\frac{1-ac_1}{1+\rho c_1}\rho c_1 (ac_2-a c_1) >0.
	\end{equation*}
\end{enumerate}
Since the scalar product of the normal to $\mathcal{M}(c_1)$ and the trajectory is always positive, the trajectory cannot enter in $\mathcal{E}(c_1)$ (see Remark \ref{rmk:trajectory}), contradicting \eqref{20151}. Hence, \eqref{19071} holds true.

	\medskip
	
	{\emph{(ii)}} We first show that for all~$\varepsilon>0$ there exists~$c_{\varepsilon}>0$ such that for all~$c\ge c_{\varepsilon}$ it holds that
	\begin{equation} \label{859}
		\mathcal{E}(c) \subset \big\{ 
		(u,v)\in [0,1]\times [0,1]\;{\mbox{ s.t. }}\; v < \varepsilon u    \big\}.
	\end{equation}
	The inclusion in~\eqref{859} is also equivalent to 
	\begin{equation} \label{255}
	 \big\{ (u,v)\in [0,1]\times [0,1]\;{\mbox{ s.t. }}\; v > 
	  \varepsilon u   \big\} \subset \mathcal{B}(c),
	\end{equation}
	and the strict inequality is justified by the fact that~$\mathcal{E}(c)$
	and~$\mathcal{B}(c)$ are separated by~$\mathcal{M}$, according
	to Proposition~\ref{prop:char}.
	We now establish the inclusion in~\eqref{255}.
	For this, let
\begin{equation}\label{255BIS}
\mathcal{T}_{\varepsilon}:= \big\{ 
	(u,v)\in [0,1]\times [0,1] \;{\mbox{ s.t. }}\; v \geq  \varepsilon u    \big\}. \end{equation}
Now, we can choose~$c$ large enough such that the condition~$ac\geq 1$
is fulfilled. In this way, thanks to~(ii) and~(iii)
of Theorem~\ref{thm:dyn},
the only equilibria are the points~$(0,0)$ and~$(0,1)$.

Now, the component of the
velocity in the inward normal direction to~$\mathcal{T}_{\varepsilon}$
on the side~$\{v=\varepsilon u\}$ is given by
	\begin{eqnarray*}
	&&(\dot u,\dot v)\cdot \frac{(-\varepsilon,1)}{\sqrt{1+\varepsilon^2}}=
	\frac{\dot{v}-\varepsilon \dot{u}}{\sqrt{1+\varepsilon^2}}
	\\&&\qquad =\frac1{{\sqrt{1+\varepsilon^2}}}\big(
	  \rho v(1-u-v) -au -\varepsilon u(1-u-v) + \varepsilon acu \big)\\
		&&\qquad=\frac1{{\sqrt{1+\varepsilon^2}}}\big[
		 (\rho v-\varepsilon u)(1-u-v)  +  (\varepsilon c -1)au\big]\\
	&&\qquad=\frac1{{\sqrt{1+\varepsilon^2}}}\big[
		 (\rho \varepsilon u-\varepsilon u)(1-u-\varepsilon u)  +  (\varepsilon c -1)au\big]	 ,
	\end{eqnarray*}
	that is positive for
	\begin{equation}\label{possibly}
		c > c_{\varepsilon} := \frac{2\varepsilon(1+\rho) +a}{\varepsilon a}.
	\end{equation}
	This says that no trajectory in~$\mathcal{T}_{\varepsilon}$ can exit~$\mathcal{T}_{\varepsilon}$ from the side~$\{v=\varepsilon u\}$ (see Remark \ref{rmk:enter}).
	
	The other parts of~$\partial \mathcal{T}_{\varepsilon}$ belong to~$\partial(
	(0,1)\times(0,1))$
	but not to~$[0,1]\times \{0 \}$.
	As a consequence, no trajectory can exit $\mathcal{T}_{\varepsilon}$, so
\begin{equation}\label{po123097}
{\mbox{every trajectory in~$\mathcal{T}_{\varepsilon}$ is well defined for all~$t\ge0$ and belongs to~$\mathcal{T}_{\varepsilon}$.}}\end{equation}	
{F}rom this,~\eqref{07960789djiewf2} and the 
Poincar\'e-Bendixson Theorem (see~\cite{TESCHL}), we conclude that
the~$\omega$-limit of any trajectory starting in~$\mathcal{T}_{\varepsilon}$
can be either an equilibrium or a union of (finitely many)
equilibria and non-closed orbits connecting these equilibria.

Now, we claim that, possibly taking~$c$ larger in~\eqref{possibly},
\begin{equation}\label{po1230972}
\mathcal{M}\subset \big([0,1]\times[0,1]\big)\setminus\mathcal{T}_{\varepsilon}.
\end{equation}
Indeed, suppose by contradiction that there exists~$(\tilde u,\tilde v)\in\mathcal{M}
\cap\mathcal{T}_{\varepsilon}$. Then, in light of~\eqref{po123097}, a trajectory passing
through~$(\tilde u, \tilde v)$ and converging to~$(0,0)$ has to be entirely contained
in~$\mathcal{T}_{\varepsilon}$.

On the other hand, by Propositions~\ref{lemma:M} and~\ref{M:p045},
we know that at~$u=0$ the manifold~$\mathcal{M}$ is tangent to the
line~$(\rho-1+ac)v-au=0$.  
Hence, if we choose~$c$ large enough such that
$$ \frac{a}{\rho-1+ac}<\varepsilon,$$
we obtain that this line is below the line~$v=\varepsilon u$, thus reaching
a contradiction. This establishes~\eqref{po1230972}.

{F}rom~\eqref{po1230972}, we deduce that,
given~$(\tilde u,\tilde v)\in\mathcal{T}_{\varepsilon}$,
and denoting~$\omega_{(\tilde u,\tilde v)}$ the~$\omega$-limit of~$(\tilde u,\tilde v)$,
\begin{equation}\label{podnjewbf215}
\omega_{(\tilde u,\tilde v)}\neq \{(0,0)\},
\end{equation}
provided that~$c$ is taken large enough.

Furthermore,~$\omega_{(\tilde u,\tilde v)}$ cannot consist of the two equilibria~$(0,0)$
and~$(0,1)$ and non-closed orbits connecting these equilibria, due to the fact that~$(0,1)$
is a sink. 

As a consequence of this and~\eqref{podnjewbf215},
we obtain that~$\omega_{(\tilde u,\tilde v)}=\{(0,1)\}$
for any~$(\tilde u,\tilde v)\in\mathcal{T}_{\varepsilon}$, provided that~$c$
is large enough.

Thus, recalling~\eqref{DEFB} and~\eqref{255BIS}, this proves~\eqref{255},
and therefore~\eqref{859}.
	
Now, using~\eqref{859}, we see that for every~$\varepsilon>0$,
\begin{eqnarray*}\underset{c>0}{\bigcap}  \mathcal{E}(c) &\subseteq& \mathcal{E}(c_{\varepsilon})
\\&\subseteq& \big\{
	 (u,v)\in [0,1]\times [0,1]\; {\mbox{ s.t. }}\; v < \varepsilon u   \big \}.
\end{eqnarray*}
Accordingly,
	\begin{eqnarray*}
	\underset{c>0}{\bigcap}  \mathcal{E}(c)& \subseteq  &\underset{\varepsilon>0}{\bigcap}  \big\{
	 (u,v)\in [0,1]\times [0,1]\; {\mbox{ s.t. }}\; v < \varepsilon u   \big \}\\& =& (0,1] \times \{0\},
	\end{eqnarray*}
	which gives~\eqref{242}, as desired.
\end{proof}

\section{Dependence on the parameter~$\rho$}

Now we analyze the dependence of the dynamics on the parameter~$\rho$, that is the fitness of the second population~$v$ with respect to the fitness of the first one~$u$.

In the following proposition, we will make it explicit the dependence on~$\rho$ by
writing~$\mathcal{E}(\rho)$ and~$\mathcal{B}(\rho)$.

\begin{proposition}[Dependence of the dynamics on~$\rho$] \label{prop:bhvA}
With the notation in~\eqref{DEFB} and~\eqref{DEFE}, we have that
	\begin{itemize}
		\item[(i)]
		When~$\rho=0$, for any~$v \in [0,1]$ the point~$(0,v)$  is an equilibrium. If~$v\in(1-ac,1]$, then it corresponds to a strictly
negative eigenvalue and a null one.
If instead~$v\in[0,1-ac)$, then it corresponds to a strictly
positive eigenvalue and a null one.
		
		Moreover, 
		\begin{equation}\label{first}
		\mathcal{B}(0)=  \varnothing,\end{equation} and for any~$\varepsilon<  ac/2~$ and
		any~$\delta< \varepsilon c/2$  we have that
\begin{equation}\label{first2}
[0,1]\times [0,1-ac) \subseteq \mathcal{E}(0)  \subseteq \mathcal{T}_{\varepsilon, \delta}  ,
\end{equation} where
		\begin{equation}\label{TEPS}
		\mathcal{T}_{\varepsilon, \delta}:=\big\{ (u,v)\in[0,1] \times [0,1]\;
		{\mbox{ s.t. }}\; \delta v-\varepsilon u \leq \delta(1-\varepsilon) \big\}.
		\end{equation}
		\item[(ii)]
		For any~$\varepsilon<  ac/3~$ and any~$\delta< \varepsilon c/2$ it holds that 
		\begin{equation*}
		\underset{{0<\rho<a/3}}{\bigcup} \mathcal{E}(\rho) \subseteq  \mathcal{T}_{\varepsilon, \delta} ,
		\end{equation*}
		where~$ \mathcal{T}_{\varepsilon, \delta}$ is defined in~\eqref{TEPS}.
		\item[(iii)]
		It holds that 
		\begin{equation}\label{ir4t4y4y}
		\underset{\omega>0}{\bigcap} \, \underset{{\rho>\omega}}{\bigcup} \mathcal{E}(\rho) =  (0,1] \times \{0\}.
		\end{equation}	
	\end{itemize}
\end{proposition}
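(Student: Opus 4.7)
The three parts of Proposition~\ref{prop:bhvA} correspond to the regimes $\rho=0$, $\rho$ small, and $\rho$ large, and I plan to treat them separately. The common technical tool for (i) and (ii) will be a linear barrier argument based on the line $\ell : \delta v - \varepsilon u = \delta(1-\varepsilon)$ bounding the region $\mathcal{T}_{\varepsilon,\delta}$ defined in~\eqref{TEPS}.

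For (i), when $\rho = 0$ the system reduces to $\dot u = u(1-u-v-ac)$, $\dot v = -au$. The Jacobian at $(0,v)$ has eigenvalues $1-v-ac$ and $0$, giving the eigenvalue statement directly. Since $\dot v = -au \leq 0$ with strict inequality when $u>0$, the component $v$ is non-increasing along every trajectory, and $\mathcal{B}(0) = \varnothing$ follows because no such trajectory can approach the level $v=1$. For $[0,1]\times[0,1-ac) \subseteq \mathcal{E}(0)$, in the region $u+v < 1-ac$ with $u>0$ one has $\dot u > 0$, so $u$ stays bounded away from $0$ and $\dot v = -au$ forces $v$ to reach $0$ in finite time. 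The inclusion $\mathcal{E}(0) \subseteq \mathcal{T}_{\varepsilon,\delta}$ is the key point: I plan to show that the complement of $\mathcal{T}_{\varepsilon,\delta}$ in $[0,1]^2$ is forward-invariant. Its boundary pieces on $\partial([0,1]^2)$ cannot be crossed by Proposition~\ref{prop:dyn} (trajectories only exit through $\{v=0\}$, which is contained in $\mathcal{T}_{\varepsilon,\delta}$ as soon as $\varepsilon<1$), and on $\ell$ the inward normal derivative $\varepsilon\dot u - \delta \dot v$ can be computed using the on-line identity $1-u-v = \varepsilon - u(1+\varepsilon/\delta)$; it reduces to an algebraic inequality that is precisely guaranteed by $\varepsilon<ac/2$ and $\delta<\varepsilon c/2$.

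For (ii) the same barrier argument applies, but the $\rho$-term adds an extra contribution $-\delta\rho v[\varepsilon - u(1+\varepsilon/\delta)]$ to the normal derivative on $\ell$. On the portion of $\ell$ inside the unit square one has $u\in[0,\delta]$ and $v\in[1-\varepsilon,1]$, so this extra term is bounded in absolute value by a constant times $\rho$. The sharper constraint $\varepsilon<ac/3$ (versus $\varepsilon<ac/2$ in (i)) leaves precisely the slack needed to absorb this correction as soon as $\rho<a/3$. For the set-theoretic conclusion it suffices to exhibit any single $a$ for which $\bigcup_{\rho\in(0,a/3)} \mathcal{E}(\rho) \subseteq \mathcal{T}_{\varepsilon,\delta}$; any $a > 3\varepsilon/c$ qualifies. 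The main technical obstacle here is the careful bookkeeping of signs in the barrier inequality, since the factor $\varepsilon - u(1+\varepsilon/\delta)$ changes sign along $\ell$.

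For (iii), the easy inclusion $(0,1]\times\{0\} \subseteq \bigcap_{\omega>0}\bigcup_{\rho>\omega}\mathcal{E}(\rho)$ is immediate: a point $(u_0, 0)$ with $u_0>0$ is already a case~(2) point of Proposition~\ref{prop:dyn} with stopping time $T=0$, for every $\rho$. For the reverse inclusion I must show that if $v_0>0$ then $(u_0, v_0) \in \mathcal{B}(\rho)$ for all sufficiently large $\rho$. By the characterization~\eqref{char:E} combined with Proposition~\ref{lemma:M}, this reduces to proving that the separating graph satisfies $\gamma_\rho(u_0) \to 0$ as $\rho \to +\infty$. Using~\eqref{usvs} one sees that the saddle has $v_s(\rho) = (1-ac)/(1+\rho c) \to 0$; the main work is to propagate this to general $u_0$ along $\mathcal{M}(\rho)$. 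I plan to do so by comparison: for $\rho$ large the orbit equation along $\mathcal{M}(\rho)$ is dominated by the $\rho v(1-u-v)$ term, which rapidly drives trajectories towards the line $u+v=1$, and the slope analysis of $\gamma_\rho$ at the saddle combined with the monotonicity from Proposition~\ref{lemma:M} yields a uniform upper bound that vanishes as $\rho\to\infty$. Establishing this uniformity is the principal obstacle of the whole proposition.
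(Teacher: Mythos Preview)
Your treatment of parts (i) and (ii) follows the same barrier argument the paper uses: forward-invariance of the complement of $\mathcal{T}_{\varepsilon,\delta}$ via a sign computation of the normal component of the flow along the line $\ell$. One small gap: your argument for the inclusion $[0,1]\times[0,1-ac)\subseteq\mathcal{E}(0)$ only treats the subregion $\{u+v<1-ac\}$, whereas the claimed set contains points with $u+v\ge 1-ac$. The paper closes this by observing that $v(t)$ is nonincreasing (so $v(t)<1-ac$ always) and that the only equilibria with $v<1-ac$ are the points $(0,\tilde v)$ with positive transverse eigenvalue, so no trajectory can converge there; Poincar\'e--Bendixson then forces the exit through $\{v=0\}$. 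Also, on $\ell$ the inward normal for the \emph{complement} of $\mathcal{T}_{\varepsilon,\delta}$ is $(-\varepsilon,\delta)$, so the quantity to make nonnegative is $\delta\dot v-\varepsilon\dot u$, not $\varepsilon\dot u-\delta\dot v$; your conclusion is right but the sign in the description is flipped.

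For part (iii) you take a genuinely different route, and this is where the proposal has a real gap. The paper does \emph{not} track the stable manifold $\gamma_\rho$ at all; instead it shows directly that for any $\varepsilon_1>\varepsilon_2>0$ the rectangle $\mathcal{R}_{\varepsilon_1,\varepsilon_2}=[0,1-\varepsilon_1]\times[\varepsilon_2,1]$ is forward-invariant once $\rho$ exceeds an explicit $R(\varepsilon_1,\varepsilon_2)$, and that for $\rho$ large the saddle $(u_s,v_s)$ lies outside $\mathcal{R}_{\varepsilon_1,\varepsilon_2}$ (since $v_s\to 0$ and $u_s\to 1-ac$). Hence $\mathcal{R}_{\varepsilon_1,\varepsilon_2}\subseteq\mathcal{B}(\rho)$ and therefore $\mathcal{E}(\rho)\subseteq\big((0,1]\times[0,\varepsilon_2)\big)\cup\big((1-\varepsilon_1,1]\times(\varepsilon_2,1]\big)$; letting $\varepsilon_1,\varepsilon_2\to 0$ gives the claim. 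This is a self-contained four-side sign check and avoids any analysis of $\mathcal{M}(\rho)$.

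Your reduction ``it suffices to show $\gamma_\rho(u_0)\to 0$'' is incomplete on two counts. First, the characterization in Proposition~\ref{prop:char} says $(u_0,v_0)\in\mathcal{E}(\rho)$ not only when $v_0<\gamma_\rho(u_0)$ but also whenever $u_0>u_{\mathcal{M}}^\rho$; you never address why $u_{\mathcal{M}}^\rho$ stays to the right of $u_0$ as $\rho\to\infty$. Second, the plan ``slope analysis at the saddle combined with monotonicity'' only controls $\gamma_\rho$ on $[0,u_s^\rho]$, where indeed $\gamma_\rho\le v_s^\rho\to 0$; but for $u>u_s^\rho$ (and $u_s^\rho\to 1-ac$, not to $1$) the manifold lies in $\mathcal{A}_2$ and the ``domination by $\rho v(1-u-v)$'' heuristic does not obviously yield a uniform smallness bound on $\gamma_\rho$ there. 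The paper's rectangle argument sidesteps both issues by working directly with $\mathcal{B}(\rho)$ rather than with the separating curve.
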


We point out that
the case~$\rho =0$ is not comprehended in Theorem~\ref{thm:dyn}.
As a matter of fact, the dynamics of this case is qualitatively very different from all the other cases. Indeed, for~$\rho =0$ the domain~$[0,1] \times [0,1]$ is not divided into~$\mathcal{E}$ and~$\mathcal{B}$, since more attractive equilibria appear on the line~$\{0\}\times(0,1)$. Thus, even if the second population cannot grow, it still has some chance of victory. 

As soon as~$\rho~$ is positive, on the line~$u=0$ only the equilibrium~$(0,1)$ survives, and it attracts all the points that were going to the line~$\{0\}\times(0,1)$ for~$\rho =0$.

When~$\rho \to +\infty$, the basin of attraction of~$(0,1)$ tends to invade the domain, thus the first population tends to have almost no chance of victory and the second population tends to win.
However, the dependence on the parameter~$\rho~$ is not monotone as one could think, at least not in~$[0,+\infty)\times[0,+\infty)$.

Indeed, by performing some simulation, one could find some values~${\rho}_1$ and~${\rho}_2$, with~$0<{\rho}_1 < {\rho}_2$, and a point~$(u^*, v^*)\in [0,+\infty)\times[0,+\infty)$ such that~$(u^*, v^*) \notin \mathcal{E}({\rho}_1)$ and~$(u^*, v^*) \in \mathcal{E}({\rho}_2)$, see~Figure~\ref{fig:trajrho}. 

\begin{figure} 
	
	\begin{subfigure}{0.8\textwidth} \label{A3to01}
		\includegraphics[width=16cm, height=10cm]{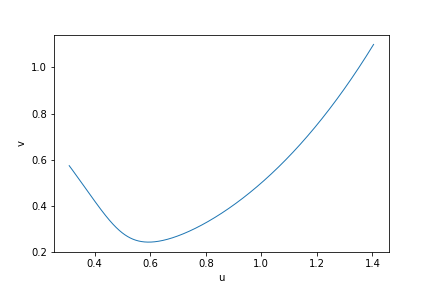} 
		\caption{$a=0.2$,~$c=0.1$, and~$\rho=3$}
		\label{fig:subim1}
	\end{subfigure}\\
	\begin{subfigure}{0.8\textwidth} \label{A7notto01}
		\includegraphics[width=16cm, height=10cm]{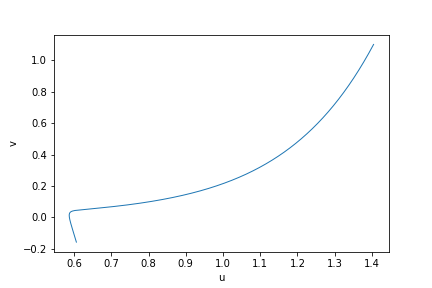}
		\caption{$a=0.2$,~$c=0.1$, and~$\rho=7$}
		\label{fig:subim2}
	\end{subfigure}
	
	\caption{\em Figure (a) and Figure (b) show the trajectory starting from the point~$(u_0,v_0)=(1.4045, 1.1)$ for~$\rho=3$ and~$\rho=7$ respectively. For~$\rho=3$ the trajectory leads to the equilibrium~$(0,1)$, so~$(u_0,v_0)\notin \mathcal{E}(\rho=3)$, while for~$\rho=7$ the second population goes to extinction in finite time, so~$(u_0,v_0)\in \mathcal{E}(\rho=7)$. 
	}
	\label{fig:trajrho}
\end{figure}

This means that, sometimes, a big value of fitness for the second population may lead to extinction while a small value brings to victory. This is counterintuitive, but can be easily explained: the parameter~$\rho~$ is multiplied by the term~$1-u-v$, that is negative past the counterdiagonal of the square~$[0,1]\times[0,1]$. So in the model~\eqref{model}, as well as in any model of Lotka-Volterra type, the population that grows faster is also the one that suffers more the consequences of overpopulation. Moreover, the usual dynamics of Lotka-Volterra models is altered by the presence of the term~$-au$, and this leads to the lack of monotonicity that we observe.
\medskip

We now give the proof of Proposition~\ref{prop:bhvA}:

\begin{proof}[Proof of Proposition~\ref{prop:bhvA}]
	{\emph{(i)}}
	For~$\rho=0$, the equation~$\dot{v}=0$ collapses to~$u=0$. Since for~$u=0$ also the equation~$\dot{u}=0$ is satisfied,
	each point on the line~$u=0$ is an equilibrium. 
	
	Calculating the eigenvalues for the points~$(0, \tilde{v})$, with~$\tilde v\in[0,1]$,
	using the Jacobian matrix in~\eqref{Jmatrix},
	one gets the values~$0$ and~$1-ac-\tilde{v}$.
	Accordingly, this entail that, if~$\tilde{v} < 1-ac$, the point~$(0, \tilde{v})$ corresponds to a strictly negative eigenvalue
	and a null one, while if~$\tilde{v}>1-ac$
then~$(0, \tilde{v})$ corresponds to a strictly negative
eigenvalue and a null one.
	These considerations proves the first statement in~(i).

We now study the behavior of the points in $[0,1]\times[0, 1-ac)$. Notice that the only part of its boundary that is inside $(0,1)\times(0,1)$ is the side~$(0,1]\times\{1-ac\}$.
We notice also that in the whole square~$(0,1]\times[0,1]$ we have
$$\dot{v}= -au < 0,$$ so trajectories cannot exit $[0,1]\times[0, 1-ac)$ (see Remark \ref{rmk:enter}).

 This also gives that there is no trajectory that can go to~$(0,1)$, and there is no cycle.
In particular this implies~\eqref{first}.

Thus a trajectory starting in $[0,1]\times[0, 1-ac)$ either converges to one of the equilibria on the
	side~$\{0\}\times[0,1]$, or has a finite stopping time.
	
In particular, since~$\{0\}\times [0,1-ac)$ consists of repulsive equilibria, we have that
$$ [0,1]\times[0, 1-ac)\subseteq \mathcal{E}(0),~$$
that is, trajectories starting in~$[0,1]\times[0, 1-ac)$ go to the extinction of~$v$.
This proves the first inclusion in~\eqref{first2}.

To prove the second inclusion in~\eqref{first2},
we first show that
\begin{equation}\label{pot43 yb9 49y}\begin{split}&
{\mbox{points in~$\big([0,1]\times[0,1]\big)\setminus\mathcal{T}_{\varepsilon, \delta}$
	are mapped}}\\&{\mbox{into~$\big([0,1]\times[0,1]\big)\setminus\mathcal{T}_{\varepsilon, \delta}$ itself.}}\end{split}\end{equation}
Indeed, 
on the line~$\{\delta v-\varepsilon u = \delta(1-\varepsilon)\}$ we have that the inward-pointing normal derivative is given by
	\begin{equation}\begin{split}\label{fagiano}&
	(\dot{u},\dot{v})\cdot\frac{(-\varepsilon,\delta)}{\sqrt{\varepsilon^2+\delta^2}}\\
=\;&
\frac1{\sqrt{\varepsilon^2+\delta^2}}	\big(\delta \dot{v}- \varepsilon \dot{u}\big)\\
 =\;&\frac1{\sqrt{\varepsilon^2+\delta^2}}\big( -\delta a u - \varepsilon u(1-u-v) +\varepsilon ac u\big)\\ 
	=\;&
	\frac{ u}{\sqrt{\varepsilon^2+\delta^2}}\left[\varepsilon\left(-1+ac+u+\frac{\varepsilon}{\delta}u +1-\varepsilon\right)-\delta a\right] \\
	=\;&\frac{ 1}{\sqrt{\varepsilon^2+\delta^2}}\left[u^2\left( 1+\frac{\varepsilon}{\delta}\right) + u(\varepsilon ac -\delta a -\varepsilon^2)\right].
	\end{split}\end{equation}
	The first term is always positive; the second one is positive for the choice \begin{equation*}
	\delta < \frac{\varepsilon c}{2} \quad {\mbox{ and }}\quad\varepsilon< \frac{ac}{2}.
	\end{equation*} 
	Hence, under the assumption in~(i), on the line~$\{\delta v-\varepsilon u = \delta(1-\varepsilon)\}$ the inward-pointing normal derivative is positive, which implies
	that no trajectories in~$\big([0,1]\times[0,1]\big)\setminus\mathcal{T}_{\varepsilon, \delta}$ can exit from~$\big([0,1]\times[0,1]\big)\setminus\mathcal{T}_{\varepsilon, \delta}$ (see Remark \ref{rmk:enter}). This establishes~\eqref{pot43 yb9 49y}.
	
As a consequence of~\eqref{pot43 yb9 49y}, we obtain also the second
inclusion~\eqref{first2}, as desired.
	\medskip
	
{\emph{(ii)}} 
	We claim that
	\begin{equation}\label{poi8562eq2dvfkgjlkuykyuasdawre}
	\big([0,1]\times[0,1]\big)\setminus\mathcal{T}_{\varepsilon, \delta}
 \subseteq \mathcal{B}(\rho),\end{equation}
	for all~$0<\rho< a/3$.
	To this end, we observe that, in order to determine
	the sign of the inward pointing normal derivative on the side~$\{\delta v -\varepsilon u = \delta(1-\varepsilon)\}$, by~\eqref{fagiano} we have to 
check that~$\delta \dot{v}- \varepsilon\dot{u}\ge0$. In order to simplify the calculation, we use the change of coordinates~$x:=u$ and~$y:=1-v$.
In this way, one needs to verify that
\begin{equation}\label{oewut45b7v85476}
\delta \dot{y}+\varepsilon \dot{x} < 0 \qquad{\mbox{ 
on the line}}\quad\{\delta y + \varepsilon x = \delta \varepsilon\}.\end{equation} For this, we compute
	\begin{equation}
	\label{cneq0}
	\begin{split}&
	\delta \dot{y}+\varepsilon \dot{x} \\=\;& \delta \rho (y-1)(y-x)+\delta a x + \varepsilon x (y-x) - \varepsilon acx \\
	 =\;& -\delta \rho (1-y) y+x \big(
 \delta \rho (1-y) +\delta a + \varepsilon (y-x) -\varepsilon ac   \big) \\
	 =\;& -\delta \rho (1-y) y +  x \big(  \delta \rho  -\delta \rho y 
+\delta a + \varepsilon y-\varepsilon x -\varepsilon a c   \big)\\
\le\;&  x \big(  \delta \rho  -\delta \rho y 
+\delta a + \varepsilon y-\varepsilon x -\varepsilon a c   \big)  .
	\end{split}
	\end{equation}
Now we choose~$\delta<\varepsilon c / 2$ and we
recall that~$\rho < a/3$. Moreover, we notice
that
$$y= \varepsilon-\frac{\varepsilon}{\delta}x\le\varepsilon,
$$ 
and therefore~$\varepsilon y \leq \varepsilon^2$. Thus, we have that
	\begin{eqnarray*}
-\delta \rho y + \delta \rho  +\delta a + \varepsilon y-\varepsilon x 
-\varepsilon a c  & \le& \frac{\varepsilon ac }{6} + \frac{\varepsilon ac }{2} +
 \varepsilon^2
-\varepsilon a c\\&=& \varepsilon\left( \frac{2}{3} ac  + \varepsilon
- a c\right)
	\end{eqnarray*}
	that is negative for~$\varepsilon < ac/3$. Plugging this
information into~\eqref{cneq0}, we obtain~\eqref{oewut45b7v85476}, as desired.

This proves that
trajectories in~$
\big([0,1]\times[0,1]\big)\setminus\mathcal{T}_{\varepsilon, \delta}$
cannot exit~$
\big([0,1]\times[0,1]\big)\setminus\mathcal{T}_{\varepsilon, \delta}$ (see Remark \ref{rmk:enter}).
This, the fact that there are no cycles in~$[0,1]\times[0,1]$
and the Poincar\'e-Bendixson Theorem (see e.g.~\cite{TESCHL})
give that
trajectories in~$\big([0,1]\times[0,1]\big)\setminus\mathcal{T}_{
\varepsilon, \delta}$ converge to~$(0,1)$,
that is the only equilibrium in~$\big([0,1]\times[0,1]\big)\setminus
\mathcal{T}_{\varepsilon, \delta}$. Hence, 
\eqref{poi8562eq2dvfkgjlkuykyuasdawre}
is established.

{F}rom~\eqref{poi8562eq2dvfkgjlkuykyuasdawre}
we deduce that
$$ \mathcal{E}(\rho)\subseteq\mathcal{T}_{\varepsilon, \delta}$$
for all~$0<\rho<a/3$, which implies the desired result
in~(ii).
	\medskip
		
{\emph{(iii)}} 
We consider~$\varepsilon_1>\varepsilon_2 >0$ to be
taken sufficiently small in what follows,
and we show that
there exists~$R>0$, depending on~$\varepsilon_1$
and~$\varepsilon_2$, such that for
all~$\rho\geq R$ it holds that
\begin{equation}\label{qeruyjy8790}
\mathcal{R}_{\varepsilon_1, \varepsilon_2}:=
[0, 1-\varepsilon_1]\times [\varepsilon_2,1] \subseteq \mathcal{B}(\rho).\end{equation}
For this, we first observe that 
\begin{equation}\label{po089egdgdkjfkghjighywrv58465v8}
{\mbox{no trajectory starting
in~$\mathcal{R}_{\varepsilon_1, \varepsilon_2}$
can exit the set.}}\end{equation}
Indeed, looking at the velocity fields on the side~$\{1-\varepsilon_1\} \times [\varepsilon_2, 1]$, the normal inward derivative is
	\begin{equation*}
	-\dot{u}=-[u(1-u-v)-acu] = -(1-\varepsilon_1)(\varepsilon_1-v-ac),
	\end{equation*}
	and this is positive for~$\varepsilon_1\leq ac$ (which is fixed
	from now on).
	In addition, on the side~$[0,1-\varepsilon_1]\times\{ \varepsilon_2 \}$, the inward normal derivative is
	\begin{eqnarray*}
	\dot{v}&=& [\rho v(1-u-v)-au] \\&=& 
	 \rho \varepsilon_2(1-u-\varepsilon_2) - au\\&
	 \ge&	\rho \varepsilon_2(\varepsilon_1-\varepsilon_2) - a(1-
	 \varepsilon_1),
	\end{eqnarray*}
	and this is positive
	for 
\begin{equation}\label{rhodef}
 \rho > \frac{a(1-\varepsilon_1)}{\varepsilon_2(\varepsilon_1
	-\varepsilon_2)}=:R.\end{equation}
These observations complete the proof
of~\eqref{po089egdgdkjfkghjighywrv58465v8} (see Remark \ref{rmk:enter}).

{F}rom~\eqref{07960789djiewf2},
\eqref{po089egdgdkjfkghjighywrv58465v8}
and the Poincar\'e-Bendixson Theorem (see e.g.~\cite{TESCHL}), we have that
all the trajectories in the interior 
of~$\mathcal{R}_{\varepsilon_1, \varepsilon_2}$
must converge to
either an equilibrium or a union of (finitely many)
equilibria and non-closed orbits connecting these equilibria.

In addition, we claim that, if~$0<ac<1$, recalling~\eqref{usvs}
and possibly enlarging~$\rho$ 
in~\eqref{rhodef},
\begin{equation}\label{possibly2}
(u_s,v_s)\notin \mathcal{R}_{\varepsilon_1, \varepsilon_2}.
\end{equation}
Indeed, we have that~$u_s \to 1-ac$ and~$v_s \to 0$,
as~$\rho \to +\infty$. Hence, we can choose~$\rho$ large enough
such that the statement in~\eqref{possibly2} is satisfied.

As a consequence of~\eqref{possibly2},
we get that all the trajectories in the interior 
of~$\mathcal{R}_{\varepsilon_1, \varepsilon_2}$
must converge to
the equilibrium~$(0,1)$,
and this establishes~\eqref{qeruyjy8790}.

Accordingly,~\eqref{qeruyjy8790} entails that, for~$\varepsilon_1>
\varepsilon_2>0$ sufficiently small, there exists~$R>0$, depending on~$\varepsilon_1$
and~$\varepsilon_2$, such that for
all~$\rho\geq R$
\begin{equation*}
\mathcal{E}(\rho)
\subset\big((0,1]\times[0,\varepsilon_2)\big)\cup
\big( (1-\varepsilon_1,1]\times(\varepsilon_2,1]\big)
.\end{equation*}
This implies~\eqref{ir4t4y4y}, as desired.
\end{proof}

\section{Dependence on the parameter~$a$}

The consequences of the lack of variational structure
become even more extreme when we observe the dependence
of the dynamics on
the parameter~$a$, that is the aggressiveness of the first population
towards the other. 
Throughout this chapter, we take~$\rho>0$ and~$c>0$,
and we perform our analysis
taking into account the limit cases~$a\to0$ and~$a\to+\infty$.
We start analyzing the dynamics of~\eqref{model}
in the case~$a=0$.

\begin{proposition}[Dynamics of~\eqref{model} when~$a=0$] \label{prop:bhvaPRE}
For~$a=0$
the system~\eqref{model} has the following
features:
\begin{itemize}
\item[i)] The system has the equilibrium~$(0,0)$, which is a source,
and a straight line of equilibria~$(u,1-u)$, for all~$u\in[0,1]$,
which correspond to a strictly negative eigenvalue and a null one.
\item[ii)] Given any~$(u(0), v(0))\in (0,1)\times(0,1)$ we have that
		\begin{equation}\label{form}
		(u(t), v(t)) \to(\bar{u}, 1-\bar{u})\quad{\mbox{ as }}t\to+\infty, 
		\end{equation}
		where~$\bar{u}\in(0,1)$ satisfies
		\begin{equation}\label{1650}
	\frac{v(0) }{u^{\rho}(0)}\bar{u}^{\rho} + \bar{u} -1=0.
		\end{equation}
\item[iii)] The equilibrium~$(u_s^0, v_s^0)$ given in~\eqref{u0v0}
has a stable manifold, which can be written as the graph of an
increasing smooth function $$\gamma_0:[0,u_{\mathcal{M}}^0]\to[0,v_{\mathcal{M}}^0],$$
as given in~\eqref{def:gamma0},
for some $$(u_{\mathcal{M}}^0,v_{\mathcal{M}}^0)\in\big(\{1\}\times[0,1]\big)\cup
\big((0,1]\times\{1\}\big),$$ such that~$\gamma_0(0)=0$ and
$\gamma_0(u_{\mathcal{M}}^0)=v_{\mathcal{M}}^0$.

More precisely, 
		\begin{equation}\label{def:um0}
		 u_{\mathcal{M}}^0:=\min \left\{1, \frac{u_s^0}{(v_s^0)^{\frac{1}{{\rho}}}}\right\},	\end{equation} 
		 being~$(u_s^0,v_s^0)$ defined in~\eqref{u0v0}.
\end{itemize}
\end{proposition}

We point out that formula~\eqref{form}
says that for~$a=0$ every point in the interior
of~$[0,1]\times[0,1]$ tends to a coexistence equilibrium.
The shape of the trajectories depends on~$\rho$, being
convex in the case~$\rho>1$,
a straight line in the case~$\rho=1$, and concave in the case~$\rho< 1$. This means that if the second population~$v$ is alive at the initial time, then it does not get extinct in finite time.

\begin{proof}[Proof of Proposition~\ref{prop:bhvaPRE}]
{\emph{(i)}}	
	For~$a=0$, we look for the equilibria of the system~\eqref{model}
 by studying when~$\dot{u}=0$ and~$\dot{v}=0$. It is easy to see that
	the point~$(0,0)$ and all the points on the line~$u+v=1$ are the only equilibria.
	
The Jacobian of the system (see~\eqref{Jmatrix}, with~$a=0$) at the point~$(0,0)$
has two positive eigenvalues,~$1$ and~$\rho~$, and therefore~$(0,0)$ is a
source.
 
Furthermore,
the characteristic polynomial at a point~$(\tilde{u}, \tilde{v})$ on the line~$u+v=1$
is given by
	$$(\lambda+\tilde{u})(\lambda+\rho \tilde{v})-\rho \tilde{u}\tilde{v}
=\lambda(\lambda+\tilde{u} +\rho \tilde{v}),$$
and therefore, the eigenvalues are~$0$ and~$-\tilde{u} -\rho \tilde{v}<0$.
\medskip

{\emph{(ii)}}  We point out that
when~$a=0$ 
\begin{equation}\label{intprim678}
{\mbox{$\mu(t):=v(t)/u^{\rho} (t)$ is a prime integral for the system.}}
\end{equation}
Indeed,
	\begin{equation*}\begin{split}
	\dot{\mu}=\;& \frac{\dot{v}u^\rho - {\rho} u^{{\rho}-1}  \dot{u} v }{u^{2{\rho}}}\\=\;& u^{{\rho}-1} \frac{{\rho}uv(1-u-v)- {\rho} uv(1-u-v) }{u^{2{\rho}}}\\=\;&0.\end{split}
	\end{equation*}  
As a result, the trajectory starting at a point~$(u(0),v(0) )\in(0,1)\times(0,1)$ lies on the curve
\begin{equation}\label{intprim}
v(t)=\frac{v(0)}{ u^{\rho}(0)}\,  u^{\rho} (t).\end{equation}
Notice that these curves do not intersect the line $\{v=0\}$ and that they are not periodic orbits, since $\dot{u}$ and $\dot{v}$ have the same sign.
Hence, the  $\alpha-$limit point of~$(u(0),v(0) )$ is  an equilibrium on this curve. Since~$(0,0)$ is a source, the only possibility
is that the trajectory starting at~$(u(0),v(0) )$ converges to an
equilibrium~$(\bar{u}, \bar{v})$ 
	such that
	$\bar{v}=1-\bar{u}$. This entails that
	\begin{equation*}
	1-\bar{u} =\bar{v}=(v(0)/ u^{\rho}(0))  \bar{u}^{\rho},
	\end{equation*}
which is exactly equation~\eqref{1650}.	

\medskip

{\emph{(iii)}} We observe that the point~$(u_s^0, v_s^0)$ given in~\eqref{u0v0}
lies on the straight line~$u+v=1$, and therefore, thanks to~(i) here, it is
an equilibrium of the system~\eqref{model}, which corresponds
to a strictly negative eigenvalue~$-u_s^0-\rho v_s^0$ and a null one.

Hence, by the Center Manifold Theorem
(see e.g. Theorem~1 on page~16
of~\cite{MR635782}), the point~$(u_s^0, v_s^0)$ 
has a stable manifold, which has dimension~$1$ and
is tangent to the eigenvector of the linearized system associated to the
strictly negative eigenvalue~$-u_s^0-\rho v_s^0$.

Also, the monotonicity and the property of being a graph follow from the strict sign of~$\dot{u}$
and~$\dot{v}$. The smoothness of the graphs follows from the regularity of the center manifold.
The fact that~$\gamma_0(0)=0$ is a consequence of the monotonicity property of~${u}$ and~${v}$, which ensures that the $\omega-$limit exists, and the fact that this limit has to lie on the prime integral
in~\eqref{intprim}.
The fact that~$\gamma_0(u_{\mathcal{M}}^0)=v_{\mathcal{M}}^0$
follows from formula~\eqref{form} and the monotonicity property.
Formula~\eqref{def:gamma0}
follows from the fact that any trajectory has to lie on the prime integral
in~\eqref{intprim}.
\end{proof}

To state our next result concerning the dependence of~$\mathcal{E}$ defined in~\eqref{DEFE} on the parameter~$a$,
we give some notation.
We will make it explicit the dependence of the sets~$\mathcal{E}$
and~$\mathcal{B}$
on the parameter~$a$, by writing
explicitly~$\mathcal{E}(a)$ and~$\mathcal{B}(a)$, and we will call
\begin{equation*}
\mathcal{E}_0:=\underset{a'>0}{\bigcap} \, \underset{a'>a>0}{\bigcup} \mathcal{E}(a)
\end{equation*}
and
\begin{equation}\label{def:Einfty}
\mathcal{E}_{\infty}:=\underset{a'>0}{\bigcap} \, \underset{a>a'}{\bigcup} \mathcal{E}(a).
\end{equation}
In this setting, we have the following statements:

\begin{proposition}[Dependence of the dynamics on~$a$] \label{prop:bhva}
	\quad
	\begin{itemize}
		\item[(i)] We have that
		\begin{equation}\label{char:E0}
			\mathcal{G}\subseteq\mathcal{E}_0
			\subseteq\overline{\mathcal{G}},
		\end{equation}
		where
		\begin{eqnarray*}
			\mathcal{G}&:=& \big\{  (u,v)\in [0,1]\times [0,1]\;{\mbox{ s.t. }}\; 
			v < \gamma_{0}(u) \,\text{ if } \, u\in[0, u_{\mathcal{M}}^0]\\
			&&\qquad\qquad\qquad\qquad{\mbox{and }}		\;
			v \leq 1  \, \text{ if }\,  u\in(u_{\mathcal{M}}^0, 1]  \big\},
		\end{eqnarray*}
		and~$\gamma_0$ and~$u_{\mathcal{M}}^0$ are given in~\eqref{def:gamma0}.
		\item[(ii)] It holds that 
\begin{equation}\label{asdfgert019283}
\mathcal{S}_c\subseteq
\mathcal{E}_{\infty} \subseteq\overline{\mathcal{S}_c},
\end{equation}
where
		\begin{equation} \label{def:S_c}
		\mathcal{S}_c:=\left\{ (u,v)\in[0,1] \times [0,1]\;
		{\mbox{ s.t. }}\; v-\frac{u}{c}<0 \right\}.
		\end{equation}	
	\end{itemize}
\end{proposition}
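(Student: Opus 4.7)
The two statements in Proposition~\ref{prop:bhva} describe the limiting behaviour of the basin $\mathcal{E}(a)$ as the aggressiveness parameter degenerates to $0$ or to $+\infty$. By Proposition~\ref{prop:char}, $\mathcal{E}(a)$ is determined by the graph $\gamma_a$ of the stable manifold $\mathcal{M}(a)$, so in both regimes the plan is to track the limit of $\gamma_a$ and then to invoke the characterization in~\eqref{char:E} and~\eqref{char:B}.

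For part (i), I would first observe that as $a\to 0^+$ the saddle $(u_s^a,v_s^a)$ from Theorem~\ref{thm:dyn} converges to the point $(u_s^0,v_s^0)$ in~\eqref{u0v0}, and the stable eigenvector of the linearization converges to the corresponding eigendirection of the limiting system analysed in Proposition~\ref{prop:bhvaPRE}. The Stable Manifold Theorem with parameter dependence then yields local uniform convergence $\gamma_a\to\gamma_0$ in a neighbourhood of $(u_s^0,v_s^0)$. To extend this to a compact subset of $[0,u_{\mathcal{M}}^0)$, I would follow the flow backward starting from that neighbourhood and use continuous dependence of solutions to~\eqref{model} on $a$ over bounded time intervals. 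Points strictly below $\gamma_0$ then lie strictly below $\gamma_a$ for all sufficiently small $a$, hence belong to $\mathcal{E}(a)$ by Proposition~\ref{prop:char}, proving the first inclusion in~\eqref{char:E0}; points strictly above $\gamma_0$ lie above $\gamma_a$ for all small enough $a$ and belong to $\mathcal{B}(a)$, proving the second inclusion.

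For part (ii), the natural approach is to rescale time via $\tau:=at$, turning~\eqref{model} into $u'=u(1-u-v)/a-cu$ and $v'=\rho v(1-u-v)/a-u$. The formal limit as $a\to+\infty$ is the linear system $u'=-cu$, $v'=-u$, whose trajectories satisfy $dv/du=1/c$, i.e.\ they are straight lines of slope $1/c$, along which $v(\tau)-u(\tau)/c$ is preserved. Consequently, for $(u_0,v_0)$ with $v_0<u_0/c$ the limit trajectory hits $\{v=0\}$ at the positive abscissa $u_0-cv_0$ in finite rescaled time, while for $v_0>u_0/c$ it hits $\{u=0\}$ at positive ordinate $v_0-u_0/c$. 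A Gr\"onwall-type estimate on the difference between~\eqref{model} in rescaled time and its linear limit, valid on any fixed compact $\tau$-interval, then shows that for all $a$ large enough the true trajectory behaves similarly: points in $\mathcal{S}_c$ end up in $\mathcal{E}(a)$, yielding $\mathcal{S}_c\subseteq\mathcal{E}_\infty$, whereas points with $v_0>u_0/c+\eta$ enter the basin $\mathcal{B}(a)$ of $(0,1)$ and hence are excluded from $\mathcal{E}(a)$, yielding $\mathcal{E}_\infty\subseteq\overline{\mathcal{S}_c}$.

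The main technical obstacle in~(i) is promoting the local convergence $\gamma_a\to\gamma_0$ guaranteed by the Stable Manifold Theorem to a convergence that is uniform on compact portions of $\gamma_0$; this forces one to follow the manifold along the backward flow and to control the escape time from a fixed neighbourhood of the saddle uniformly in $a$, using the fact established in Section~\ref{sec:deg0} that $\mathcal{M}(a)$ lies in $\mathcal{A}_2\cup\mathcal{A}_4$ where $\dot u$ and $\dot v$ have definite signs. In~(ii) the delicate point is that the limiting system is linear and degenerate (with $(0,0)$ as its only equilibrium) while the relevant trajectories travel all the way to $\partial([0,1]\times[0,1])$, so the Gr\"onwall bound must be tuned so that the error remains small uniformly up to the boundary hits; moreover, the strict/non-strict nature of the inequalities defining $\mathcal{S}_c$ prevents upgrading either inclusion in~(ii) to an equality on the critical line $v=u/c$, and one should not expect more than the stated sandwich between $\mathcal{S}_c$ and $\overline{\mathcal{S}_c}$.
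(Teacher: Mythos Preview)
Your plan for part~(i) is essentially the paper's own approach: the paper isolates the convergence $\gamma_a\to\gamma_0$ as a separate lemma (Lemma~\ref{lemma:conv_gamma}), proved precisely by combining local convergence near the saddle with continuous dependence of the flow on~$a$ away from it, and then reads off both inclusions in~\eqref{char:E0} from Proposition~\ref{prop:char}. The paper's implementation is more concrete (a quantitative lower bound on the speed along~$\mathcal{M}^a$ away from the equilibria, tubular neighbourhoods built using the prime integral~$v/u^\rho$ of the $a=0$ system), but the architecture is the same.

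For part~(ii) your route via the rescaling $\tau=at$ and comparison with the linear limit $u'=-cu$, $v'=-u$ is genuinely different from the paper's. The paper never rescales; instead it works directly with the wedge regions $\mathcal{S}_{\varepsilon^\pm}=\{v\gtrless u(\tfrac1c\pm\varepsilon)\}$, checks by an elementary computation that the inward normal component of the velocity along the boundary line has a sign once~$a$ is large, and then uses Poincar\'e--Bendixson together with the known tangent direction of~$\mathcal{M}(a)$ at the origin to conclude $\mathcal{S}_{\varepsilon^+}\subseteq\mathcal{B}(a)$ and $\mathcal{S}_{\varepsilon^-}\subseteq\mathcal{E}(a)$. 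Your argument does handle the inclusion $\mathcal{S}_c\subseteq\mathcal{E}_\infty$: when $v_0<u_0/c$ the limit trajectory hits $\{v=0\}$ transversally at the finite time $\tau=-\tfrac1c\log(1-cv_0/u_0)$, so a Gr\"onwall comparison on a fixed compact $\tau$-interval carries over to the true system for large~$a$.

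There is, however, a real gap in your argument for the reverse inclusion $\mathcal{E}_\infty\subseteq\overline{\mathcal{S}_c}$. When $v_0>u_0/c$, the limit trajectory does \emph{not} hit $\{u=0\}$ in finite rescaled time: one has $u(\tau)=u_0e^{-c\tau}>0$ for all~$\tau$, and $(u,v)\to(0,v_0-u_0/c)$ only as $\tau\to+\infty$. A Gr\"onwall estimate valid on a fixed compact interval $[0,T]$ then tells you merely that the true trajectory at rescaled time~$T$ is near some point $(\delta,v_*)$ with small~$\delta$ and $v_*\approx v_0-u_0/c>0$; it gives no information for $\tau>T$, and in particular it does not by itself place the orbit in $\mathcal{B}(a)$. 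To close this you must still argue that such a point lies above~$\gamma_a$ \emph{uniformly for all large~$a$}, or equivalently trap the tail of the trajectory in a forward-invariant region excluding $\{v=0\}$. That is exactly the invariant-wedge computation the paper performs, so for this direction your rescaling approach does not bypass the paper's key step---it only postpones it. The cleanest fix is to drop Gr\"onwall for this half and argue, as the paper does, that the sector $\{v>u(\tfrac1c+\varepsilon)\}$ is forward invariant once~$a$ is large.
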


We point out that the set~$\mathcal{E}_0$ in~\eqref{char:E0}
does not coincide with
the basin of attraction for the system~\eqref{model} when~$a=0$.
Indeed, as already mentioned, formula~\eqref{form}
in Proposition~\ref{prop:bhvaPRE}
says that for~$a=0$ every point in the interior
of~$[0,1]\times[0,1]$ tends to a coexistence equilibrium and thus
if~$v(0)\neq0$ then~$v(t)$ does not get extinct in finite time.

Also, as~$a\to+\infty$, we have that the set~$\mathcal{E}_{\infty}$
is determined by~$\mathcal{S}_c$, defined in~\eqref{def:S_c},
that depends only on the parameter~$c$.

\medskip

The statement in~(i) of Proposition~\ref{prop:bhva}
will be a direct consequence of the following result. 
Recalling the function~$\gamma$ introduced in
Propositions~\ref{lemma:M} and~\ref{M:p045}, 
we express here the dependence on the parameter
$a$ by writing~$\gamma_a$,~$u_a$,~$v_a$,
$u_s^a$,~$u_{\mathcal{M}}^a$.
We will also denote by~$\mathcal{M}^a$ the stable manifold
of the point~$(u_s, v_s)$ in~\eqref{usvs}, and by~$\mathcal{M}^0$
the stable manifold
of the point~$(u_s^0, v_s^0)$ in~\eqref{u0v0}.
The key lemma is the following:

\begin{lemma}\label{lemma:conv_gamma}
	For all~$u\in[0,1]$, we have that~$\gamma_a(u) \to \gamma_0(u)$
uniformly as~$a\to0$, where~$\gamma_0(u)$
	is the function defined in~\eqref{def:gamma0}.
\end{lemma}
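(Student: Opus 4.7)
The plan is to combine a compactness argument with an identification of the limit via continuous dependence on parameters, and then upgrade pointwise to uniform convergence. Concretely, I would first extend each $\gamma_a$ to a monotone nondecreasing function $[0,1]\to[0,1]$ (e.g. by setting it equal to $v_{\mathcal{M}}^a$ on $[u_{\mathcal{M}}^a, 1]$). Given any sequence $a_n\downarrow 0$, Helly's selection theorem produces a subsequence, not relabelled, with $\gamma_{a_n}\to \bar\gamma$ pointwise on $[0,1]$, where $\bar\gamma$ is monotone nondecreasing. The target is to show that $\bar\gamma\equiv \gamma_0$ on $[0,u_{\mathcal{M}}^0]$. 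Once this is established, the classical fact that a sequence of monotone functions converging pointwise to a continuous monotone limit converges uniformly gives uniform convergence of the subsequence, and a standard subsequence argument then removes the subsequence, yielding $\gamma_a\to\gamma_0$ uniformly as $a\to 0$.

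To identify $\bar\gamma$, I fix $u^*\in(0,u_s^0)$ (the upper branch $u^*\in(u_s^0,u_{\mathcal{M}}^0)$ is handled symmetrically, and the endpoints $u=0$ and $u=u_{\mathcal{M}}^0$ follow from $\gamma_a(0)=0=\gamma_0(0)$ and monotonicity combined with interior convergence). Since $(u^*,\gamma_{a_n}(u^*))\in\mathcal{M}^{a_n}$, the orbit $\phi_{a_n}(\cdot,u^*,\gamma_{a_n}(u^*))$ of \eqref{model} with parameter $a_n$ converges to the saddle $(u_s^{a_n},v_s^{a_n})$ as $t\to+\infty$, and by \eqref{usvs} one has $(u_s^{a_n},v_s^{a_n})\to(u_s^0,v_s^0)$. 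Classical continuous dependence on parameters and initial data gives, for every fixed $T>0$, the convergence $\phi_{a_n}(T,u^*,\gamma_{a_n}(u^*))\to \phi_0(T,u^*,\bar\gamma(u^*))$ as $n\to\infty$. Interchanging this limit with $T\to+\infty$ (see below for why this is legitimate) yields $\phi_0(t,u^*,\bar\gamma(u^*))\to(u_s^0,v_s^0)$ as $t\to+\infty$. Since for $a=0$ the quantity $v/u^\rho$ is a prime integral (cf.\ \eqref{intprim678}--\eqref{intprim}), this asymptotic behaviour forces $\bar\gamma(u^*)/(u^*)^\rho = v_s^0/(u_s^0)^\rho$, that is, $\bar\gamma(u^*)=\gamma_0(u^*)$ by the explicit formula in \eqref{def:gamma0}.

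The chief obstacle is justifying the interchange of the limits $T\to+\infty$ and $n\to+\infty$, which requires uniform-in-$n$ control on the rate of approach of $\phi_{a_n}(\cdot,u^*,\gamma_{a_n}(u^*))$ to the saddle. I would argue as follows: the Jacobian $J(u_s^a,v_s^a)$ has a negative eigenvalue depending continuously on $a$ and tending to $-(u_s^0+\rho v_s^0)<0$ as $a\to 0$; in particular it is bounded away from $0$ for $a\in[0,a_0]$. Hence, in a small fixed neighbourhood $U$ of $(u_s^0,v_s^0)$, the local stable manifolds $\mathcal{M}^a$ can be written as $C^1$ graphs over the stable eigendirection with $C^1$-norms uniform in $a$, yielding exponential contraction to $(u_s^a,v_s^a)$ with a uniform rate. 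Outside $U$, on the $\mathcal{A}_4$-portion of $\mathcal{M}^a$ the quantities $\dot u$ and $\dot v$ are positive and bounded below by a constant independent of $a$ once $u\in[u^*, u_s^a-\eta]$ for a fixed $\eta>0$; this forces the trajectory to enter $U$ in a time bounded uniformly in $a$. Combining these two estimates gives the required uniform rate, completing the identification.
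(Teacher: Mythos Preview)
Your overall strategy---compactness via Helly, identification of the subsequential limit through the $a=0$ prime integral, and the upgrade from pointwise to uniform convergence using the continuity of $\gamma_0$---is a genuinely different route from the paper's. The paper works directly with the flows: it proves a quantitative lower bound on the speed $|\dot u|^2+|\dot v|^2$ along $\mathcal{M}^a$ away from the saddle (uniform in $a$), uses this to show that trajectories on $\mathcal{M}^a$ enter a fixed neighbourhood of $(u_s^0,v_s^0)$ in time $O(\varepsilon^{-2})$, and then combines continuous dependence on compact time intervals with the foliation of a neighbourhood of $(u_s^0,v_s^0)$ by level sets of the prime integral $v/u^\rho$ to trap $\mathcal{M}^a$ in a tubular neighbourhood of $\mathcal{M}^0$. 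Your approach is more structural and avoids these explicit flow estimates, at the price of the limit-interchange step.

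That step, however, is where your argument has a real gap. You invoke ``uniform $C^1$ local stable manifolds'' over $a\in[0,a_0]$, but the limiting equilibrium $(u_s^0,v_s^0)$ is \emph{not} hyperbolic: one eigenvalue of $J(u_s^a,v_s^a)$ tends to $0$ as $a\to 0$ (the determinant is $-au_s^a(1+\rho c)$), so the standard parameter-dependent stable manifold theorem does not apply directly. While the stable eigenvalue does remain uniformly negative and the two eigendirections do converge to a linearly independent pair, turning this into a uniform-in-$a$ contraction statement on the nonlinear manifolds $\mathcal{M}^a$ requires a genuine argument, and the ``hence'' in your sketch does not deliver it.

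Fortunately the interchange of limits is unnecessary. For $u^*\in(0,u_s^0)$, once $(u^*,\gamma_{a_n}(u^*))\in\mathcal{A}_4^{a_n}$ the $a_n$-trajectory is monotone increasing in both coordinates up to $(u_s^{a_n},v_s^{a_n})$; passing to the limit $n\to\infty$ at each fixed $T$ in the inequalities $u^*\le u_{a_n}(T)\le u_s^{a_n}$ and $\gamma_{a_n}(u^*)\le v_{a_n}(T)\le v_s^{a_n}$ gives $u_0(T)\le u_s^0$ and $v_0(T)\le v_s^0$ for all $T$. Now letting $T\to\infty$ for the $a=0$ trajectory alone---which by Proposition~\ref{prop:bhvaPRE}(ii) converges to some $(\bar u,1-\bar u)$---these bounds force $\bar u\le u_s^0$ and $1-\bar u\le v_s^0=1-u_s^0$, hence $\bar u=u_s^0$; the prime integral then yields $\bar\gamma(u^*)=\gamma_0(u^*)$. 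The range $u^*\in(u_s^0,u_{\mathcal{M}}^0)$ is handled by the symmetric sandwich via $\mathcal{A}_2^{a_n}$. This argument needs no uniform rate and sidesteps the non-hyperbolicity issue entirely; with it, your proof becomes complete and arguably cleaner than the paper's.
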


\begin{proof}
Since we are dealing with the limit as~$a$ goes to zero,
throughout this proof we will always assume that
we are in the case~$ac<1$.

Also, we denote by~$\phi_p^{a}(t)$ the flow at time~$t$
of the point~$p\in[0,1]\times[0,1]$ associated with~\eqref{model},
and similarly by~$\phi_p^{(0)}(t)$ the flow at time~$t$
of the point~$p$ associated with~\eqref{model} when~$a=0$. 
With a slight abuse of notation,
we will also write $$\phi_p^{a}(t)=(u_a(t),v_a(t)) \qquad
{\mbox{ with }}\; p=(u_a(0),v_a(0)).$$

Let us start by proving that
\begin{equation}\label{340}
\mathcal{M}^a\cap\big([0,u_s^0]\times[0,v_s^0]\big)\to
\mathcal{M}^0\cap\big([0,u_s^0]\times[0,v_s^0]\big) \quad {\mbox{ as }}a\to0.
\end{equation}
For this, we claim that, for every~$\varepsilon>0$,
if
\begin{equation}\label{aqwzero}
(u_a(0))^2+(v_a(0))^2 \ge\frac{\varepsilon^2}{4}
\end{equation}
and
	\begin{equation}\label{zz}
	\big| (u_a(t),v_a(t) ) - (u_s^a, v_s^a)\big| > \frac{\varepsilon}{2},
	\end{equation}
then
	\begin{equation}  \label{z}
	|\dot{u}_a(t)|^2 +|\dot{v}_a(t)|^2 > \frac{\varepsilon^{4}}{C_0},
	\end{equation}
for some~$C_0>0$, depending only on~$\rho$ and~$c$.

Indeed, by recalling~(v) of
Theorem~\ref{thm:dyn} and~\eqref{zz}, we see that
the trajectory~$(u_a(t), v_a(t))$ belongs to the set $$[0, u_s^a] 
\times [0, v_s^a] \setminus B_{\frac{\varepsilon}{2}}(u_s^a, v_s^a).$$

Moreover, we claim that
\begin{equation}\label{123456poi}
1-ac-u_a(t)-v_a(t)\ge \frac{\varepsilon \sqrt{2}}{4},
\end{equation}
for any~$t>0$ such that~\eqref{zz} is satisfied.
To prove this, we recall that~$(u_s^a, v_s^a)$ lies on the straight
line~$\ell$ given by~$v=-u+1-ac$
when~$0<ac<1$ (see~\eqref{curve:u'}). 
Clearly, there is no point of 
the set $$[0, u_s^a] 
\times [0, v_s^a] \setminus B_{\frac{\varepsilon}{2}}(u_s^a, v_s^a)$$
lying on~$\ell$, and we notice that the points 
in the set $$[0, u_s^a] 
\times [0, v_s^a] \setminus B_{\frac{\varepsilon}{2}}(u_s^a, v_s^a)$$
with minimal distance from~$\ell$ are given by
$$p:=\left(u_s^a-\frac{\varepsilon}2,
v_s^a\right) \qquad {\mbox{ and }} \qquad q:=\left(u_s^a, v_s^a-\frac{\varepsilon}2\right).$$
Also, the distance of the point~$p$ from the straight line~$\ell$
is given by $$\frac{\varepsilon}2\cdot \tan\frac\pi4=
\frac{\varepsilon \sqrt{2}}{4}.$$
Thus, the distance between~$(u_a(t),v_a(t) )$ 
and the line~$\ell$ is greater than 
$$\frac{\varepsilon \sqrt{2}}{4},$$
and this gives~\eqref{123456poi}.

As a consequence of~\eqref{123456poi}, we obtain that
	\begin{equation}\label{pggdeyw087968754}\begin{split}
	(\dot{u}_a(t))^2 = \;&
\big(u_a(t)(1-ac-u_a(t)-v_a(t)) \big)^2 \\> \;&
(u_a(t))^2\left(\frac{\varepsilon \sqrt{2}}{4}\right)^2\end{split}
	\end{equation}
and that
\begin{equation}\begin{split}\label{pggdeyw087968754BIS}
(\dot{v}_a(t))^2\,=& 
\big(\rho v_a(t)(1-u_a(t)-v_a(t))-au_a(t) \big)^2 \\
\ge&
\left(\rho v_a(t)\left(ac+\frac{\varepsilon \sqrt{2}}{4}\right)-au_a(t) \right)^2.
\end{split}\end{equation}
Now, if~$u_a(t)\ge\rho cv_a(t)$, then from~\eqref{pggdeyw087968754}
and~\eqref{aqwzero}
we obtain that
\begin{eqnarray*}&&
(\dot{u}_a(t))^2+(\dot{v}_a(t))^2 \ge 
(\dot{u}_a(t))^2\\&&\qquad\qquad>
(u_a(t))^2\left(\frac{\varepsilon \sqrt{2}}{4}\right)^2\\&&\qquad\qquad
\ge \frac{(u_a(t))^2}2\left(\frac{\varepsilon \sqrt{2}}{4}\right)^2
+\frac{(\rho cv_a(t))^2}2\left(\frac{\varepsilon \sqrt{2}}{4}\right)^2
\\&&\qquad\qquad
\ge \min \{1,\rho^2c^2\} \frac{\varepsilon^2}{16}
\big((u_a(t))^2+(v_a(t))^2\big)\\
&&\qquad\qquad
\ge \min \{1,\rho^2c^2\} \frac{\varepsilon^2}{16}
\big((u_a(0))^2+(v_a(0))^2\big)\\
&&\qquad\qquad
\ge \min \{1,\rho^2c^2\} \frac{\varepsilon^4}{64},
\end{eqnarray*}
which proves~\eqref{z} in this case.

If instead~$u_a(t)<\rho cv_a(t)$, we use~\eqref{pggdeyw087968754BIS}
to see that
\begin{eqnarray*}&&
(\dot{u}_a(t))^2+(\dot{v}_a(t))^2 \ge 
(\dot{v}_a(t))^2\\&&\qquad\qquad\ge
\left(\rho v_a(t)\left(ac+\frac{\varepsilon \sqrt{2}}{4}\right)-au_a(t) \right)^2
\\&&\qquad\qquad =
\left(\frac{\varepsilon \sqrt{2}\rho v_a(t)}{4}+a\big(\rho cv_a(t)-u_a(t)\big) \right)^2\\&&\qquad\qquad
\ge
\left(\frac{\varepsilon \sqrt{2}\rho v_a(t)}{4}\right)^2\\&&\qquad\qquad
\ge
\frac12\left(\frac{\varepsilon \sqrt{2}\rho v_a(t)}{4}\right)^2
+\frac12\left(\frac{\varepsilon \sqrt{2} u_a(t)}{4c}\right)^2\\&&\qquad\qquad
\ge \min \left\{\rho^2,\frac1{c^2}\right\}\frac{\varepsilon^2}{16}
\big( (u_a(t))^2 +(v_a(t))^2\big)\\
&&\qquad\qquad
\ge \min \left\{\rho^2,\frac1{c^2}\right\}\frac{\varepsilon^2}{16}
\big( (u_a(0))^2 +(v_a(0))^2\big)\\
&&\qquad\qquad
\ge \min \left\{\rho^2,\frac1{c^2}\right\}\frac{\varepsilon^4}{64},
\end{eqnarray*}
which completes the proof of~\eqref{z}.

Now, for any~$\eta>0$, we define
\begin{eqnarray*}\mathcal{P}_\eta&:=&\Bigg\{(u,v)\in[0,1]\times[0,1]\;
{\mbox{ s.t. }}\\ &&\qquad v=\frac{v_s^0-\eta'}{(u_s^0+\eta')^\rho}u^\rho\;
{\mbox{ with }} |\eta'|\le\eta
\Bigg\}.\end{eqnarray*}
Notice that this is the union of graphs of functions close to the curve~$\gamma_0$.
Given~$\varepsilon>0$, we define
\begin{equation}\label{mettiin}
{\mbox{$\eta(\varepsilon)$ to be the smallest~$\eta$
for which~$\mathcal{P}_\eta \supset B_{\varepsilon}(u_s^0,v_s^0)$.}} \end{equation}
We remark that
\begin{equation}\label{ricorda}
\lim_{\varepsilon\to0}\eta(\varepsilon)=0.
\end{equation}
Also, given~$\delta>0$, we define a tubular
neighborhood~$\mathcal{U}_\delta$
of~$\mathcal{M}^0$ as
$$ \mathcal{U}_\delta :=\bigcup_{q\in\mathcal{M}^0}
B_{\delta}(q).$$
Furthermore, we define
\begin{equation}\label{lometto}
{\mbox{$\delta(\varepsilon)$ the smallest~$\delta$ such that
$\mathcal{U}_\delta\supset \mathcal{P}_{\eta(\varepsilon)}$.}}
\end{equation}
Recalling~\eqref{ricorda}, we have that
\begin{equation}\label{ricorda2}
\lim_{\varepsilon\to0}\delta(\varepsilon)=0.
\end{equation}

We remark that, as~$a\to0$, the point~$(u_s^a, v_s^a)$ in~\eqref{usvs},
which is a saddle point for the dynamics of~\eqref{model}
when~$ac<1$ (recall Theorem~\ref{thm:dyn}),
tends to the point~$(u_s^0,v_s^0)$ in~\eqref{u0v0}, that belongs
to the line~$v+u=1$, which is an equilibrium point for the dynamics of~\eqref{model}
when~$a=0$, according to Proposition~\ref{prop:bhvaPRE}.

As a consequence, for every~$\varepsilon>0$, there exists~$a_\varepsilon>0$
such that if~$a\in(0,a_\varepsilon)$,
\begin{equation}\label{qwertyuisdfghjxcvbn}
|(u_s^a,v_s^a)-(u_s^0,v_s^0)|\le\frac\varepsilon8.
\end{equation}
This gives that the intersection of~$\mathcal{M}^a$ with~$B_{\varepsilon/2}(u_s^0,v_s^0)$
is nonempty.

Furthermore, since~$\gamma_a(0)=0$, in light of Proposition~\ref{lemma:M},
we have that the intersection of~$\mathcal{M}^a$ with~$B_{\varepsilon/2}$
is nonempty. Hence, there exists~$p_{\varepsilon,a}\in \mathcal{M}^a\cap
\partial B_{\varepsilon/2}$.

We also notice that
\begin{equation}\label{swqdbvsdjvksdv097654}
\mathcal{M}^a=\phi_{p_{\varepsilon,a}}^{a}(\R).\end{equation}
In addition, 
\begin{equation}\label{a12ewgerheh}
 \phi_{p_{\varepsilon,a}}^{a}\big((-\infty,0]\big)\subset B_{\varepsilon/2}.
\end{equation}
Also, since the origin belongs to~$\mathcal{M}^0$, we have that~$
B_{\varepsilon/2}\subset \mathcal{U}_\varepsilon$. {F}rom
this and~\eqref{a12ewgerheh}, we deduce that
\begin{equation}\label{lsdgrdhtrjb yrweur748v6348900}
\phi_{p_{\varepsilon,a}}^{a}\big((-\infty,0]\big)
\subset \mathcal{U}_\varepsilon.\end{equation}

Now, we let~$C_0$ be as in~\eqref{z} and
we claim that there exists~$t_{\varepsilon,a}\in(0,3\sqrt{C_0}\varepsilon^{-2})$
such that
\begin{equation}\label{esisteuntempo}
\phi_{p_{\varepsilon,a}}^{a}(t_{\varepsilon,a})\in\partial B_{3\varepsilon/4}
(u_s^0,v_s^0).
\end{equation}
To check this, we argue by contradiction and we suppose that
$$ \phi_{p_{\varepsilon,a}}^{a}\big((0,3\sqrt{C_0}\varepsilon^{-2})\big)
\cap B_{3\varepsilon/4}(u_s^0,v_s^0)=\varnothing.$$
Then, for every~$t\in(0,3\sqrt{C_0}\varepsilon^{-2})$,
recalling also~\eqref{qwertyuisdfghjxcvbn},
\begin{eqnarray*} \big|\phi_{p_{\varepsilon,a}}^{a}(t)-(u_s^a,v_s^a)\big|&\ge&
 \big|\phi_{p_{\varepsilon,a}}^{a}(t)-(u_s^0,v_s^0)\big| -
\big|(u_s^a,v_s^a)-(u_s^0,v_s^0)\big|\\&\ge&
\frac{3\varepsilon}4-\frac\varepsilon8\\&>&\frac{\varepsilon}2,
\end{eqnarray*}
and consequently~\eqref{zz} is satisfied for every~$t\in(0,3\sqrt{C_0}
\varepsilon^{-2})$.

Moreover,
we observe that~$p_{\varepsilon,a}$ satisfies~\eqref{aqwzero},
and therefore, by~\eqref{z},
$$ | \dot{u}_a(t)|^2 +| \dot{v}_a(t)|^2
> \frac{\varepsilon^{4}}{C_0},$$
for all~$t\in(0,3\sqrt{C_0}\varepsilon^{-2})$,
where we used the notation$${\phi}_{p_{\varepsilon,a}}^{a}(t)=
(u_a(t),v_a(t)),$$being$$p_{\varepsilon,a}=(u_a(0),v_a(0)).$$
As a result,
$$ \big( \dot{u}_a(t)+  \dot{v}_a(t)\big)^2>\frac{\varepsilon^{4}}{C_0},$$
and thus
$$ \dot{u}_a(t)+  \dot{v}_a(t)>\frac{\varepsilon^{2}}{\sqrt{C_0}}.$$
This leads to
\begin{eqnarray*}&&
u_a\left(\frac{3\sqrt{C_0}}{\varepsilon^2}\right)
+v_a\left(\frac{3\sqrt{C_0}}{\varepsilon^2}\right)\\&=&u_a(0)+v_a(0)
+\int_0^{\frac{3\sqrt{C_0}}{\varepsilon^2}}\big( \dot{u}_a(t)+  \dot{v}_a(t)\big)\,dt
\\&\ge& u_a(0)+v_a(0)+
\int_0^{\frac{3\sqrt{C_0}}{\varepsilon^2}}\frac{\varepsilon^{2}}{\sqrt{C_0}}\,dt\\&
=&u_a(0)+v_a(0) +3\\&\ge& 3,
\end{eqnarray*}
which forces the trajectory to exit the region~$[0,1]\times[0,1]$.
This is against the assumption that~$p_{\varepsilon, a}\in\mathcal{M}^a$,
and therefore the proof of~\eqref{esisteuntempo} is complete.

In light of~\eqref{esisteuntempo}, we can set~$q_{\varepsilon,a}:=
\phi_{p_{\varepsilon,a}}^{a}(t_{\varepsilon,a})$,
and we deduce from~\eqref{mettiin} that~$q_{\varepsilon,a}\in\mathcal{P}_{\eta(\varepsilon)}$.
We also observe that the set~$\mathcal{P}_\eta$ is invariant for the
flow with~$a=0$, thanks to~\eqref{intprim678}. These observations
give that~$\phi_{q_{\varepsilon,a}}^{0}(t)\in\mathcal{P}_{\eta(\varepsilon)}$
for all~$t\in\R$. 

As a result, using~\eqref{lometto}, we conclude that
\begin{equation}\label{ASDFGHJtergyfhgj}
\phi_{q_{\varepsilon,a}}^{0}(t)\in\mathcal{U}_{\delta(\varepsilon)}\quad
{\mbox{ for all }} t\in\R.
\end{equation}
In addition, by the continuous dependence of the flow
on the parameter~$a$ in closed intervals of time (see e.g. Section~2.4
in~\cite{MR3791466},
or Theorem~2.4.2 in~\cite{MR3186036}),
$$ \big|\phi_{q_{\varepsilon,a}}^{0}(t)-\phi_{q_{\varepsilon,a}}^{a}(t)\big|
<\varepsilon,$$
for all~$t\in[-3\sqrt{C_0}\varepsilon^{-2},0]$, provided that~$a$
is sufficiently small, possibly in dependence of~$\varepsilon$.
This fact and~\eqref{ASDFGHJtergyfhgj} entail that
$$ \phi_{q_{\varepsilon,a}}^{a}(t)\in\mathcal{U}_{\delta(\varepsilon)+\varepsilon}\quad
{\mbox{ for all }} t\in[-3\sqrt{C_0}\varepsilon^{-2},0].
$$
In particular, for all~$t\in[0,t_{\varepsilon,a}]$,
\begin{equation}\label{andatosu}
\phi_{p_{\varepsilon,a}}^{a}(t)=\phi_{q_{\varepsilon,a}}^{a}(t-t_{\varepsilon,a})
\in\mathcal{U}_{\delta(\varepsilon)+\varepsilon}.\end{equation}

We now claim that
for all~$t\ge t_{\varepsilon,a}$,
\begin{equation}\label{qwertyuiop}
\phi_{p_{\varepsilon,a}}^{a}(t)\subset B_{\varepsilon}(u_s^a,v_s^a).
\end{equation}
Indeed, this is true when~$t=t_{\varepsilon,a}$ thanks to~\eqref{qwertyuisdfghjxcvbn}
and~\eqref{esisteuntempo}. 

Hence, since
the trajectory~$\phi_{p_{\varepsilon,a}}^{a}(t)$
is contained in the domain where~$\dot{u}\ge0$ and~$\dot{v}\ge0$,
thanks to~\eqref{aggiunto}, we deduce that~\eqref{qwertyuiop} holds true.

{F}rom~\eqref{qwertyuisdfghjxcvbn} and~\eqref{qwertyuiop}, we conclude that
$$ \phi_{p_{\varepsilon,a}}^{a}(t)\subset B_{2\varepsilon}(u_s^0,v_s^0),$$
for all~$t\ge t_{\varepsilon,a}$.

Using this,~\eqref{lsdgrdhtrjb yrweur748v6348900}
and~\eqref{andatosu}, we obtain that
$$ \phi_{p_{\varepsilon,a}}^{a}(\R)\subset\mathcal{U}_{\delta(\varepsilon)+
2\varepsilon}.$$
This and~\eqref{ricorda2} give that~\eqref{340} is satisfied, as desired.

One can also show that
\begin{equation}\label{340BIS}
\mathcal{M}^a\cap\big([u_s^0, u_{\mathcal{M}}^0]\times[v_s^0,v_{\mathcal{M}}^0]\big)\to
\mathcal{M}^0\cap\big([u_s^0, u_{\mathcal{M}}^0]\times[v_s^0,v_{\mathcal{M}}^0]\big)
\end{equation}
as~$a\to0$.
The proof of~\eqref{340BIS} is similar to that of~\eqref{340},
just replacing~$p_{\varepsilon,a}$ with~$(u_{\mathcal{M}}^a,v_{\mathcal{M}}^a)$ (in this case
the analysis near the origin is simply omitted since the trajectory
has only one limit point).

With~\eqref{340} and~\eqref{340BIS}
the proof of Lemma~\ref{lemma:conv_gamma} is thereby complete.
\end{proof}

Now we are ready to give the proof of Proposition~\ref{prop:bhva}:

\begin{proof}[Proof of Proposition~\ref{prop:bhva}]
{\emph{(i)}} We aim at proving that~$\mathcal{G}\subseteq\mathcal{E}_0
\subseteq\overline{\mathcal{G}}$.

For this, we observe that, by Lemma~\ref{lemma:conv_gamma},
$\gamma_a(u)$ converges to~$\gamma_{0}(u)$ pointwise as~$a\to0$.
In particular,~$u_{\mathcal{M}}^a\to u_{\mathcal{M}}^0$ as~$a\to0$.
 
Also, recalling~\eqref{def:gamma0},
we notice that if~$u_{\mathcal{M}}^0= u_s^0 / (v_s^0)^{\frac{1}{\rho}}<1$,
then~$\gamma_0(u_{\mathcal{M}}^0)= 1$,
otherwise if~$u_{\mathcal{M}}^0=1$
then~$\gamma_0(u_{\mathcal{M}}^0)<1$, being~$\gamma_0(u)$ strictly
monotone increasing.
	
Furthermore, thanks to Proposition~\ref{prop:char},
we know that
the set~$\mathcal{E}(a)$ is bounded from above by the graph of the function~$\gamma_a(u)$ for~$u\in [0, u_{\mathcal{M}}^a]$ and from the
straight line~$v=1$ for~$u\in(u_{\mathcal{M}}^a, 1]$ (that is non empty for~$u_{\mathcal{M}}^a<1$). 

Now we claim that, for all~$a'>0$,
	\begin{equation}\label{gocont123}
	\mathcal{G} \subseteq \underset{0<a<a'}{\bigcup} \mathcal{E}(a).
	\end{equation}
To show this, we take a point~$(u,v)\in\mathcal{G}$.
Hence, in light of the considerations
above, we have that~$(u,v)\in\mathcal{E}(a)$ for any~$a$ sufficiently small,
which proves~\eqref{gocont123}.
	
{F}rom~\eqref{gocont123}, we deduce that
	\begin{equation}\label{gocont1232233}
	\mathcal{G} \subseteq \underset{a'>0}{\bigcap} \, \underset{0<a<a'}{\bigcup} \mathcal{E}(a).
	\end{equation}
	Now we show that
		\begin{equation}\label{gocont12322}
  \underset{a'>0}{\bigcap} \,\underset{0<a<a'}{\bigcup} \mathcal{E}(a)\subseteq
	\overline{\mathcal{G}} .
	\end{equation}
	For this, we take 
	$$(\widehat{u},\widehat{v})\in  \underset{a'>0}{\bigcap} \, \underset{0<a<a'}{\bigcup} \mathcal{E}(a),$$
	then it must hold that for every~$a'>0$
	there exists~$a<a'$ such that~$(\widehat{u},\widehat{v})\in\mathcal{E}(a)$,
	namely~$\widehat v < \gamma_{a}(\widehat u)$ if~$\widehat u\in[0, u_{\mathcal{M}}^a]$ and~$\widehat v
	 \leq 1$ if~$\widehat u\in(u_{\mathcal{M}}^a, 1]$.  
	Thus, by the pointwise convergence,
	we have that~$  \widehat{v} \le\gamma_0(\widehat{u})~$ if~$\widehat u\in[0, u_{\mathcal{M}}^0]$ and~$\widehat v
	 \leq 1$ if~$\widehat u\in(u_{\mathcal{M}}^0, 1]$, which proves~\eqref{gocont12322}.

{F}rom~\eqref{gocont1232233} and~\eqref{gocont12322},
we conclude that
	\begin{equation*}
	\mathcal{G}\subseteq
	\underset{a'>0}{\bigcap} \, \underset{0<a<a'}{\bigcup} \mathcal{E}(a) =\mathcal{E}_0\subseteq \overline{\mathcal{G}} ,
	\end{equation*}
	as desired.
	\medskip	

{\emph{(ii)}} Since we deal with the limit case as~$a\to+\infty$, from now on
we suppose from now on that~$ac>1$.
We fix~$\varepsilon>0$ and we consider the set  
	\begin{equation*}
	\mathcal{S}_{\varepsilon^+} := 
\left\{  (u,v)\in [0,1]\times[0,1]\;{\mbox{ s.t. }}\;
v>u \left( \frac{1}{c}+\varepsilon \right)      \right\}.
	\end{equation*}
We claim that
\begin{equation}\label{prova1}
\mathcal{S}_{\varepsilon^+} \subseteq \mathcal{B}(a)
\end{equation}
for~$a$ big enough, possibly in dependence of~$\varepsilon$.
For this, 
we first analyze the component of the velocity in the inward normal directions
along the boundary of~$\mathcal{S}_{\varepsilon^+}$.
The only side on the interior of $[0,1]\times[0,1]$ is given by the straight line~$v-u(\varepsilon +1/c )=0$.
Ignoring the scaling constant $1/\sqrt{1+\varepsilon^2 +\frac{2\varepsilon}{c}+\frac{1}{c^2}}$, we compute
	\begin{align*}& (\dot{u},\dot{v})\cdot\left(-\left(\varepsilon+\frac1c\right),
	1\right)=
	\dot{v}- \dot{u}\left(\varepsilon+ \frac{1}{c} \right) \\
	&\quad = {\rho}v(1-u-v)-au - \left(\varepsilon+ \frac{1}{c} \right)u(1-u-v) + \left(\varepsilon+ \frac{1}{c} \right) acu \\
	&\quad=  \Bigg[ {\rho}v - \left(\varepsilon+ \frac{1}{c} \right) u     \Bigg] (1-u-v) +\varepsilon ac u	.	
	\end{align*} 	
Thus, by using that~$v-u(\varepsilon +1/c )=0$, we obtain that
	\begin{align*}
 (\dot{u},\dot{v})\cdot\left(-\left(\varepsilon+\frac1c\right),
	1\right) = u\left[a\varepsilon c + ({\rho}-1)(1-u-v)   \left( \varepsilon+ \frac{1}{c} \right)  \right]. 
	\end{align*}
	Notice that~$u\leq 1$ and~$|1-u-v|\leq 2$, and therefore
	$$ (\dot{u},\dot{v})\cdot\left(-\left(\varepsilon+\frac1c\right),
	1\right) \geq u\left[a\varepsilon c -2 ({\rho}+1) \left( \varepsilon+ \frac{1}{c} \right)  \right] .$$
Accordingly, the normal velocity is positive for~$a \geq {a}_1$, where
	\begin{equation*}
	{a}_1:= 2({\rho}+1) \left( \varepsilon+ \frac{1}{c}  \right)\frac{1}{\varepsilon c}.
	\end{equation*}
 Hence, by Lemma \ref{lemma:exit}, no trajectory can exit  $\mathcal{S}_{\varepsilon^+}$.
These considerations, together with the fact that there are no cycles
in~$[0,1]\times[0,1]$, that $\mathcal{S}_{\varepsilon^+}\cap \{v=0\}= \varnothing$, and the
Poincar\'e-Bendixson Theorem (see e.g.~\cite{TESCHL}),
give that the~$\omega$-limit set of any trajectory starting
in the interior of~$\mathcal{S}_{\varepsilon^+}$
can be either an equilibrium or a union of (finitely many)
equilibria and non-closed orbits connecting these equilibria.

We remark that 
\begin{equation}\label{asdfgzxcv098t76re}
{\mbox{the~$\omega$-limit set of any trajectory cannot
be the equilibrium~$(0,0)$.}}\end{equation}
Indeed, if the~$\omega$-limit of a trajectory
were~$(0,0)$, then this trajectory must lie on the stable manifold of~$(0,0)$,
and moreover it must be contained in~$\mathcal{S}_{\varepsilon^+}$,
since no trajectory can exit~$\mathcal{S}_{\varepsilon^+}$.
On the other hand, by Proposition~\ref{lemma:M},
we have that at~$u=0$ the stable manifold is tangent to the
line
$$v=\frac{a}{\rho-1+ac}u=\frac{1}{\frac{\rho-1}{a}+c}u.
$$
Now, if we take~$a$ sufficiently large, this line lies
below the line~$v=u(1/c+\varepsilon)$, thus providing a contradiction.
Hence, the proof of~\eqref{asdfgzxcv098t76re} is complete.

Accordingly, since~$(0,1)$ is a sink, the only possibility is that
the~$\omega$-limit set of any trajectory starting
in the interior of~$\mathcal{S}_{\varepsilon^+}$ is the equilibrium~$(0,1)$.
Namely, we have established~\eqref{prova1}.
 
As a consequence of~\eqref{prova1}, we deduce that for every~$\varepsilon>0$
there exists~$a_{\varepsilon}>0$ such that
\begin{equation}\label{qwt5uktkjer464586897}
\underset{a\ge a_\varepsilon}{\bigcup}  \mathcal{E}(a) \subseteq
\left\{  (u,v)\in [0,1]\times[0,1]\;{\mbox{ s.t. }}\;
v\le u \left( \frac{1}{c}+\varepsilon \right)      \right\}.\end{equation}
In addition,
	\begin{equation*}\begin{split}
&	\underset{\varepsilon >0 }{\bigcap} \left\{  (u,v)\in [0,1]\times[0,1]\;{\mbox{ s.t. }}\;
v\le u \left( \frac{1}{c}+\varepsilon \right)      \right\}\\&\qquad
=\left\{  (u,v)\in [0,1]\times[0,1]\;{\mbox{ s.t. }}\;
v\le \frac{u}{c} \right\}=\overline{\mathcal{S}_c}.\end{split}
	\end{equation*}
	This and~\eqref{qwt5uktkjer464586897} entail that 
	\begin{equation*}
	\underset{a'>0}{\bigcap} \, \underset{a>a'}{\bigcup}
 \mathcal{E}(a)\subseteq \overline{\mathcal{S}_c},
	\end{equation*}
which implies the second inclusion in~\eqref{asdfgert019283}.

Now, to show the first inclusion in~\eqref{asdfgert019283},
for every~$\varepsilon\in(0,1/c)$ we consider the set
	\begin{equation*}
	\mathcal{S}_{\varepsilon^-} := \left\{  
(u,v)\in [0,1]\times[0,1] \;{\mbox{ s.t. }}\; v<u \left( \frac{1}{c}-\varepsilon \right)      \right\}.
	\end{equation*}
We claim that, for all~$\varepsilon\in(0,1/c)$,
\begin{equation}\label{chefus}
\mathcal{S}_{\varepsilon^-} \subseteq \mathcal{E}_{\infty}.
\end{equation}
For this, we first show that if~$a$ is sufficiently large, possibly
in dependence of~$\varepsilon$,
\begin{equation}\label{forse33}
\mathcal{S}_{\varepsilon^-} \subseteq \mathcal{E}(a).
\end{equation}
Indeed, no trajectory can leave $\mathcal{S}_{\varepsilon^-}$. 
	In fact, on the side given by~$v-(-\varepsilon+1/c)u=0$, the component of the velocity
in the direction of the
outward normal vector is 
	\begin{eqnarray*}&&
	(\dot{u},\dot{v})\cdot\left(- \left( \frac{1}{c} -\varepsilon \right),1
\right)\\&=&
\dot{v} - \dot{u} \left( \frac{1}{c} -\varepsilon \right)\\
	&=& \rho v(1-u-v) -au - \left( \frac{1}{c} -\varepsilon \right)u(1-u-v)   + \left( \frac{1}{c} -\varepsilon \right)acu\\
	&
=&u\left[\left( \frac{1}{c} -\varepsilon \right)(\rho-1)(1-u-v)    - 
\varepsilon ac \right]\\
& \le& u\left[2\left( \frac{1}{c} -\varepsilon \right)(\rho+1)  - 
\varepsilon ac \right]
,
	\end{eqnarray*}
which is negative if~$a \geq {a}_2$, with
	\begin{equation*}
	{a}_2:= 2\left( \frac{1}{c} -\varepsilon \right)
 \left( \rho+1  \right) \frac{1}{\varepsilon c} .
	\end{equation*}
	Hence, if~$(u(0), v(0))\in \mathcal{S}_{\varepsilon^-}$, then
 either~$T_s(u(0), v(0)) <\infty$ or~$(u(t), v(t))\in \mathcal{S}_{\varepsilon^-}$
 for all~$t\geq 0$, where the notation in~\eqref{def:T_s} has been used.
We also notice that,
for~$a>1/c$, the points~$(0,1)$ and~$(0,0)$ are the only equilibria
of the system, and there are no cycles. 
	We have that~$(0,1) \notin \overline{\mathcal{S}_{\varepsilon^-}}$
and~$(0,0) \in \overline{\mathcal{S}_{\varepsilon^-}}$, thus if
\begin{equation}\label{dweioterygvhsdjk}
{\mbox{$(u(t), v(t))\in
 \mathcal{S}_{\varepsilon^-}$ for all~$t\geq 0$}}\end{equation} then
	\begin{equation}\label{tendere}
	(u(t), v(t)) \to (0,0).
	\end{equation}
On the other hand, by Proposition~\ref{lemma:M},
we have that at~$u=0$ the stable manifold is tangent to the
line
$$v=\frac{a}{\rho-1+ac}u=\frac{1}{\frac{\rho-1}{a}+c}u,
$$
and, if we take~$a$ large enough, this line lies
above the line~$v=u(1/c-\varepsilon)$. This says that, for sufficiently large~$t$,
the trajectory must lie outside~$ \mathcal{S}_{\varepsilon^-}$,
and this is in contradiction with~\eqref{dweioterygvhsdjk}.

As a result of these considerations, we conclude 
that if $$(u(0), v(0))\in \mathcal{S}_{\varepsilon^-}$$ then $$T_s(u(0), v(0)) <\infty ,$$
which implies~\eqref{forse33}.

As a consequence of~\eqref{forse33}, we obtain that for every~$\varepsilon\in(0,1/c)$
there exists~$a_\varepsilon>0$ such that
$$\mathcal{S}_{\varepsilon^-}\subseteq\underset{a\ge a_\varepsilon}{\bigcap}
 \mathcal{E}(a).$$
In particular for all~$\varepsilon\in(0,1/c)$ it holds that
	\begin{equation*}
	\mathcal{S}_{\varepsilon^-}\subseteq \underset{a'>0}{\bigcap} \,
 \underset{a> a'}{\bigcup} \mathcal{E}(a)=\mathcal{E}_\infty,
	\end{equation*}
which proves~\eqref{chefus}, as desired.

Then, the first inclusion in~\eqref{asdfgert019283} plainly follows
from~\eqref{chefus}.
\end{proof}

\chapter{Strategies of the first population}\label{STRATE}

\begin{center}
\begin{minipage}{25em}
\noindent{\bf Abstract of Chapter~\ref{STRATE}.}
{\sl Here we characterize the winning strategies for the aggressive populations, i.e.
the setting of the parameters which lead to the victory of the aggressive population
starting from a favorable initial condition.

The analysis is different for different parameter ranges. In particular, the case of equal fitness between the two populations boils down to constant strategies, but the general case
is not exhausted by them.

In any case, we prove that also in the general case the winning strategies can always be found among the ``bang-bang'' functions, i.e. piecewise constant functions with at most one jump.}
\end{minipage}\end{center}\bigskip\bigskip\bigskip\bigskip\bigskip\bigskip

The main theorems on the winning strategy have
been stated in Section~\ref{ss:strategy}.
In particular, Theorem~\ref{thm:Vbound} gives the characterization of the set~$\mathcal{V}_{\mathcal{A}}$
of points that have a winning strategy
in~\eqref{DEFNU},
and Theorem~\ref{thm:W} establishes the non equivalence of constant
and non-constant strategies when~$\rho\ne1$
(and their equivalence when~$\rho=1$).
Nonetheless, in Theorem~\ref{thm:H} we state that
Heaviside functions are enough to construct a
winning strategy for every point in~$\mathcal{V}_{\mathcal{A}}$.

In the following subsections we will give the proofs of these results.

\section{Winning non-constant strategies}\label{explowrewwt}

We want to put in light the construction of non-constant winning
strategies for the points for which constant strategies fail.

For this, we recall the notation introduced in~\eqref{u0v0},
\eqref{def:gamma0}
and~\eqref{ZETADEF}, and we have the following statement:

\begin{proposition}\label{prop:construction}
	Let~$M>1$. Then we have:
	\begin{itemize}
		\item[1.] For~$\rho<1$, let~$(u_0, v_0)$ be a point in the set
		\begin{equation}\label{PPDEFA}\begin{split}
			\mathcal{P}:=\;&\Bigg\{ (u, v)\in [u_s^0,1]\times[0,1]\;{\mbox{ s.t. }} \\&\qquad\qquad  \gamma_{0}(u) \leq v < \frac{u}{c} + \frac{1-\rho}{1+\rho c}    \Bigg\}.	\end{split}\end{equation}
		Then there exist~$a^*>M$,~$a_*<\frac{1}{M}$,
		and~$T\ge0$, depending on~$(u_0, v_0)$,~$c$, and~$\rho$, such that
		for the Heaviside strategy defined by
		\begin{equation}\label{NSJmldsf965to}
		a(t) = \left\{
		\begin{array}{lr}
		a^*,  & {\mbox{ if }} t<T, \\
		a_*,  &  {\mbox{ if }} t\geq T,
		\end{array}
		\right.
		\end{equation} we have $(u_0, v_0)\in  \mathcal{V}_{\mathcal{A}}$.
		\item[2.] For~$\rho>1$, let~$(u_0, v_0)$ be a point of the set
		\begin{equation}\label{DEFQ}
		\mathcal{Q}:=\left\{ (u, v)\in [u_{\infty},1]\times[0,1] \;{\mbox{ s.t. }}\;\frac{u}{c}  \leq v < \zeta(u)   \right\}.
		\end{equation}
		Then there exist~$a^*>M$,~$a_*<\frac{1}{M}$, and~$T\ge0$, depending on~$(u_0, v_0)$,~$c$, and~$\rho$, such that for the Heaviside strategy
		defined by
		\begin{equation*}
		a(t) = \left\{
		\begin{array}{lr}
		0,  &{\mbox{ if }} t<T, \\
		a^*,  &{\mbox{ if }} t\geq T,
		\end{array}
		\right.
		\end{equation*}
	we have $(u_0, v_0)\in  \mathcal{V}_{\mathcal{A}}$.
	\end{itemize}
\end{proposition}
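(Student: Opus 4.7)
The plan is to handle the two cases separately, exploiting the extreme dynamics for small $a$ (governed by Proposition~\ref{prop:bhvaPRE}) and for large $a$ (analyzed via a singular time rescaling).

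For case 2 ($\rho>1$), fix $(u_0,v_0)\in\mathcal{Q}$ and consider the $a=0$ flow. By Proposition~\ref{prop:bhvaPRE}, the trajectory lies on the curve $v=C_0 u^{\rho}$ with $C_0:=v_0/u_0^{\rho}$ and converges to the equilibrium $(\bar u,1-\bar u)$, where $C_0\bar u^{\rho}+\bar u=1$. The hypothesis $v_0<\zeta(u_0)$ is equivalent to $C_0<1/(cu_{\infty}^{\rho-1})$, and evaluating $C_0 x^\rho+x-1$ at $x=u_{\infty}$ yields a strictly negative value; hence $\bar u>u_{\infty}$ and $\bar v=1-\bar u<1-u_{\infty}=u_{\infty}/c<\bar u/c$. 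Moreover, in $\mathcal{Q}$ one has $u_0+v_0\ge u_0(c+1)/c>1$, so $u$ is strictly decreasing along the $a=0$ flow, which therefore crosses the line $v=u/c$ and reaches, in finite time $T_1>0$, some point $(\tilde u,\tilde v)$ with $\tilde v<\tilde u/c-\eta$ for a suitable $\eta>0$. From the proof of Proposition~\ref{prop:bhva}(ii) there exists $A>0$ such that $(\tilde u,\tilde v)\in\mathcal{E}(a^*)$ for every $a^*\ge A$, and we fix $a^*>\max\{M,A\}$. By classical continuous dependence of solutions on the parameter $a$, for $a_*\in(0,1/M)$ small enough the constant-$a_*$ trajectory issued from $(u_0,v_0)$ is, at time $T:=T_1$, within an open neighborhood of $(\tilde u,\tilde v)$ still contained in $\mathcal{E}(a^*)$; switching to $a^*$ at $t=T$ then forces victory.

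For case 1 ($\rho<1$), the $a=0$ flow is useless since $v(t)>0$ for all $t$; the crucial observation is that under constant strategy $a^*$ and rescaled time $\tau:=a^*t$, system~\eqref{model} becomes
\begin{equation*}
\frac{du}{d\tau}=\frac{u(1-u-v)}{a^*}-cu,\qquad\frac{dv}{d\tau}=\frac{\rho v(1-u-v)}{a^*}-u,
\end{equation*}
which, as $a^*\to+\infty$, converges on every bounded interval $[0,\tau^*]$ to the linear system $du/d\tau=-cu$, $dv/d\tau=-u$. The latter admits the explicit solution $U(\tau)=u_0 e^{-c\tau}$, $V(\tau)=v_0-(u_0/c)(1-e^{-c\tau})$, which parametrizes the straight line of slope $1/c$ through $(u_0,v_0)$, namely $V=v_0+(U-u_0)/c$. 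Now the defining condition of $\mathcal{P}$, namely $v_0-u_0/c<(1-\rho)/(1+\rho c)$, gives
\begin{equation*}
v_0+\frac{u_s^0-u_0}{c}=\Big(v_0-\frac{u_0}{c}\Big)+\frac{u_s^0}{c}<\frac{1-\rho}{1+\rho c}+\frac{\rho}{1+\rho c}=\frac{1}{1+\rho c}=v_s^0=\gamma_0(u_s^0),
\end{equation*}
so the limit line lies strictly below the graph of $\gamma_0$ at $u=u_s^0$. A continuity argument then selects a target $(\tilde u,\tilde v)$ on this line with $\tilde u>0$ and $\tilde v\in(0,\gamma_0(\tilde u))$; Proposition~\ref{prop:bhva}(i) together with Lemma~\ref{lemma:conv_gamma} provides $a_0>0$ such that $(\tilde u,\tilde v)\in\mathcal{E}(a_*)$ for every $a_*\in(0,a_0)$. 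Setting $a_*:=\min\{1/M,a_0/2\}$, $\tau^*:=(1/c)\ln(u_0/\tilde u)>0$, and $T:=\tau^*/a^*$, standard continuous dependence applied to the rescaled system on $[0,\tau^*]$ shows that the trajectory at time $T$ converges to $(\tilde u,\tilde v)$ as $a^*\to+\infty$; hence for $a^*>M$ sufficiently large one has $(u(T),v(T))\in\mathcal{E}(a_*)$, and the subsequent switch to $a_*$ yields victory.

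The main technical obstacle is the singular limit in case 1: the right-hand side of~\eqref{model} diverges as $a^*\to+\infty$, so the classical continuous dependence theorem does not apply directly on the original timescale. The rescaling $\tau=a^*t$ precisely bypasses this, turning the family into a regularly perturbed system on every fixed bounded $\tau$-interval, to which Theorem~2.4.2 in~\cite{MR3186036} applies; one only needs to keep in mind that the switching time $T=\tau^*/a^*$ tends to $0$ as $a^*\to+\infty$, so in case 1 the Heaviside strategy activates the large value $a^*$ only over a very short initial window before releasing to the small value $a_*$.
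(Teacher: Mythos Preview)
Your proof is correct and, for Part~2, follows essentially the same route as the paper: run the $a=0$ flow from $(u_0,v_0)$, use the prime integral $v/u^\rho$ to see that the limit equilibrium lies strictly below $v=u/c$, cross that line in finite time, and then invoke Proposition~\ref{prop:bhva}(ii) to finish with a large $a^*$. Your extra continuous-dependence step to replace $a_*=0$ by a small positive $a_*$ is unnecessary (the statement allows $a_*=0$, which is what the paper uses) but harmless.

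For Part~1 your argument is genuinely different from the paper's. The paper works directly in the original timescale and proves the pointwise differential inequality $\dot v/\dot u>\frac1c-\xi$ for $u>u_s^0$ and $a$ large; integrating this shows the $a^*$-trajectory hits the vertical line $u=u_s^0$ at a height strictly below $v_s^0=\gamma_0(u_s^0)$, after which Lemma~\ref{lemma:conv_gamma} gives a small $a_*$ that wins. Your singular-perturbation approach via $\tau=a^* t$ is a clean alternative: it identifies the limiting straight line of slope $1/c$ explicitly and replaces the ad~hoc slope estimate by a standard continuous-dependence argument on a bounded $\tau$-interval. The paper's method is more elementary (it needs nothing beyond signs of $\dot u,\dot v$ and one inequality), while yours is more conceptual and makes the geometric picture---trajectory approximately a segment of slope $1/c$---transparent. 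One small point you glossed over with ``a continuity argument'': the value $\tilde v=v_0+(u_s^0-u_0)/c$ can be nonpositive when $v_0<(u_0-u_s^0)/c$, so you cannot always take $\tilde u=u_s^0$; but since $L(u):=v_0+(u-u_0)/c$ satisfies $L(u_0)=v_0\ge\gamma_0(u_0)>0$ and $L(u_s^0)<\gamma_0(u_s^0)$, there is always a $\tilde u\in(u_s^0,u_0)$ with $0<L(\tilde u)<\gamma_0(\tilde u)$, so your target point exists as claimed.
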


\begin{proof} We start by proving the first claim in Proposition~\ref{prop:construction}.
To this aim, we take~$(\bar{u}, \bar{v})\in \mathcal{P}$, and we observe that
\begin{equation*}
		\bar{v}-\frac{\bar{u}}{c}< \frac{1-\rho}{1+\rho c} = v_s^0-\frac{u_s^0}{c}.
	\end{equation*}
Therefore, there exists~$\xi>0$ such that 
	\begin{equation*}
	\xi < \frac{v_s^0- \bar{v}-\frac{1}{c}(u_s^0-\bar{u})}{\bar{u}-u_s^0}.
	\end{equation*}
Hence, setting
	\begin{equation}\label{VUESSE0}
		v_S:=\left(\frac{1}{c} -\xi \right)(u_s^0-\bar{u}) + \bar{v},
	\end{equation}	
we see that
	\begin{equation}\label{VUESSE}v_S<v_s^0.\end{equation}	
	Now, we want to show that there exists~$a^*>0$ such that, for any~$a>a^*$ and~$u>u_s^0$, we have that
	\begin{equation}\label{1632}
		\frac{\dot{v}}{\dot{u}} > \frac{1}{c}- \xi.
	\end{equation}
	To prove this, we first notice that
	\begin{equation}\label{questab}
	{\mbox{if~$a>\displaystyle\frac2c$, then~$\dot{u}\le -u<0$ for all $u,v\in[0,1]\times[0,1]$.}}\end{equation}
	 Moreover, we set~$$a_1:=\frac{1+\rho c}{4c} , ~$$
	and we claim that, 
	\begin{equation}\label{questae}
	{\mbox{if~$a>a_1$ and~$u>u_s^0$, then~$\dot{v}<0$.}}\end{equation}
	Indeed, we recall that the function~$\sigma$
	defined in~\eqref{f:sigma} represents the points
	in~$[0,1]\times[0,1]$ where~$\dot v=0$
	and separates the points where~$ \dot v>0$, which lie on the left of
	the curve described by~$\sigma$, from the points where~$ \dot v<0$, which lie on the right of
	the curve described by~$\sigma$.	

Therefore, in order to show~\eqref{questae}, it is sufficient to prove that
the curve described by~$\sigma$
is contained in~$\{u\le u_s^0\}$ whenever~$a>a_1$. For this, one computes that, if~$u=\sigma(v)$
and~$a>a_1$, then
\begin{eqnarray*}&&
u-u_s^0=\sigma(v)-\frac{\rho c}{1+\rho c}\\&&\qquad=
1-\frac{\rho v^2+a}{\rho v+a}-\frac{\rho c}{1+\rho c}\\&&\qquad=
\frac{\rho v-\rho v^2}{\rho v+a}-\frac{\rho c}{1+\rho c}\\&&\qquad=\frac{\rho v(1-v)}{\rho v+a}-\frac{\rho c}{1+\rho c}
\\&&\qquad\le\frac{\rho }{4(\rho v+a)}-\frac{\rho c}{1+\rho c}\\&&\qquad\le
\frac{\rho }{4a}-\frac{\rho c}{1+\rho c}\\&&\qquad\le
\frac{\rho }{4a_1}-\frac{\rho c}{1+\rho c}\\&&\qquad\le0.
\end{eqnarray*}
This completes the proof of~\eqref{questae}.

Now we define
\begin{equation*}
		a_2:=\left( \rho+\frac{1}{c}+\xi  \right) \frac{2}{u_s^0 c \xi}.
	\end{equation*}
and we claim that
	\begin{equation}\label{questaX}
	{\mbox{if~$a>a_2$ and~$u>u_s^0$, then }}
		\dot{v} < \left( \frac{1}{c}- \xi \right) \dot{u}.
\end{equation}
Indeed, under the assumptions of~\eqref{questaX},
we deduce that
\begin{eqnarray*}&&
\dot{v} -\left( \frac{1}{c}- \xi \right) \dot{u}\\&&\qquad
=\rho v(1-u-v)-au-\left( \frac{1}{c}- \xi \right)\Big(
u(1-u-v)-acu
\Big)\\
&&\qquad=(1-u-v)\left(
\rho v-\left( \frac{1}{c}- \xi \right)u\right)-ac \xi u\\&&\qquad
\le 2\left(
\rho v+ \frac{u}{c}+ \xi u\right)-ac \xi u\\&&\qquad< 2\left(
\rho + \frac{1}{c}+ \xi \right)-a_2\,c \xi {u_s^0}\\&&\qquad=0,
\end{eqnarray*}
and this establishes the claim in~\eqref{questaX}.

Then, choosing
$$ a^*:=
\max\left\{\displaystyle\frac2c,a_1,a_2,M\right\},$$
we can exploit~\eqref{questab},~\eqref{questae} and~\eqref{questaX}
to deduce~\eqref{1632}, as desired.
 
Now we claim that, for any~$a>a^*$, there exists~$T\ge0$ 
such that the trajectory~$(u(t), v(t))$ starting from~$(\bar{u}, \bar{v})$ satisfies
\begin{equation}\label{SM -kg}
{\mbox{$u(T)=u_s^0$ and~$v(T)< v_S$.}}
\end{equation}
Indeed, we define~$T\ge0$ to be the first time for which~$u(T)=u_s^0$.
This is a fair definition, since~$u(0)=\bar{u}\ge u_s^0$
and~$\dot u$ is negative, and bounded away from zero
till~$u\ge u_s^0$, thanks to~\eqref{questab}.
Then, we see that
\begin{eqnarray*}
v(T)&=&\bar{v}+\int_0^T \dot v(t)\,dt\\&<&
\bar{v}+\int_0^T\left(
\frac{1}{c}- \xi\right)\,\dot u(t)\,dt\\&=&
\bar{v}+\left(
\frac{1}{c}- \xi\right)(u(T)-u(0))\\&=&
\bar{v}+\left(
\frac{1}{c}- \xi\right)(u_s^0-\bar u)\\&=&v_S,
\end{eqnarray*}
thanks to~\eqref{VUESSE0} and~\eqref{1632}, and this establishes~\eqref{SM -kg}.

Now we observe that
$$ v(T)<v_S<v_s^0=\gamma_0(u_s^0)=\gamma_0(u(T))$$
due to~\eqref{VUESSE}
and~\eqref{SM -kg}

As a result, recalling Lemma~\ref{lemma:conv_gamma}, we can choose~$a_*<1/M$
such that
$$ v(T)<\gamma_{a_*}(u(T)).$$
Accordingly, by Proposition~\ref{prop:char},
we obtain that~$(u(T),v(T))\in{\mathcal{E}}(a_*)$.
Hence, applying the strategy
in~\eqref{NSJmldsf965to},
we accomplish the desired result and complete the proof of the first claim
in Proposition~\ref{prop:construction}.\medskip

Now we focus on the proof of the second claim in Proposition~\ref{prop:construction}.
For this, let \begin{equation}\label{8ygdw}
(u_0,v_0)\in \mathcal{Q},\end{equation} and consider the trajectory~$(u_0(t),v_0(t))$
starting from~$(u_0,v_0)$ for the strategy~$a=0$. In light of formula~\eqref{form}
of Proposition~\ref{prop:bhvaPRE}, we have that
\begin{equation}\label{Ecijerrin}\begin{split}&
{\mbox{the trajectory~$(u_0(t),v_0(t))$
converges}}\\&{\mbox{to a point of the form~$(u_F, 1-u_F)$ as~$t\to+\infty$.}}\end{split}
\end{equation}

We define
\begin{equation}\label{1713}
	v_F:=1-u_F, \quad	v_{\infty}:=1-u_{\infty}=\frac{1}{c+1},
\end{equation} 
where the last equality can be checked starting from the value of~$u_{\infty}$ given in~\eqref{ZETADEF}.
Using the definition of~$\zeta$ in~\eqref{ZETADEF} 
we have
\begin{equation*}
	\zeta(u_{\infty})= \frac{1}{c(u_{\infty})^{\rho-1}} u_{\infty}^{\rho}=\frac{c}{c(c+1)}=v_{\infty}
\end{equation*}
and, recalling~\eqref{1713} and formula~\eqref{form} of Proposition~\ref{prop:bhvaPRE}, we get that 
the graph of~$\zeta$ is the union of trajectories for~$a=0$ that converges to~$(u_\infty,1-u_\infty)$ as~$t\to+\infty$.

Also, by~\eqref{8ygdw}, we have that~$v_0 < \zeta(u_0)$. Thus, since by Cauchy's uniqueness result for ODEs, two orbits never intersect, we have that
\begin{equation}\label{JS145DD-0}
{\mbox{the orbit~$(u_0(t),v_0(t))$ must lie below the graph of~$\zeta$.}}\end{equation}
Since both~$(u_F, v_F)$ and~$(u_{\infty}, v_{\infty})$ belong to the line given by~$v=1-u$, from~\eqref{JS145DD-0} we get that
\begin{equation}\label{2304}
	u_{\infty} < u_F
\end{equation}
and 
\begin{equation}\label{2305}
	v_{\infty} > v_F.
\end{equation}
Thanks to~\eqref{2304} and~\eqref{2305} and recalling the values of~$u_{\infty}$ from~\eqref{ZETADEF} and of~$v_{\infty}$ from~\eqref{1713}, we get that
\begin{equation}\label{1524}
v_F < v_{\infty} = \frac{u_{\infty}}{c} < \frac{u_F}{c}.
\end{equation}
As a consequence, since the inequality in~\eqref{1524} is strict,
there exists~$T'>0$
such that
\begin{equation}\label{2334}
	v_0(T') < \frac{u_0(T')}{c}.
\end{equation}
Moreover, since~$\dot{u}<0$ for~$v>1-u$ and~$a=0$, we get that 
$u_0(t)$ is decreasing in~$t$, and therefore~$u_F < u_0(T') < u_0$.

By the strict inequality in~\eqref{2334}, and 
claim~(ii) in Proposition~\ref{prop:bhva},
we have that~$(u_0(T'), v_0(T')) \in \mathcal{E}_{\infty}$, where~$\mathcal{E}_{\infty}$ is defined in~\eqref{def:Einfty}.
In particular, we have that $$(u_0(T'), v_0(T')) \in
\underset{a>a'}{\bigcup} \mathcal{E}(a),$$ for every~$a'>0$.

Consequently, there exists~$a^*>M$ such that
$$(u_0(T'),v_0(T'))\in\mathcal{E}(a^*).$$ Therefore, applying the strategy
		\begin{equation*}
		a(t) = \left\{
		\begin{array}{lr}
		0,  & t<T', \\
		a^*,  & t\geq T,
		\end{array}
		\right.
		\end{equation*}
	we reach the claim $(u_0,v_0)\in \mathcal{V}_{\mathcal{A}}$.
\end{proof}

\section{Winning strategies}

To avoid repeating passages in the proofs of Theorems \ref{thm:Vbound} and \ref{thm:W}, we first state and prove the following lemma:

\begin{lemma}\label{lemma:rho=1}
	If~$\rho=1$, then for all~$a>0$:
	\begin{enumerate}
		\item the curve
			\begin{equation*}
				v(\tau)=\frac{e^{c\tau} \left( c\frac{v(0)}{u(0)}-1 \right) +1}{c} u(\tau) \quad \text{for}  \ \tau>0,
			\end{equation*}
			is a parametrisation of the trajectory starting in $(u(0), v(0))$.
			
		\item  we have~$\mathcal{E}(a)=\mathcal{S}_c$, where~$\mathcal{S}_c$ is defined in~\eqref{def:S_c}.	
	\end{enumerate}

\end{lemma}

\begin{proof}
	Let~$(u(t),v(t))$ be a trajectory starting at a point
	in~$[0,1]\times[0,1]$.
	For any~$a>0$, we consider the function
	$$\mu(t):= \frac{\displaystyle
		v(t/a)}{\displaystyle u\left(t/a\right)}.$$
	Notice that
	\begin{equation}\label{-1-e}\begin{split}&
	(u(0),v(0))\in\mathcal{E}(a)
	{\mbox{ if and only if}}\\ &{\mbox{there exists~$T>0$ such that }}\mu(T)=0.\end{split}
	\end{equation}
	In addition, we observe that
	\[\begin{split}
	\dot{\mu}(t) \,&= 
	\frac{\displaystyle\dot v\left(t/a\right)u\left(t/a\right)-
		v\left(t/a\right)\dot u\left(t/a\right)}{\displaystyle au^2\left(t/a\right)}
	\\&=
	\frac{\displaystyle -u^2 \left(t/a\right)+c u\left(t/a\right)v\left(t/a\right)}{\displaystyle u^2\left(t/a\right)}\\&=c\mu(t) -1.
	\end{split}\]
	We deduce that
	\begin{equation}\label{18131801}
	\mu(t)=\frac{e^{ct} \left( c\mu(0)-1 \right) +1}{c}. 
	\end{equation}
	This proves the first part of the Lemma.
	
	{F}rom \eqref{18131801} and~\eqref{-1-e}, we deduce that
	\begin{equation*}
	(u(0),v(0))\in\mathcal{E}(a)
	{\mbox{ if and only if }}
	c\mu(0)-1<0.
	\end{equation*}
	This leads to
	\begin{equation*}
	(u(0),v(0))\in\mathcal{E}(a)
	{\mbox{ if and only if }}\,
	\frac{v(0)}{u(0)}<\frac1c,
	\end{equation*}
	which, recalling the definition of~$\mathcal{S}_c$
	in~\eqref{def:S_c}, ends the proof.
\end{proof}

Now we provide the proof of Theorem~\ref{thm:Vbound}, exploiting the result obtained
in Section~\ref{explowrewwt}.

\begin{proof}[Proof of Theorem~\ref{thm:Vbound}] \quad \\
{\em (i)} Let~$\rho=1$. 
We claim that
\begin{equation}\label{Thns932}
{\mbox{$\mathcal{V}_{\mathcal{A}}=\mathcal{S}_c$,}}\end{equation} where~$\mathcal{S}_c$ was
defined in~\eqref{def:S_c}
(incidentally,~$\mathcal{S}_c$
is precisely the right-hand-side of equation~\eqref{Vbound1}).

{F}rom Lemma \ref{lemma:rho=1} we have that for $\rho=1$ and $a>0$ it holds $\mathcal{S}_c=\mathcal{E}(a)\subseteq \mathcal{V}_{\mathcal{A}}$. Thus, to show \eqref{Thns932} we just need to check that
\begin{equation}\label{inturn}
\mathcal{V}_{\mathcal{A}} \subseteq \mathcal{S}_c,\end{equation}
which is equivalent to
\begin{equation}\label{dotdot}
\mathcal{S}_c^C \subseteq \mathcal{V}_{\mathcal{A}}^C,
\end{equation}
where the superscript~$C$ denotes the complement of the set in the topology of~$[0,1]\times[0,1]$.

In order to apply Lemma \eqref{lemma:exit}, we analyze the behavior of the trajectories at~$\partial \mathcal{S}_c^C$. 
It holds that~$$\partial \mathcal{S}_c^C \cap \big(
[0,1]\times[0,1]\big)= \left\{ (u,v)\in (0,1)\times(0,1) \;{\mbox{ s.t. }}\; v=\frac{u}{c} \right\}.$$
Then, let us analyse the curve $v=\frac{u}{c}$ for $u\in[0, u_M]$ where $u_M=\min \{ 1, c  \}$.
By Lemma \ref{lemma:rho=1}, for all $a> 0$ and by Proposition \ref{prop:bhvaPRE}  if $a=0$, the considered curve is a parametrisation of a trajectory.
Thus, for $\check u\in[0, u_M]$, for all points in the form $(\check u,\check v)=(\check u,\check u/c)$, it holds that the trajectory $(u(t), v(t))$ starting at $(\check u,\check v)$ at $t=0$ satisfies
\begin{equation}\label{1742}
	(\dot{u}(0),\dot{v}(0))\cdot \nu =0 \quad \text{for} \ \check u\in[0, u_M], \check v=\check u/c
\end{equation} 
where $\nu$ is the outward unit normal vector to $\partial{S}_c^C$ at $(\check u, \check v)$.

Hence, by choosing $K_1=[0,u_M]$, thanks to \eqref{1742} we can apply  Lemma \ref{lemma:exit} and conclude that
\begin{equation*}\label{1649}
{\mbox{no trajectory can exit~$\mathcal{S}_c^C$.}}
\end{equation*}

Next, we observe that
\begin{equation*}\label{1617}
\mathcal{S}_c^C \cap ((0,1]\times\{0\})=\varnothing,
\end{equation*}
so no trajectory starting in~$\mathcal{S}_c^C$ can reach the set~$(0,1]\times\{0\}$.

Therefore, $$\mathcal{S}_c^C \cap \mathcal{V}_{\mathcal{A}}= \varnothing$$ and this implies that~\eqref{dotdot} is true. 
As a result, the proof of~\eqref{inturn} is established and the proof is completed for $\rho=1$.

	\medskip
	
	{\em (ii)} Let~$\rho<1$. 
Let~$\mathcal{Y}$ be the set in the right-hand-side of~\eqref{bound:rho<1}, and
\begin{equation}\label{qwertyuiolkjhgf}
\mathcal{F}_0:=
\big\{  (u,v)\in [0,1]\times [0,1]\;{\mbox{ s.t. }}\; 
v < \gamma_{0}(u) \,\text{ if } \, u\in[0, 1]  \big\}.\end{equation}
Notice that 
\begin{equation}\label{8ujff994-p-1}
\mathcal{Y} = \mathcal{F}_0 \cup \mathcal{P},
\end{equation}
being~$\mathcal{P}$ the set defined in~\eqref{PPDEFA}.

Moreover,
\begin{equation}\label{8ujff994-p-2}
\mathcal{P}\subseteq \mathcal{V}_{\mathcal{A}},
\end{equation}
thanks to
Proposition~\ref{prop:construction}.

We also claim that
\begin{equation}	\label{8ujff994-p-3BIS}\mathcal{F}_0\subseteq \mathcal{V}_{\mathcal{K}},\end{equation}
where~$\mathcal{K}$ is the set of constant functions.
Indeed, if~$(u,v)\in\mathcal{F}_0$, we have that~$v < \gamma_{0}(u)$
and consequently~$v < \gamma_{a}(u)$, as long as~$a$ is small enough,
due to Lemma~\ref{lemma:conv_gamma}.

{F}rom this and Proposition~\ref{prop:char}, we deduce that~$(u,v)$
belongs to~${\mathcal{E}}(a)$, as long as~$a$ is small enough,
and this proves~\eqref{8ujff994-p-3BIS}.

{F}rom~\eqref{8ujff994-p-3BIS} and the fact that~$\mathcal{K}\subseteq\mathcal{A}$,
we obtain that
\begin{equation}	\label{8ujff994-p-3}\mathcal{F}_0\subseteq \mathcal{V}_{\mathcal{A}}.\end{equation}
Then, as a consequence of~\eqref{8ujff994-p-1},
\eqref{8ujff994-p-2} and~\eqref{8ujff994-p-3},
we get that~$\mathcal{Y}\subseteq \mathcal{V}_{\mathcal{A}}$.

Hence, we are left with proving that 
\begin{equation}\label{8iujdpp-1}
\mathcal{V}_{\mathcal{A}} \subseteq \mathcal{Y}.\end{equation}
For this, we show that
\begin{equation}\label{8iujdpp-2}\begin{split}&
{\mbox{no trajectory enters $\mathcal{Y}$ }}\\&{\mbox{through $\partial \mathcal{Y}\cap\big([0,1]\times[0,1]
		\big) $.}}\end{split}
\end{equation}
To prove this, in order to be able to apply Lemma \ref{lemma:entrance},
we calculate the outward normal derivative on the part of~$\partial \mathcal{Y}$ lying on the graph of~$v=\gamma_0(u)$ for $u\in[0, u_s]$, that is, up to a positive constant of normalization,
\begin{equation*}
\dot{v}-\frac{\rho v^0_s \, u^{{\rho} -1}\dot{u}}{(u^0_s)^{\rho} }={\rho} v(1-u-v)-a u -\frac{\rho v^0_s \, u^{{\rho} }(1-u-v-ac)}{ (u^0_s)^{\rho} }.
\end{equation*}
By substituting for~$v=\gamma_0(u)=\frac{v^0_s \, u^\rho}{(u_s^0)^{\rho}}$ we get 
\begin{eqnarray*}&&
	\dot{v}-\frac{\rho v^0_s \, u^{{\rho} -1}\dot{u}}{ (u^0_s)^{\rho-1} }\\&=&
	\frac{\rho v^0_s \, u^\rho}{ (u_s^0)^{\rho}}(1-u-v)-a u -\frac{\rho v^0_s \, u^{{\rho} }(1-u-v-ac)}{ (u^0_s)^{\rho} }\\
	&=& -a u +\frac{ ac\rho v_s^0 \, u^{{\rho} }}{ (u^0_s)^{\rho} }	\\&
	=&au^{\rho}  \left( - u^{1-{\rho} } + \frac{c\rho v_s^0}{(u^0_s)^{\rho} } \right)\\&=&au^{\rho}  \left( - u^{1-{\rho} } + \frac{1}{(u^0_s)^{\rho-1} } \right)
	.\end{eqnarray*}
As a result, since~${\rho} <1$, we have
\begin{equation}\label{ygfbv7r9yty4}
\dot{v}-\frac{ \rho v_s^0\, u^{{\rho} -1}\dot{u}}{(u^0_s)^{\rho} }> 0 \quad \text{for} \ u\in(0, u_s)
\end{equation}
and
\begin{equation}\label{ygfbv7r9yty42}
\dot{v}-\frac{ \rho v_s^0\, u^{{\rho} -1}\dot{u}}{(u^0_s)^{\rho} }= 0 \quad \text{for} \ u\in\{0, u_s\}.
\end{equation}
Notice that $\partial \mathcal{Y}$ coincides on the line~$v=\frac{u}{c} + \frac{1-{\rho} }{1+{\rho} c}$ for
$$ u\in \left[ u_s^0, \min\left\{1, \frac{\rho c(c+1)}{1+\rho c}  \right\}  \right]  $$
For the sake of simplicity, we suppose
that $$\frac{\rho c(c+1)}{1+\rho c}\ge1.$$
Then, on the part of~$\partial \mathcal{Y}$ contained on the line~$v=\frac{u}{c} + \frac{1-{\rho} }{1+{\rho} c}$, the outward normal derivative is, up to the constant $1/\sqrt{1+(1/c^2)}$,
\begin{equation}\label{7undws8uf8v}\begin{split}&
\dot{v}-\frac{\dot{u}}{c}\\ =\;& {\rho} v(1-u-v) -au -\frac{u(1-ac-u-v)}{c}\\=\;&\left({\rho} v-\frac{u}{c}\right)(1-u-v)\\=\;&
\left(
\frac{\rho u}{c} + \frac{\rho(1-{\rho}) }{1+{\rho} c}-\frac{u}{c}\right)\left(1-u-
\frac{u}{c}- \frac{1-{\rho} }{1+{\rho} c}\right)\\
=\;&
\left(
\frac{(\rho-1) u}{c} + \frac{\rho(1-{\rho}) }{1+{\rho} c}\right)\left(-
\frac{u(c+1)}{c}+ \frac{\rho(1+c) }{1+{\rho} c}\right)
.\end{split}
\end{equation}
We also observe that, when~$u > u_s^0=\frac{\rho c}{1+\rho c}$, the condition~$\rho<1$ gives that
\begin{eqnarray*}
	\frac{(\rho-1) u}{c} + \frac{\rho(1-{\rho}) }{1+{\rho} c} < 
	\frac{\rho (\rho-1)}{1+\rho c}+ \frac{\rho(1-{\rho}) }{1+{\rho} c}=0
\end{eqnarray*}
and
\begin{eqnarray*}-\frac{u(c+1)}{c}+ \frac{\rho(1+c) }{1+{\rho} c}<
	-\frac{\rho(c+1)}{1+\rho c}+ \frac{\rho(1+c) }{1+{\rho} c}=0.
\end{eqnarray*}
Therefore, when~$u > u_s^0$, we deduce from~\eqref{7undws8uf8v} that~$$
\dot{v}-\frac{\dot{u}}{c}> 0,$$
and 
$$\dot{v}-\frac{\dot{u}}{c} =0 \quad \text{for } \ u=u_s^0.$$
Combining this,~\eqref{ygfbv7r9yty4} and~\eqref{ygfbv7r9yty4}, we
can say that for
$K_1=\{0\}$, $K_2=\{u_s^0\}$, $I_1=(0, u_s^0)$, $I_2=(u_s^0, 1]$
we can apply  Lemma \ref{lemma:entrance}
 obtaining~\eqref{8iujdpp-2}, as desired.

Since for any value of~$a$, no trajectory starting in~$\big([0,1]\times[0,1]
\big)\setminus\mathcal{Y}$ can enter in~$\mathcal{Y}$ passing through $\partial \mathcal{Y}\cap ([0,1]\times[0,1])$, 
in particular no trajectory starting
in~$\big([0,1]\times[0,1]
\big)\setminus\mathcal{Y}$
can hit~$\{v=0\}$, which ends the proof of~\eqref{8iujdpp-1}.
\medskip

{\em (iii)} Let~$\rho>1$. For the sake of simplicity, we suppose
that$$\frac{c}{(c+1)^\rho}\ge1.$$
Let~$\mathcal{X}$ be the right-hand-side of~\eqref{bound:rho>1}.
We observe that
\begin{equation}\label{7hperpre923i5}
\mathcal{X}= \mathcal{S}_{c} \cup \mathcal{Q},
\end{equation}
where~$\mathcal{S}_{c}$ was defined in~\eqref{def:S_c} and~$\mathcal{Q}$ in~\eqref{DEFQ}.

Thanks to Proposition~\ref{prop:bhva},
one has that
$$\mathcal{S}_{c}\subseteq \underset{a>a'}{\bigcup} \mathcal{E}(a),$$ for every~$a'>0$, and therefore~$\mathcal{S}_{c}\subseteq\mathcal{V}_{\mathcal{A}}$.

Moreover, by the second claim in
Proposition~\ref{prop:construction}, one also has that~$\mathcal{Q}\subseteq \mathcal{V}_{\mathcal{A}}$. Hence, 
\begin{equation}\label{1923}
\mathcal{X}\subseteq \mathcal{V}_{\mathcal{A}}.
\end{equation}

Accordingly, to prove equality in~\eqref{1923} and thus complete
the proof of~\eqref{bound:rho>1}, we need to show that~
\begin{equation}\label{2026}
	\mathcal{V}_{\mathcal{A}} \subseteq \mathcal{X}.
\end{equation}
First, we prove that
\begin{equation}\label{1229}
(0,1]\times\{0\} \subseteq \mathcal{X}.
\end{equation}
Indeed, for~$u>0$ we have~$v=\frac{u}{c}>0$, therefore~$(u, 0)\in \mathcal{X}$ for~$u\in (0, u_{\infty}]$. Then,~$\zeta(u)$ is increasing in~$u$ since it is a positive power function, therefore~$v=\zeta(u)>0$ for~$u\in(u_{\infty}, 1]$, hence ~$(u, 0)\in \mathcal{X}$ for~$u\in ( u_{\infty}, 1]$. These observations prove~\eqref{1229}.

Now, to show that
\begin{equation}
    \mbox{no trajectory enters} \ \mathcal{X},
\end{equation}
we want to apply Lemma \ref{lemma:entrance} with $K_1=\{0\}$, $I_1=(0, u_{\infty})$, $K_2=\{u_{\infty}\}$, $I_2=\left(u_{\infty},1 \right]$.

First, we notice that $(0,0)\in \partial \mathcal{X}$ is an equilibrium, thus the component of the velocity are 0 in every direction. This proves \eqref{property2} for $K_1=\{0\}.$

We now prove that the component of the velocity field in the outward normal direction with respect to~$\mathcal{X}$  is positive on
\begin{equation*} 
\left\{ (u,v)\in(0,u_{\infty})\times(0,1) \ : \ v=\frac{u}{c} \right\}.
\end{equation*}
In fact, it is 
\begin{equation}\label{1853}\begin{split}
\dot{v}-\frac{1}{c}\dot{u}&= \rho v(1-u-v)-au -\frac{u}{c}(1-ac-u-v)\\&=\left(\rho v -\frac{u}{c}\right)(1-u-v).\end{split}
\end{equation}
The first term is positive because for~$\rho >1$ we have
\begin{equation*}
\rho v > v =\frac{u}{c}.
\end{equation*}
Moreover, for~$u< u_{\infty}$ we have that~$$1-u-v>1-u_{\infty}-\frac{u_{\infty}}{c}=0,$$
thanks to~\eqref{ZETADEF}.
Thus, the left hand side of~\eqref{1853} is positive for $u\in I_1=(0, u_{\infty})$.

On the part of~$\partial \mathcal{X}$ lying in the graph of~$v=\zeta(u)$, that is, for $u\in I_2$, the component of the velocity field in the outward normal direction is
given by
\begin{equation}\label{po091326uthgbvfjf}\begin{split}&
\dot{v}-\frac{\rho u^{{\rho} -1}\dot{u}}{\rho c(u_{\infty})^{\rho-1} }\\&\qquad ={\rho} v(1-u-v)-a u -\frac{{\rho} u^{{\rho} }}{\rho c(u_{\infty})^{\rho-1} }(1-u-v-ac).\end{split}
\end{equation}
Now we substitute for
$$v=\zeta(u)= \frac{u^{{\rho} }}{\rho c(u_{\infty})^{\rho-1} }$$ in~\eqref{po091326uthgbvfjf} and we get 
\begin{equation}\label{1850}
\dot{v}-\frac{u^{{\rho} -1}\dot{u}}{ c(u_{\infty})^{\rho-1} } =	au  \left( - 1 + \frac{u^{\rho-1}}{(u_{\infty})^{\rho-1} } \right)
\end{equation}
which leads to
\begin{equation*}
\dot{v}-\frac{\rho u^{{\rho} -1}\dot{u}}{\rho c(u_{\infty})^{\rho-1} }>0 \quad \text{if} \ u>u_{\infty},
\end{equation*}
as desired. 
This proves the hypothesis \eqref{property} for $u\in I_2$.

Moreover, again by the expression in \eqref{1853} and to~\eqref{ZETADEF}, for $u=u_{\infty}$, the scalar product of the direction of the trajectory with the normal vector to~$\left\{v=\frac{u}{c} \right\}$ is zero. 
Also, by \eqref{1850}, for $u=u_{\infty}$, the scalar product of the direction of the trajectory with the normal vector to~$\left\{v=\zeta (u) \right\}$ is zero. Thus, the hypothesis \eqref{property2} of Lemma \ref{lemma:entrance} is satisfied in $K_2=\{u_{\infty}\}$.

As a consequence of these considerations, by Lemma \ref{lemma:entrance} we find that no trajectory starting in~$\mathcal{X}^C$ can enter in~$\mathcal{X}$ and therefore hit~$\{v=0\}$, by~\eqref{1229}. 

Hence, we conclude that~\eqref{2026} holds true, which, together with~\eqref{1923}, establishes~\eqref{bound:rho>1}.
\end{proof}

\section{The role of the constant strategies}

In order to prove Theorem~\ref{thm:W},
we will establish a geometrical lemma in order to
understand the reciprocal position of the function~$\gamma$,
as given by Propositions~\ref{lemma:M} and~\ref{M:p045},
and the straight line where the saddle equilibria lie. To emphasize the dependence of~$\gamma$
on the parameter~$a$ we will often use the notation~$\gamma=\gamma_a$.
Moreover, we recall the notation of the saddle points~$(u_s,v_s)$
defined in~\eqref{usvs} and of the points~$(u_{\mathcal{M}},v_{\mathcal{M}})$ given by
Propositions~\ref{lemma:M} and~\ref{M:p045}, with the convention
that
\begin{equation}\label{usvs2}
{ \mbox{$(u_s,v_s)=(0,0)$ if~$ac\ge1$,}  }
\end{equation}
and we state the following
result:

\begin{lemma}\label{lemma:vett_tg}
If~$\rho<1$, then 
\begin{equation}\label{gamma1>r}
\frac{u}{{\rho}c} \leq \gamma_a(u) \quad \text{ for }  u\in[0, u_s]
\end{equation}
and
\begin{equation}\label{gamma<r}
\gamma_a(u) \leq \frac{u}{{\rho}c} \quad \text{ for }  u\in[u_s, u_{\mathcal{M}}].
\end{equation}
If instead~${\rho}>1$, then
\begin{equation}\label{gamma1<r}
\gamma_a(u) \leq \frac{u}{{\rho}c} \quad \text{ for }  u\in[0, u_s]
\end{equation}
and
\begin{equation}\label{gamma>r}
\frac{u}{{\rho}c} \leq \gamma_a(u)  \quad \text{ for }  u\in[u_s, u_{\mathcal{M}}].
\end{equation}
Moreover equality holds in~\eqref{gamma1>r} and~\eqref{gamma1<r} 
if and only if either~$u=u_s$ or~$u=0$. Also, strict inequality
holds in~\eqref{gamma<r}
and~\eqref{gamma>r} for~$u\in(u_s, u_{\mathcal{M}})$.
\end{lemma}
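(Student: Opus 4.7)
The plan is to study the signed deviation $\phi(u):=\gamma_a(u)-\frac{u}{\rho c}$ between the manifold $\mathcal{M}$ (viewed as the graph of $\gamma_a$) and the straight line $L:v=u/(\rho c)$. Since the coordinates in~\eqref{usvs} satisfy $v_s=u_s/(\rho c)$, the line $L$ passes through both $(0,0)$ and $(u_s,v_s)$, so $\phi(0)=0=\phi(u_s)$. The task reduces to determining the sign of $\phi$ on the open subintervals $(0,u_s)$ and $(u_s,u_{\mathcal{M}}]$ (when $ac\geq 1$, the convention~\eqref{usvs2} forces $u_s=0$ and only the second subinterval survives).

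The first ingredient is a transversality computation: plugging $v=u/(\rho c)$ into~\eqref{model}, the common factor $(1-u-v)$ cancels in the ratio $\dot v/\dot u$ and one obtains $\dot v/\dot u=1/c$ at every non-equilibrium point of $L$. Consequently, at any interior zero $u^*\in(0,u_{\mathcal{M}}]\setminus\{u_s\}$,
\begin{equation*}
\phi'(u^*)=\frac{1}{c}-\frac{1}{\rho c}=\frac{\rho-1}{\rho c},
\end{equation*}
so every such zero is a transverse crossing in the direction of $\mathrm{sgn}(\rho-1)$. At the equilibrium $u_s$ itself (when $ac<1$), $\gamma_a$ is tangent to the stable eigenvector, which has slope $y/x=-(u_s+\lambda_-)/u_s$. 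Evaluating the characteristic polynomial of $J(u_s,v_s)$ at $\lambda_0:=-u_s(1+\rho c)/(\rho c)$ and using the equilibrium relations $v_s=u_s/(\rho c)$, $\mathrm{tr}\,J=-u_s(c+1)/c+\rho ac$, $\det J=-u_s a(1+\rho c)$, a short calculation gives
\begin{equation*}
P(\lambda_0)=\frac{u_s^2(1+\rho c)(1-\rho)}{\rho^2 c^2},
\end{equation*}
whose sign pins down the position of $\lambda_-$ relative to $\lambda_0$ and translates into $\mathrm{sgn}(\phi'(u_s))=\mathrm{sgn}(\rho-1)$. Hence every zero of $\phi$ in $(0,u_{\mathcal{M}}]$ is a transverse crossing of the same orientation.

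A routine continuity argument then shows that $\phi$ has at most one zero in $(0,u_{\mathcal{M}}]$, since two consecutive transverse crossings of the same direction would force $\phi$ to change sign between them without vanishing, contradicting continuity. In the case $ac<1$ this unique zero is $u_s$, and combining with the sign of $\phi'(u_s)$ yields $\phi<0$ on $(0,u_s)$ and $\phi>0$ on $(u_s,u_{\mathcal{M}}]$ when $\rho>1$, and the opposite signs when $\rho<1$. This is precisely~\eqref{gamma1>r}--\eqref{gamma>r}, with the claimed strict inequalities in the open intervals.

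The degenerate cases $ac\geq 1$ (hence $u_s=0$) follow from the same crossing argument once the sign of $\phi$ immediately to the right of the origin is identified. When $ac>1$, Proposition~\ref{lemma:M} supplies the tangent slope $a/(\rho-1+ac)$ at the origin, and its comparison with $1/(\rho c)$ reduces to $ac(\rho-1)$ versus $\rho-1$; since $ac>1$, the sign of $\phi$ near $0^+$ is $\mathrm{sgn}(\rho-1)$. When $ac=1$, Proposition~\ref{M:p045} forces the tangent at the origin to coincide with $L$, so one must go to second order: writing $\gamma_a(u)=u/(\rho c)+\alpha u^2+O(u^3)$ and matching terms in $\dot v=\gamma_a'(u)\dot u$ on the manifold (using $\dot u=-u(u+v)$ that follows from $1-ac=0$) gives $\alpha=(\rho c+1)(\rho-1)/(\rho^3 c^2)$, again of sign $\mathrm{sgn}(\rho-1)$. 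In either subcase, the crossing argument propagates this initial sign to all of $(0,u_{\mathcal{M}}]$. The main technical step is the algebraic identification of $P(\lambda_0)$; everything else follows from topological continuity and the transversality identity, with the $ac=1$ branch being the most delicate because the leading-order slope comparison is trivial and one must control the quadratic term.
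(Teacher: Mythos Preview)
Your argument is correct and takes a genuinely different route from the paper. The paper argues by contradiction: assuming a region between the graph of $\gamma_a$ and the line $v=u/(\rho c)$ exists, it traps trajectories there via normal-velocity computations and then applies the Poincar\'e--Bendixson Theorem to force these trajectories onto the stable manifold, which is absurd since the region was taken off the manifold. It details only inequality~\eqref{gamma<r} in the case $ac<1$ and states that the others follow by the same method.

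Your approach is more direct and algebraic: you work with the scalar difference $\phi(u)=\gamma_a(u)-u/(\rho c)$, show that every zero of $\phi$ in $(0,u_{\mathcal{M}}]$ is a transverse crossing with $\phi'$ of the fixed sign $\mathrm{sgn}(\rho-1)$, and conclude from a counting/accumulation argument that there is at most one such zero. The key identity $\dot v/\dot u=1/c$ on $L$ away from equilibria (note: the cancelling factor is actually $1-u-v-ac$, not $1-u-v$, but your conclusion is unaffected) handles the generic zeros, the evaluation $P(\lambda_0)=u_s^2(1+\rho c)(1-\rho)/(\rho^2c^2)$ pins down the stable-eigenvector slope at $u_s$, and the tangent slope $a/(\rho-1+ac)$ (for $ac>1$) and the second-order coefficient $\alpha=(\rho c+1)(\rho-1)/(\rho^3c^2)$ (for $ac=1$) fix the sign of $\phi$ near the origin. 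All of these check out.

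What each approach buys: the paper's method is purely phase-plane and avoids any spectral computation, at the cost of invoking Poincar\'e--Bendixson and only sketching three of the four inequalities. Your method treats all four inequalities and all three regimes $ac\lessgtr 1$, $ac=1$ in a single framework and yields the strict inequalities on the open intervals automatically; the price is the explicit linear-algebra step at $(u_s,v_s)$ and the Taylor expansion at the degenerate origin when $ac=1$.
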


\begin{proof}
We focus here on the proof of~\eqref{gamma<r}, since
the other inequalities are proven in a similar way. Moreover,
we deal with the case~$ac<1$, being the case~$ac\ge1$ analogous with
obvious modifications.

We suppose by contradiction that~\eqref{gamma<r} does not hold true.
Namely, we assume that there exists~$\tilde{u}\in(u_s,u_{\mathcal{M}}]$ such that
$$  \gamma_a(\tilde u) > \frac{\tilde u}{{\rho}c}.$$
Since~$\gamma_a$ is continuous thanks to Propositions~\ref{lemma:M},
we have that
$$  \gamma_a( u) > \frac{ u}{{\rho}c} \quad \mbox{in a neighborhood of~$\tilde u$. }$$
Hence, we consider the largest open
interval~$(u_1,u_2)\subset(u_s,u_{\mathcal{M}}]$ containing~$\tilde u$ and such that
\begin{equation} \label{g>r}
\gamma_a(u) > \frac{u}{{\rho}c} \quad {\mbox{ for all }} u \in (u_1,u_2).
\end{equation}
Moreover, in light of~\eqref{usvs}, we see that
\begin{equation}\label{togdfgheter}
\gamma_a(u_s)=v_s= \frac{1-ac}{1+\rho c}=
\frac{u_s}{\rho c}.\end{equation}
Hence, by the continuity of~$\gamma_a$, we have that~$\gamma_a(u_1)=\frac{u_1}{\rho c}$
and 
\begin{equation}\label{doesnotcon}
{\mbox{either~$\gamma_a(u_2)=\displaystyle \frac{u_2}{\rho c}$ or~$u_2=u_{\mathcal{M}}$.}}\end{equation}

Now, we consider the set
\begin{equation*}
\mathcal{T}:= \left\{  (u,v)\in [u_1,u_2]\times[0,1] \;
{\mbox{ s.t. }}\; \frac{u}{{\rho}c} < v<  \gamma_a(u)      \right\},
\end{equation*}
that is non empty, thanks to~\eqref{g>r}. 
We claim that
\begin{equation}\label{lkjhgfds1234567}\begin{split}&
{\mbox{for all~$(u(0), v(0))\in \mathcal{T}$,}}\\&{\mbox{the~$\omega$-limit
		of its trajectory is~$(u_s,v_s)$.}}\end{split}\end{equation}
To prove this, 
we analyze the normal derivative on 
\begin{equation*}\begin{split}
&\partial \mathcal{T} =\mathcal{T}_1\cup\mathcal{T}_2\cup
\mathcal{T}_3,\\
{\mbox{where }}\quad &
\mathcal{T}_1:=\big\{ (u, \gamma_a(u)) \;{\mbox{ with }} 
u \in (u_1,u_2) \big\},\\
&\mathcal{T}_2:= \left\{  \left( u, \frac{u}{{\rho}c} \right) \;{\mbox{ with }}
u \in (u_1,u_2)  \right\}\\
{\mbox{and }}\quad &
\mathcal{T}_3:=\left\{ (u_2, v) \;{\mbox{ with }} 
v \in \left( \frac{u_2}{{\rho}c},\min\{\gamma_a(u_2),1\}\right) \right\}
,
\end{split}\end{equation*}
with the convention that~$\partial \mathcal{T}~$ does contain~$\mathcal{T}_3$
only if the second possibility in~\eqref{doesnotcon} occurs.

We notice that the set~$\mathcal{T}_1$ is an orbit for the system, and
thus the component of the velocity in the normal direction is null. 
On~$\mathcal{T}_2$, we have that the sign of
the component of the velocity in the inward normal direction is given by
\begin{equation}\label{der:r}
\begin{split}&
(\dot{u},\dot{v})\cdot\left(-\frac1{\rho c},1\right)\\
=\;&
\dot{v} - \frac{1}{{\rho}c} \dot{u} \\&= {\rho} v(1-u-v) 
-au - \frac{u}{{\rho}c}(1-u-v) + \frac{au}{\rho} \\
=\;& \frac{u}{c} \left( 1-u-\frac{u}{{\rho}c}   \right)
\left( 1 - \frac{1}{{\rho}}   \right) -au\left( 1-\frac{1}{\rho}  
\right)  \\
 =\;& \frac{u}{c} \left(1-\frac{1}{\rho} \right) \left(  
1-u-\frac{u}{{\rho}c}  -ac \right) .
\end{split}
\end{equation}
Notice that  for~$u \geq u_s$ we have that
\begin{equation}\label{acapo}
1-u-v -ac  \leq0,
\end{equation}
thus the sign of last term in~\eqref{der:r} depends only on
the quantity~$1-\frac{1}{\rho}$.
Consequently, since~${\rho}<1$ the sign of
the component of the velocity in the inward normal direction is positive. 

Furthermore, in the case in which the second possibility in~\eqref{doesnotcon} occurs,
we also check the sign of the component of the velocity in the inward normal direction
along~$\mathcal{T}_3$. In this case, if~$\gamma_a(u_2)<1$ then~$u_2=1$,
and therefore we find that
$$(\dot{u},\dot{v})\cdot\left(-1 ,0 \right)=-\dot{u}=-u(1-u-v)+acu=
v+ac,
$$
which is positive. If instead~$\gamma_a(u_2)=1$
$$(\dot{u},\dot{v})\cdot\left(-1 ,0 \right)=-\dot{u}=-u(1-u-v)+acu=
-u(1-ac-u-v)
,
$$
which is positive, thanks to~\eqref{acapo}.

We also point out that there are no cycle in~$\mathcal{T}$, since~$\dot{u}$
has a sign. These considerations and the
Poincar\'e-Bendixson Theorem (see e.g.~\cite{TESCHL})
give that the~$\omega$-limit set of~$(u(0),v(0))$
can be either an equilibrium or a union of (finitely many)
equilibria and non-closed orbits connecting these equilibria.
Since~$(0,0)$ and~$(0,1)$ do not belong to the closure of~$\mathcal{T}$,
in this case the only possibility is that the~$\omega$-limit is the equilibrium~$(u_s,v_s)$.
Consequently, we have that~$u_1=u_s$, and that~\eqref{lkjhgfds1234567}
is satisfied.

Accordingly, in light of~\eqref{lkjhgfds1234567}, we have that the
set~$\mathcal{T}$
is contained in the stable manifold of~$(u_s,v_s)$, which is in contradiction
with the definition of~$\mathcal{T}$.
Hence,~\eqref{gamma<r} is established, as desired.
\medskip

Now we show that strict inequality holds true in~\eqref{gamma<r}
if~$u\in(u_s,u_{\mathcal{M}})$.	
To this end, we suppose by contradiction that
there exists~$\bar{u}\in (u_s,u_{\mathcal{M}})$ such that
\begin{equation}\label{equality}
\gamma_a(\bar{u})=\frac{\bar{u}}{{\rho}c}.
\end{equation} 
Now, since~\eqref{gamma<r} holds true, we have that
the line~$v-\frac{u}{{\rho}c}=0$ is tangent to the curve~$
v=\gamma_a(u)$ at~$(\bar{u}, \gamma_a(\bar{u}))$,
and therefore at this point the components of the velocity
along the normal directions to the curve and to the line coincide. 
On the other hand,
the normal derivative at a point on
the line has a sign, as computed in~\eqref{der:r}, while the normal derivative
to~$v=\gamma_a(u)$ is~$0$ because the curve is an orbit. 

This, together with~\eqref{togdfgheter}, proves that equality
in~\eqref{gamma<r} holds true if~$u=u_s$, but strict inequality holds true
for all~$u\in(u_s,u_{\mathcal{M}})$,
and thus
the proof of Lemma~\ref{lemma:vett_tg} is complete.
\end{proof}

For each~$a>0$, we define~$(u_d^a, v_d^a)\in [0,1]\times[0,1]$ as the unique intersection of the graph of~$\gamma_a$ with the line~$\{v=1-u\}$, that is the solution of the system
\begin{equation}\label{ki87yh556g}
\left\{
\begin{array}{l}
v_d^a=\gamma_a(u_d^a),\\
v_d^a=1- u_d^a.
\end{array}
\right.
\end{equation}
We recall that the above intersection is
unique since the function~$\gamma_a$
is increasing. Also, by construction,
\begin{equation}\label{CALM}
u_d^a\le u_{\mathcal{M}}.
\end{equation}
Now, recalling~\eqref{usvs}  and making explicit the dependence on $a$ by writing $u_s^a$
(with the convention in~\eqref{usvs2}), we give the following result:

\begin{lemma}\label{lemma:ord}
We have that:
\begin{enumerate}
	\item For $\rho<1$, for all $a^*>0$ it holds that
	\begin{equation}\label{1304b}\begin{split}
	& \gamma_a(u) \leq \gamma_{a^*}(u) \\& \text{for all} \ a > a^* \ \text{and for all} \ u\in[u_s^{a^*}, u_d^{a^*}].\end{split}
	\end{equation}
	
	\item  For $\rho>1$, for all $a^*>0$ it holds that
	\begin{equation}\label{1819b}\begin{split}
&	\gamma_a(u) \leq \gamma_{a^*}(u) \\&\text{for all} \ a < a^* \ \text{and for all} \ u\in[u_s^{a^*}, u_d^{a^*}].\end{split}
	\end{equation}
\end{enumerate}
\end{lemma}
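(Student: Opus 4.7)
My plan is to reduce the comparison of the two graphs to a sign analysis at any putative crossing, combined with Lemma~\ref{lemma:vett_tg} and a description of the $\mathcal{A}_2$ branch of $\gamma_{a^*}$. The starting point is the ODE
\[
\gamma_a'(u)=\frac{\rho\,\gamma_a(u)\bigl(1-u-\gamma_a(u)\bigr)-au}{u\bigl(1-u-\gamma_a(u)-ac\bigr)},
\]
which comes from $\gamma_a'=\dot v/\dot u$ along an orbit of~\eqref{model}. A direct algebraic manipulation shows that at any common point $(\bar u,\bar v)$ with $\gamma_a(\bar u)=\gamma_{a^*}(\bar u)=\bar v$, setting $\Phi:=1-\bar u-\bar v$,
\[
\gamma_a'(\bar u)-\gamma_{a^*}'(\bar u)=\frac{(a-a^*)\,\Phi\,(c\rho\bar v-\bar u)}{\bar u\,(\Phi-ac)(\Phi-a^*c)}.
\]
In both parts I argue by contradiction: set $\hat u:=\inf\{u\in(u_s^{a^*},u_d^{a^*}]:\gamma_a(u)>\gamma_{a^*}(u)\}$, note $\gamma_a(\hat u)=\gamma_{a^*}(\hat u)=:\hat v$, and deduce $\gamma_a'(\hat u)\geq\gamma_{a^*}'(\hat u)$ from difference quotients on the right; the plan is to show that the displayed identity forces the opposite strict inequality.

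For part~(1), since $u_s^a$ is decreasing in $a$ we have $u_s^{a^*}\in(u_s^a,u_{\mathcal{M}}^a]$, so by \eqref{gamma<r} of Lemma~\ref{lemma:vett_tg}, $\gamma_a(u_s^{a^*})\leq u_s^{a^*}/(\rho c)=v_s^{a^*}=\gamma_{a^*}(u_s^{a^*})$, with strict inequality from the equality characterization of that lemma; this guarantees $\hat u>u_s^{a^*}$. Along the $\mathcal{A}_2$ branch of $\gamma_{a^*}$, the sum $u+v$ is strictly increasing in $u$ (the branch is traversed backward in time and $\dot u+\dot v<0$ in the interior of $\mathcal{A}_2$), running from $1-a^*c$ at $u_s^{a^*}$ to $1$ at $u_d^{a^*}$, hence $\Phi(\hat u,\hat v)\in(0,a^*c)$. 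Because $a>a^*$ gives $ac>a^*c>\Phi$, both $\Phi-ac$ and $\Phi-a^*c$ are negative and the denominator positive, while the strict part of \eqref{gamma<r} gives $c\rho\hat v-\hat u<0$, so the numerator is negative. Thus $\gamma_a'(\hat u)<\gamma_{a^*}'(\hat u)$, contradicting the slope inequality; the possibility $\hat u=u_d^{a^*}$ is excluded by continuity.

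For part~(2) the novelty is that $u_s^{a^*}<u_s^a$, so a crossing might occur in the sub-interval $[u_s^{a^*},u_s^a]$, where $\gamma_a$ sits on its $\mathcal{A}_4$ branch. This sub-interval is handled directly: \eqref{gamma1<r} gives $\gamma_a(u)\leq u/(\rho c)$ and \eqref{gamma>r} gives $\gamma_{a^*}(u)\geq u/(\rho c)$, so $\gamma_a\leq\gamma_{a^*}$ pointwise there (strictly except at $u_s^{a^*}$ itself). Consequently $\hat u>u_s^a$, and at $(\hat u,\hat v)$ both curves lie on their respective $\mathcal{A}_2$ branches; the same monotonicity argument gives $\Phi\in(0,a^*c)$, $\Phi-a^*c<0$, and $\Phi-ac<0$ since $(\hat u,\hat v)\in\mathcal{A}_2(a)$ when $\hat u>u_s^a$. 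With $a-a^*<0$, $\Phi>0$, and the strict inequality $c\rho\hat v-\hat u>0$ from \eqref{gamma>r}, the identity again yields $\gamma_a'(\hat u)<\gamma_{a^*}'(\hat u)$, the desired contradiction.

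The main obstacle I anticipate is the algebraic identity for the slope difference: all the relevant data must collapse into a single product of factors whose signs are dictated by Lemma~\ref{lemma:vett_tg} and the location of $(\bar u,\bar v)$ relative to the nullclines~\eqref{curve:u'} and~\eqref{curve:v'}. The second delicate point is verifying that $(\hat u,\hat v)$ sits in the correct nullcline region of both flows so that $\Phi-ac$ and $\Phi-a^*c$ have the expected signs; this is precisely what forces the explicit case split at $u=u_s^a$ in part~(2) and the use of the $\mathcal{A}_2$-branch monotonicity of $u+v$ in both parts.
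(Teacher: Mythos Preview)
Your approach coincides with the paper's: both show that at any crossing $\gamma_a(\hat u)=\gamma_{a^*}(\hat u)$ in the open interval one must have $\gamma_a'(\hat u)<\gamma_{a^*}'(\hat u)$, using the slope formula $\gamma'=\dot v/\dot u$ together with Lemma~\ref{lemma:vett_tg} to control the sign of $c\rho\hat v-\hat u$ and the location relative to the $\dot u=0$ nullcline to control the signs of $\Phi-ac$ and $\Phi-a^*c$. Your compact identity for the slope difference is a cleaner packaging of the same algebra that the paper carries out in~\eqref{1335} and~\eqref{1800}.

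One case is missing from your plan. In part~(1) the step ``$u_s^{a^*}\in(u_s^a,u_{\mathcal{M}}^a]$'' fails when $a^*c\ge 1$: then $u_s^{a^*}=u_s^a=0$ by the convention~\eqref{usvs2}, both curves start at the origin, and the endpoint comparison gives only $\gamma_a(0)=\gamma_{a^*}(0)=0$, not the strict inequality you use to force $\hat u>u_s^{a^*}$. The analogous issue appears in part~(2) when $ac\ge 1$ (so that $u_s^a=u_s^{a^*}=0$). The paper covers this (see~\eqref{1641b} and~\eqref{1821}) by comparing tangent slopes at the origin: Propositions~\ref{lemma:M} and~\ref{M:p045} give $\gamma_a'(0)=a/(\rho-1+ac)$, and since $\tfrac{d}{da}\bigl(a/(\rho-1+ac)\bigr)=(\rho-1)/(\rho-1+ac)^2$ has the sign of $\rho-1$, one obtains $\gamma_a'(0)<\gamma_{a^*}'(0)$ in both situations, hence $\gamma_a<\gamma_{a^*}$ on a right neighborhood of~$0$ and $\hat u>0$. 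Adding this case distinction completes your argument.
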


\begin{proof}
We claim that
\begin{equation}\label{1306}
u_s^{a^*} < u_d^{a^*}.
\end{equation}
Indeed, when~$a^* c\ge1$, we have that~$u_s^{a^*} =0< u_d^{a^*}$
and thus~\eqref{1306} holds true. If instead~$
a^* c<1$, by \eqref{usvs} and~\eqref{ki87yh556g} we have that
\begin{equation} \label{8uhj76tuyg6446r6f6}\gamma_{a^*}(u_s^{a^*})+u_s^{a^*}=1-a^* c< 1=
\gamma_{a^*}(u_d^{a^*})+u_d^{a^*}.\end{equation}
Also, since~$\gamma_{a^*}$ is increasing,
we have that the map~$r\mapsto \gamma_{a^*}(r)+r$
is strictly increasing. Consequently, we deduce from~\eqref{8uhj76tuyg6446r6f6} that~\eqref{1306}
holds true in this case as well.

Now we suppose that $\rho<1$ and we prove~\eqref{1304b}.
For this, we claim that, for every~$a^*>0$ and every~$a>a^*$,
\begin{equation}\label{xcvbn881300}\begin{split}&\gamma_{a}(
u_s^{a^*})\le\gamma_{a^*}( u_s^{a^*})\\&{\mbox{ with strict inequality when }}
a^*\in\left(0,\frac{1}{c}\right).\end{split}
\end{equation}
To check this, we distinguish two cases.
If $a^*\in\left(0,\frac{1}{c}\right)$, then for all $a>a^*$
\begin{equation}\label{1300}
u_s^a=\max \left\{ 0, \rho c \frac{1-ac}{1+ \rho c} \right\} <  \rho c \frac{1-a^*c}{1+ \rho c}  =u_s^{a^*}.
\end{equation}
By \eqref{1300} and formula \eqref{gamma<r} in Lemma \ref{lemma:vett_tg}, we have that
\begin{equation}\label{1647}
\gamma_a(u_s^{a^*}) < \frac{u_s^{a^*}}{\rho c} = \gamma_{a^*}(u_s^{a^*}) \quad \text{for all} \ a> a^*.
\end{equation}
If instead $a^*\geq \frac{1}{c}$, then $u_s^{a^*}=0$ and for all $a>a^*$ we have $u_s^a=0$. As a consequence, 
\begin{equation}\label{1647b}
\gamma_{{a^*}}(u_s^{a^*})=\gamma_{{a}}(u_s^{a^*}) \quad \text{for all} \ a> a^* .
\end{equation}
The claim in \eqref{xcvbn881300} thus follows from~\eqref{1647} and~\eqref{1647b}.

Furthermore, by Propositions \ref{lemma:M} and \ref{M:p045},
\begin{equation}\label{1641b}\begin{split}&
\gamma_a'(0)= \frac{a}{\rho+ac-1} < \frac{{a^*}}{\rho+{a^*}c-1}=\gamma_{a^*}'(0) \\& \text{for all} \ a> a^*\ge\frac1c.
\end{split}\end{equation}
Moreover, for all $a\ge a^*$ and $u>u_s^{a^*}$  it holds that, when~$v=\gamma_{a^*}(u)$,
\begin{equation}\label{1623b}\begin{split}
-\big(
acu-u(1-u-v)
\big)&=
u(1-u-\gamma_{a^*}(u)- ac)\\& < u(1-u_s^{a^*}-v_s^{a^*}-ac)\\&\le 0.\end{split}
\end{equation}
Now, we establish that
\begin{equation}\label{767675747372}\begin{split}&
u(\rho c  v-u)(1-u-v)(a-a^*) < 0 \\ &\qquad \text{ for all} \ a > a^*, \   u
\in(u_s^{a^*}, u_d^{a^*}), \  v=\gamma_{a^*}(u).\end{split}
\end{equation}
Indeed,
for the values of $a$, $u$ and $v$ as in \eqref{767675747372} we have
that~$v\le\gamma_{a^*}(u_d^{a^*})$ and hence
\begin{equation}\label{767675747372-2}
(1-u-v)> (1-u_d^{a^*}-\gamma_{a^*}(u_d^{a^*}))=0.
\end{equation}
Moreover, by formula \eqref{gamma<r} in Lemma \ref{lemma:vett_tg},
for $u\in(u_s^{a^*}, u_d^{a^*})$ and $v=\gamma_{a^*}(u)$ and  we have that $$\rho c v -u =
\rho c \gamma_{a^*}(u) -u<
0.$$
{F}rom this and~\eqref{767675747372-2}, we see that~\eqref{767675747372} plainly follows, as desired.

As a consequence of~\eqref{1623b} and~\eqref{767675747372}, one deduces that,
for all~$a > a^*$, $u\in(u_s^{a^*}, u_d^{a^*})$ and~$v=\gamma_{a^*}(u)$,
\begin{equation}\label{1335}\begin{split}
&
\frac{au- \rho v(1-u-v)}{acu-u(1-u-v)} - \frac{a^* u- \rho v(1-u-v)}{a^* cu-u(1-u-v)} \\=\,&
\frac{(a-a^*)c \rho uv(1-u-v)-(a-a^*) u^2(1-u-v)}{\big(a cu-u(1-u-v)\big)\big(a^* cu-u(1-u-v)\big)}
\\=\,&
\frac{(a-a^*)(1-u-v)u( c \rho v- u)}{\big(a cu-u(1-u-v)\big)\big(a^* cu-u(1-u-v)\big)}
\\ \le\,&0.
\end{split}
\end{equation}
Now, we define
\begin{equation}\label{3456784jncdkc6knsbd vc83456789} {\mathcal{Z}}(u):=\gamma_a(u)-\gamma_{a^*}(u)\end{equation}
and we claim that
\begin{equation}\label{4jncdkc6knsbd vc8}
{\mbox{if~$\check u\in(u_s^{a^*}, u_d^{a^*})$ is such that~${\mathcal{Z}}(\check u)=0$,
		then~${\mathcal{Z}}'(\check u)<0$.}}
\end{equation}
Indeed,
since~$\gamma_a$ is a trajectory for~\eqref{model},
if~$(u_a(t),v_a(t))$ is a solution of~\eqref{model}, we have that~$
v_a(t)=\gamma_a(u_a(t))$, whence
\begin{equation}\label{989u:SMNDnb csn44}
\begin{split}&\rho v_a(t)(1-u_a(t)-v_a(t)) -au_a(t)\\&\qquad=
\dot v_a(t)\\&\qquad=\gamma_a'(u_a(t))\,\dot u_a(t)\\&\qquad=
\gamma_a'(u_a(t))\big( u_a(t)(1-u_a(t)-v_a(t)) - acu_a(t)\big)
.\end{split}\end{equation}
Then, we let~$\check v:=\gamma_a(\check u)$
and we notice that~$\check v$ coincides also with~$\gamma_{a^*}(\check u)$.
Hence, we
take trajectories of the system with parameter~$a$
and~$a^*$ starting at~$(\check u,\check v)$,
and by~\eqref{1335} we obtain that
\begin{eqnarray*}0
	&>&\frac{a\check u- \rho v(1-\check u-\check v)}{ac\check u-\check u(1-\check u-\check v)}-
	\frac{a^*\check u- \rho v(1-\check u-\check v)}{a^*c\check u-\check u(1-\check u-\check v)}\\&=&
	\frac{au_a(0)- \rho v(1-u_a(0)-v_a(0))}{acu_a(0)-u(1-u_a(0)-v_a(0))}\\&&\qquad\qquad-
	\frac{a^*u_{a^*}(0)- \rho v(1-u_{a^*}(0)-v_a(0))}{a^*cu_{a^*}(0)-u(1
		-u_{a^*}(0)-v_{a^*}(0))}\\&
	=&\gamma'_a(u_a(0))-\gamma'_{a^*}(u_{a^*}(0))\\&=&
	\gamma'_a(\check u)-\gamma'_{a^*}(\check u),
\end{eqnarray*}
which establishes~\eqref{4jncdkc6knsbd vc8}.

Now we claim that
\begin{equation}\label{xx124ff469}\begin{split}&
{\mbox{there exists~$\underline{u}\in[u_s^{a^*}, u_d^{a^*}]$
		such that~${\mathcal{Z}}(\underline{u})<0$}}\\&{\mbox{and~${\mathcal{Z}}(u)\le0$ for every~$u\in[u_s^{a^*},\underline{u}]$.}}
\end{split}\end{equation}
Indeed, if~$a^*\in\left(0,\frac{1}{c}\right)$,
we deduce from~\eqref{xcvbn881300}
that~${\mathcal{Z}}( u_s^{a^*})<0$
and therefore~\eqref{xx124ff469} holds true with$$\underline{u}:=
u_s^{a^*}.$$ If instead~$a^*\ge\frac{1}{c}$, 
we have that~$u_s^{a}=u_s^{a^*}=0$
and we deduce from~\eqref{xcvbn881300}
and~\eqref{1641b} that~${\mathcal{Z}}(u_s^{a^*})=0$
and~${\mathcal{Z}}'(u_s^{a^*})<0$, from which~\eqref{xx124ff469}
follows by choosing$$\underline{u}:=u_s^{a^*}+\epsilon$$
with~$\epsilon>0$ sufficiently small.

Now we claim that
\begin{equation}\label{TBP-SP-EL-34}
{\mathcal{Z}}(u)\le0\qquad{\mbox{for every }}u\in[u_s^{a^*}, u_d^{a^*}].
\end{equation}
To prove this, 
in light of~\eqref{xx124ff469}, it suffices to check that~${\mathcal{Z}}(u)\le0$
for every~$u\in(\underline{u}, u_d^{a^*}]$.
Suppose not. Then there exists~$u^\sharp\in(\underline{u}, u_d^{a^*}]$
such that~${\mathcal{Z}}(u)<0$ for all~$[\underline{u},u^\sharp)$
and~${\mathcal{Z}}(u^\sharp)=0$. This gives that~$
{\mathcal{Z}}'(u^\sharp)\ge0$.
But this inequality is in contradiction with~\eqref{4jncdkc6knsbd vc8}
and therefore the proof of~\eqref{TBP-SP-EL-34}
is complete.

The desired claim in~\eqref{1304b} follows easily
from~\eqref{TBP-SP-EL-34}, hence we focus now
on the proof of~\eqref{1819b}.
\medskip 

To this end, we take $\rho>1$ and we 
claim that, for every~$a^*>0$ and every~$a\in(0,a^*)$,
\begin{equation}\label{xcvbn881300-ALT56}\begin{split}&\gamma_{a}(
u_s^{a^*})\le\gamma_{a^*}( u_s^{a^*})\\&{\mbox{ with strict inequality when }}
a^*\in\left(0,\frac{1}{c}\right).\end{split}
\end{equation}
To prove this, we first notice that,
if $a<a^*<\frac{1}{c}$, then
\begin{equation*}
u_s^{a^*}=\rho c \frac{1-a^*c}{1+\rho c} < \rho c \frac{1-ac}{1+\rho c} = u_s^a.
\end{equation*}
Hence by \eqref{gamma1<r} in
Lemma \ref{lemma:vett_tg} we have
\begin{equation*}
\gamma_a(u_s^{a^*}) < \frac{u_s^{a^*}}{\rho c} = \gamma_{a^*}(u_s^{a^*}) \quad \text{for} \ a<a^*<\frac{1}{c}, 
\end{equation*}
and this establishes~\eqref{xcvbn881300-ALT56}
when~$a^*\in\left(0,\frac{1}{c}\right)$.
Thus, we now focus on the case~$a^*\geq \frac{1}{c}$.
In this situation, we have that~$u_s^{a^*}=0$
and accordingly~$\gamma_a(u_s^{a^*})=
\gamma_a(0)=\gamma_{a^*}(0) =
\gamma_{a^*}(u_s^{a^*})$, that completes the proof
of~\eqref{xcvbn881300-ALT56}.

In addition, by Propositions \ref{lemma:M} and \ref{M:p045} we have that
\begin{equation}\label{1821}\begin{split}&
\gamma_a'(0)= \frac{a}{\rho-1+ac} \leq \frac{{a^*}}{\rho-1+{a^*}c}=\gamma_{a^*}'(0) \\& \text{for} \ a\in\left[\frac{1}{c}, {a^*}\right].\end{split}
\end{equation}
Moreover, for $u>u_s^a$, if~$v=\gamma_a(u)$ we have that~$v>\gamma_a(u_s^a)=v_s^a$, thanks to the monotonicity of~$\gamma_a$,
and, as a result,
\begin{equation}\label{1804}
u(1-u-v-ac)<u(1-u_s^a-v_s^a-ac)=0.
\end{equation}

Now we claim that, for all~$ a< {a^*}$,
$u\in(u_s^{a^*}, u_d^{{a^*}})$ and~$  v=\gamma_{{a^*}}(u)$,
we have
\begin{equation}\label{473-bniu-1}
u(1-u-v)({a^*}-a)(u-\rho c v)<0
.\end{equation}
Indeed, 
by the monotonicity of~$\gamma_{{a^*}}$,
in this situation we have that~$v\le\gamma_{{a^*}}(u^{a^*}_d)$,
and therefore,
by \eqref{ki87yh556g}, \begin{equation}\label{473-bniu-2}
1-u-v >1-u_d^{a^*}-\gamma_{{a^*}}(u_d^{a^*})=1-u_d^{a^*}-1+u_d^{a^*}=0. \end{equation}
Moreover, by~\eqref{gamma>r}
in Lemma \eqref{lemma:vett_tg},
we have that~$ \gamma_{a^*}(u) > \frac{u}{\rho c}$, and
hence $u-\rho c v> 0$. Combining this inequality with~\eqref{473-bniu-2},
we obtain~\eqref{473-bniu-1}, as desired.

Now, by~\eqref{1804},
for all~$ a < {a^*}$,
$u\in(u_s^{a}, u_d^{{a^*}})$
and~$v=\gamma_{{a^*}}(u)$,
$$ 
0<
-u(1-u-v-ac)=acu-u(1-u-v) <
{a^*} cu-u(1-u-v)$$
and then, by~\eqref{473-bniu-1},
\begin{equation}\label{1800}
\begin{split}&
\frac{au- \rho v(1-u-v)}{acu-u(1-u-v)} - \frac{{a^*} u- \rho v(1-u-v)}{{a^*} cu-u(1-u-v)} \\=\,&
\frac{u(1-u-v)({a^*}-a)(u-\rho c v)}{\big(acu-u(1-u-v)\big)
	\big({a^*} cu-u(1-u-v)\big)}
\\ <\,&0.
\end{split}
\end{equation}
Now we recall the definition of~${\mathcal{Z}}$
in~\eqref{3456784jncdkc6knsbd vc83456789}
and we claim that
\begin{equation}\label{567890-4jncdkc6knsbd vc8}
{\mbox{if~$\check u\in(u_s^{a^*}, u_d^{a^*})$ is such that~${\mathcal{Z}}(\check u)=0$,
		then~${\mathcal{Z}}'(\check u)<0$.}}
\end{equation}
To prove this, we let~$\check v:=\gamma_a(\check u)$,
we notice that~$\check v=\gamma_{a^*}(\check u)$, we
recall~\eqref{989u:SMNDnb csn44}
and apply it to a trajectory starting at~$(\check u,\check v)$,
thus finding that
\begin{eqnarray*}
	&&\rho \check v(1-\check u-v_a(t)) -a\check u=
	\gamma_a'(\check u)\big( \check u(1-\check u-\check v) - ac\check u\big)
	.\end{eqnarray*}
This and~\eqref{1800} yield that
\begin{eqnarray*}
	0&>&\frac{au- \rho v(1-u-v)}{acu-u(1-u-v)} - \frac{{a^*} u- \rho v(1-u-v)}{{a^*} cu-u(1-u-v)}\\& =&\gamma_a'(\check u)-\gamma_{a^*}'(\check u)\\&=&{\mathcal{Z}}'(\check u),
\end{eqnarray*}
which proves the desired claim in~\eqref{567890-4jncdkc6knsbd vc8}.

We now point out that
\begin{equation}\label{6879977xx124ff469}\begin{split}&
{\mbox{there exists~$\underline{u}\in[u_s^{a^*}, u_d^{a^*}]$
		such that~${\mathcal{Z}}(\underline{u})<0$}}\\&{\mbox{and~${\mathcal{Z}}(u)\le0$ for every~$u\in[u_s^{a^*},\underline{u}]$.}}
\end{split}\end{equation}
Indeed, if~$a^*\in\left(0,\frac{1}{c}\right)$,
this claim follows directly from~\eqref{xcvbn881300}
by choosing$$\underline{u}:=
u_s^{a^*},$$ while if~$a^*\ge\frac{1}{c}$,
the claim follows from~\eqref{xcvbn881300}
and~\eqref{4jncdkc6knsbd vc8}
by choosing $$\underline{u}:=u_s^{a^*}+\epsilon$$
with~$\epsilon>0$ sufficiently small.

Now we claim that
\begin{equation}\label{jjjjdnfnfTBP-SP-EL-34}
{\mathcal{Z}}(u)\le0\qquad{\mbox{for every }}u\in[u_s^{a^*}, u_d^{a^*}].
\end{equation}
Indeed, by~\eqref{6879977xx124ff469},
we know that the claim is true for all~$u\in[u_s^{a^*},\underline{u}]$.
Then, the claim for~$u\in(\underline{u}, u_d^{a^*}]$
can be proved by contradiction,
supposing that there exists~$u^\sharp\in(\underline{u}, u_d^{a^*}]$
such that~${\mathcal{Z}}(u)<0$ for all~$[\underline{u},u^\sharp)$
and~${\mathcal{Z}}(u^\sharp)=0$. This gives that~$
{\mathcal{Z}}'(u^\sharp)\ge0$, which is in contradiction with~\eqref{4jncdkc6knsbd vc8}.

Having completed the proof of~\eqref{jjjjdnfnfTBP-SP-EL-34},
one can use it to obtain the desired claim in~\eqref{1819b}.
\end{proof}

Now we perform the proof 
of Theorem~\ref{thm:W}, analyzing separately the cases~$\rho=1$,~$\rho<1$
and~$\rho>1$.

\begin{proof}[Proof of Theorem~\ref{thm:W}, case $\rho=1$]
We notice that
\begin{equation}\label{1959}
\mathcal{V_{\mathcal{K}}}\subseteq 
\mathcal{V_{\mathcal{A}}},
\end{equation}
since~$\mathcal{K}\subset \mathcal{A}$.

Also, from Theorem~\ref{thm:Vbound}, part (i), we get that~$\mathcal{V}_{\mathcal{A}}=\mathcal{S}_c$, where~$\mathcal{S}_c$ was defined in~\eqref{def:S_c}. 
On the other hand, by Lemma~\ref{lemma:rho=1}, we know that for~$\rho=1$ and for all~$a>0$ we have~${\mathcal{E}(a)}=\mathcal{S}_c$.
But since every constant~$a$ belongs to the set~$\mathcal{K}$, we have~$\mathcal{E}(a)\subseteq \mathcal{V}_{\mathcal{K}}$.
This shows that~$\mathcal{V}_{\mathcal{A}} = {\mathcal{E}(a)}\subseteq \mathcal{V}_{\mathcal{K}}$, and together with~\eqref{1959} concludes the proof.
\end{proof}

\begin{proof}[Proof of Theorem~\ref{thm:W}, case~$\rho<1$]
We notice that
\begin{equation}\label{00--fg5996}
\mathcal{V_{\mathcal{K}}}\subseteq 
\mathcal{V_{\mathcal{A}}},\end{equation} since~$\mathcal{K}\subset \mathcal{A}$.
To prove that the inclusion is strict, we aim to find a point~$
(\bar{u}, \bar{v})\in \mathcal{V}_{\mathcal{A}} \setminus  \mathcal{V}_{\mathcal{K}}$.
Namely, we have to prove that there exists~$
(\bar{u}, \bar{v})\in \mathcal{V}_{\mathcal{A}}$ such that,
for all constant strategies~$a>0$, we have that~$(\bar{u}, \bar{v})\notin \mathcal{E}(a)$,
that is, by the characterization in Proposition~\ref{prop:char},
it must hold true that~$\bar{v} \geq \gamma_a(\bar{u})$ and $\bar{u}\leq u_{\mathcal{M}}^a$.

To do this, we define
\begin{equation}\label{def:f}
f(u):= \frac{u}{c} +  \frac{1-\rho}{1+\rho c}\quad {\mbox{ and }}\quad
m:= \min \left\{\frac{\rho c (c+1)}{1+\rho c}, 1 \right\}.
\end{equation}
By inspection, one can see that~$(u, f(u))\in[0,1]\times[0,1]$ if and only
if~$u\in [0,m]$.
We point out that, by~(ii) of Theorem~\ref{thm:Vbound},
for~$\rho <1$ and~$u\in [u_s^0, m]$,
a point~$({u}, {v})$ belongs to~$\mathcal{V}_{\mathcal{A}}$
if and only if~${v} < f({u})$. Here~$u_s^0$ is defined in~\eqref{u0v0}. We underline that the interval~$[u_s^0, m]$ is non empty since
\begin{equation}\label{2101}
u_s^0=\frac{\rho c}{1+\rho c}<\min \left\{\frac{\rho c (c+1)}{1+\rho c}, 1 \right\}= m.
\end{equation}	
Now we point out that
\begin{equation}\label{1633}
m \leq u_{\mathcal{M}}^a .
\end{equation}
Indeed, by \eqref{def:f} we already know that $m\leq 1$, thus if $u_{\mathcal{M}}^a=1$ the inequality in \eqref{1633} is true. On the other hand, when $u_{\mathcal{M}}^a<1$ we have that $(u_{\mathcal{M}}^a,1)\times(0,1)\subseteq{\mathcal{E}}(a)$.
This and~\eqref{00--fg5996} give that~$
(u_{\mathcal{M}}^a,1)\times(0,1)\subseteq\mathcal{V_{\mathcal{K}}}\subseteq 
\mathcal{V_{\mathcal{A}}}$.

Hence, in view of~\eqref{bound:rho<1}, we deduce that $$\frac{\rho c(c+1)}{1+\rho c}\le u_{\mathcal{M}}^a.$$ In particular, we find that~$m\le u_{\mathcal{M}}^a$,
and therefore~\eqref{1633} is true also in this case.

With this notation, we claim the existence of a
value~$\bar{v}\in(0,1]$
such that for all~$a>0$ we have 
$\gamma_a(m)\leq  \bar{v} < f(m)$.
That is, we prove now that  there
exists~$\theta>0$ such that
\begin{equation}\label{0000}
\gamma_a(m)+ \theta < f(m) \quad {\mbox{ for all }} a>0.
\end{equation}
The strategy is to study two cases separately, namely we prove~\eqref{0000}
for sufficiently small values of~$a$ and then for the other 
values of~$a$.

To prove~\eqref{0000} for small values of~$a$, we start by
looking at
the limit function~$\gamma_0$ defined in~\eqref{def:gamma0}.
One observes that
\begin{equation}\label{wqffwoe3u8ry4}
\gamma_0(u_s^0) = v_s^0= \frac{1}{1+\rho c} = \frac{\rho c}{c(1+\rho c)}+\frac{1-\rho}{1+\rho c}=
f(u_s^0).\end{equation}
Moreover, for all~$u\in(u_s^0, m]$,
we have that 
$$\gamma_0'(u) =\frac{v_s^0}{(u_s^0)^\rho} \,\rho u^{\rho-1}<
\frac{v_s^0}{(u_s^0)^\rho} \,\rho (u_s^0)^{\rho-1}=\frac{\rho v_s^0}{u_s^0}=
\frac{1}{c}= f'(u).$$
Hence, using the fundamental theorem of calculus on the continuous functions $\gamma_{0}(u)$ and $f(u)$, we get 
\begin{equation*}\begin{split}
\gamma_{0}(m)&= \gamma_{0}(u_s^0) +\int_{u_s^0}^{m} \gamma_{0}'(u)\,du \\&<  f(u_s^0) +\int_{u_s^0}^{m} f'(u)\,du \\&= f(m).\end{split}
\end{equation*}
Then, the quantity  $$\theta_1:= \frac{f(m)-\gamma_0(m)}{4}$$
is positive and
we have
\begin{equation}\label{1608}
\gamma_0(m)+ 2\theta_1 < f(m).
\end{equation}

Now, by the uniform convergence of~$\gamma_a$ to~$\gamma_0$
given by Lemma~\ref{lemma:conv_gamma},
we know that there exists~$\varepsilon\in \left(0,\frac1c\right)$ such that, if~$a\in(0,\varepsilon]$, 
\begin{equation}\label{KJ444S}\underset{u\in [u_s^0,m]}{\sup } |\gamma_a(u)-\gamma_0(u)| < {\theta_1}.\end{equation}
By this and~\eqref{1608}, we obtain that
\begin{equation}\label{0000BIS}
\gamma_a(m) + {\theta_1} < f(m) \quad {\mbox{ for all }} a\in(0,\varepsilon] .
\end{equation}
We remark that formula~\eqref{0000BIS} will give the desired claim
in~\eqref{0000} for conveniently small values of~$a$.

We are now left with considering the case~$a> \varepsilon$.
To this end, recalling~\eqref{usvs}, \eqref{ki87yh556g}, by the first statement in
Lemma~\ref{lemma:ord}, used here with~$a^*:=\varepsilon$,
we get 
\begin{equation}\label{1304}
\gamma_a(u) \leq \gamma_{\varepsilon}(u) \quad \text{for all} \ a > \varepsilon \ \text{and for all} \ u\in[u_s^{\varepsilon}, u_d^{\varepsilon}].
\end{equation}
Now we observe that
\begin{equation}\label{8j8j8fb8i903-1}
u^a_d\ge u_s^\varepsilon.
\end{equation}
Indeed, suppose not, namely
\begin{equation}\label{8j8j8fb8i903-2}
u^a_d< u_s^\varepsilon.
\end{equation}
Then, by the monotonicity of~$\gamma_a$, we have that~$\gamma_a(u^a_d)\le\gamma_a( u_s^\varepsilon)$.
This and~\eqref{1304} yield that~$\gamma_a(u^a_d)\le\gamma_\varepsilon( u_s^\varepsilon)$.

Hence,
the monotonicity of~$\gamma_\varepsilon$ gives that~$
\gamma_a(u^a_d)\le\gamma_\varepsilon( u_d^\varepsilon)$.
This and~\eqref{ki87yh556g} lead to~$1-u^a_d\le1-u_d^\varepsilon$,
that is~$u_d^\varepsilon\le u^a_d$. {F}rom this inequality,
using again~\eqref{8j8j8fb8i903-2}, we deduce that~$u_d^\varepsilon<
u_s^\varepsilon$. This is in contradiction with~\eqref{1306}
and thus the proof of~\eqref{8j8j8fb8i903-1}
is complete.

We also notice that
\begin{equation}\label{8j8j8fb8i903-11}
u^a_d\ge u_d^\varepsilon.
\end{equation}
Indeed, suppose not, say
\begin{equation}\label{8j8j8fb8i903-12}
u^a_d< u_d^\varepsilon.\end{equation}
Then, by~\eqref{8j8j8fb8i903-1}, we have that~$u^a_d\in[u_s^\varepsilon, u_d^\varepsilon]$ and therefore we can apply~\eqref{1304}
to say that~$
\gamma_a(u^a_d) \leq \gamma_{\varepsilon}(u^a_d)$.
Also, by the monotonicity of~$\gamma_{\varepsilon}$,
we have that~$\gamma_{\varepsilon}(u^a_d)\le \gamma_{\varepsilon}(u^\varepsilon_d)$.

With these items of information and~\eqref{ki87yh556g}, we find that
$$ 1-u^a_d=\gamma_a(u^a_d) \leq
\gamma_{\varepsilon}(u^\varepsilon_d)=1-u^\varepsilon_d,$$
and accordingly~$u^a_d\ge u^\varepsilon_d$.
This is in contradiction with~\eqref{8j8j8fb8i903-12}
and establishes~\eqref{8j8j8fb8i903-11}.

Moreover, by~\eqref{usvs} and~\eqref{u0v0},
we know that~$u_s^0>u_s^{a^*}$, for every~$a^*>0$.
Therefore, setting
$$\tilde u_d^{a^*}:=\min
\{u_d^{a^*},u_s^0\},$$ we have that~$\tilde u_d^{a^*}\in
[u_s^{a^*},u_d^{a^*}]$. Thus, we are in the position of
using the first statement 
in Lemma~\ref{lemma:ord} with~$a:=\varepsilon$
and deduce that
\begin{equation}\label{90i3883jj889203}
\gamma_\varepsilon (\tilde u_d^{a^*})\le\gamma_{a^*}(
\tilde u_d^{a^*})\qquad{\mbox{for all}}\quad a^*<\varepsilon.
\end{equation}
We also remark that
\begin{equation} \label{79ihkf843767676}u_d^{a^*}\to u_s^0\qquad{\mbox{as }}\;a^*\to0.\end{equation} Indeed, 
up to a subsequence we can assume that~$u_d^{a^*}\to \tilde u$ as~$a^*\to0$, for some~$\tilde u\in[0,1]$. Also, by~\eqref{ki87yh556g},
$$ \gamma_{a^*} (u_d^{a^*})=1-u_d^{a^*},$$
and then the uniform convergence of~$ \gamma_{a^*}$
in Lemma~\ref{lemma:conv_gamma} yields that
$$ \gamma_{0} (\tilde u)=1-\tilde u.$$
This and~\eqref{ki87yh556g} lead to~$\tilde u=u_d^0$.
Since
\begin{equation}\label{SQund0-dis0}
u_d^0=u_s^0\end{equation} in virtue of~\eqref{u0v0},
we thus conclude that~$\tilde u=u_s^0$
and the proof of~\eqref{79ihkf843767676}
is thereby complete.

As a consequence of~\eqref{79ihkf843767676},
we have that~$\tilde u_d^{a^*}\to
u_s^0$ as~$a^*\to0$. Hence,
using again the uniform convergence of~$ \gamma_{a^*}$
in Lemma~\ref{lemma:conv_gamma},
we obtain that~$\gamma_{a^*}(
\tilde u_d^{a^*})\to\gamma_{0}(u^0_s)$.

{F}rom this and~\eqref{90i3883jj889203}, we conclude that
\begin{equation}\label{SQund0-dis1}
\gamma_\varepsilon (u^0_s)\le\gamma_{0}(
u^0_s).\end{equation}

Now we claim that
\begin{equation}\label{9i9i9i78i9u-3934}
u_d^{\varepsilon} > u_s^0 .\end{equation}
Indeed, suppose, by contradiction,
that
\begin{equation}\label{9i9i9i78i9u-3934-0}u_d^{\varepsilon} \le u_s^0.\end{equation} Then, the monotonicity
of~$\gamma_{\varepsilon} $, together with~\eqref{SQund0-dis0}
and~\eqref{SQund0-dis1}, gives that
$$ 1-u_d^{\varepsilon} =
\gamma_{\varepsilon} (u_d^{\varepsilon}) \le
\gamma_{\varepsilon} ( u_s^0)=1-u_s^0.$$
{F}rom this and~\eqref{9i9i9i78i9u-3934-0} we deduce that~$u_d^{\varepsilon}=u_s^0$. In particular, we
have that~$u^0_s\in(u_s^\varepsilon,u^\varepsilon_{\mathcal{M}})$.
Accordingly, by~\eqref{gamma<r},
$$ 1- u^0_s=
1-u_d^{\varepsilon}=
\gamma_{\varepsilon}(u_d^{\varepsilon})=
\gamma_{\varepsilon}(u^0_s)< \frac{u^0_s}{{\rho}c} 
.$$
As a consequence,
$$ u^0_s>\frac{\rho c}{1+\rho c},$$
and this is in contradiction with~\eqref{u0v0}.
The proof of~\eqref{9i9i9i78i9u-3934} is thereby complete.

As a byproduct of~\eqref{SQund0-dis0}
and~\eqref{9i9i9i78i9u-3934}, we have that
\begin{equation}\label{89ujfvdjjgjh599fghjkl6}\begin{split}&
v^{\varepsilon}_d=
\gamma_{\varepsilon}(u^{\varepsilon}_d)=
1-u_d^{\varepsilon} <1- u_s^0=1- u_d^0\\&\qquad\qquad=\gamma_0(u^0_d)
=\gamma_0(u^0_s)=v^0_s.\end{split}\end{equation}
Similarly, by means of~\eqref{8j8j8fb8i903-11},
\begin{equation}\label{Qcbvolr9fjevcanf9d4}
v^a_d=
\gamma_a( u^a_d)= 1-u^a_d\le1- u_d^\varepsilon=
\gamma_\varepsilon(u_d^\varepsilon)=v_d^\varepsilon.
\end{equation}
In light of~\eqref{8j8j8fb8i903-11}, \eqref{9i9i9i78i9u-3934},
\eqref{89ujfvdjjgjh599fghjkl6} and~\eqref{Qcbvolr9fjevcanf9d4},
we can write that
\begin{equation}\label{1731}
1>u_d^a \geq u_d^{\varepsilon} > u_s^0 >0 \quad \text{and} \quad 1> v_s^0 > v_d^{\varepsilon} \geq v_d^a >0.
\end{equation}

\begin{figure} 
	\begin{subfigure}{1\textwidth}
		\centering
		\includegraphics[scale=1]{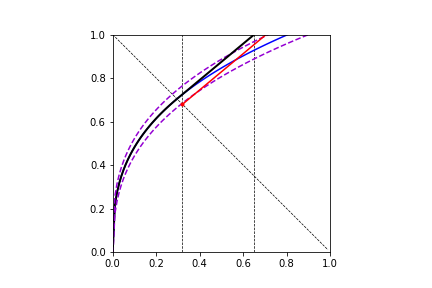}
	\end{subfigure}\\
	\begin{subfigure}{1\textwidth}
		\centering
		\includegraphics[scale=1]{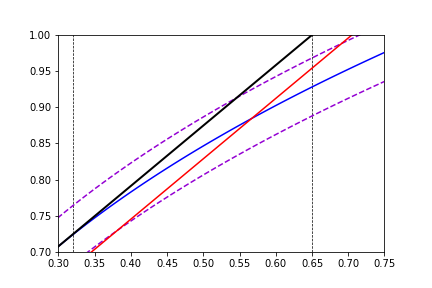}
	\end{subfigure}
	\caption{\em The figures illustrate the functions involved in the proof of Theorem \ref{thm:W} for the case $\rho < 1$.
		The two vertical lines correspond to the values $u_d^{\varepsilon}$ and $m$. The thick black line represents the boundary of $\mathcal{V}_{\mathcal{A}}$; the blue line is the graph of $\gamma_0(u)$; the dark violet lines delimit the area where $\gamma_{a}(u)$ for $a\leq\varepsilon$ might be; the red line is the upper limit of $\gamma_a(u)$ for $a>\varepsilon$. The image was realized using a simulation in Python for the values $\rho=0.35$ and $c=1.2$. }
	\label{fig:thm141}
\end{figure}

Now,
to complete the proof of~\eqref{0000}
when~$a>\varepsilon$,
we consider two cases depending on the order of $m$ and $u_d^{\varepsilon}$. If $u_d^{\varepsilon}\geq m$, by \eqref{1731}  we have that~$m<1$ and $f(m)=1$. Then, 
\begin{equation}\label{1802}
\gamma_a(m) \leq \gamma_a(u_d^{\varepsilon}) \le
\gamma_{\varepsilon}(u_d^{\varepsilon})= v_d^{\varepsilon} < 1 = f(m),
\end{equation}
thanks to the monotonicity of $\gamma_a$, \eqref{1304} and \eqref{1731}.
We define $$\theta_2:=\frac{1-v_d^{\varepsilon
}}{2},$$ which is positive  thanks to \eqref{1731}. {F}rom \eqref{1802}, we get that
\begin{equation}\label{1815}
\gamma_{a}(m) +\theta_2 \leq v_d^{\varepsilon} +\theta_2 <1= f(m).
\end{equation}
This formula proves the claim  in \eqref{0000} for $a>\varepsilon$ and $u_d^{\varepsilon}\geq m$.

If instead~$u_d^{\varepsilon}< m$, then we proceed as follows.
By \eqref{1731} we have
\begin{equation}\label{1722}
\gamma_a(u_d^{a}) =v_d^{a} \leq v_d^{\varepsilon} <  v_s^0 =  f(u_s^0).
\end{equation}
Now we set $$\theta_3:=\frac{f(u_d^{\varepsilon})-f(u_s^0)}{2}.$$
Using the definition of $f$ in \eqref{def:f}, we see that $$	\theta_3
= \frac{u_d^{\varepsilon}-u_s^0}{2c} ,$$ 
and accordingly~$\theta_3$ is positive, due to \eqref{1731}. 

{F}rom \eqref{1722} we have
\begin{equation}\label{1740}
\gamma_a(u_d^{a}) + \theta_3 < f(u_s^0) +\theta_3 < f(u_d^{\varepsilon}).
\end{equation}

Now we show that, on any trajectory~$(u(t),v(t))$ lying
on the graph of~$\gamma_{a}$, it holds that
\begin{equation} \label{1640}
\dot{v}(t) > \frac{\dot{u}(t)}{c} \quad \text{provided that} \ u(t)\in( u_d^a,u^a_{\mathcal{M}}) .
\end{equation}
To prove this, we first observe that  $u(t)>u_d^{a}> u_s^{a}$,
thanks to~\eqref{1306}.
Hence, we can exploit formula \eqref{gamma<r} of Lemma~\ref{lemma:vett_tg} and get that 
\begin{equation}\label{8yh78749in2fd9}
\gamma_a(u(t)) -
\frac{u(t)}{\rho c}<0.\end{equation}
Also, by the monotonicity of~$\gamma_a$
and~\eqref{ki87yh556g},
$$\gamma_a(u(t))\ge \gamma_a(u_d^a) = 1-u_d^a > 1-u(t).$$
{F}rom this and \eqref{8yh78749in2fd9} it follows that
\begin{equation*}
\left(\dot{v}(t) - \frac{\dot{u}(t)}{c} \right)= \rho \left(\gamma_a(u(t)) -
\frac{u(t)}{\rho c} \right) (1-u(t)-\gamma_a(u(t))) > 0 
\end{equation*}
provided that~$ u(t)
\in( u_d^a,u^a_{\mathcal{M}}) $, and this proves~\eqref{1640}.

In addition, for such a trajectory~$(u(t),v(t))$ we have that
\begin{equation*}\begin{split}
\dot{u}(t)&=u(t)\,(1-u(t)-\gamma_a(u(t))- ac) \\&
< u(t)\,(1-u(t)-\gamma_a(u_d^a))\\&=u(t)\,(1-u(t)-1+u_d^a)\\&<0,\end{split}
\end{equation*}
provided that~$ u(t)
\in( u_d^a,u^a_{\mathcal{M}}) $.

{F}rom this and \eqref{1640}, we get
\begin{equation*}
\gamma_a'(u(t))= \frac{\dot{v}(t)}{\dot{u}(t)} < \frac{1}{c} = f'(u(t))
,\end{equation*}
provided that~$ u(t)
\in( u_d^a,u^a_{\mathcal{M}}) $.

Consequently, taking as initial datum of the trajectory
an arbitrary point~$(u,\gamma_a(u))$ with~$u\in
( u_d^a,u^a_{\mathcal{M}}) $, we can write that, for all~$u\in( u_d^a,u^a_{\mathcal{M}})$,
\begin{equation*}
\gamma_a'(u)< f'(u).
\end{equation*}
As a result, integrating and using~\eqref{1304}, 
for all~$u\in( u_d^a,u^a_{\mathcal{M}})$, we have
\begin{equation*}\begin{split}
\gamma_a(u)&=
\gamma_{a}(u_d^a)+ \int_{u_d^a}^{u}\gamma_a'(u)\,du\\&
< \gamma_{a}(u_d^a)+ \int_{u_d^a}^{u}f'(u)\,du\\&=\gamma_{a}(u_d^a)+f(u)-f(u_d^a)
.\end{split}
\end{equation*}
Then, making use \eqref{1740}, for~$u\in( u_d^a,u^a_{\mathcal{M}})$,
\begin{equation}\label{8781jh98172omOS} 	\begin{split}\gamma_a(u) + \theta_3 &<  \gamma_{a}(u_d^a)+f(u)-f(u_d^a)  + \theta_3\\&\le
f(u)-f(u_d^a)+f(u_d^{\varepsilon}).\end{split}
\end{equation}
Also, recalling~\eqref{1731}
and the monotonicity of~$f$, we see that~$f(u_d^{\varepsilon})\le
f(u_d^{a})$. Combining this and~\eqref{8781jh98172omOS},
we deduce that
\begin{equation}\label{8781jh98172omOS-0987654-PRE}
\gamma_a(u) + \theta_3 <f(u)\qquad{\mbox{for all }}u\in( u_d^a,u^a_{\mathcal{M}})
.\end{equation}
We also observe that if~$u\in (u_d^\varepsilon, u_d^a]$,
then the monotonicity of~$\gamma_a$ yields that~$\gamma_a(u)\le
\gamma_a(u_d^a)$. It follows from this and~\eqref{1740}
that~$\gamma_a(u)+\theta_3 < f(u_d^{\varepsilon})$.
This and the monotonicity of~$f$ give that
$$ \gamma_a(u)+\theta_3 < f(u)
\qquad{\mbox{for all }}u\in(u_d^\varepsilon, u_d^a]
.$$
Comparing this with~\eqref{8781jh98172omOS-0987654-PRE},
we obtain
\begin{equation*}
\gamma_a(u) + \theta_3 <f(u)\qquad{\mbox{for all }}u\in( u_d^\varepsilon,u^a_{\mathcal{M}})
\end{equation*}
and
therefore
\begin{equation}\label{8781jh98172omOS-0987654}
\gamma_a(u) + \theta_3 \le f(u)\qquad{\mbox{for all }}u\in[u_d^\varepsilon,u^a_{\mathcal{M}}]
.\end{equation}

Now, in view of~\eqref{1633}, we have that~$m\in
[u_d^\varepsilon,u^a_{\mathcal{M}}]$.
Consequently, we can utilize~\eqref{8781jh98172omOS-0987654}
with~$u:=m$ and find that
\begin{equation}\label{a}
\gamma_a(m) + \theta_3 \le f(m)
\end{equation}
which gives \eqref{0000} in the case $a>\varepsilon$ and $u_d^{\varepsilon} \leq m$
(say, in this case with~$\theta\le\theta_3/2$).

That is, by~\eqref{0000BIS},~\eqref{1815}  and~\eqref{a}
we obtain that~\eqref{0000} holds true
for
\begin{equation*}
\theta :=\frac12\, \min \left\{ \theta_1, \ \theta_2 , \ \theta_3   \right\}.
\end{equation*}
If we choose $\bar{v}:= f(m)-\frac{\theta}{2}$ we have that
\begin{equation}\label{1643}
0 < \gamma_{a}(m) \leq \bar{v} < f(m) \leq 1.
\end{equation}
This completes the proof of Theorem~\ref{thm:W} when~$\rho<1$, in light of 
the characterizations of $\mathcal{E}(a)$ and $\mathcal{V}_{\mathcal{A}}$ from Proposition \ref{prop:char} and Theorem \ref{thm:Vbound}, respectively.
\end{proof}

Now we focus on the case~$\rho>1$. 

\begin{proof}[Proof of Theorem~\ref{thm:W}, case~$\rho>1$]
As before, the inclusion~$\mathcal{V_{\mathcal{K}}}\subseteq \mathcal{V_{\mathcal{A}}}$ is trivial since~$\mathcal{K}\subset \mathcal{A}$.
To prove that it is strict, we aim to find a point~$(\bar{u}, \bar{v})\in \mathcal{V}_{\mathcal{A}}$ such that~$(\bar{u}, \bar{v})\notin \mathcal{V}_{\mathcal{K}}$.
Thus, we have to prove that there exists~$
(\bar{u}, \bar{v})\in \mathcal{V}_{\mathcal{A}}$ such that,
for all constant strategies~$a>0$, we have that~$(\bar{u}, \bar{v})\notin \mathcal{E}(a)$.

To this end, using the characterizations given in Proposition~\ref{prop:char} and Theorem~\ref{thm:Vbound}, we claim that
\begin{equation}\begin{split}\label{1219}
&{\mbox{there exists a point~$(\bar{u}, \bar{v})\in[0,1]\times[0,1]$ satisfying}}\\ &{\mbox{$u_{\infty}\leq\bar{u}\leq u_{\mathcal{M}}^a$ and~$\gamma_a(\bar{u}) \leq \bar{v} < \zeta (\bar{u})$ for all~$a>0$.}}   
\end{split}\end{equation}
For this, we let
\begin{equation*}
m:=  \min\left\{1, \frac{c}{(c+1)^{\frac{\rho-1}\rho}} \right\} .
\end{equation*}
By \eqref{ZETADEF} one sees that
\begin{equation}\label{1657}
u_{\infty}<m.
\end{equation}
In addition, we point out that
\begin{equation}\label{1633-0987654gfhyf}
m \leq u_{\mathcal{M}}^a .
\end{equation}
Indeed, since $m\leq 1$, if $u_{\mathcal{M}}^a=1$ the desired inequality is obvious. If instead $u_{\mathcal{M}}^a<1$ we have that $(u_{\mathcal{M}}^a,1)\times(0,1)\subseteq{\mathcal{E}}(a)\subseteq\mathcal{V_{\mathcal{K}}}\subseteq 
\mathcal{V_{\mathcal{A}}}$.

Hence, by~\eqref{bound:rho>1},
it follows that $$\frac{c}{(c+1)^{\frac{\rho-1}\rho}}\le u_{\mathcal{M}}^a,$$
which leads to~\eqref{1633-0987654gfhyf}, as desired.

Now we claim that there exists~$\theta >0$ such that
\begin{equation}\label{0017}
\gamma_a(m) + \theta < \zeta(m) \quad {\mbox{for all }}\; a>0.
\end{equation}

We first show some preliminary facts for~$\gamma_a(u)$.
For all~$a>0$, we have that~$\mathcal{E}(a)\subseteq \mathcal{V}_{\mathcal{A}}$. Owing
to the characterization of~$\mathcal{E}(a)$ from Proposition~\ref{prop:char} and of~$\mathcal{V}_{\mathcal{A}}$ from Theorem~\ref{thm:Vbound} (which can be used here, thanks
to~\eqref{1657} and~\eqref{1633-0987654gfhyf}), we get that
\begin{equation}\label{1826}
\gamma_a(u)\le \frac{u}{c} \quad \text{for all } \ u\in(0, u_{\infty}]\ \text{ and }\ a>0.
\end{equation}
This is true in particular for~$u=u_{\infty}$.

We choose 
\begin{equation}\label{pthm3:def:delta}
\delta \in\left(0, \frac{\rho-1}{c}\right)\quad{\mbox{ and }}\quad M:= \max\left\{  \frac{1}{c}, \frac{\rho + \frac{1}{c}+\delta}{\delta c u_{\infty}}  \right\} ,
\end{equation}
and we prove~\eqref{0017} by treating separately the cases~$a>M$ and~$a\in(0, M]$.

We first consider the case~$a>M$. 
We let~$(u(t),v(t))$ be a trajectory for \eqref{model} lying on~$\gamma_a$ and we show that
\begin{equation}\label{1721}\begin{split}&
\dot{v}(t)- \left( \frac{1}{c}+\delta  \right)\dot{u} (t)> 0 \\& \text{provided that } \ u(t)>u_{\infty}\
{\mbox{ and }}\ a>M.\end{split}
\end{equation}
To check this, we observe that
\begin{align*}&
\dot{v}(t)- \left( \frac{1}{c}+\delta  \right)\dot{u} (t)\\&\qquad=  \left[ \rho \gamma_{a}(u(t))
-\left( \frac{1}{c}+\delta  \right) u(t)\right](1-u(t)-\gamma_{a}(u(t)))+ \delta acu(t) \\ 
& \qquad\geq  - \left\vert \rho + \frac{1}{c} + \delta \right\vert
+ \delta a c u_{\infty} \\&\qquad>0,	
\end{align*}
where the last inequality is true thanks to the hypothesis~$a>M$ and the definition of~$M$
in~\eqref{pthm3:def:delta}. This proves~\eqref{1721}.

Moreover, for~$a>M\geq \frac{1}{c}$ we have $\dot{u}<0$.
{F}rom this, \eqref{1721} and the invariance of~$\gamma_a$ for the flow, we get
\begin{equation}\label{1905}
\gamma_a'(u(t))=\frac{\dot v(t)}{\dot u(t)}< 
\frac{1}{c}+ \delta ,
\end{equation}	
provided that $u(t)>u_{\infty}$ and $a>M$.

For this reason and~\eqref{1826}, we get
\begin{equation}\label{pthm3:1643}\begin{split}
\gamma_a(u(t)) &= \gamma_a(u_{\infty})  + \int_{u_{\infty}}^{u(t)} \gamma_a'(\tau)\,d\tau\\&\le \frac{u_{\infty}}{c} + \left( \frac{1}{c}+\delta \right)(u(t)-u_{\infty}
),\end{split}
\end{equation}
provided that $u(t)>u_{\infty}$ and $a>M$.

Furthermore, thanks to the choice of~$\delta$ in~\eqref{pthm3:def:delta}, we have
\begin{equation*}
\zeta'(u)= \frac{\rho u^{\rho-1}}{c u_{\infty}^{\rho-1}}>\frac{\rho}{c} > \frac{1}{c}+\delta \quad \text{for all } \ u>u_{\infty}.
\end{equation*} 
Since also~$\zeta(u_{\infty})=\frac{u_{\infty}}{c}$, by \eqref{pthm3:1643} we deduce that
\begin{equation}\label{1621-PRE}\begin{split}
\gamma_a(u(t))&\leq    \frac{u_{\infty}}{c} + \left( \frac{1}{c}+\delta \right)(u(t)-u_{\infty})\\& < \zeta(u_{\infty}) + \int_{u_{\infty}}^{u(t)} \zeta'(\tau)\, d\tau\\& = \zeta(u(t)),\end{split}
\end{equation}
provided that $u(t)>u_{\infty}$ and $a>M$.

In particular, given any~$u>u_\infty$, we can take a trajectory starting at~$(u,\gamma_a(u))$
and deduce from~\eqref{1621-PRE} that
\begin{equation*}\begin{split}
\gamma_a(u)&\leq    \frac{u_{\infty}}{c} + \left( \frac{1}{c}+\delta \right)(u-u_{\infty})\\& < \zeta(u_{\infty}) + \int_{u_{\infty}}^{u} \zeta'(\tau)\, d\tau \\&= \zeta(u),\end{split}
\end{equation*}
whenever $a>M$. We stress that, in light of~\eqref{1657}, we can take~$u:=m$
in the above chain of inequalities, concluding that
\begin{equation*}
\gamma_a(m)\le	  \frac{u_{\infty}}{c} + \left( \frac{1}{c}+\delta \right)(m-u_{\infty}) < \zeta(m)
.\end{equation*}
We rewrite this in the form
\begin{equation}\label{1621}
\gamma_a(m)\le	\left( \frac{1}{c}+\delta  \right)m-{\delta}u_{\infty}< \zeta(m)
.\end{equation}
We define
\begin{equation}\label{pthm3:def:theta}
\theta_1:=\frac{1}{2}\left[ \zeta(m)-\left( \frac{1}{c}+\delta  \right)m +{\delta}u_{\infty} \right],
\end{equation}
that is positive thanks to the last inequality in~\eqref{1621}. Then by the first inequality in~\eqref{1621} we have
\begin{equation*}\begin{split}
\gamma_a(m)+\theta_1 &
\le \left( \frac{1}{c}+\delta  \right)m-{\delta}u_{\infty}+\theta_1\\&=
\frac12\left[ \left( \frac{1}{c}+\delta  \right)m-{{\delta}u_{\infty}}\right]
+\frac{ \zeta(m)}2.	\end{split}\end{equation*}
Hence, using again the last inequality in~\eqref{1621}, we obtain that
\begin{equation}\label{2001}
\gamma_a(m)+\theta_1<\zeta(m),\end{equation}
which gives the claim in \eqref{0017} for the case~$a>M$.

Now we treat the case~$a\in(0, M]$.
We claim that
\begin{equation}\label{1204}
u_d^M > u_{\infty}.
\end{equation}
Here, we are using the notation~$u_d^M$ to denote the point~$u_d^a$ when~$a:=M$. To prove~\eqref{1204} we argue as follows.
Since $M\geq \frac{1}{c}$, by Propositions \ref{lemma:M} and~\ref{M:p045} we have
\begin{equation}\label{1719}
\gamma_M'(0) = \frac{M}{\rho-1+Mc} < \frac{1}{c}.
\end{equation}
Moreover, since the graph of $\gamma_M(u)$ is a parametrization of a trajectory for \eqref{model} with $a=M$, we have that $ \dot{v}(t)= \gamma_M'(u(t)) \dot{u}(t)$.
Hence, at all points $(\bar{u}, \bar{v})$ with~$\bar{u}\in(0, u_{\infty})$ and $\bar{v}=\gamma_M(\bar{u})$ we have
\begin{equation}\label{1734}
\gamma_M ' (\bar{u}) = \frac{M \bar{u} - \rho \bar{v} (1-\bar{u}-\bar{v}) }{Mc \bar{u} - \bar{u}(1-\bar{u}-\bar{v})}.
\end{equation}
We stress that the denominator in the right hand side of~\eqref{1734}
is strictly positive, since~$M\geq \frac{1}{c}$ and~$\bar u>0$.

In addition, we have that  
\begin{equation}\label{1757}
\begin{split}
\frac{1}{c} - \frac{M \bar{u} - \rho \bar{v} (1-\bar{u}-\bar{v}) }{Mc \bar{u} - \bar{u}(1-\bar{u}-\bar{v})} = \frac{(\rho c \bar{v}-\bar{u})(1-\bar{u}-\bar{v}) }{Mc^2 \bar{u} - c\bar{u}(1-\bar{u}-\bar{v})}.
\end{split}
\end{equation}
Also,
$$
u_s^M=0<\bar{u}<u_\infty<m\le u^M_{\mathcal{M}},
$$
thanks to~\eqref{1657} and~\eqref{1633-0987654gfhyf}.
Hence, we can exploit formula~\eqref{gamma>r} in Lemma \ref{lemma:vett_tg}
with the strict inequality, thus obtaining that
\begin{equation}\label{8ujINtdensnumeok3965}
\rho c \bar{v}-\bar{u}=\rho c\gamma_M(\bar{u})-\bar{u} >0.\end{equation}
Moreover, by \eqref{1826},
$$ 1-\bar{u}-\bar{v} = 
1-\bar{u}-\gamma_M(\bar{u})\ge
1-\bar{u}-\frac{\bar{u}}{c} > 1-u_{\infty}- \frac{u_{\infty}}{c}=0. $$
Therefore, using the latter estimate and~\eqref{8ujINtdensnumeok3965}
into~\eqref{1757}, we get that
\begin{equation*}
\begin{split}
\frac{1}{c} - \frac{M \bar{u} - \rho \bar{v} (1-\bar{u}-\bar{v}) }{Mc \bar{u} - \bar{u}(1-\bar{u}-\bar{v})} > 0.
\end{split}
\end{equation*}
{F}rom this and \eqref{1734}, we have that
\begin{equation*}
\gamma_M'(u) < \frac{1}{c} \quad \text{for all} \ u\in(0,u_{\infty}).
\end{equation*}
This, together with \eqref{1719} and the fact that $\gamma_M(0)=0$, gives
\begin{equation*}
\gamma_M(u)
=\gamma_M(u)-\gamma_M(0)=\int_0^u \gamma_M'(\tau)\,d\tau
< \frac{u}{c} 
\end{equation*}
for all~$ u\in(0,u_{\infty}]$. 
This inequality yields that
\begin{equation}\label{1810}
\gamma_M(u_{\infty}) <  \frac{u_{\infty}}{c}= 1-u_{\infty}.
\end{equation}
Now, to complete the proof of~\eqref{1204} we argue by contradiction
and suppose that the claim in~\eqref{1204} is false, hence
\begin{equation}\label{1814}
u_d^M \leq u_{\infty}.
\end{equation}
Thus, by \eqref{1810}, the monotonicity of $\gamma_M(u)$ and the definition of $u_d^M$ given in \eqref{ki87yh556g}, we get
\begin{equation*}
1-u_d^M  = \gamma_M(u_d^M) \le \gamma_M(u_{\infty}) < 1-u_{\infty} 
\end{equation*}
which is in contraddiction with \eqref{1814}. Hence, \eqref{1204} holds true, as desired.

Also, by the second statement in Lemma \ref{lemma:ord}, used here with~$a^*:=M$,
\begin{equation}\label{1819}
\gamma_a(u) \leq \gamma_{M}(u) \quad \text{for all } \ u\in[0, u_d^M].
\end{equation}
We claim that
\begin{equation}\label{181967890p67890-4567890456789}
u_d^M\le u^a_d.
\end{equation}
Indeed, suppose, by contradiction, that
\begin{equation}\label{181967890p67890-4567890456789PRE}
u_d^M>u^a_d.
\end{equation}
Then, by the monotonicity of~$\gamma_a$ and~\eqref{1819}, used here with~$u:=u^M_d$,
we find that
$$ 1-u^a_d=\gamma_a(u^a_d)\le\gamma_a(u^M_d) \leq \gamma_{M}(u^M_d)=1-u^M_d.$$
This entails that~$u^a_d\ge u^M_d$, which is in contradiction with~\eqref{181967890p67890-4567890456789PRE},
and thus establishes~\eqref{181967890p67890-4567890456789}.

We note in addition that
\begin{equation}\label{09876543988878-00-181967890p67890-4567890456789PRE}
v_d^M =\gamma_M(u_d^M)=1-u_d^M
<1-u_{\infty},
\end{equation}
thanks to the definition of $(u_d^M, v_d^M)$ and~\eqref{1204}.

Similarly, by~\eqref{181967890p67890-4567890456789},
\begin{equation}\label{09876543988878-00-181967890p67890-4567890456789PRE098-08}
v_d^a =\gamma_a(u_d^a)=1-u_d^a\le
1-u_d^M=\gamma_M(u_d^M)
= v_d^M.
\end{equation}
Collecting the pieces of information in~\eqref{1204}, \eqref{181967890p67890-4567890456789},
\eqref{09876543988878-00-181967890p67890-4567890456789PRE}
and~\eqref{09876543988878-00-181967890p67890-4567890456789PRE098-08},
we thereby conclude that, for all~$a\in(0,M]$,
\begin{equation}\label{1834}\begin{split}&
0<	u_{\infty} < u_d^M \leq u_d^a<1 \\ \text{and} \qquad &0< v_d^a \leq v_d^M < 1-u_{\infty} =:v_\infty<1.\end{split}
\end{equation}

Now we consider two cases depending on the order of $m$ and $u_d^M$. If $u_d^M\geq m$,  by \eqref{1834} we have $m<1$ and $\zeta(m)=1$. Accordingly,
for $a\in(0,M]$, by \eqref{1834} and \eqref{1819} we have 
\begin{equation*}
\gamma_a(m) \le \gamma_{a}(u_d^M) \leq  \gamma_{M}(u_d^M)=v_d^M < 1=\zeta (m).
\end{equation*}
Hence, we can define $$\theta_2:=\frac{1-v_d^M}{2},$$
and observe that~$\theta_2$ is positive by \eqref{1834}, thus obtaining that
\begin{equation}\label{1916}
\gamma_{a}(m) + \theta_2 < \zeta(m).
\end{equation}
This is the desired claim in \eqref{0017} for $a\in(0,M]$ and $u^*\geq m$.

\begin{figure} 
	\centering
	\includegraphics[scale=1]{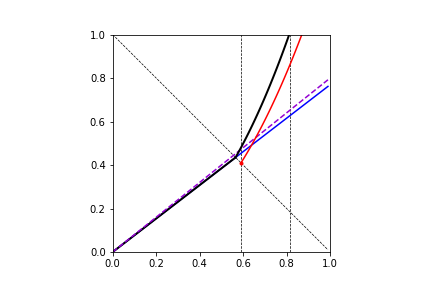}
	\caption{\em The figure illustrates the functions involved in the proof of Theorem \ref{thm:W} for the case $\rho > 1$.
		The two vertical lines correspond to the values $u_d^{M}$ and $m$. The thick black line represents the boundary of $\mathcal{V}_{\mathcal{A}}$; the blue line is the graph of the line $v=\frac{u}{c}$; the dark violet line is the upper bound for $\gamma_{a}(u)$ for $a>M$; the red line is  $\phi(u)$. The image was realized using a simulation in Python for the values $\rho=2.3$ and $c=1.3$. }
	\label{fig:thm142}
\end{figure}

If instead $u_d^M<m$, we consider the function
\begin{equation*}
\phi(u) := v_d^M\,\left(\frac{u}{u_d^M}\right)^{\rho} , \quad{\mbox{ for }} u\in[u_d^M, m]
\end{equation*}and we claim that 
\begin{equation}\label{1730}
\gamma_a(u)\leq\phi(u) \quad \text{for all } \ a\in(0,M]\ {\mbox{ and }} \ u\in[u_d^M,m].
\end{equation}
To prove this, we recall \eqref{1834} and the fact that~$\gamma_a$ is an increasing function
to see that
\begin{equation}\label{1946}\gamma_a(u_d^M)\le 
\gamma_a(u_d^a) =v_d^a \leq v_d^M = \phi(u_d^M) .
\end{equation} 

Now we remark that
$$ \gamma_M(u_d^M)+u_d^M=1>1-Mc=\gamma_M(u_s^M)+u_s^M,$$
and therefore~$u_d^M>u_s^M$. Notice also that~$u_d^M<m\le 
u^M_{\mathcal{M}}$, thanks to~\eqref{1633-0987654gfhyf}.
As a result, we find that~$\rho c \gamma_M(u_d^M) > u_d^M$ by inequality~\eqref{gamma>r}
in Lemma \ref{lemma:vett_tg}. Therefore, if~$u\ge u_d^M$ and~$v=\phi(u)$, then
\begin{equation*}\begin{split}
au \left(   1-\rho c \frac{v_d^M}{(u_d^M)^{\rho}} u^{\rho -1}   \right)&
= au \left(   1- \frac{\rho c\gamma_M(u_d^M) }{(u_d^M)^{\rho}} u^{\rho -1}   \right)\\&<
au \left(   1- \left(\frac{u}{u_d^M} \right)^{\rho -1}   \right)\\&
\leq 0\\&=\rho \left( v- \frac{v_d^M}{(u_d^M)^{\rho}} u^{\rho}  \right) (1-u-v).\end{split}
\end{equation*}
Using this and	\eqref{1804}, we deduce that, if~$a\in[0,M]$, $u\in[u_d^M, m]$ and~$v=\phi(u)$,
\begin{equation}\label{1858}\begin{split}&
\frac{au-\rho v (1-u-v)}{acu - u(1-u-v)}-\frac{v_d^M}{(u_d^M)^{\rho}} \rho u^{\rho-1}
\\=\;&
\frac{au-\rho v (1-u-v)
	-\big(acu - u(1-u-v) \big)\,\frac{v_d^M}{(u_d^M)^{\rho}} \rho u^{\rho-1}
}{acu - u(1-u-v)}\\=\;&
\frac{au\left(1-\rho c\frac{v_d^M}{(u_d^M)^{\rho}} u^{\rho-1}\right)-\rho (1-u-v)\left(v-
	\frac{v_d^M}{(u_d^M)^{\rho}}  u^{\rho}\right)
}{acu - u(1-u-v)}\\ <\;&0.
\end{split}
\end{equation}
Now we take~$ a\in(0,M]$, $u\in[u_d^M, m]$ and
suppose that~$v=\phi(u)=\gamma_a(u)$,
we consider an orbit~$(u(t),v(t))$ lying on~$\gamma_a$ with~$(u(0),v(0))=(u,v)$,
and we notice that, by~\eqref{1804} and~\eqref{1858},
\begin{equation}\label{1951}\begin{split}
\gamma_a'(u)&=	\gamma_a'(u(0))\\&=\frac{\dot v(0)}{\dot u(0)}\\&
=	\frac{au(0)-\rho v(0)\, (1-u(0)-v(0))}{acu (0)- u(0)(1-u(0)-v(0))}\\&
=	\frac{au-\rho v\, (1-u-v)}{acu - u(1-u-v)}\\&
<
\frac{v_d^M}{(u_d^M)^{\rho}} \rho u^{\rho-1}\\&
= \phi'(u).
\end{split}\end{equation}

To complete the proof of~\eqref{1730},
we define
$$ {\mathcal{H}}(u):=\gamma_a(u)-\phi(u)$$
and we claim that for every~$a\in(0,M]$ there exists~$\underline{u}\in[u_d^M, m]$
such that
\begin{equation}\label{98989898kjkjkjkdfbv}
{\mbox{${\mathcal{H}}(\underline u)<0$ and~${\mathcal{H}}(u)\le0$ for every~$u\in[u_d^M,\underline u]$.}}
\end{equation}
Indeed, by~\eqref{1946}, we know that~${\mathcal{H}}(u_d^M)\le0$.
Thus, if~${\mathcal{H}}(u_d^M)<0$ then we can choose~$
\underline u:=u_d^M$ and obtain~\eqref{98989898kjkjkjkdfbv}.
If instead~${\mathcal{H}}(u_d^M)=0$, we have that~$
\gamma_a(u_d^M)=\phi(u_d^M)$ and thus we can
exploit~\eqref{1951} and find that~${\mathcal{H}}'(u_d^M)<0$, from which
we obtain~\eqref{98989898kjkjkjkdfbv}.

Now we claim that, for every~$ a\in(0,M]$ and $u\in[u_d^M, m]$,
\begin{equation}\label{KL:0ksf3566}
{\mathcal{H}}(u)\le0.
\end{equation}
For this, given~$ a\in(0,M]$, we define
$$ 
{\mathcal{L}}:=\{ u_*\in [u_d^M, m]{\mbox{ s.t. }}{\mathcal{H}}(u)\le0 {\mbox{ for every }}u\in[u_d^M,u_*]\}$$and $$
\overline u:=\sup {\mathcal{L}}.$$
We remark that~$\underline u\in{\mathcal{L}}$, thanks to~\eqref{98989898kjkjkjkdfbv}
and therefore~$\overline u$ is well defined.
We have that
\begin{equation}\label{12jnSikjm239gfvhb37}
\overline u=m,
\end{equation}
otherwise we would have that~${\mathcal{H}}(\overline u)=0$
and thus~${\mathcal{H}}'(\overline u)<0$, thanks to~\eqref{1951},
which would contradict the maximality of~$\overline u$.
Now, the claim in~\eqref{KL:0ksf3566} plainly follows from~\eqref{12jnSikjm239gfvhb37}.

We notice that by the inequalities in~\eqref{1834} we have
\begin{equation}\label{2007}
\zeta(u)= \frac{v_{\infty}}{(u_{\infty})^{\rho}} u^{\rho}>
\frac{v_d^M}{(u_d^M)^{\rho}} u^{\rho}
= \phi(u).
\end{equation}
Then, we define
\begin{equation}\label{1958}
\theta_3:= \frac{\zeta(m)-\phi(m)}{2},
\end{equation}	
that is positive thanks to \eqref{2007}.
We get that
\begin{equation}\label{1912}
\phi(m)+\theta_3 < \zeta(m). 
\end{equation}
{F}rom this and~\eqref{1730}, we conclude that  
\begin{equation}\label{0115}
\gamma_a(m) + \theta_3 \leq \phi(m) + \theta_3 < \zeta(m) \quad \ \text{for} \ a\in(0,M].
\end{equation}

By~\eqref{2001},~\eqref{1916} and~\eqref{0115} we have that~\eqref{0017} is true for~$\theta = \min \{\theta_1, \ \theta_2, \ \theta_3  \}$. 

This also establishes the claim in \eqref{1219}, and the proof is completed.  
\end{proof}

\section{The role of Heaviside functions}

Now, we can complete the proof of Theorem~\ref{thm:H} by building on the previous work.

\begin{proof}[Proof of Theorem~\ref{thm:H}]
Since the class of Heaviside functions~$\mathcal{H}$ is contained in the class of piecewise continuous functions~$\mathcal{A}$, we have that
\begin{equation}
\mathcal{V}_{\mathcal{H}}\subseteq \mathcal{V}_{\mathcal{A}},
\end{equation}
hence we are left with proving the converse inclusion. We treat separately the cases $\rho=1$, $\rho<1$ and $\rho>0$.

If~$\rho=1$, the desired
claim follows from Theorem \ref{thm:W}, part (i).

If~$\rho<1$, we deduce from~\eqref{bound:rho<1} and~\eqref{8ujff994-p-1} that
\begin{equation}\label{iwfewuguew387627}
\mathcal{V}_{\mathcal{A}}= \mathcal{F}_0 \cup \mathcal{P},
\end{equation}
where~$\mathcal{P}$ has been defined 
in~\eqref{PPDEFA}
and~$\mathcal{F}_0$ in~\eqref{qwertyuiolkjhgf}.

Moreover, by~\eqref{8ujff994-p-3BIS}, we have that
\begin{equation}\label{iwfewuguew38762722} \mathcal{F}_0\subseteq \mathcal{V}_{\mathcal{K}}\subseteq \mathcal{V}_{\mathcal{H}}.	\end{equation}
Also, in Proposition~\ref{prop:construction} we construct a Heaviside winning strategy for every point in~$ \mathcal{P}$. Accordingly, it follows that~$ \mathcal{P} \subseteq \mathcal{V}_{\mathcal{H}}$. 
This,~\eqref{iwfewuguew387627} and~\eqref{iwfewuguew38762722} entail that~$ \mathcal{V}_{\mathcal{A}} \subseteq \mathcal{V}_{\mathcal{H}}$, which completes the proof of
Theorem~\ref{thm:H} when~$\rho<1$.

Hence, we now focus on the case~$\rho>1$.  By~\eqref{bound:rho>1}
and~\eqref{7hperpre923i5},
\begin{equation}\label{8877SA}
\mathcal{V}_{\mathcal{A}}= \mathcal{S}_{c} \cup \mathcal{Q},
\end{equation}
where~$\mathcal{S}_{c}$ was defined in~\eqref{def:S_c} and~$\mathcal{Q}$ in~\eqref{DEFQ}.

For every point~$(u_0, v_0)\in\mathcal{S}_{c}$ there exists~$\bar{a}$ that is a constant winning strategy for~$(u_0, v_0)$, thanks to Proposition~\ref{prop:bhva},
therefore~$\mathcal{S}_{c}\subseteq
\mathcal{V}_{\mathcal{H}}$.
Moreover, in Proposition~\ref{prop:construction} for every point~$(u_0, v_0)\in \mathcal{Q}$ we constructed a Heaviside winning strategy, whence~$ \mathcal{Q} \subseteq \mathcal{V}_{\mathcal{H}}$. In light of these observations
and~\eqref{8877SA}, we see that also in this case~$ \mathcal{V}_{\mathcal{A}} \subseteq \mathcal{V}_{\mathcal{H}}$ and the proof is complete.
\end{proof}

\section{Pointwise
constraints}

This subsection is dedicated to the analysis of~$\mathcal{V}_{\mathcal{A}}$ when we put some constraints on~$a(t)$. In particular, we consider $M\geq m \geq 0$ with $M>0$ and the set $\mathcal{A}_{m,M}$ of the functions $a(t)\in\mathcal{A}$ with $m\leq a(t)\leq M$ for all $t>0$.  We will prove Theorem \ref{thm:limit} via
a technical proposition giving informative bounds on $\mathcal{V}_{{m,M}}$.

For this, we denote by~$(u_s^m,v_s^m)$ the point~$(u_s,v_s)$ introduced in~\eqref{usvs}
when~$a(t)=m$ for all~$t>0$ (this when~$mc<1$, and we use the convention that~$(u_s^m,v_s^m)=(0,0)$
when~$mc\ge1$).
In this setting, we have the following result obtaining explicit
bounds on the favorable set~$\mathcal{V}_{{m,M}}$:

\begin{proposition} \label{prop:limit}
Let~$M\geq m\geq 0$ with $M>0$
and
\begin{equation}\label{RANGEEP}
\varepsilon\in\left(0,\,\min\left\{\frac{M(c+1)}{M+1},1\right\}\right).\end{equation}
Then
\begin{itemize}
	\item[(i)] If~$\rho<1$, we have
	\begin{equation}\label{8uj6tg574tygh}
	\begin{split}
	\mathcal{V}_{{m,M}} \subseteq \Big\{ (u,v)\in[0,1] \times [0,1] \;{\mbox{ s.t. }}\; v< f_{\varepsilon}(u)\Big\}
	\end{split}
	\end{equation}
	where~$f_{\varepsilon} : [0, u_{\mathcal{M}}]\to [0,1]$ is the continuous function given by
	\begin{equation*}
	f_{\varepsilon}(u)=\left\{ 
	\begin{array}{ll}
	\displaystyle		\frac{(u_s^m)^{1-\rho}u^{\rho}}{\rho c } & \text{if} \ u\in [0, u_s^m), \\		
	\displaystyle\frac{u}{\rho c}  & \text{if} \ u\in [u_s^m, u_s^0), \\
	\displaystyle\dfrac{u}{c}+\frac{1-\rho}{1+\rho c}  & \text{if} \ u\in [u_s^0, u_1), \\
	hu +p & \text{if} \ u\in [u_1, 1],
	\end{array}	\right.
	\end{equation*}
	with the convention that the first interval is empty if $m\geq \frac{1}{c}$, the second interval is empty if $m=0$,
	and $h$, $u_1$ and $p$ take the following values:
$$ h := \frac{1}{c}\left(1-\dfrac{\varepsilon^2(1-\rho)}{M (1+\rho c)(c+1-\varepsilon)^2 + \varepsilon (\rho c +\rho + \varepsilon-\varepsilon \rho)}\right),$$
$$u_1:=\frac{c(\rho c+\rho+\varepsilon-\varepsilon \rho)}{(1+\rho c)(c+1-\varepsilon)} $$
and $$p :=\frac{c+1-hc(\rho c+\rho+\varepsilon-\varepsilon \rho)}{(1+\rho c)(c+1-\varepsilon)}.
	$$
	
	\item[(ii)]  If~$\rho>1$,  we have
	\begin{equation*}
	\begin{split}
	\mathcal{V}_{{m,M}} \subseteq \Big\{ (u,v)\in[0,1] \times [0,1] \;{\mbox{ s.t. }}\; v< g_{\varepsilon}(u)\Big\},
	\end{split}
	\end{equation*}
	where~$g_{\varepsilon} : [0, u_{\mathcal{M}}] \to [0,1]$ is the continuous function given by
	\begin{equation*}
	g_{\varepsilon}(u)=
	\begin{dcases}
	k\,u & \text{if} \ u\in [0, u_2), \\		
	\displaystyle\dfrac{u}{c} + q  & \text{if} \ u\in [u_2, u_3), \\
	\displaystyle\dfrac{(1-u_3)u^{\rho}}{(u_3)^{\rho}}  & \text{if} \ u\in [u_3, 1]
	\end{dcases}	
	\end{equation*}
	for the following values:
	\begin{align*}&
	k:= \frac{(c+1-\varepsilon)M}{(\rho -1)\varepsilon c + (c+1-\varepsilon) Mc}, \\& q:=
	\frac{(kc-1)(1-\varepsilon)}{c(k-k\varepsilon+1)}, \\& u_2:=\frac{1-\varepsilon}{k-k\varepsilon+1}\\ {\mbox{and}}\qquad&
	u_3:=\frac{c+1-\varepsilon}{(c+1)(k-k\varepsilon +1)}.
	\end{align*}
\end{itemize}
\end{proposition}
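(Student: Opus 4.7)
\smallskip

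My plan is to prove both parts by constructing, for each case, an explicit piecewise-smooth barrier curve (namely the graph of $f_\varepsilon$ for $\rho<1$ and of $g_\varepsilon$ for $\rho>1$) with the property that the closed region lying at or above the curve is positively invariant under \emph{every} admissible strategy $a(\cdot)\in\mathcal{A}_{m,M}$. Since a winning strategy requires the trajectory to reach the side $\{v=0\}$ (by Proposition~\ref{prop:dyn}), and since such a barrier separates the initial data $\{v\ge f_\varepsilon(u)\}$ from $\{v=0\}$, this will force $\mathcal{V}_{m,M}\subseteq\{v<f_\varepsilon(u)\}$, and analogously for $g_\varepsilon$. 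To establish the invariance, I will verify on each piece of the curve that the component of the velocity field $(\dot u,\dot v)$ along the upward normal is nonnegative for every $a\in[m,M]$, i.e.\ that $\dot v-f_\varepsilon'(u)\,\dot u\ge 0$ (and similarly with $g_\varepsilon$) whenever $v=f_\varepsilon(u)$.

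For part (i), on the four pieces of $f_\varepsilon$ the barrier computation proceeds as follows. On the power arc $u\in[0,u_s^m]$, substituting $v=\frac{v_s^m}{(u_s^m)^\rho}u^\rho$ (and using $\rho c v_s^m=u_s^m$) makes the $(1-u-v)$-terms cancel and leaves $\dot v-f_\varepsilon'(u)\dot u=au\bigl((u/u_s^m)^{\rho-1}-1\bigr)$, which is $\ge 0$ since $\rho<1$ and $u\le u_s^m$. On the saddle line $v=u/(\rho c)$ for $u\in[u_s^m,u_s^0]$, the same substitution yields $\tfrac{u(1-\rho)}{c\rho}\bigl[ac-1+u+v\bigr]$, which is nonnegative because $u+v\ge u_s^m+v_s^m=1-mc$ and $a\ge m$. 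On the linear piece $v=u/c+(1-\rho)/(1+\rho c)$ for $u\in[u_s^0,u_1]$, the $a$-contribution drops out entirely, giving the $a$-independent product $(\rho v-u/c)(1-u-v)$, which is the product of two nonpositive factors on this range (as in the proof of Theorem~\ref{thm:Vbound}(ii)), hence nonnegative. The delicate piece is the affine segment $v=hu+p$ for $u\in[u_1,1]$: here
\[
\dot v-h\dot u=(\rho v-hu)(1-u-v)-au(1-hc),
\]
which, since $h<1/c$, is minimized over $a\in[m,M]$ at $a=M$. The constants $h$, $u_1$, $p$ in the statement are precisely tailored so that this expression, evaluated at $a=M$ and on the line $v=hu+p$, is nonnegative on $[u_1,1]$; indeed continuity at $u=u_1$ with the previous piece fixes one relation, while the requirement that the residual first-order inequality in $u$ hold on the entire interval fixes the slope $h$ and the breakpoint $u_1$.

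For part (ii), the analogous barrier analysis for $g_\varepsilon$ reverses the order of the pieces. On the segment $v=ku$ for $u\in[0,u_2]$, I compute $\dot v-k\dot u=k(\rho-1)u(1-u-v)-au(1-kc)$, and the parameters $k$ and $u_2$ are chosen so that this is nonnegative at the worst case $a=M$ on the relevant range (again $kc<1$, so the $a$-term has a definite sign). The middle piece $v=u/c+q$ once more gives the $a$-independent factorization $(\rho v-u/c)(1-u-v)$: continuity at $u_2$ and the identification of $u_3$ as the intersection with $\{1-u-v=0\}$ ensure both factors are nonnegative on $[u_2,u_3]$ (note that for $\rho>1$ the sign pattern flips compared to part (i), which is consistent with $q<0$ here while the analogous constant is positive in part (i)). Finally, on the power arc $v=(1-u_3)u^\rho/u_3^\rho$ for $u\in[u_3,1]$, the same algebraic simplification collapses the $(1-u-v)$-terms and leaves $\dot v-g_\varepsilon'(u)\dot u=au\bigl(\rho c(1-u_3)u^{\rho-1}/u_3^\rho-1\bigr)$, which is nonnegative for $u\ge u_3$ precisely when $\rho c(1-u_3)\ge u_3$; this last inequality is a consequence of the relation $1-u_3=u_3/c+q$ combined with the explicit form of $q$.

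The main obstacle, as foreshadowed above, is the affine piece $v=hu+p$ in part (i) and, symmetrically, the initial segment $v=ku$ in part (ii): on these arcs the inequality $\dot v-f_\varepsilon'(u)\,\dot u\ge 0$ does not decouple into an $a$-independent factorization, so one must exploit the monotonicity of the inequality in $a$ to reduce to the worst case $a=M$ and then verify an explicit polynomial inequality in $u$. The values of $h$, $p$, $u_1$ (respectively $k$, $q$, $u_2$, $u_3$) stated in the proposition are exactly those for which this final inequality holds throughout the relevant range and matches continuously with the adjacent pieces; the verification is a bookkeeping exercise using the explicit formulas for $u_s^0$ and $v_s^0$ from~\eqref{u0v0} and the constraint $\varepsilon<M(c+1)/(M+1)$ from~\eqref{RANGEEP}, which ensures in particular that $kc<1$ and that $u_1,u_2,u_3\in(0,1)$.
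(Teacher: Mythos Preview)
Your approach is essentially the same as the paper's: both construct the closed ``upper'' region $\mathcal{D}=\{v\ge f_\varepsilon(u)\}$ (respectively $\mathcal{G}=\{v\ge g_\varepsilon(u)\}$), verify by a piecewise normal-velocity computation that it is forward invariant for every $a\in[m,M]$, and conclude via Proposition~\ref{prop:dyn} that no point of this region can reach $\{v=0\}$. Your piece-by-piece computations and the identification of $a=M$ as the worst case on the affine arcs (because $h<1/c$, $kc<1$) match the paper exactly.

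There is one genuine technical point you do not address. The barrier $f_\varepsilon$ has \emph{corners} at $u_s^m$, $u_s^0$ (and $u_1$), where the one-sided slopes differ, so the phrase ``$\dot v-f_\varepsilon'(u)\,\dot u\ge 0$'' is not literally defined there; moreover, on the power arc and on the saddle line the normal velocity actually \emph{vanishes} at $u=u_s^m$ (your own formula gives $au((u/u_s^m)^{\rho-1}-1)=0$ there, and on the right piece it vanishes when $a=m$). Thus tangential contact occurs at the junction and a naive sign check does not immediately exclude egress through those two points. The paper closes this gap with a short measure-theoretic argument: if a trajectory could exit $\mathcal{D}$ it would have to pass through one of the finitely many corner points, but then by continuous dependence an entire open neighborhood in $\mathcal{D}$ would be funneled through a single smooth arc (the backward orbit of the corner), contradicting positivity of Lebesgue measure. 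You should either reproduce this argument or, equivalently, check at each corner that the velocity lies in the tangent cone of $\mathcal{D}$ (i.e.\ is inward with respect to \emph{both} one-sided supporting lines), which here follows from your nonnegativity computations on the adjacent pieces.

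A second, lesser point: calling the verification on the affine piece $v=hu+p$ a ``bookkeeping exercise'' undersells it. The paper carries out that inequality in full, showing via the explicit forms of $h$, $p$, $u_1$ that
\[
\big((1-\rho)hu-\rho p\big)(u+v-1)\;>\;Mu(1-hc)
\]
on $[u_1,1)$; this step uses both the continuity relation at $u_1$ and a careful lower bound on $u+v-1$, and it is where the constraint~\eqref{RANGEEP} on $\varepsilon$ is implicitly needed. Your sketch is correct in spirit, but be aware this computation is the heart of the proof, not an afterthought.
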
 

We observe that it might be that for some $u\in[0,1]$ we have $f_{\varepsilon}(u)>1$ or $g_{\varepsilon}(u)>1$. In this case, Proposition~\ref{prop:limit} would produce the trivial result that $\mathcal{V}_{{m,M}} \cap (\{u\}\times[0,1]) \subseteq \{  u\}\times [0,1]$.

On the other hand, a suitable choice of~$\varepsilon$ would lead to nontrivial
consequences entailing, in particular, the proof of
Theorem \ref{thm:limit}.

\begin{proof}[Proof of Proposition~\ref{prop:limit}]

We start by proving the claim in~(i). For this, we will show that
\begin{equation}\label{1642}
\mathcal{V}_{m, M} \subset \mathcal{D}:=\Big\{ (u,v)\in[0,1] \times [0,1] \;{\mbox{ s.t. }}\; v<  f_{\varepsilon}(u)\Big\}.
\end{equation}

We remark that once~\eqref{1642} is established, then the desired claim in~\eqref{8uj6tg574tygh}
plainly follows by taking the complement sets.

To prove~\eqref{1642} we first show that
\begin{equation}\label{rPjhnfvvcc}
0 \leq u_s^m < u_s^0 < u_1 < 1.\end{equation}
Notice, as a byproduct, that the above inequalities also give that~$f_{\varepsilon}$ is well defined.
To prove~\eqref{rPjhnfvvcc} we notice that, by \eqref{usvs}, \eqref{u0v0} and \eqref{usvs2},
\begin{equation*}
0\leq 	u_s^m=\max \left\{ 0, \frac{1-mc}{1+\rho c}\,\rho c\right\} < \frac{\rho c}{1+\rho c} =u_s^0.
\end{equation*}
Actually the first inequality is strict if $m<\frac{1}{c}$.

Next, one can check that, since~$\varepsilon>0$,
$$  u_s^0-u_1=\frac{\rho c}{1+\rho c}
-\frac{c(\rho c+\rho+\varepsilon-\varepsilon \rho)}{(1+\rho c)(c+1-\varepsilon)}=-\frac{c\varepsilon}{(1+\rho c)(c+1-\varepsilon)}
<0.$$
Furthermore, since~$\varepsilon<1$,
$$u_1-1=
\frac{c(\rho c+\rho+\varepsilon-\varepsilon \rho)}{(1+\rho c)(c+1-\varepsilon)}-1=\frac{(\varepsilon-1)(c+1)}{(1+\rho c)(c+1-\varepsilon)}<0.
$$
These observations prove~\eqref{rPjhnfvvcc}, as desired.

Now we point out that
\begin{equation}\label{97896705689045-0}
{\mbox{$f_{\varepsilon}$ is a continuous function. }}\end{equation}
Indeed,
\begin{equation}\label{97896705689045-1}
\frac{(u_s^m)^{1-\rho}}{\rho c} (u_s^m)^\rho = \frac{u_s^m}{\rho c}\qquad{\mbox{ and }}\qquad
\frac{u_s^0}{\rho c} = \frac{u_s^0}{ c}+ \frac{1-\rho}{1+\rho c}.\end{equation}
Furthermore, by the definitions of~$p$ and~$u_1$ we see that
\begin{equation}\label{767thisbc0-i6yjh00}
\begin{split} p\,&=
\frac{c+1}{(1+\rho c)(c+1-\varepsilon)}
-
\frac{hc(\rho c+\rho+\varepsilon-\varepsilon \rho)}{(1+\rho c)(c+1-\varepsilon)}\\&
=\frac{c+1}{(1+\rho c)(c+1-\varepsilon)}-hu_1.\end{split}\end{equation}
Moreover, from the definition of~$u_1$,
$$ \frac{u_1}{c}+\frac{1-\rho}{1+\rho c}  = \frac{c+1}{(1+\rho c)(c+1-\varepsilon)}.$$
Combining this and~\eqref{767thisbc0-i6yjh00}, we deduce that
\begin{equation}\label{indeh8idenf4596}
\frac{u_1}{c}+\frac{1-\rho}{1+\rho c}  = h u_1+p.
\end{equation}
This observation and~\eqref{97896705689045-1}
entail the desired claim in~\eqref{97896705689045-0}.

Next, we show that
\begin{equation}\label{1601}
f_{\varepsilon}(u)>0 \quad \text{for} \ u>0.
\end{equation}
To prove this, we note that for $u\in(0,u_s^m)$ the function~$f_\varepsilon(u)$ is an exponential times the positive constant $$\frac{(u_s^m)^{1-\rho}}{\rho c},$$ hence it is positive.

If $u\in[u_s^m, u_s^0)$ then $f_{\varepsilon}(u)$ is a linear function and it is positive since~$\rho c >0$. 

On $[u_s^0, u_1)$, $f_{\varepsilon}(u)$ coincide with a linear function with positive angular coefficient, hence we have
$$ f_{\varepsilon}(u) \geq \underset{u\in[u_s^0, u_1)}{\min} f_{\varepsilon}(u)= f_{\varepsilon}(u_s^0)= \frac{u_s^0}{\rho c} >0.  $$
By inspection one can check that $h>0$. 

Therefore, 
in the interval $[u_1,1]$ we have
$$ f_{\varepsilon}(u) \geq \underset{u\in[u_1, 1]}{\min} f_{\varepsilon}(u)= f_{\varepsilon}(u_1)\geq \frac{u_s^0}{\rho c} >0.  $$
This completes the proof of \eqref{1601}.

Let us notice that, as a consequence of~\eqref{1601}, 
\begin{equation}\label{2344}
\big( (0,1]\times \{0\} \big) \subset \mathcal{D}.
\end{equation}

Now we show that 
\begin{equation}\label{2352}
{ \mbox{for any strategy~$a\in\mathcal{A}_{m, M}$, no trajectory enters~$\mathcal{D}$.}  }
\end{equation}

To apply Lemma \eqref{lemma:entrance}, we compute the velocity of a trajectory in the outward normal direction at~$\partial \mathcal{D}\cap \{v=f_{\varepsilon}(u)\}$.

For every~$u\in[0, u_s^m)$ we have that this normal velocity is
\begin{equation}\begin{split}\label{1614}&
\dot{v}-	\frac{(u_s^m)^{1-\rho} \rho (u)^{\rho-1} \dot{u}}{\rho c }  \\
 =\;&\rho \left( v-  \frac{(u_s^m)^{1-\rho} \, u^{\rho} }{\rho c }  \right)  (1-u-v) -au\left(1- \frac{(u_s^m)^{1-\rho} }{u^{1-\rho}}  \right).
\end{split}\end{equation}
Notice that the term $$ 
v-  \frac{(u_s^m)^{1-\rho} \, u^{\rho} }{\rho c } $$
vanishes on~$\partial \mathcal{D}\cap \{v=f_{\varepsilon}(u)\}$
when~$u\in[0, u_s^m]$.

Also, for all~$u\in[0, u_s^m]$ we have 
\begin{equation*}
1- \frac{(u_s^m)^{1-\rho} }{u^{1-\rho}}\leq0,
\end{equation*} 
thus the left hand side in~\eqref{1614} is nonnegative with equality only in $u_s^m$. 
So \eqref{property} is satisfied for $u\in(0, u_s^m)$, and \eqref{property2} is satisfied for $u=0$ and $u=u_s^m$.

It remains to verify the hypothesis at points of~$\partial \mathcal{D}\cap\{v=f_{\varepsilon}(u)\}$
with~$u\in[ u_s^m,1]$. We first consider this type of points when~$[u_s^m,u_s^0]$.
At these points, we have that the
velocity in the
outward normal direction on~$\{ v=\frac{u}{\rho c} \}$ is
\begin{equation*}
\dot{v}- \frac{\dot{u}}{\rho c}= \left( \rho v - \frac{u}{\rho c} \right)(1-u-v) + au\left( \frac{1}{\rho} -1\right).
\end{equation*}  
Expressing~$u$ with respect to~$v$ on~$\partial \mathcal{D}\cap \{v=f_{\varepsilon}(u)\}$
with~$u\in[ u_s^m,u_s^0]$, we have
\begin{equation}\begin{split}\label{Moiuyted645JN}
\dot{v}- \frac{\dot{u}}{\rho c}&=v \left( \rho-1  \right)(1-\rho c v-v) + a \rho cv\,\frac{1-\rho}{\rho} \\
&= v(1-\rho)(\rho c v + v-1 +ac).\end{split}
\end{equation}  
We also remark that, for these points,
\begin{equation*}
v\geq v_s^m= \frac{1-mc}{1+\rho c}\ge\frac{1-ac}{1+\rho c}	,\end{equation*} 
thanks to~\eqref{usvs}. This gives that the quantity in~\eqref{Moiuyted645JN}
is strictly positive for $u\in(u_s^m,u_s^0)$  and null for $u=u_s^m$ and $u=u_s^0$, as a consequence,
we have proved \eqref{property} for $u\in(u_s^m,u_s^0)$  and \eqref{property2} for $u=u_s^m$ and $u=u_s^0$.

It remains to consider the case~$u\in[u_s^0,1]$; we now consider the interval~$u\in[u_s^0,u_1]$.
In this interval, the boundary $\partial \mathcal{D}\cap\{v=f_{\varepsilon}(u)\}$
lies on the line~$v=\frac{u}{c}+ \frac{1-\rho}{1+\rho c}$.
The velocity of a trajectory starting at a point~$(u,v)\in\partial \mathcal{D}\cap\{v=f_{\varepsilon}(u)\}$
in the outward normal direction with respect to~$\partial \mathcal{D}$ is given by
\begin{equation}\label{tqwfe3857uvcjycer4cubt}
\dot{v}- \frac{1}{c}\dot{u}= \left(\rho v - \frac{u}{c} \right)(1-u-v).
\end{equation}
We also observe that, in light of~\eqref{u0v0},
$$ u\geq u_s^0=\frac{\rho c}{1+\rho c}, $$
and therefore, for any~$u\in[ u_s^0,u_1]$ lying on the above line,
\begin{equation*}
1-u-v= 1-u-\frac{u}{c} - \frac{1-\rho}{1+\rho c} =(c+1)\left(\frac{\rho}{1+\rho c} -\frac{u}{c} \right) \leq 0
\end{equation*}
and
\begin{equation*}
\rho v - \frac{u}{c} = \frac{\rho u}{c}+ \frac{\rho (1-\rho)}{1+\rho c}- \frac{u}{c}  =(1-\rho)\left(  \frac{\rho}{1+\rho c} - \frac{u}{c} \right) \leq 0,
\end{equation*}
with equality in $u_s^0$.
Using these pieces of information in~\eqref{tqwfe3857uvcjycer4cubt}, we conclude that
\eqref{property}
for a point~$(u,v)\in\partial \mathcal{D}\cap \{v=f_{\varepsilon}(u) \}$ is satisfied for~$u\in (u_s^0,u_1]$ and \eqref{property2} is satisfied for $u=u_s^0$. 
We need to verify the case~$u\in [u_1,1]$.

We consider now the interval~$[u_1,1]$. 
In this interval, the component of the velocity of a trajectory at a point on the straight
line given by~$v=hu+p$ in the orthogonal outward pointing direction is 
\begin{equation}\label{8gqwfOJHNsmeoout43906}\begin{split}&
(\dot{u}, \dot{v}) \cdot \frac{(-h, 1)}{\sqrt{1+h^2}} = \frac{   (\rho v -h u)(1-u-v) -au(1-hc) }{\sqrt{1+h^2}}\\
&\qquad =
\frac{((1-\rho)hu-\rho p )(u+v-1) -au(1-hc)}{\sqrt{1+h^2}}
\end{split}\end{equation}
We observe that, if~$u\in[u_1,1]$,
\begin{equation}\label{jd723u9007432yhgvythgkliew}\begin{split}&(1-\rho)hu-\rho p \ge
(1-\rho)hu_1-\rho p\\&\qquad=hu_1-\rho (hu_1+p) \\ 
&\qquad	= h u_1-\rho \left(\frac{u_1}{c}+\frac{1-\rho}{1+\rho c} \right) \\&\qquad=
h u_1-\rho \left(
\frac{\rho c+\rho+\varepsilon-\varepsilon \rho}{(1+\rho c)(c+1-\varepsilon)}
+\frac{1-\rho}{1+\rho c} \right) \\&\qquad=
h u_1-\frac{\rho (c+1)}{(1+\rho c)(c+1-\varepsilon)},
\end{split}\end{equation}
thanks to~\eqref{indeh8idenf4596}.

We also remark that
\begin{equation*}\begin{split}
& hu_1\\ =\,&
\left(1-\dfrac{\varepsilon^2(1-\rho)}{M (1+\rho c)(c+1-\varepsilon)^2 + \varepsilon (\rho c +\rho + \varepsilon-\varepsilon \rho)}\right)\,\frac{ \rho c+\rho+\varepsilon-\varepsilon \rho}{(1+\rho c)(c+1-\varepsilon)} \\
=\,&
\frac{ \rho c+\rho+\varepsilon-\varepsilon \rho}{(1+\rho c)(c+1-\varepsilon)}\\&\qquad
-
\dfrac{\varepsilon^2(1-\rho)\big(\rho c+\rho+\varepsilon-\varepsilon \rho\big)}{\big(
	M (1+\rho c)(c+1-\varepsilon)^2 + \varepsilon (\rho c +\rho + \varepsilon-\varepsilon \rho)\big)(1+\rho c)(c+1-\varepsilon)}
.
\end{split}
\end{equation*}	
Accordingly,
\begin{eqnarray*}&&
	h u_1-\frac{\rho (c+1)}{(1+\rho c)(c+1-\varepsilon)}\\&=&
	\frac{ \varepsilon(1- \rho)}{(1+\rho c)(c+1-\varepsilon)}\\&&\qquad
	-
	\dfrac{\varepsilon^2(1-\rho)\big(\rho c+\rho+\varepsilon-\varepsilon \rho\big)}{\big(
		M (1+\rho c)(c+1-\varepsilon)^2 + \varepsilon (\rho c +\rho + \varepsilon-\varepsilon \rho)\big)(1+\rho c)(c+1-\varepsilon)}\\
	&=&
	\frac{ \varepsilon(1- \rho)}{(1+\rho c)(c+1-\varepsilon)}\Bigg(1
	-
	\dfrac{\varepsilon \big(\rho c+\rho+\varepsilon-\varepsilon \rho\big)}{
		M (1+\rho c)(c+1-\varepsilon)^2 + \varepsilon (\rho c +\rho + \varepsilon-\varepsilon \rho)}\Bigg)\\&=&
	\frac{ \varepsilon(1- \rho)}{(1+\rho c)(c+1-\varepsilon)}\cdot
	\dfrac{M (1+\rho c)(c+1-\varepsilon)^2}{
		M (1+\rho c)(c+1-\varepsilon)^2 + \varepsilon (\rho c +\rho + \varepsilon-\varepsilon \rho)}\\&=&
	\dfrac{\varepsilon M(1- \rho)(c+1-\varepsilon)}{
		M (1+\rho c)(c+1-\varepsilon)^2 + \varepsilon (\rho c +\rho + \varepsilon-\varepsilon \rho)}
	.\end{eqnarray*}
{F}rom this and~\eqref{jd723u9007432yhgvythgkliew}, we gather that
\begin{equation}\label{ILpredmnow55}\begin{split}&
(1-\rho)hu-\rho p\\ &\qquad\ge
\dfrac{\varepsilon M(1- \rho)(c+1-\varepsilon)}{
	M (1+\rho c)(c+1-\varepsilon)^2 + \varepsilon (\rho c +\rho + \varepsilon-\varepsilon \rho)}
.\end{split}\end{equation}

Furthermore, we point out that, when~$[u_1, 1)$ and~$v=hu+p$,
\begin{equation*}\begin{split}
u+v-1&\ge u_1+hu_1+p-1 \\&= 
u_1+\frac{u_1}c+\frac{1-\rho}{1+\rho c}-1\\&
=\frac{(c+1)(\rho c+\rho+\varepsilon-\varepsilon \rho)}{(1+\rho c)(c+1-\varepsilon)}
-\frac{\rho(c+1)}{1+\rho c}
\\&=\frac{\varepsilon(c+1)}{(1+\rho c)(c+1-\varepsilon)}\\& >\frac{\varepsilon}{c+1-\varepsilon},
\end{split}\end{equation*}
thanks to~\eqref{indeh8idenf4596}.

Combining this inequality and~\eqref{ILpredmnow55}, we deduce that
\begin{equation*}\begin{split}&
((1-\rho)hu-\rho p )(u+v-1)\\&\qquad >
\dfrac{\varepsilon^2 M(1- \rho)}{
	M (1+\rho c)(c+1-\varepsilon)^2 + \varepsilon (\rho c +\rho + \varepsilon-\varepsilon \rho)}.
\end{split}\end{equation*}	
Therefore, noticing that~$h<\frac{1}{c}$,
\begin{eqnarray*}&&
	((1-\rho)hu-\rho p )(u+v-1) -au(1-hc)\\&>&
	\dfrac{\varepsilon^2 M(1- \rho)}{
		M (1+\rho c)(c+1-\varepsilon)^2 + \varepsilon (\rho c +\rho + \varepsilon-\varepsilon \rho)}-Mu(1-hc)\\
	&=&
	\dfrac{\varepsilon^2 M(1- \rho)(1-u)}{
		M (1+\rho c)(c+1-\varepsilon)^2 + \varepsilon (\rho c +\rho + \varepsilon-\varepsilon \rho)},
\end{eqnarray*}
which is strictly positive.

Using this information in~\eqref{8gqwfOJHNsmeoout43906},
we can thereby conclude that \eqref{property} is verified for~$(u, f_{\varepsilon}(u))\in
\partial \mathcal{D}$
with~$u\in [u_1,1)$.

In this way, we have shown that either \eqref{property} or \eqref{property2} holds for $(u,f_{\varepsilon}(u))\in \partial \mathcal{D}$ for $u\in[0,1]$ and \eqref{property2} holds in a finite number of points, so we can apply Lemma \ref{lemma:entrance}. Hence, no trajectory
 can enter~${\mathcal{D}}$ and the proof
of~\eqref{2352}
is complete.

By~\eqref{2344} and~\eqref{2352}, no trajectory starting outside~$\mathcal{D}$ can arrive in~$(0,1]\times[0,1]$ when the bound~$m\leq a(t)\leq M$ holds, hence \eqref{1642} is true. 
Therefore the statement~(i) in Proposition \ref{prop:limit} is true.

\medskip

Now we establish the claim in~(ii). To this end,
we point out that claim (ii) is equivalent to
\begin{equation}\label{2359}
\mathcal{V}_{m, M} \subseteq \mathcal{G}:=\Big\{ (u,v)\in[0,1] \times [0,1] \;{\mbox{ s.t. }}\; v <  g_{\varepsilon}(u)\Big\}.
\end{equation}

First, we point out that
\begin{equation}\label{Ov54io0v9ik4gfvh}
{\mbox{$g_{\varepsilon}$ is a well defined continuous function. }}\end{equation}
Indeed, one can easily check for $\varepsilon\in(0,1)$ that 
\begin{equation}\label{1409}\begin{split}
0 &< u_2\\&
=\frac{1-\varepsilon}{k-k\varepsilon+1}-\frac{c+1-\varepsilon}{(c+1)(k-k\varepsilon +1)}+u_3\\&
=-\frac{c\varepsilon }{(c+1)(k-k\varepsilon +1)}+u_3\\&
<u_3\\&
<\frac{c+1}{(c+1)(k-k\varepsilon +1)}\\&<1.\end{split}
\end{equation}
Then, one checks that 
\begin{align*}
ku_2=\frac{u_2}{c}+q,
\end{align*}
hence $g_{\varepsilon}$ is continuous at the point $u_2$. In addition, one can check that $g_{\varepsilon}$ is continuous
at the point~$u_3$ by observing that
\begin{equation}\label{isceocessvcpoo}\begin{split}&
\frac{u_3}c+q-(1-u_3)=\frac{(c+1)u_3}{c}+q-1\\&\qquad=
\frac{c+1-\varepsilon}{c(k-k\varepsilon +1)}+\frac{(kc-1)(1-\varepsilon)}{c(k-k\varepsilon+1)}-1\\&\qquad=
\frac{c+1-\varepsilon+(kc-1)(1-\varepsilon)-c(k-k\varepsilon+1)}{c(k-k\varepsilon+1)}=0.
\end{split}\end{equation}
This completes the proof of \eqref{Ov54io0v9ik4gfvh}.

Now we show that
\begin{equation}\label{1411}
g_{\varepsilon}(u)>0 \quad \text{for every} \ u\in(0,1].
\end{equation}
We have that~$k>0$ for every~$\varepsilon<1$, and therefore~$g_{\varepsilon}(u)>0$ for all~$u\in(0, u_2)$.
Also, since $g_{\varepsilon}(u_2)=ku_2>0$ and $g_{\varepsilon}$ is linear in $(u_2, u_3)$, we have that $g_{\varepsilon}(u)>0$ for all~$u\in(u_2, u_3)$.

Moreover, in the interval~$\in[u_3,1]$ we have that~$g_{\varepsilon}$
is an exponential function multiplied by a  positive constant, thanks to~\eqref{1409}, hence it is positive. These considerations prove~\eqref{1411}.

As a consequence of~\eqref{1411}, we have that 
\begin{equation}\label{2360}
\big((0,1]\times \{0\}\big) \subset \mathcal{G}.
\end{equation}
Now we claim that 
\begin{equation}\label{0002}
{ \mbox{for any strategy~$a\in\mathcal{A}_{m,M}$, no trajectory enters~$\mathcal{G}$.}  }
\end{equation}

To prove~\eqref{0002}, we want to apply Lemma \ref{lemma:entrance}.
We do this
by showing that the outward pointing derivative of the trajectory is positive up to a finite number of points, where it is zero, according to the computation below.

At a point on the line~$v=k u$, the velocity of a trajectory in the direction that is orthogonal to~$\partial \mathcal{G}$ for~$u\in(0,u_2]$ and pointing outward is:
\begin{equation}\label{1741}
(\dot{u}, \dot{v})\cdot \frac{(-k, 1)}{\sqrt{1+k^2}} =\frac{(\rho v- ku)(1-u-v)-au(1-kc) }{\sqrt{1+k^2}}    .   
\end{equation}
We also note that
\begin{equation}\label{2018}
kc
= \frac{(c+1-\varepsilon)M}{(\rho -1)\varepsilon + (c+1-\varepsilon)M }
<1,\end{equation} and therefore, at a point on~$v=k u$ with $u\in(0, u_2]$,
\begin{equation*}\begin{split}
1-u-v &\;\geq  1-u_2-k u_2 \\&\;= 
1-	\frac{(1+k)(1-\varepsilon)}{k-k\varepsilon+1}
\\&\;=	\frac{\varepsilon}{k(1-\varepsilon)+1}\\&\;=\frac{\varepsilon c}{kc(1-\varepsilon)+c}\\&\;
> \frac{\varepsilon c}{1+c-\varepsilon}.\end{split}
\end{equation*}
This inequality entails that
\begin{equation*}\begin{split}
k&\;= \frac{(1+c-\varepsilon)M}{(\rho-1)\varepsilon c+(1+c-\varepsilon)Mc } \\&\;
=\frac{M}{\frac{(\rho-1)\varepsilon c}{1+c-\varepsilon}+Mc } 
\\&\;>  \frac{M}{(\rho-1)(1-u-v)+Mc}.\end{split}
\end{equation*}
Consequently,
\begin{equation*}
(\rho-1)(1-u-v)k > M (1-kc).
\end{equation*} 
{F}rom this and~\eqref{1741}, one deduces that, for all~$u\in(0, u_2]$, $a\leq M$, and $v=k u$,
\begin{equation*}\begin{split}
(\dot{u}, \dot{v})\cdot \frac{(-k, 1)}{\sqrt{1+k^2}} &\;=\frac{ku(\rho - 1)(1-u-v)-au(1-kc) }{\sqrt{1+k^2}} \\&\;  >
\frac{Mu (1-kc)-au(1-kc) }{\sqrt{1+k^2}}\\&\;>0,\end{split}
\end{equation*}
thus satisfying \eqref{property}. 
Moreover, since $(0,0)$ is an equilibrium, \eqref{property2} holds for $u=0$.

It remains to consider the portions of~$\partial\mathcal{G}\cap((0,1)\times(0,1))$ given by
\begin{equation}\label{9u:9idkj:0oekdjfjfj81763yhrf}
\left\{ u\in[ u_2,u_3){\mbox{ and }} v=\frac{ u}c+q\right\}\end{equation}
and by
\begin{equation}\label{9u:9idkj:0oekdjfjfj81763yhrf2}\left\{ u\in[u_3,1]{\mbox{ and }} v=\frac{(1-u_3)u^\rho}{(u_3)^\rho}\right\}.\end{equation}

Let us deal with the case in~\eqref{9u:9idkj:0oekdjfjfj81763yhrf}.
In this case,
the velocity of a trajectory in the direction orthogonal to~$\partial \mathcal{G}$ for~$u\in[u_2,u_3)$ and pointing outward is
\begin{equation}\label{2027}
(\dot{u}, \dot{v})\cdot \frac{(-1, c)}{\sqrt{1+c^2}}=\frac{(\rho c v -u)(1-u-v)}{\sqrt{1+c^2}}.
\end{equation}
Recalling~\eqref{RANGEEP}, we also observe that
\begin{equation}\label{1536}\begin{split}k-
\frac{1}{\rho c}
&\;=\frac1c\left(
\frac{(c+1-\varepsilon)M}{(\rho -1)\varepsilon + (c+1-\varepsilon) M}-\frac1\rho\right)\\&\;=
\frac{(\rho-1)\big((c+1-\varepsilon)M
	-\varepsilon\big)}{\rho c\big( (\rho -1)\varepsilon + (c+1-\varepsilon) M\big)}
\\&\;>0.\end{split}
\end{equation}
Thus, on the line given by~$v=\frac{u}{c}+q$ we have that
\begin{equation}\label{Cnodizeos80p4}\begin{split}
\rho c v -u&\;= (\rho-1)u +\rho c q
\\&\;\ge (\rho-1)u_2 +\rho c q\\&\;=
\frac{(\rho-1)(1-\varepsilon)}{k-k\varepsilon+1}
+\frac{\rho(kc-1)(1-\varepsilon)}{k-k\varepsilon+1}\\&\;
= (1-\varepsilon)\frac{(\rho-1)+\rho(kc-1)}{k-k\varepsilon+1}\\&\;=\frac{(1-\varepsilon)(\rho k c -1)}{k-k\varepsilon+1}\\&\;>0,\end{split}
\end{equation}
where~\eqref{1536} has been used in the latter inequality.

In addition, recalling~\eqref{isceocessvcpoo},
\begin{equation*}
1-u-v > 1-u_3 - \frac{u_3}{c} -q = 1-u_3-1+u_3=0.
\end{equation*}
{F}rom this and~\eqref{Cnodizeos80p4}, we gather that the velocity calculated in~\eqref{2027} is positive in~$[u_2, u_3)$ (satisfying \eqref{property}) and null in $u_3$ (satisfying \eqref{property2}). 

Next, we focus
on the portion of the boundary described in~\eqref{9u:9idkj:0oekdjfjfj81763yhrf2}
by considering~$u\in[u_3, 1]$.
That is, we now compute the component of the velocity at a point on~$\partial \mathcal{G}$ for ~$u\in[u_3, 1]$ in the direction that is orthogonal to~$\partial \mathcal{G}$ and pointing outward, that is
\begin{equation}\label{1803}
\begin{split}&
(\dot{u}, \dot{v})\cdot \frac{(-\rho \frac{1-u_3}{(u_3)^{\rho}}u^{\rho-1}, 1)}{\sqrt{1+\rho^2\frac{(1-u_3)^2}{(u_3)^{2\rho}}u^{2\rho-2}}} \\=\,& \frac{\rho(1-u-v)\left(v-   \frac{1-u_3}{(u_3)^{\rho}} u^{\rho} \right) - au\left( 1-\rho c \frac{1-u_3}{(u_3)^{\rho}} u^{\rho-1}   \right) }{\sqrt{1+\rho^2\frac{(1-u_3)^2}{(u_3)^{2\rho}}u^{2\rho-2}}}
\\=\,& \frac{ au\left( \rho c \frac{1-u_3}{(u_3)^{\rho}} u^{\rho-1} -1   \right) }{\sqrt{1+\rho^2\frac{(1-u_3)^2}{(u_3)^{2\rho}}u^{2\rho-2}}}\\ \ge\,&
\frac{ au\left( \rho c \frac{1-u_3}{u_3} -1   \right) }{\sqrt{1+\rho^2\frac{(1-u_3)^2}{(u_3)^{2\rho}}u^{2\rho-2}}}.
\end{split}
\end{equation}
Now we notice that
\begin{eqnarray*}
	\rho c (1-u_3)&=& \rho c \left(\frac{u_3}{c}+q \right) \\&=&
	\rho u_3+ \rho c q\\&=&\rho u_3+\frac{\rho (kc-1)(1-\varepsilon)(c+1) u_3}{c+1-\varepsilon},
\end{eqnarray*} 
thanks to~\eqref{isceocessvcpoo}.

As a result, using~\eqref{1536}, 
\begin{eqnarray*}
		\rho c (1-u_3) &>&\rho u_3+\frac{ (1-\rho)(1-\varepsilon)(c+1) u_3}{c+1-\varepsilon}
	\\&
	= & \frac{ u_3}{c+1-\varepsilon}
	\Big(\rho (c+1-\varepsilon)+(1-\rho)(1-\varepsilon)(c+1) \Big)\\& =&
	\frac{ u_3\big( (1-\varepsilon)(c+1)+\varepsilon \rho c\big)}{c+1-\varepsilon}	\\&
=& u_3+
	\frac{ \varepsilon c u_3( \rho-1)}{c+1-\varepsilon}\\&>&u_3.
\end{eqnarray*}
This gives that the quantity in  \eqref{1803} is positive, proving \eqref{property} for $u\in[u_3,1]$.

Hence, since \eqref{property2} is verified at $u=0, u_3$ and \eqref{property} is verified for $u\in [0,1]\setminus \{0, u_3 \}$, we can apply Lemma \ref{lemma:entrance} and get \eqref{0002}.

Since no trajectory can enter~$\mathcal{G}$ for any~$a$ with~$m\leq a \leq M$, we get that no point~$(u,v)\in \mathcal{G}^c$ is mapped into~$(0,1]\times\{0\}$ because of~\eqref{2360}, thus~\eqref{2359} is true and the proof is complete.
\end{proof}

We end this chapter with the proof of Theorem \ref{thm:limit}.

\begin{proof}[Proof of Theorem \ref{thm:limit}]
Since by definition $\mathcal{A}_{m,M}\subseteq \mathcal{A}$, we have that~$\mathcal{V}_{{m,M}}\subseteq \mathcal{V}_{\mathcal{A}}$. Hence, we are left with proving that the latter inclusion is strict.

We start with the case $\rho<1$. We choose
\begin{equation}\label{1934567890-dfghjk4567890-fd11}
\varepsilon\in\left(0,\,\min\left\{ \frac{ \rho c(c+1)}{1+\rho c}, \frac{M(c+1)}{M+1},1  \right\} \right). \end{equation}
We observe that this choice is compatible with
the assumption on~$\varepsilon$ in~\eqref{RANGEEP}. We note that
\begin{equation}\label{1911}
u_1 < \min\left\{ \frac{ \rho c(c+1)}{1+\rho c}, 1  \right\},
\end{equation}
thanks to~\eqref{1934567890-dfghjk4567890-fd11}.
Moreover, by \eqref{indeh8idenf4596} and the fact that~$h<\frac1c$, it holds that
\begin{equation}\label{1933}\begin{split}
h u + p 
&\;=h (u-u_1)+hu_1 + p\\
&\;=h (u-u_1)+\frac{u_1}{c}+\frac{1-\rho}{1+\rho c}\\
&\;<
\frac{u}{c}+ \frac{1-\rho }{1+\rho c} \end{split}
\end{equation}
for all~$u>u_1$.

Now we choose $$\bar{u}\in \left( u_1,  \min\left\{ \frac{ \rho c(c+1)}{1+\rho c}, 1  \right\} \right),$$ which is possible thanks to \eqref{1911}, and 
\begin{equation}\label{1925}
\bar{v}: = \frac{1}{2}\left(  h \bar{u} + p   \right) + \frac{1}{2}\left( \frac{\bar{u} }{c}+ \frac{1-\rho }{1+\rho c}   \right).
\end{equation}
By \eqref{1933} we get that
\begin{equation}\label{1937}
h \bar{u} + p <\frac12\left(h \bar{u} + p\right)
+ \frac{1}{2}\left( \frac{\bar{u} }{c}+ \frac{1-\rho }{1+\rho c}   \right)=
\bar{v} < \frac{\bar{u}}{c}+ \frac{1-\rho }{1+\rho c}.
\end{equation}
Using Proposition \ref{prop:limit} and \eqref{1937}, we deduce that  $(\bar{u}, \bar{v})\not\in \mathcal{V}_{{m,M}}$. By Theorem \ref{thm:Vbound} and \eqref{1937} we obtain instead that $(\bar{u}, \bar{v})\in \mathcal{V}_{\mathcal{A}}$. Hence, the set  
$\mathcal{V}_{{m,M}}$ is strictly included in~$\mathcal{V}_{\mathcal{A}}$
when~$\rho<1$.

Now we consider the case~$\rho>1$, using again the notation of Proposition \ref{prop:limit}. We recall that~$u_2>0$
and~$ u_{\infty}>0$, due to~\eqref{ZETADEF} and~\eqref{1409}, hence we can choose
$$ \bar{u} \in \left( 0, \min\{u_2, u_{\infty}\}   \right) .$$
We also define 
\begin{equation*}
\bar{v} := \frac12\left( \frac{1}{c} +k   \right) \bar{u}. 
\end{equation*}
By \eqref{2018}, we get that 
\begin{equation}\label{2031}
k \bar{u} < \frac{k\bar{u}}2+\frac{\bar{u}}{2c}
=	\bar{v} < \frac{ \bar{u}}{c}.
\end{equation}
Exploiting this and the characterization in Proposition \ref{prop:limit}, it holds that  $(\bar{u}, \bar{v})\not\in \mathcal{V}_{{m,M}}$.

On the other hand, by Theorem \eqref{thm:Vbound} and \eqref{2031} we have instead that $(\bar{u}, \bar{v})\in \mathcal{V}_{\mathcal{A}}$. As a consequence, the set~$\mathcal{V}_{{m,M}}$
is strictly contained in~$ \mathcal{V}_{\mathcal{A}}$ for $\rho>1$.
This concludes the proof of Theorem~\ref{thm:limit}.
\end{proof}

\section{Minimization of the war duration}\label{s:dimthmmin}

We now deal with the strategies leading to the quickest possible victory of the first population.

\begin{proof}[Proof of Theorem~\ref{thm:min}]
Our aim is to establish the existence 
of the strategy leading to the quickest possible victory
and to determine its range.
For this, we consider
the following minimization problem under constraints for~$x(t):=(u(t), v(t))$: 
\begin{equation}\label{sys:min}
\left\{ 
\begin{array}{ll}
\dot{x}(t)=f(x(t), a(t) ), \\ x(0)=(u_0, v_0), \\ x(T_s)\in (0,1]\times\{0\}, \\
\displaystyle\min_{a(t)\in [m, M]} \displaystyle\int_{0}^{T_s} 1 \,dt, 
\end{array}
\right.
\end{equation}
where 
\begin{equation*}
f(x, a) := \Big( u(1-u-v-ac), \ \rho v(1-u-v) -au    \Big).
\end{equation*}
Here~$T_s$ corresponds to the exit time introduced in~\eqref{def:T_s}, in dependence of the strategy~$a(\cdot)$.

Theorem 6.15 in~\cite{trelat2005controle} assures the existence of a minimizing solution~$(\tilde{a}, \tilde{x})$ with~$\tilde{a}(t)\in[m, M]$ for all~$t\in[0,T]$, and~$\tilde{x}(t)\in[0,1]\times[0,1]$ absolutely continuous, such that~$\tilde{x}(T)=(\tilde u(T), 0)$ with~$\tilde u(T)\in [0,1]$,
where~$T$ is the exit time for~$\tilde{a}$.

We now prove that
\begin{equation}\label{90o-045}
\tilde{u}(T)>0.\end{equation}
Indeed, if this were false, then~$(\tilde{u}(T), \tilde{v}(T))=(0,0)$. Let us call~$d(t):
= \tilde{u}^2(t)+ \tilde{v}^2(t)$. Then, we observe that
the function~$d(t)$ satisfies the following differential inequality:
\begin{equation}\label{1955}
- \dot{d}(t) \le C d , \qquad \text{for}  \quad C:=4+4\rho+2Mc+M.
\end{equation}
To check this, we compute that
\begin{align*}
- \dot{d}  &= 2\left(  -\tilde{u}^2(1-\tilde{u}-\tilde{v}-\tilde ac) - \tilde{v}^2 \rho(1-\tilde{u}-\tilde{v}) + \tilde{u}\tilde{v}\tilde a     \right) \\
& \le2\tilde{u}^2(2+Mc) +  4\rho\tilde{v}^2 + (\tilde{u}^2+\tilde{v}^2)M \\
& \le C (\tilde{u}^2+\tilde{v}^2)\\&= C d,
\end{align*}
which proves~\eqref{1955}.

{F}rom~\eqref{1955}, one has that
\begin{equation*}
0<(u_0^2+v_0^2 ) e^{-CT} \le d(T)=\tilde{u}^2(T)+ \tilde{v}^2(T)=\tilde{u}^2(T),
\end{equation*}
and this leads to~\eqref{90o-045}, as desired.	
We remark that, in this way, we have found a trajectory~$\tilde{a}$ which
leads to the victory of the first population in the shortest possible time.

Theorem 6.15 in~\cite{trelat2005controle} assures that~$\tilde{a}(t)\in L^{1}[0,T]$, so~$\tilde{a}(t)$ is measurable.
We have that the two vectorial functions~$F$ and~$G$, defined by
\begin{equation*}
F(u,v):= \left( 
\begin{array}{c}
u(1-u-v)\\
\rho v (1-u-v)
\end{array}
\right)\qquad{\mbox{and}}\qquad G(u,v):= \left( 
\begin{array}{c}
-cu\\
-u
\end{array}
\right),
\end{equation*}
and satisfying~$f(x(t), a(t))= F(x(t))+a(t)G(x(t))$, 
are analytic.

Moreover the set $\overline{\mathcal{V}}_{\mathcal{A}_{m,M}}$ is a subset of $\R^2$, therefore it can be seen as an analytic manifold with border which is also a compact set.

For all $x_0\in{\mathcal{V}}_{\mathcal{A}_{m,M}}$ and $t>0$ we have that the trajectory starting from $x_0$ satisfies $x(\tau)\in\overline{\mathcal{V}}_{\mathcal{A}_{m,M}}$ for all $\tau\in[0,t]$.

Then, by Theorem~3.1 in~\cite{sussmann1987C}, there exists a couple $(\tilde{a}, \tilde{x})$ analytic a part from a finite number of points, such that $(\tilde{a}, \tilde{x})$ solves \eqref{sys:min}.

\medskip

Now, to study the range of~$\tilde{a}$, we apply the Pontryagin Maximum Principle (see for example~\cite{trelat2005controle} or the original book \cite{pontryagin2018mathematical}). The Hamiltonian associated with system~\eqref{sys:min} is
\begin{equation*}
H(x,p, p_0, a ): =  p\cdot f(x,a)  + p_0
\end{equation*}
where~$p=(p_u, p_v)$ is the adjoint to~$x=(u,v)$ and~$p_0$ is the adjoint to the cost function identically
equal to~$1$.

The Pontryagin Maximum Principle tells us that, since~$\tilde{a}(t)$ and~$\tilde{x}(t)=(\tilde{u}(t), \tilde{v}(t))$ give the optimal solution, there exist a vectorial function~$\tilde p : [0, T] \to \R^2$ and a scalar~$\tilde p_0\in(-\infty, 0]$ such that 
\begin{equation}\label{HJA}
\left\{
\begin{array}{ll}
\dfrac{d\tilde{x}}{dt} (t)= \dfrac{\partial H}{\partial p} (\tilde{x}(t), \tilde p(t), \tilde p_0, \tilde{a}(t) ), & \text{for a.a.} \ t\in[0, T], \\
\\
\dfrac{d	\tilde{p}}{dt} (t)=- \dfrac{\partial H}{\partial x} (\tilde{x}(t), \tilde p(t), \tilde p_0, \tilde{a}(t) ), & \text{for a.a.} \ t\in[0, T],
\end{array}
\right.
\end{equation} 
and 
\begin{equation}\label{2349}\begin{split}
& H(\tilde{x}(t), \tilde p(t), \tilde p_0, \tilde{a}(t) ) = \underset{a(\cdot)\in[m,M]}{\max} H(\tilde{x}(t), \tilde p(t), \tilde p_0, a ) \\& \text{for a.a.} \ t\in[0, T].\end{split}
\end{equation}
Moreover, since the final time is free, we have
\begin{equation}\label{1244}
H(\tilde{x}(T), \tilde p(T),\tilde p_0, \tilde{a}(T) ) =0. 
\end{equation}
Also, since~$H(x,p,p_0,a)$ does not depend on~$t$, we get
\begin{equation}\label{2343}
H(\tilde{x}(t), \tilde p(t), \tilde p_0, \tilde{a}(t) ) ={\mbox{constant}}=0, \quad \text{for a.a.} \ t\in[0, T], 
\end{equation}
where the value of the constant is given by~\eqref{1244}.
By substituting the values of~$f(x,a)$ in~$H(x,p,p_0,a)$ and using~\eqref{2343}, we get, for a.a.~$
t\in[0, T]$,
\begin{equation*}
\tilde p_u \tilde{u}(1-\tilde{u}- \tilde{v}-\tilde{a}c)+ \tilde p_v\rho  \tilde{v}(1-\tilde{u}- \tilde{v}) -\tilde p_v \tilde{a} \tilde{u} + \tilde p_0 =0,
\end{equation*}
where~$\tilde p=(\tilde p_u,\tilde p_v)$.

Also, by~\eqref{2349} we get that
\begin{equation}\label{0oskdfee}\begin{split}&
\underset{a\in[m,M]}{\max} H(\tilde{x}(t), \tilde p(t), \tilde p_0, a )\\=& \underset{a\in[m,M]}{\max} \Big[-a\tilde{u}(c\tilde p_u + \tilde p_v ) + \tilde p_u \tilde{u}(1-\tilde{u}- \tilde{v})+ \tilde p_v\rho  \tilde{v}(1-\tilde{u}- \tilde{v})+\tilde p_0\Big].\end{split}
\end{equation}Thus, to maximize the term in the square brackets we must choose appropriately the value of~$\tilde{a}$ depending on the sign of~$\varphi(t):=c\tilde p_u(t)+\tilde p_v(t)$, that is
we choose
\begin{equation}\label{1631}
\tilde{a}(t):=
\left\{
\begin{array}{ll}
m &{\mbox{ if }} \varphi(t)>0, \\
M &{\mbox{ if }} \varphi(t)<0.
\end{array}
\right.
\end{equation}
When~$\varphi(t)=0$, we are for the moment free
to choose~$	\tilde{a}(t):=a_s(t)$ for every~$a_s(\cdot)$
with range in~$[m,M]$,
without affecting the maximization problem in~\eqref{0oskdfee}.

Our next goal is to determine that~$a_s(t)$ has the expression stated in~\eqref{KSM94rt3rjjjdfe} for
a.a.~$t\in[0,T]\cap \{\varphi=0\}$.

To this end, we claim that
\begin{equation}\label{9id0-3rgjj}
{\mbox{$\dot\varphi(t)=0$ a.e.~$t\in[0,T]\cap \{\varphi=0\}$.}}
\end{equation}
Indeed, by~\eqref{HJA}, we know that~$\tilde p$ is Lipschitz continuous in~$[0,T]$,
hence almost everywhere differentiable, and thus the same holds for~$\varphi$.

Therefore, up to a set of null measure, given~$t\in[0,T]\cap \{\varphi=0\}$,
we can suppose that~$t$ is not an isolated point in such a set,
and that~$\varphi$ is differentiable at~$t$.

That is, there exists an infinitesimal sequence~$h_j$
for which~$\varphi(t+h_j)=0$ and
$$ \dot\varphi(t)=\lim_{j\to+\infty}\frac{\varphi(t+h_j)-\varphi(t)}{h_j}
=\lim_{j\to+\infty}\frac{0-0}{h_j}=0,$$
and this establishes~\eqref{9id0-3rgjj}.

Consequently, in light of~\eqref{9id0-3rgjj}, a.a.~$t\in[0,T]\cap \{\varphi=0\}$
satisfies	\begin{equation*}\begin{split}
	0& \;=\dot\varphi(t)\\&\;= c\frac{d\tilde p_u}{dt}(t)+ \frac{d\tilde p_v}{dt}(t) \\&\; = c\big[ -\tilde p_u(t)(1-2\tilde{u}(t)-\tilde{v}(t)-ca_s(t))+\tilde p_v(t) (\rho \tilde{v}(t)+a_s(t))  \big]\\&\qquad\qquad
+ \tilde p_u(t)\tilde u(t)-\tilde p_v(t) \rho(1-\tilde{u}(t)-2\tilde{v}(t)).\end{split}
\end{equation*}
Now, since~$\varphi(t)=0$, we have that~$ \tilde p_v(t)=- c\tilde p_u(t)$; inserting this information in the last equation, we get
\begin{equation}\label{0004}\begin{split}&
0= -\tilde p_u c (1-2\tilde u-\tilde v-a_s c) -\tilde p_u \rho c^2 \tilde v \\&\qquad- \tilde p_u a_s c^2 + \tilde p_u \tilde u+ \tilde p_u \rho c (1-\tilde u-2\tilde v).\end{split}
\end{equation}
Notice that if~$\tilde p_u=0$, then~$\tilde p_v=-c \tilde p_u=0$; moreover, by~\eqref{2343}, one gets~$\tilde p_0=0$. But by the Pontryagin Maximum Principle one cannot have~$(\tilde p_u, \tilde p_v, \tilde p_0)=(0,0,0)$, therefore one obtains~$\tilde p_u\neq 0$ in~$\{ \varphi=0 \}$.

Hence, dividing~\eqref{0004} by~$\tilde p_u$ and rearranging the terms, one gets
\begin{equation}\label{0007}
\tilde{u}(2c+1-\rho c) + c\tilde{v}(1-\rho c-2\rho)+c(\rho-1)=0.
\end{equation}
Differentiating the expression in~\eqref{0007} with respect to time, we get
\begin{equation*}
\tilde{u} (2c+1-\rho c) (1-\tilde{u}-\tilde{v}-ac) + c(1-\rho c-2\rho) [  \rho \tilde{v} (1-\tilde{u}-\tilde{v}) -a\tilde{u} ]=0,
\end{equation*}
that yields
\begin{equation*}
a_s = \frac{(1-\tilde{u}-\tilde{v}) (	\tilde{u} (2c+1-\rho c)+\rho c) }{2c\tilde{u}(c+1)},
\end{equation*}
which is the desired expression. 
By a slight abuse of notation, we define the function~$a_s(t)= a_s(\tilde{u}(t), \tilde{v}(t))$ for~$t\in[0,T]$. 
Notice that since~$\tilde{u}(t)>0$ for~$t\in[0,T]$,~$a_s(t)$ is continuous for~$t\in[0,T]$.
\end{proof}


\chapter*{Bibliographical notes}

\begin{center}
\begin{minipage}{25em}
\noindent
{\sl We give here some reference to the classical books, where the reader can find more detailed explanations of the theories we exploit, and some guidance in the literature of Lotka-Volterra systems.}
\end{minipage}\end{center}\bigskip\bigskip\bigskip\bigskip\bigskip\bigskip

We start with some classical references to dynamical systems. In this book, we often refer to the book of Perko \cite{dynsyst}, which covers those topics necessary for a clear understanding of the qualitative theory of ordinary differential equations and the concept of a dynamical system. It is written for advanced undergraduates and for beginning graduate students, and it is useful as a primer for young people interested in research in dynamical systems. In particular, it focuses on describing the qualitative behavior of the solution set of a given system of differential equations, including the invariant sets and limiting behavior of the dynamical system. This includes the Stable Manifold Theorem and the Pointcar\'e-Bendixon Theorem, which we often exploit.

Another excellent reference is the book \cite{smale1974differential} by Hirsh and Smale. 
The advantage of this book is the vast collection of examples, including the predatory-prey and the SIR  model, which are presented to visualize the theorems and their hypothesis.
This text is more accessible to non-mathematicians and undergraduate students, being at the same time very complete and satisfactory as a background for our text.

The book \cite{MR1056699} by Wiggings takes a step forward. In the first chapters, it presents the geometrical analysis of dynamical systems already covered in \cite{smale1974differential}, but in a more general and abstract form. Then, it continues with advanced material, working with Hamiltonians and their relations with the other tools, and exposing questions that are relevant in research.
The text is intended for an audience with ``mathematical maturity'' and interested in research on dynamical systems. The author says that the material is abundant and ambitious even for a three terms program.

Pontryaigin's Maximum Principle was first presented in the book \cite{pontryagin2018mathematical} by Pontryagin, Boltayanskii, Gamkrelidze, and Mishchenko, in the form of multiple theorems based on the same principle. This text is however hard to read, and nowadays more accessible texts are available.

For control theory and optimization, the textbook \cite{trelat2005controle} by Tr\'elat is a very good starting point. Targeted at undergraduate students, it provides a clear and concise introduction to problems and tools in control theory, including controllability and Pontyagin Maximum Principle, and it is completed by examples and numerical methods. The proposed material is enough backgorund for the comprehension of our book.
We can also recommend \cite{macki2012introduction} by Macki and Strauss as an advanced undergraduate text. Other reliable options are the lecture notes \cite{fernandez2003control} by F\'ernandez Cara and Zuazua or \cite{micu2004introduction} by Micu and Zuazua.

More advanced books in control theory focus on geometric control, a branch that has been developed since the 60s using the tools of differential geometry and Lie theory to prescribe optimal strategies, called optimal feedback, for a dynamic optimization problem.
To the scope of our book, a great reference is the book \cite{boscain2003optimal} by Boscain and Piccoli. In fact, their work focuses on time minimization problems in 2-D systems, for which the authors provide a complete theory, based on the research papers of the early 2000s. 

For a comprehensive treatment of geometric control applied to both linear and nonlinear systems, including stabilization, consider the references \cite{coron2007control} by Coron and \cite{agrachev2013control} Agrachev and Sachkov. 

Control theory also originated the branch of game theory, that analyses systems in which one or more parties take strategic decisions in order to achieve an optimal situation for their interests. Started with a finite number of players taking decisions from a discrete set of choices, this fascinating field has evolved in various way. When the situation evolves according to a dynamical system, where some of the parameters represents the strategic decisions of the players, we talk about differential games. 
An overview of the connection between control theory and differential games can be found for instance in the books \cite{BCD,dockner2000differential}, 
see also \cite{evans1984differential} where the subject is approached within the theory of viscosity solutions.
We also refer to the book \cite{isaacs1999differential}, which focuses in particular on some differential games modeling war conflicts.

The classic Lotka-Volterra equations for modeling predator-prey systems were first introduced independently in \cite{1eddd3e8-a442-3aa5-91bf-ea3ab4ee18ac} and \cite{volterra1926variatzioni} in the 20s. The work of Volterra \cite{volterra1926variatzioni} focused on three behaviors - the predation, the competition for the same resources, and the mutualism.
In the same decades, the emergence of numerous models inspired by ecology gave birth to the branch of mathematics that we call mathematical biology. In the 80s, Murray wrote the milestone books \cite{murray1, murray2} to bring together the fundamentals of this new subject. 
We also cite the text \cite{cantrell2004spatial} by Cantarell and Cosner, which collects the rigorous mathematics formalizing many of the insights that have been had throughout the twentieth century in mathematical biology. However, this book treats mainly single-species equations.
In fact, ecological modelisation has grown so much that it is impossible to give a complete picture.

A series of six papers by Hirsh and Morris answered many open questions on the dynamics of competitive and collaborative systems. The ones regarding two-dimensional models are \cite{hirsch1982systems, hirsch1985systems, hirsch1988systems}.

On the other hand, the Lotka-Volterra model was extended in different ways. A very successful additive feature is the spatial diffusion of the individuals of the two species. As examples of studies on the subject, a long list of works on traveling waves for Lotka-Volterra competiton systems are available, see \cite{gardner1982existence} by Gardner, \cite{tang1980propagating} by Tang and Fife, \cite{kan1997fisher} by Kan-On, \cite{guo2013sign} by Guo and Lin just to cite a few.
Spatial segregation caused by competition was studied by Tavares, Terracini, Verzini and their collaborators in \cite{conti2005asymptotic, tavares2012regularity} and the following works.

To conclude, from the viewpoint of our book, another relevant use of Lotka-Volterra competitive systems consists of applications to diffusion of new technologies that substitute old ones. The Bass model introduced in \cite{bass1969new} became one of the most influential in Management Science of the last century, see also  \cite{krishnan2000impact} and the subsequent research papers.

	
\bibliography{biblio}

\newpage$\,$\vfill

{\em Elisa Affili}, Laboratoire de Math\'{e}matiques Rapha\"{e}l Salem
UFR des Sciences et Techniques
Avenue de l'Universit\'{e}, BP.12
76801 Saint-\'{E}tienne-du-Rouvray, France \\
{\tt elisa.affili@univ-rouen.fr}\medskip\medskip

{\em Serena Dipierro}, Department of Mathematics and Statistics,
University of Western Australia,
35 Stirling Highway,
Crawley WA 6009, Australia. \\ {\tt serena.dipierro@uwa.edu.au}\medskip\medskip

{\em Luca Rossi}, Dipartimento di Matematica, Sapienza Universit\`a di Roma, Piazzale Aldo Moro 5,
00185 Roma, Italy. \\ {\tt l.rossi@uniroma1.it}\medskip\medskip

{\em Enrico Valdinoci}, Department of Mathematics and Statistics,
University of Western Australia,
35 Stirling Highway,
Crawley WA 6009, Australia. \\ {\tt enrico.valdinoci@uwa.edu.au}

\end{document}